\documentclass{amsart}
\usepackage{mathrsfs, amsmath, graphics}
\usepackage{amssymb, comment}

\usepackage{graphicx}

\usepackage{epsfig}

\usepackage{amsfonts}

\usepackage{amssymb,amscd}
\usepackage{tikz}
\usepackage{verbatim}
\usepackage{enumerate}
\usepackage{booktabs}
\usepackage[all]{xy}
\usepackage{subfigure}

\usepackage{caption}
\usepackage{subfigure}
\usepackage{marginnote}

\newtheorem{thm}{Theorem}[section]
\newtheorem{lemma}[thm]{Lemma}

\newtheorem{prop}[thm]{Proposition}

\theoremstyle{definition}
\newtheorem{defn}[thm]{Definition}

\newtheorem{question}[thm]{Question}

\theoremstyle{remark}
\newtheorem{remark}[thm]{Remark}
\usepackage{enumerate}
\numberwithin{equation}{section}

\theoremstyle{notation}

\usepackage{floatrow}
\floatsetup[table]{capposition=top}
\newfloatcommand{capbtabbox}{table}[][\FBwidth]

\usepackage{lineno}

\setcounter{tocdepth}{2}

\usepackage{graphicx}
\usepackage{mathrsfs}
\usepackage{epsfig}
\usepackage{amsmath}
\usepackage{amsfonts}
\usepackage{amssymb}
\usepackage{amssymb,amscd}
\usepackage{tikz}
\usepackage{verbatim}
\usepackage{enumerate}
\usepackage{booktabs}
\usepackage[all]{xy}


\theoremstyle{definition}

\theoremstyle{remark}

\numberwithin{equation}{section}

\newcommand{\hc}{\mathbf{H}^2_{\mathbb{C}}}





\title{Three-dimensional complex reflection groups  via Ford domains}

\author{Jiming Ma}
\address{School of Mathematical Sciences, Fudan University, Shanghai, 200433, P. R. China}
\email{majiming@fudan.edu.cn}

\keywords{Complex hyperbolic geomerty, spherical CR uniformization,  complex  reflection.}

\subjclass[2010]{20F55, 20H10, 57M60, 22E40, 51M10.}

\date{Mar. 3, 2023}

\thanks{Jiming Ma was partially supported by  NSFC 12171092. \\
}

\begin{document}



\maketitle

\begin{abstract} We initiate the study of  deformations of groups in three-dimensional complex hyperbolic geometry.
	Let $$G=\left\langle \iota_1, \iota_2, \iota_3, \iota_4 \Bigg| \begin{array}{c}    \iota_1^2=  \iota_2^2 = \iota_3^2=\iota_4^2=id,\\ [3 pt]
(\iota_1 \iota_3)^{2}=(\iota_1 \iota_4)^{3}=(\iota_2 \iota_4)^{2}=id
	\end{array}\right\rangle$$
	be an  abstract  group.
	We  study representations $\rho: G \rightarrow \mathbf{PU}(3,1)$,  where $\rho( \iota_{i})=I_{i}$ is a complex reflection fixing a  complex hyperbolic plane   in ${\bf H}^{3}_{\mathbb C}$   for $1 \leq i \leq 4$,  with the additional condition that  $I_1I_2$ is parabolic. When we assume two pairs of hyper-parallel complex hyperbolic planes  have the same distance, then
	  the moduli space $\mathcal{M}$ is parameterized by $(h,t) \in  [1, \infty) \times [0, \pi]$   but  $t \leq \operatorname{arccos}(-\frac{3h^2+1}{4h^2})$. In particular, $t=0$  and $t=\operatorname{arccos}(-\frac{3h^2+1}{4h^2})$ degenerate to ${\bf H}^{3}_{\mathbb R}$-geometry and  ${\bf H}^{2}_{\mathbb C}$-geometry  respectively.
	Via Ford domain, we   show   $\rho_{(h,t)}$ is a discrete and faithful representation of the group $G$  into $\mathbf{PU}(2,1)$ when $(h,t)=(\sqrt{2},\operatorname{arccos}(-\frac{7}{8}))$. We also show the 3-manifold at infinity of the even subgroup of $\Gamma =\rho_{(\sqrt{2},\operatorname{arccos}(-\frac{7}{8}))}(G)<\mathbf{PU}(2,1)$ is  the connected sum of the trefoil knot  complement  in $\mathbb{S}^3$ and a three dimensional real projective space. 
	Using the Ford domain of $\rho_{(\sqrt{2},\operatorname{arccos}(-\frac{7}{8}))}(G)$  as a guide, we continue to show   $\rho_{(h,t)}$ is a discrete and faithful representation of $G \rightarrow \mathbf{PU}(3,1)$ when $(h,t) \in \mathcal{M}$ is near to $(\sqrt{2}, \operatorname{arccos}(-\frac{7}{8}))$. This is the first nontrivial  example of the Ford domain of a subgroup in  $\mathbf{PU}(3,1)$ that has been studied.

\end{abstract}


\section{Introduction}\label{sec:intro}

Hyperbolic $n$-space ${\bf H}^{n}_{\mathbb R}$ is the unique complete simply
connected Riemannian $n$-manifold with all sectional curvatures $-1$.  Complex hyperbolic $n$-space ${\bf H}^{n}_{\mathbb C}$ is the unique complete simply
connected K\"ahler  $n$-manifold with all holomorphic  sectional curvatures $-1$. But the Riemannian  sectional curvatures of complex hyperbolic space  are no longer constant, which are
pinched between $-1$ and $-\frac{1}{4}$, this makes complex hyperbolic  geometry more difficult to study.
The   holomorphic isometry group of ${\bf H}^{n}_{\mathbb C}$ is $\mathbf{PU}(n,1)$, the   orientation preserving isometry group of ${\bf H}^{n}_{\mathbb R}$ is $\mathbf{PO}(n,1)$, then  $\mathbf{PO}(n,1)$ is a subgroup of  $\mathbf{PU}(n,1)$.


Over the last sixty years the theory of Kleinian groups, that  is,   deformations of groups into $\mathbf{PO}(3,1)$,  has flourished because of its close connections with low dimensional topology and geometry. More precisely, pioneered by
Ahlfors and Bers in the 1960's, and then
 Thurston formulated a conjectural classification scheme
for all hyperbolic 3-manifolds with finitely generated fundamental groups in the late 1970's.
The conjecture predicted that an infinite volume hyperbolic 3-manifold with finitely
generated fundamental group is uniquely determined by its topological type and its
end invariants. Thurston's conjecture is completed by a series of works of many mathematicians,   which is one of the most great breakthrough  in 3-manifold theory.  See Minsky's ICM talk 	\cite{Minsky:2006} for related topics and the reference.


A {\it spherical CR-structure} on a smooth 3-manifold $M$ is a maximal collection of distinguished charts modeled on the boundary $\partial \mathbf{H}^2_{\mathbb C}$
of  $\mathbf{H}^2_{\mathbb C}$, where coordinates changes are restrictions of transformations from  $\mathbf{PU}(2,1)$.
In other words, a {\it spherical CR-structure} is a $(G,X)$-structure with $G=\mathbf{PU}(2,1)$ and $X=\mathbb{S}^3$.
 A spherical CR-structure on a 3-manifold $M$  is {\it uniformizable} if it is
obtained as $M=\Gamma\slash \Omega_{\Gamma}$, where  $\Omega_{\Gamma}\subset \partial \mathbf{H}^2_{\mathbb C}$ is the set of discontinuity of a discrete subgroup  $\Gamma$ acting on $\partial \mathbf{H}^2_{\mathbb C}=\mathbb{S}^3$. The \emph{limit set} $\Lambda_{\Gamma}$ of $\Gamma$ is  $\mathbb{S}^3- \Omega_{\Gamma}$ by definition.
For a discrete group $\Gamma < \mathbf{PU}(2,1)$, the 3-manifold $M=\Omega_{\Gamma}/\Gamma$ at infinity of the 4-manifold $\mathbf{H}^2_{\mathbb C}/ \Gamma$  is the analogy of the 2-manifold at infinity of a geometrically finite,  infinite volume hyperbolic 3-manifold. In other words, uniformizable spherical CR-structures  on 3-manifolds in $\mathbf{H}^2_{\mathbb C}$-geometry are the analogies
of  conformal structures on surfaces in $\mathbf{H}^3_{\mathbb R}$-geometry.
But in contrast to results on other geometric structures carried on 3-manifolds, there are relatively few examples known about uniformizable  spherical CR-structures.


There are some remarkable works on deformations of groups into $\mathbf{PU}(2,1)$.
Let $T(p,q,r)$ be the abstract $(p,q,r)$ reflection triangle group with the presentation
$$T(p,q,r)=\langle \sigma_1, \sigma_2, \sigma_3 | \sigma^2_1=\sigma^2_2=\sigma^2_3=(\sigma_2 \sigma_3)^p=(\sigma_3 \sigma_1)^q=(\sigma_1 \sigma_2)^r=id \rangle,$$
where $p,q,r$ are positive integers or $\infty$ satisfying $$\frac{1}{p}+\frac{1}{q}+\frac{1}{r}<1.$$ If $p,q$ or $r$ equals $\infty$, then
the corresponding relation does not appear.  The  ideal triangle group is the case that $p=q=r=\infty$.
A \emph{$(p,q,r)$ complex hyperbolic triangle group} is a representation $\rho$ of $T(p,q,r)$ into $\mathbf{PU}(2,1)$
where the generators fix complex lines, we denote $\rho(\sigma_{i})$ by $I_{i}$, and the image group by $\Delta_{p,q,r}=\langle I_1,I_2,I_3 \rangle$. It is well known  that the space of $(p,q,r)$-complex reflection triangle groups has real dimension one if $3 \leq p \leq q \leq r$. Sometimes, we denote the image of the  representation of the triangle group $T(p,q,r)$ into $\mathbf{PU}(2,1)$ such that $I_1 I_3I_2 I_3$ is  order $n$ by $\Delta_{p,q,r;n}$, so the representation now is not faithful, but in some case it is discrete.

  Goldman and Parker  initiated the study of the deformations of ideal triangle group into  $\mathbf{PU}(2,1)$ in \cite{GoPa}.
They  gave an interval  in the moduli space of complex hyperbolic ideal triangle groups, for points in this interval  the corresponding representations are discrete and faithful.
They conjectured that a complex hyperbolic ideal triangle group $\Gamma=\Delta_{\infty,\infty, \infty}=\langle I_1, I_2, I_3 \rangle$ is discrete and faithful if and only if $I_1 I_2 I_3$ is not elliptic. Schwartz proved  Goldman-Parker's conjecture in \cite{Schwartz:2001ann, schwartz:2006}. Furthermore,  Schwartz analyzed the complex hyperbolic ideal triangle group $\Gamma$ when $I_1 I_2 I_3$ is parabolic, and showed  the 3-manifold at infinity of the quotient space ${\bf H}^2_{\mathbb C}/{\Gamma}$ is commensurable with the
Whitehead link complement in the 3-sphere. In particular, the
Whitehead link complement  admits uniformizable  spherical CR-structures.
 Richard Schwartz has also  conjectured the necessary and sufficient condition for a general complex hyperbolic  triangle group $\Delta_{p,q,r}=\langle I_1,I_2,I_3\rangle < \mathbf{PU}(2,1)$ to be a discrete and faithful  representation of $T(p,q,r)$ \cite{Schwartz-icm}. Schwartz's conjecture has been proved in a few cases \cite{ParkerWX:2016, ParkerWill:2017}.

 From above we know one  way to study discrete subgroups of  $\mathbf{PU}(n,1)$ is the deformations of a representation. From a  finitely presented abstract  group $G$, and a discrete faithful representation $\rho_{0}: G \rightarrow \mathbf{PU}(n,1)$,  we may deform $\rho_{0}$  to $\rho_{1}: G \rightarrow \mathbf{PU}(n,1)$ along a path. We are interested in   whether  $\rho_{1}$ is discrete and faithful. Moreover, even when  $\rho_{1}$ is not faithful, but it also has the chance to be discrete. This case is very interesting, since if we are lucky, we have the  opportunity to get  a complex hyperbolic lattice at  $\rho_{1}$ \cite{dpp:2016, dpp:2021}.

One of the most important questions in complex hyperbolic geometry is
 the existence  of (infinitely many commensurable classes of)  non-arithmetic complex hyperbolic lattices  \cite{Margulis, Fisher:2021}. Right now, people only found 22
 commensurable classes of  non-arithmetic complex hyperbolic lattices in $\mathbf{PU}(2,1)$ \cite{DeligneMostow:1986,dpp:2016, dpp:2021}, and
  2
 commensurable classes of  non-arithmetic complex hyperbolic lattices in $\mathbf{PU}(3,1)$ \cite{DeligneMostow:1986, Deraux:2020}. Both $\mathbf{PO}(3,1)$  and $\mathbf{PU}(2,1)$ are subgroups of $\mathbf{PU}(3,1)$. It is reasonable that deformations of some discrete groups in $\mathbf{PO}(3,1)$  into the larger group $\mathbf{PU}(3,1)$  may give   some discrete, but not faithful representations, which  may give new  ${\bf H}^3_{\mathbb C}$-lattices  as pioneered  by \cite{DeligneMostow:1986, dpp:2016, dpp:2021}.


In this paper, we initiate the study of  deformations of groups into $\mathbf{PU}(3,1)$,  which is much more difficult and richer than deformations of  groups into $\mathbf{PO}(3,1)$  and $\mathbf{PU}(2,1)$.  By this we mean:

\begin{itemize}
	
		\item it is well known that the space of discrete and faithful  representations of a group into $\mathbf{PO}(3,1)$ has fractal  boundary in general. For example, the Riley slice has a beautiful fractal  boundary in $\mathbb{C}$ (see Page VIII of  \cite{ASWY:2019});
	\item  people tend to guess that  the space of discrete and faithful  representations of a group into $\mathbf{PU}(2,1)$ has piece-wise smooth boundary. For one of the tractable  cases, the so called complex Riley slice,  which is 2-dimensional, see \cite{ParkerWill:2017}. Moreover, there are very few results on the space of discrete and faithful  representations of a group into $\widehat{\mathbf{PU}(2,1)}$. Here  $\widehat{\mathbf{PU}(2,1)}$ is the full isometry group of  ${\bf H}^2_{\mathbb C}$. To the author's knowledge, the only complete  classification  is in \cite{Falbelparker:2003}, where Falbel-Parker completed the study on the space of discrete and faithful  representations of $\mathbb{Z}_{2}*\mathbb{Z}_{3}$  into $\widehat{\mathbf{PU}(2,1)}$ (with one additional parabolic element), the moduli space is 1-dimensional.

\end{itemize}
So deformations of groups into $\mathbf{PU}(3,1)$   must have fractal boundaries in general, but we are very far from understanding them.



Let $$G=\left\langle \iota_1, \iota_2, \iota_3, \iota_4 \Bigg| \begin{array}{c}    \iota_1^2=  \iota_2^2 = \iota_3^2=\iota_4^2=id,\\ [2 pt]
(\iota_1 \iota_3)^{2}=(\iota_1 \iota_4)^{3}=(\iota_2 \iota_4)^{2}=id
\end{array}\right\rangle$$
be an  abstract  group. We also let $$K= \langle \iota_1 \iota_2,\iota_3\iota_1,\iota_4 \iota_1 \rangle$$ be an index two subgroup of $G$, which is isomorphic to $\mathbb{Z}_{2}* \mathbb{Z}_{2}*\mathbb{Z}_{3}$.
We  study representations $\rho: G \rightarrow \mathbf{PU}(3,1)$,  where $\rho( \iota_{i})=I_{i}$ is a complex reflection about a  complex hyperbolic plane   in ${\bf H}^{3}_{\mathbb C}$   for $1 \leq i \leq 4$,  with the additional condition that  $I_1I_2$ is parabolic.  We also assume two pairs of hyper-parallel complex hyperbolic planes  have the same distance for simplicity,  then
the moduli space $\mathcal{M}$ is parameterized by $(h,t) \in  [1, \infty) \times [0, \pi]$   with the condition $t \leq \operatorname{arccos}(-\frac{3h^2+1}{4h^2})$. For a point $(h,t) \in \mathcal{M}$, we denote the corresponding representation by $\rho_{(h,t)}$.  Moreover, $t=0$  and $t=\operatorname{arccos}(-\frac{3h^2+1}{4h^2})$ degenerate to ${\bf H}^{3}_{\mathbb R}$-geometry and  ${\bf H}^{2}_{\mathbb C}$-geometry, by this we mean the group $\rho_{(h,t)}(G)$ preserves a   ${\bf H}^{3}_{\mathbb R} \hookrightarrow {\bf H}^{3}_{\mathbb C}$ or a ${\bf H}^{2}_{\mathbb C}\hookrightarrow {\bf H}^{3}_{\mathbb C}$  invariant respectively.  See Section \ref{sec:moduli} for more details.

 As a model of the proof via Ford domains, the first main result in this paper is

 \begin{thm}\label{thm:3-mfd} $\rho_{(\sqrt{2},  \operatorname{arccos}(-\frac{7}{8}))}$   is a discrete and faithful representation of $G$ into $\mathbf{PU}(2,1)$. Moreover, the 3-manifold at infinity  of  $\Sigma=\rho_{(\sqrt{2}, \operatorname{arccos}(-\frac{7}{8}))}(K)  <\mathbf{PU}(2,1)$ is the connected sum of the trefoil knot  complement in $\mathbb{S}^3$ and a real projective space  ${\mathbb R}{\mathbf P}^3$.
\end{thm}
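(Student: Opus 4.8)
The plan is to place the parabolic fixed point $q_\infty$ of $I_1I_2$ at infinity in the Siegel domain model of $\mathbf{H}^2_{\mathbb C}$ and to construct an explicit Ford domain for $\Gamma=\rho_{(\sqrt 2,\operatorname{arccos}(-\frac{7}{8}))}(G)$ centered at $q_\infty$. Using the parametrization of Section~\ref{sec:moduli} one first writes down concrete lifts in $\mathrm{SU}(2,1)$ of $I_1,I_2,I_3,I_4$ at the parameter $(h,t)=(\sqrt 2,\operatorname{arccos}(-\frac{7}{8}))$, normalized so that $I_1I_2$ fixes $q_\infty$. Since $I_1(I_1I_2)I_1=(I_1I_2)^{-1}$ and likewise for $I_2$, both $I_1$ and $I_2$ fix $q_\infty$, so the cusp group $\Gamma_\infty=\mathrm{Stab}_\Gamma(q_\infty)$ contains $\langle I_1,I_2\rangle$, an infinite dihedral group acting on the Heisenberg group $\partial\mathbf{H}^2_{\mathbb C}\setminus\{q_\infty\}$; one checks this is all of $\Gamma_\infty$ and fixes a fundamental domain $\Pi_\infty$ for it. The candidate Ford domain is $D=\Pi_\infty\cap\bigcap_{g\in\Gamma\setminus\Gamma_\infty}\mathrm{ext}(B_g)$, where $B_g$ is the isometric sphere (a bisector) of $g$.

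The core of the proof is to show that only finitely many isometric spheres are relevant---conjecturally those of $I_3$ and $I_4$ together with a short, explicitly listed set of short words in the generators and their $\Gamma_\infty$-translates---and to determine the combinatorics of $D$: which bisectors meet, along which Giraud discs or complex lines, and how the resulting $2$-faces are paired. Goldman's description of bisector intersections is the tool for the ridge analysis. One then verifies the hypotheses of the Poincar\'e polyhedron theorem relative to the cusp group $\Gamma_\infty$: the side-pairing maps (the involutions $I_3$, $I_4$ fold their own isometric spheres, $I_1$, $I_2$ fold the vertical sides of $\Pi_\infty$, and the remaining sides are paired off against each other) carry $D$ off itself; every ridge cycle closes up with a cycle transformation whose order realizes one of the relations $\iota_i^2=\mathrm{id}$, $(\iota_1\iota_3)^2=\mathrm{id}$, $(\iota_1\iota_4)^3=\mathrm{id}$, $(\iota_2\iota_4)^2=\mathrm{id}$; and the cross-section at $q_\infty$ is complete. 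It follows that $D$ is a fundamental domain, so $\Gamma$ is discrete, and the presentation read off from $D$ is exactly that of $G$, so the tautological surjection $G\to\Gamma$ is an isomorphism; hence $\rho_{(\sqrt 2,\operatorname{arccos}(-\frac{7}{8}))}$ is faithful.

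For the $3$-manifold at infinity, I would intersect the whole picture with $\partial\mathbf{H}^2_{\mathbb C}=\mathbb{S}^3$: the part $D\cap\mathbb{S}^3$ lying in the domain of discontinuity $\Omega_\Gamma$, with its induced side-pairings, is a fundamental polyhedron for the $\Gamma$-action on $\Omega_\Gamma$, and adjoining a single coset translate yields one for the index-two subgroup $\Sigma$; reading off the gluings gives a cell decomposition of the $3$-manifold at infinity $\Omega_\Sigma/\Sigma$. The parabolic point $q_\infty$ contributes a torus cusp cross-section, which will be the cusp of the trefoil complement, while an order-two element of $\Gamma$ acting on a suitable invariant $2$-sphere in $\mathbb{S}^3$ produces the $\mathbb{RP}^3$ summand; this matches the splitting $K\cong(\mathbb{Z}_2*\mathbb{Z}_3)*\mathbb{Z}_2$, with the $\mathbb{Z}_2*\mathbb{Z}_3=\mathrm{PSL}(2,\mathbb{Z})$ factor giving the Seifert-fibered trefoil complement and the extra $\mathbb{Z}_2$ giving $\mathbb{RP}^3$. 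One then identifies the glued-up manifold with $(\mathbb{S}^3\setminus\mathrm{trefoil})\#\mathbb{RP}^3$ by exhibiting the separating $2$-sphere and recognizing the two complementary pieces, or by matching the resulting Heegaard-type structure with the standard one for the trefoil complement.

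The step I expect to be the main obstacle is the completeness assertion in the second paragraph: proving that the finite candidate list of isometric spheres really cuts out $D$, i.e. that no further element of $\Gamma$ has an isometric sphere meeting the interior of $D$. As $\Gamma$ is infinite and contains torsion, this needs quantitative control on the radii and centers of isometric spheres of arbitrary words, typically via a covering argument on the Heisenberg boundary fed by the combinatorics of $G$; it is precisely the ingredient that is genuinely new for the later $\mathbf{PU}(3,1)$ statements. A secondary, more mechanical difficulty is the bisector-intersection bookkeeping behind the ridge cycles, since bisectors in $\mathbf{H}^2_{\mathbb C}$ are not totally geodesic and each pairwise intersection and cycle transformation has to be analysed individually.
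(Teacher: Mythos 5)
Your overall strategy (Ford domain centered at the parabolic fixed point plus the Poincar\'e polyhedron theorem for cosets) is the paper's strategy, but with two substantive caveats. First, the paper does not work with the full group $G$ and an infinite dihedral cusp group; it passes to the even subgroup $K=\langle A,B,C\rangle$ with $A=I_1I_2$, $B=I_3I_1$, $C=I_4I_1$, and builds a Ford domain modulo the cyclic group $\langle A\rangle$ only. The sides are the isometric spheres of $A^kCA^{-k}$, $A^kCBC^{\pm1}A^{-k}$, $A^kC^{-1}BC^{\pm1}A^{-k}$ --- in particular $I(B)$ itself is \emph{not} a side, so your guessed list ``$I_3$, $I_4$ and short words'' is off, and the ridge cycles realize $B^2=C^3=(AC)^2=\mathrm{id}$ rather than the relations of $G$ directly. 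More importantly, what you defer as ``the main obstacle'' (that no further isometric sphere cuts the candidate domain) and as ``secondary bookkeeping'' (the pairwise/triple bisector intersections) is in fact the entire content of the theorem: the paper spends all of Section~\ref{sec:Ford2dim} on explicit Cygan-distance estimates showing all but finitely many isometric spheres are disjoint from or contained in others, and on Giraud-disk parametrizations showing that several triple intersections degenerate to two crossed segments, so that certain ridges are unions of sectors and one side is a pinched solid light cone. Without this the Poincar\'e hypotheses cannot be checked, so as written the proposal is a plausible plan rather than a proof. Note also that once the local tessellation conditions of the Poincar\'e theorem hold for the candidate polyhedron $D_R$, the theorem itself yields that $D_R$ is a fundamental domain; one does not separately prove that no other isometric sphere meets its interior.

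There is also a concrete error in your treatment of the manifold at infinity. You propose to read the trefoil summand off the splitting $K\cong(\mathbb{Z}_2*\mathbb{Z}_3)*\mathbb{Z}_2$, identifying the $\mathbb{Z}_2*\mathbb{Z}_3\cong\mathrm{PSL}(2,\mathbb{Z})$ factor with the trefoil complement group. This cannot work: the trefoil group is $\langle a,b\mid a^2=b^3\rangle$, which is torsion-free, whereas $\mathbb{Z}_2*\mathbb{Z}_3$ has torsion; and $\pi_1(\Omega_\Sigma/\Sigma)$ is not isomorphic to $\Sigma$ in the first place (the quotient map $\Omega_\Sigma\to\Omega_\Sigma/\Sigma$ only gives a surjection $\pi_1(M)\twoheadrightarrow$ image in $\Sigma$ modulo the topology of $\Omega_\Sigma$). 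The paper instead collapses $\Omega_\Sigma/\Sigma$ onto an explicit $2$-spine $X$ built from the ideal boundaries of the sides, computes $\pi_1(X)=\langle a,b,c\mid a^2=b^3,\ c^2=\mathrm{id}\rangle$ directly from the edge and ridge identifications, and then invokes the prime decomposition, the uniqueness of the trefoil complement and of $\mathbb{RP}^3$ among $3$-manifolds with those fundamental groups, and the Poincar\'e conjecture to identify $M$. You would need to replace your $\pi_1$ heuristic with an argument of this kind (or genuinely exhibit the separating $2$-sphere in the glued-up complex, which again requires the full combinatorics of $D_R\cap\partial\mathbf{H}^2_{\mathbb C}$).
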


We denote $A=I_1I_2$, $B=I_3I_1$ and $C=I_4I_1$.
Let
 $$R=\{A^{k}CA^{-k},A^{k}CBCA^{-k},A^{k}C^{-1}BC^{-1}A^{-k}, A^{k}C^{-1}BCA^{-k}, A^{k}CBC^{-1}A^{-k}\} _{k \in \mathbb{Z}}$$
 be a subset of $\rho_{(\sqrt{2}, \operatorname{arccos}(-\frac{7}{8}))}(K)=\Sigma$. We will show the partial Ford domain 
$D_{R}(\Sigma)$ is in fact the Ford domain  of $\Sigma$, then we obtain Theorem \ref{thm:3-mfd}.




Using the 
Ford domain in Theorem \ref{thm:3-mfd} as a guide, the  second main result of this paper  is

\begin{thm}\label{thm:complex3dim} There is a neighborhood $U$ of $(\sqrt{2}, \operatorname{arccos}(-\frac{7}{8}))$ in $\mathcal{M}$, such that   $\rho_{(h,t)}$ is a discrete and faithful representation of $G \rightarrow \mathbf{PU}(3,1)$ when $(h,t) \in U$.
\end{thm}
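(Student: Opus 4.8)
The plan is to run a deformation argument on top of the combinatorial skeleton supplied by Theorem~\ref{thm:3-mfd}, using a version of the Poincar\'e polyhedron theorem for Ford domains in $\mathbf{H}^3_{\mathbb C}$. Keep the notation $A=I_1I_2$, $B=I_3I_1$, $C=I_4I_1$ and the (infinite, $A$-invariant) set $R$ of words used at the base point $p_0=(\sqrt2,\operatorname{arccos}(-\tfrac78))$. First I would write down, for a general $(h,t)\in\mathcal M$, explicit matrices in $\mathbf{SU}(3,1)$ for $I_1,\dots,I_4$ in a Siegel/Heisenberg model of $\partial\mathbf{H}^3_{\mathbb C}=\mathbb S^5$, normalised so that the parabolic $A=I_1I_2$ fixes $\infty$. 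Because $\mathcal M$ is built precisely so that $A$ is parabolic for \emph{every} $(h,t)\in\mathcal M$, this normalisation is uniform in the parameter: the cusp group (the stabiliser of $\infty$ in $\rho_{(h,t)}(K)$) varies continuously, keeps its isomorphism type, and keeps a fixed fundamental domain combinatorics on the horosphere $\partial\mathcal H_\infty$. Then I would form the isometric spheres $S_g$ ($g\in R$), which are spinal spheres in $\mathbb S^5$, and the partial Ford domain $D_R(\rho_{(h,t)}(K))=\bigl(\bigcap_{g\in R}\operatorname{ext}(S_g)\bigr)\cap(\text{fundamental domain for the cusp group})$, exactly mirroring the construction of $D_R(\Sigma)$ in Theorem~\ref{thm:3-mfd}.

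The content of Theorem~\ref{thm:3-mfd} is a finite list of combinatorial assertions about $D_R$ at $p_0$: the face-pairing structure ($S_g\leftrightarrow S_{g^{-1}}$ and the pairing maps), the ridge (codimension-two) cycles together with their cycle transformations, the cusp cross-section, and the crucial statement that no isometric sphere of any element of $\rho_{p_0}(K)$ other than those in $R$ intrudes into $D_R$. I would argue that each of these is of one of two types. Type (i): an \emph{open} condition, i.e.\ an inequality that is strict at $p_0$ and therefore persists on a relative neighborhood $U$ of $p_0$ in $\mathcal M$ --- this covers transversality of the faces, the combinatorial intersection pattern of the ridges, and the fact that the finitely many ``most competitive'' extra isometric spheres stay outside $D_R$. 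Type (ii): an \emph{equality} that holds identically on all of $\mathcal M$ because it is forced by the defining relations of $K$ (equivalently of $G$) together with the parabolicity of $A$ --- this covers the ridge-cycle relations coming from $\iota_i^2=\mathrm{id}$, $(\iota_1\iota_3)^2=(\iota_1\iota_4)^3=(\iota_2\iota_4)^2=\mathrm{id}$, and the gluing pattern along the quotient of $\partial\mathcal H_\infty$. Granting these, the Poincar\'e polyhedron theorem applied to $D_R(\rho_{(h,t)}(K))$ for $(h,t)\in U$ yields discreteness, the assertion that $D_R$ is a fundamental domain (hence in fact the Ford domain), and a presentation of $\rho_{(h,t)}(K)$ whose generators and relators are literally those at $p_0$, hence isomorphic to $\mathbb Z_2*\mathbb Z_2*\mathbb Z_3$; so $\rho_{(h,t)}|_K$ is faithful, and then $\rho_{(h,t)}$ is discrete and faithful on $G$ by the same index-two bookkeeping used at $p_0$. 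Since $p_0$ lies on the boundary stratum $t=\operatorname{arccos}(-\tfrac{3h^2+1}{4h^2})$ with $h_0=\sqrt2>1$, any such $U$ automatically meets the open region $t<\operatorname{arccos}(-\tfrac{3h^2+1}{4h^2})$, so the conclusion is genuinely about subgroups of $\mathbf{PU}(3,1)$ and not only about $\mathbf{H}^2_{\mathbb C}$-groups.

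The main obstacle --- and what makes this more than a formal corollary of Theorem~\ref{thm:3-mfd} --- is establishing that $D_R$ remains the \emph{entire} Ford domain after the deformation, i.e.\ that no isometric sphere of an element of $\rho_{(h,t)}(K)$ not already accounted for protrudes. At $p_0$ the group lies in a totally geodesic $\mathbf{H}^2_{\mathbb C}\hookrightarrow\mathbf{H}^3_{\mathbb C}$ and all isometric spheres are spinal spheres ``centred'' on the corresponding $\mathbb S^3\subset\mathbb S^5$; moving into the interior of $\mathcal M$ thickens this configuration in the two real directions transverse to $\mathbb S^3$, and one must control the radii and positions of infinitely many spinal $4$-spheres \emph{uniformly} in $(h,t)$. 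I expect this to reduce, Schwartz-style, to two ingredients: (a) a finite computation showing that a critical finite sub-collection of isometric spheres behaves correctly --- an open condition, hence stable on a neighborhood; and (b) a combinatorial/geometric propagation argument --- using the action of the cusp group together with a word-length or Ford-height (Cygan-radius) estimate --- that upgrades (a) to a statement about all remaining elements. Step (b) is where the uniformity in the parameter has to be extracted, and arranging that a single neighborhood $U$ suffices for \emph{all} the infinitely many spheres simultaneously is the delicate point of the proof; everything else is either a finite (open) verification or an identity valid throughout $\mathcal M$.
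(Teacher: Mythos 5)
Your overall architecture (perturb the Ford domain of Theorem~\ref{thm:3-mfd}, check the finitely many conditions of the Poincar\'e polyhedron theorem, propagate to the infinitely many remaining isometric spheres by continuity of the Cygan distance and the $A$-action) matches the paper's strategy, and your Type~(i) reasoning does cover the disjointness statements analogous to Propositions~\ref{prop:C}--\ref{prop:inverseCBC}. But there is a genuine gap in your dichotomy: you file ``the combinatorial intersection pattern of the ridges'' under Type~(i), i.e.\ as an open condition stable under perturbation. At the base point this pattern is \emph{not} generic: the triple intersection $I(C)\cap I(CBC^{-1})\cap I(C^{-1}BC^{-1})$ is a pair of crossed straight segments, and the ridges $s(C)\cap s(CBC^{-1})$, $s(C^{-1}BC^{-1})\cap s(C)$, $s(C^{-1}BC^{-1})\cap s(CBC^{-1})$ are each a union of two sectors meeting at a point (Propositions~\ref{prop:cBcandCintersecCBinverseC}--\ref{prop:CBcandCintersectioninverseCBinverseC}). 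Three isometric spheres in general position in $\mathbf{H}^3_{\mathbb C}$ would meet in a disjoint union of $3$-balls; a degenerate configuration like this one is exactly the kind of thing a small perturbation destroys, and if it were destroyed the ridge cycles feeding the Poincar\'e theorem would change. So the persistence of this pattern is neither an open condition nor an identity forced by the abstract relations of $G$ together with parabolicity of $A$ --- it is a separate exact fact that has to be verified.

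The paper supplies precisely this missing ingredient: Lemma~\ref{lemma:coplane} shows that for \emph{every} $(h,t)\in\mathcal M$ one has $q_{\infty}-C^{-1}(q_{\infty})+CBC^{-1}(q_{\infty})-CBC(q_{\infty})=0$ in $\mathbb{C}^{3,1}$, so these four points span a totally geodesic $\mathbf{H}^2_{\mathbb C}\hookrightarrow\mathbf{H}^3_{\mathbb C}$ throughout the deformation, and Lemma~\ref{lemma:intersection} then computes that the triple intersection stays a pair of crossed segments inside that slice (equivalently, the three $5$-spheres remain non-transverse, meeting in two $3$-balls sharing a $2$-ball). The author flags both facts as surprising, and states that his initial guess was the opposite (that the configuration would become generic for $t<\operatorname{arccos}(-\frac{3h^2+1}{4h^2})$). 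Your proposal would need either to discover this coplanarity identity or to redo the entire ridge analysis for a generic configuration --- and the latter would produce different ridge cycles, so it is not a drop-in replacement. Everything else in your outline is consistent with the paper's proof.
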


 To our knowledge, Theorem \ref{thm:complex3dim} is the first nontrivial  example of the Ford domain of a subgroup in  $\mathbf{PU}(3,1)$ that has been studied. Ford domains in ${\bf H}^{3}_{\mathbb C}$-geometry are highly difficult to study in general. The author  believes that right now we can only study very special cases of Ford domains in ${\bf H}^{3}_{\mathbb C}$-geometry, there are many fundamental results/tools demand to establish. For example we do not how to show the intersection of three isometric spheres ${\bf H}^{3}_{\mathbb C}$ is topologically a 3-ball (if non-empty). In \cite{Ma:2023}, the author studied the Dirichlet domain of a variety  of discrete subgroups in $\mathbf{PU}(3,1)$ (which are highly symmetric and so are  very lucky).  We hope this paper  may  attract more interest  on  this promising  direction.



 Our proof of Theorem \ref{thm:complex3dim} runs the same line as the proof of  Theorem \ref{thm:3-mfd}, so it seems the proof of it is very short here.  But in fact the proof  is technically more difficult, and the reader have to read Section \ref{sec:Ford2dim} before the proof of Theorem  \ref{thm:complex3dim}. Moreover we can  only prove it for a neighborhood $U$ of  $(\sqrt{2}, \operatorname{arccos}(-\frac{7}{8}))$ in $\mathcal{M}$, but we can not sketch how big the neighborhood is. 
 
 One of the technical  reasons that we can prove Theorem \ref{thm:complex3dim} is that  four points, say 
$q_{\infty}$, $C^{-1}(q_{\infty})$,  $CBC^{-1}(q_{\infty})$ and $CBC(q_{\infty})$,  are co-planar. This fact is a little surprising to the author, see Lemma \ref{lemma:coplane}.
 From the proof of Theorem 
 \ref{thm:3-mfd},  we know the triple intersection of three isometric spheres  $I(C)$,  $I(CBC^{-1})$ and $I(C^{-1}BC^{-1})$ in ${\bf H}^{2}_{\mathbb C}$ for the group $\rho_{(\sqrt{2}, \operatorname{arccos}(-\frac{7}{8}))}(K)  <\mathbf{PU}(2,1)$ is a union of two crossed straight segments, see Proposition \ref{prop:cBcandCintersecCBinverseC} and Figure 	\ref{figure:inverseCBinverseCandCintersectCBinverseC}. Then the intersection of three isometric spheres  $I(C)$,  $I(CBC^{-1})$ and $I(C^{-1}BC^{-1})$ for the group $\rho_{(\sqrt{2}, \operatorname{arccos}(-\frac{7}{8}))}(K)  <\mathbf{PU}(3,1)$ in ${\bf H}^{3}_{\mathbb C}$ is a union of two 3-balls which intersect in a 2-ball. So the three  isometric spheres  $I(C)$,  $I(CBC^{-1})$ and $I(C^{-1}BC^{-1})$ are not in general position in ${\bf H}^{3}_{\mathbb C}$. By this we mean if three  5-balls in a 6-ball  are in general position, then the intersection of them is a disjoint union of 3-balls. At first, the author guessed the facets of the Ford domain of  $\rho_{(h, t)}$ in ${\bf H}^{3}_{\mathbb C}$  are in general position  when $(h,t)$ is near to $(\sqrt{2}, \operatorname{arccos}(-\frac{7}{8}))$ but $t < \operatorname{arccos}(-\frac{3h^2+1}{4h^2})$, by this we mean we guessed the intersections of any three facets of the Ford domain is a disjoint union of 3-balls (if it is not empty). And then when $t$ converges to  $\operatorname{arccos}(-\frac{7}{8})$, the Ford domain of  $\rho_{(\sqrt{2},t)}(K)$ degenerates to the non-generic Ford domain of  $\rho_{(\sqrt{2},\operatorname{arccos}(-\frac{7}{8}))}(K)$. But with the careful  calculation, we show the Ford domain of  $\rho_{(h, t})$ in ${\bf H}^{3}_{\mathbb C}$ is not in  general position, in fact it has the same combinatorics as $\rho_{(\sqrt{2},\operatorname{arccos}(-\frac{7}{8}))}$, see  Lemma  \ref{lemma:intersection}, which is also a little surprising to the author.




 {\bf The paper is organized as follows.} In Section \ref{sec:background}  we give well known background
 material on complex hyperbolic geometry. In Section \ref{sec:gram}, we give the matrix representations of $G$ into $\mathbf{PU}(3,1)$ with complex reflection generators.
  Section \ref{sec:Ford2dim}  is devoted to the descriptions of the isometric spheres that bound the Ford domain of  $\rho_{(\sqrt{2},\operatorname{arccos}(-\frac{7}{8}))}(K) <\mathbf{PU}(2,1)$. Based on Section \ref{sec:Ford2dim}, we study the  3-manifold at infinity  of $\rho_{(\sqrt{2},\operatorname{arccos}(-\frac{7}{8}))}(K)<\mathbf{PU}(2,1)$ in Section \ref{sec:3mfd}.
 We prove Theorem \ref{thm:complex3dim}  in Section  \ref{sec:complex3dim}.
Finally,  we propose a few related questions in Section \ref{sec:question} which seem interesting. 


\textbf{Acknowledgement}: The author would like to thank Ying Zhang for helpful discussions. The author also would like to thank his co-author Baohua Xie
\cite{MaX:2021}, the author learned a lot  from Baohua on complex hyperbolic geometry.

 \section{Background}\label{sec:background}

 The purpose of this section is to introduce briefly complex hyperbolic geometry. One can refer to Goldman's book \cite{Go} for more details.


\subsection{Complex hyperbolic space}  \label{subsec:chs}
Let ${\mathbb C}^{n,1}$  denote the vector space ${\mathbb C}^{n+1}$ equipped with the Hermitian
form  of signature $(n,1)$:
 $$\langle {\bf{z}}, {\bf{w}} \rangle=\bf{w}^* \cdot  H \cdot \bf{z},$$ where $ \bf{w}^*$ is the  Hermitian transpose  of  $\bf{w}$ and 
$$H=\begin{pmatrix}
0& 0&1\\
0 &I_{n-1} & 0\\
1 &0 & 0\\
\end{pmatrix}. $$
Then
the Hermitian form divides ${\mathbb C}^{n,1}$ into three parts $V_{-}, V_{0}$ and $V_{+}$, which are
\begin{eqnarray*}
  V_{-} &=& \{{\bf z}\in {\mathbb C}^{n+1}-\{0\} : \langle {\bf z}, {\bf z} \rangle <0 \}, \\
  V_{0} &=& \{{\bf z}\in {\mathbb C}^{n+1}-\{0\} : \langle {\bf z}, {\bf z} \rangle =0 \}, \\
  V_{+} &=& \{{\bf z}\in {\mathbb C}^{n+1}-\{0\} : \langle {\bf z}, {\bf z} \rangle >0 \}.
\end{eqnarray*}

Let $$ \mathbb{P}: {\mathbb C}^{n+1}-\{0\}\rightarrow {\mathbb C}{\mathbf P}^{n}$$ be  the canonical projection onto the  complex projective space.
Then the {\it complex hyperbolic space} ${\bf H}^{n}_{\mathbb C}$ is the image of $V_{-}$ in ${\mathbb C}{\mathbf P}^{n}$
by the  map ${\mathbb P}$  and its {\it ideal boundary}, or {\it  boundary at infinity}, is  the image of $V_{0}$ in
 ${\mathbb C}{\mathbf P}^{n}$, we denote it by $\partial {\bf H}^{n}_{\mathbb C}$.

There is a typical anti-holomorphic isometry $\iota$ of ${\bf H}^{n}_{\mathbb C}$. $\iota$ is given on the level of homogeneous coordinates by complex conjugate

\begin{equation}\label{antiholo}
\iota:\left[\begin{matrix} z_1 \\ z_2\\ \vdots \\ z_{n+1} \end{matrix}\right]
\longmapsto \left[\begin{matrix} \overline{z_1} \\
\overline{z_2} \\ \vdots \\\overline{z_{n+1}} \end{matrix}\right].
\end{equation}


\subsection{Totally geodesic submanifolds and complex reflections}
There are two kinds of totally geodesic submanifolds  in ${\bf H}^{n}_{\mathbb C}$:

\begin{itemize}

\item Given any point $x \in {\bf H}^{n}_{\mathbb C}$, and a complex linear subspace $F$ of dimension $k$ in the tangent space $T_{x}{\bf H}^{n}_{\mathbb C}$, there is a unique complete holomorphic totally geodesic     submanifold contains $x$ and is tangent  to $F$. Such a holomorphic submanifold a called a \emph{$\mathbb{C}^{k}$-plane}.  A $\mathbb{C}^{k}$-plane is the intersection of a complex $k$-dimensional projective subspace in  $\mathbb{C}{\bf P}^{n}$ with ${\bf H}^{n}_{\mathbb C}$, and it is holomophic to ${\bf H}^{k}_{\mathbb C}$.  A $\mathbb{C}^{1}$-plane is called a \emph{complex geodesic}. The intersection of a $\mathbb{C}^{k}$-plane with $\partial {\bf H}^{n}_{\mathbb C}=\mathbb{S}^{2n-1}$ is a smoothly embedded sphere $\mathbb{S}^{2k-1}$, which is called a  $\mathbb{C}^{k}$-chain.

\item  Corresponding to the compatible real structures on $\mathbb{C}^{n,1}$ are the real forms of ${\bf H}^{n}_{\mathbb C}$; that is, the maximal totally real totally geodesic subspaces of ${\bf H}^{n}_{\mathbb C}$, which has real dimension $n$.  A maximal totally real totally geodesic subspace of ${\bf H}^{n}_{\mathbb C}$  is the  fixed-point set  of an  anti-holomorphic isometry of ${\bf H}^{n}_{\mathbb C}$, we have give an example of  anti-holomorphic isometry $\iota$     in (\ref{antiholo}) of Subsection  \ref{subsec:chs}.
For the usual real structure, this submanifold is the real hyperbolic $n$-space ${\bf H}^{n}_{\mathbb R}$ with curvature $-\frac{1}{4}$. Any  totally geodesic subspace of a maximal totally real totally geodesic subspace is a  totally real totally geodesic subspace,  which  is the real hyperbolic $k$-space ${\bf H}^{k}_{\mathbb R}$ for some $k$.

\end{itemize}

Since the Riemannian sectional curvature of the  complex hyperbolic space
is non-constant, there are no totally geodesic hyperplanes in  ${\bf H}^{n}_{\mathbb C}$ when $n \geq 2$.


Let $L$ be a $(n-1)$-dimensional complex plane in ${\bf H}^{n}_{\mathbb C}$,  a {\it polar vector} of $L$ is the unique vector (up to scaling) perpendicular to this complex plane with
respect to the Hermitian form. A polar vector of a $(n-1)$-dimensional complex plane belongs to  $V_{+}$ and each vector in $V_{+}$ corresponds to a $(n-1)$-dimensional complex plane.
Moreover, let $L$ be a $(n-1)$-dimensional complex plane with polar vector
${\bf n}\in V_{+}$,
then the {\it complex reflection} fixing $L$ with rotation angle $\theta$  is given by
$$
 I_{\bf n, \theta}({\bf z}) = -{\bf z}+(1-\mathrm{e}^{\theta   \mathrm{i}})\frac{\langle {\bf z}, {\bf n} \rangle}{\langle {\bf n},{\bf n}\rangle}{\bf n}.
$$
The complex plane $L$ is also called the \emph{mirror} of $I_{\bf n, \theta}$. In this paper, we may assume $\theta= \frac{2 \pi}{m}$ for some $m \in \mathbb{Z}_{\geq 2}$, and then the complex reflection has order $m$.

 \subsection{The isometries} The complex hyperbolic space is a K\"{a}hler manifold of constant holomorphic sectional curvature $-1$.
 We denote by $\mathbf{U}(n,1)$ the Lie group of $\langle \cdot,\cdot\rangle$ preserving complex linear
 transformations and by $\mathbf{PU}(n,1)$ the group modulo scalar matrices. The group of holomorphic
 isometries of ${\bf H}^{n}_{\mathbb C}$ is exactly $\mathbf{PU}(n,1)$. It is sometimes convenient to work with
 $\mathbf{SU}(n,1)$.
 The full isometry group of ${\bf H}^{n}_{\mathbb C}$ is
 $$\widehat{\mathbf{PU}(n,1)}=\langle \mathbf{PU}(n,1),\iota\rangle,$$
 where $\iota$ is the antiholomotphic isometry in Subsection   \ref{subsec:chs}.

 
 Elements of $\mathbf{SU}(n,1)$ fall into three types, according to the number and types of  fixed points of the corresponding
 isometry. Namely,
 \begin{itemize}
 	
 	\item
 	
  an isometry is {\it loxodromic} if it has exactly two fixed points 
 on $\partial {\bf H}^{n}_{\mathbb C}$;
 
 	\item  an isometry is  {\it parabolic} if it has exactly one fixed point
 	on $\partial {\bf H}^{n}_{\mathbb C}$;
 
	\item   an isometry is {\it elliptic}  when it has (at least) one fixed point inside ${\bf H}^{n}_{\mathbb C}$. 
	
 \end{itemize}

 	An element $A\in \mathbf{SU}(n,1)$ is called {\it regular} whenever it has  distinct eigenvalues,
 	an elliptic $A\in \mathbf{SU}(n,1)$ is called  {\it special elliptic} if
 	it has a repeated eigenvalue. Complex reflection about a ${\bf H}^{n-1}_{\mathbb C} \hookrightarrow {\bf H}^{n}_{\mathbb C}$ is an example of  special elliptic element in  $\mathbf{SU}(n,1)$.


\subsection{Goldman's function}
For an element in $\mathbf{PU}(2,1)$, there is a simple way to study its type.  We denote $tr(A)$ the trace of a  matrix $A$. Goldman \cite{Go} defined a function $\mathcal{G}: \mathbb{C} \rightarrow \mathbb{R}$, which can determine whether $A \in \mathbf{PU}(2,1)$ is loxodromic or not. The function $\mathcal{G}$ is
\begin{equation}\label{goldmanfunction}
\mathcal{G}(z)= |z|^4-8 Re(z^3)+18|z|^2-27
\end{equation}
for $z \in \mathbb{C}$.

\begin{thm} (Goldman \cite{Go}) \label{thm:Goldman} The trace map $tr:  \mathbf{SU}(2,1) \rightarrow \mathbb{C}$ is surjective. For $A \in \mathbf{SU}(2,1)$:
	\begin{itemize}
		\item $A$ is regular elliptic if and only if  $\mathcal{G}(tr(A)) < 0$;
		
		\item $A$ is loxodromic  if and only if  $\mathcal{G}(tr(A))  > 0$;

		\item $A$ is unipotent   if and only if
		$tr(A) \in 3 \cdot\{1, \omega, \omega^2\}$;
		
		\item If
		$tr(A) \notin 3 \cdot \{1, \omega, \omega^2\}$, and $f(tr(A))=0$, $A$ maybe a $\mathbb{C}$-reflection about a $\mathbb{C}$-line, a $\mathbb{C}$-reflection about a point in ${\bf H}^2_{\mathbb C}$, or an ellipto-parabolic element.
	\end{itemize}
\end{thm}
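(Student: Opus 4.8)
The plan is to read the type of $A\in\mathbf{SU}(2,1)$ off its characteristic polynomial and to recognize $\mathcal{G}$ as the discriminant of that polynomial.

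\emph{Step 1 (normal form of the characteristic polynomial).} Since $A^*HA=H$ we have $A^{-1}=H^{-1}A^*H$, hence $tr(A^{-1})=\overline{tr(A)}$; combined with $\det A=1$ this shows the characteristic polynomial of $A$ is
\[
f_z(\lambda)=\lambda^3-z\lambda^2+\bar z\lambda-1,\qquad z=tr(A).
\]
Thus the multiset of eigenvalues is invariant under $\lambda\mapsto 1/\bar\lambda$; as this involution permutes three eigenvalues it fixes at least one, so $A$ always has an eigenvalue of modulus $1$. The conjugacy type is then pinned down by the moduli of the eigenvalues and by whether $A$ is diagonalizable, via the standard facts that eigenvectors for eigenvalues $\lambda,\mu$ with $\lambda\bar\mu\ne 1$ are $H$-orthogonal, that an eigenvector for an eigenvalue of modulus $\ne 1$ is isotropic, and that the negative (resp.\ isotropic) eigenlines of a diagonalizable $A$ are exactly its fixed points in ${\bf H}^2_{\mathbb C}$ (resp.\ on $\partial{\bf H}^2_{\mathbb C}$).

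\emph{Step 2 ($\mathcal{G}$ is the discriminant; the non-degenerate cases).} A direct computation of the discriminant of the cubic $f_z$ yields exactly
\[
\operatorname{disc}(f_z)=|z|^4-8\operatorname{Re}(z^3)+18|z|^2-27=\mathcal{G}(z),
\]
so $\mathcal{G}(z)$ is real and $\mathcal{G}(z)=0$ precisely when $f_z$ has a repeated root. If $\mathcal{G}(z)\ne 0$ the three eigenvalues are distinct, $A$ is diagonalizable, and the $\lambda\mapsto 1/\bar\lambda$ symmetry leaves only two options: either all three eigenvalues lie on the unit circle (so $A$ has a negative eigenline: regular elliptic), or one eigenvalue has modulus $1$ and the other two form a pair $\lambda,1/\bar\lambda$ with $|\lambda|\ne 1$ whose isotropic eigenlines are two distinct boundary fixed points (loxodromic). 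To match signs one evaluates $\operatorname{disc}(f_z)=\prod_{i<j}(\lambda_i-\lambda_j)^2$ in each case: writing unit-circle eigenvalues as $e^{i\alpha_k}$ with $\sum\alpha_k\equiv 0$ gives $\operatorname{disc}(f_z)=-64\prod_{i<j}\sin^2\!\big(\tfrac{\alpha_i-\alpha_j}{2}\big)<0$, whereas writing loxodromic eigenvalues as $re^{i\phi},\,r^{-1}e^{i\phi},\,e^{-2i\phi}$ gives $\operatorname{disc}(f_z)=(r-r^{-1})^2\big(2\cos 3\phi-r-r^{-1}\big)^2>0$. This is the content of the first two bullets.

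\emph{Step 3 (the locus $\mathcal{G}(z)=0$) and the main obstacle.} Here $f_z$ has a double root $\lambda_0$ and a simple root $\mu$. Since $\lambda\mapsto 1/\bar\lambda$ preserves multiplicities it cannot interchange $\lambda_0$ with $\mu$, so $|\lambda_0|=1$, hence $|\mu|=1$ and $\lambda_0^2\mu=1$. If $\lambda_0=\mu$ the root is a triple cube root of unity and $z\in 3\{1,\omega,\omega^2\}$; then $A=\zeta U$ with $\zeta^3=1$ and $U$ unipotent (the unipotent/central case). If $\lambda_0\ne\mu$, then $z\notin 3\{1,\omega,\omega^2\}$, and either $A$ is diagonalizable, hence conjugate to $\operatorname{diag}(\lambda_0,\lambda_0,\mu)$ — a $\mathbb{C}$-reflection about a point or about a $\mathbb{C}$-line according as the $\mu$-eigenline is negative or positive — or $A$ is non-diagonalizable, so the $\lambda_0$-generalized eigenspace is a single Jordan block; being $H$-orthogonal to the $\mu$-eigenline it is a non-degenerate $2$-plane on which the isometry $\lambda_0 I+N$ forces $\ker N$ to be isotropic, and one checks $A$ is ellipto-parabolic. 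This covers the last bullet. Surjectivity of $tr$ then follows since for every $z\in\mathbb{C}$ the roots of $f_z$ have product $1$ and are closed under $\lambda\mapsto1/\bar\lambda$, and the normal forms above realize them as eigenvalue data of some $A\in\mathbf{SU}(2,1)$ (the signature is forced to be $(2,1)$ in the loxodromic and parabolic cases, and chosen to be $(2,1)$ in the all-unit-modulus case). I expect the bulk of the work, and the only subtle point, to be Steps 2 and 3: the two discriminant sign computations, and — on the boundary $\mathcal{G}(z)=0$ — separating the diagonalizable sub-case (complex reflections) from the non-diagonalizable one (ellipto-parabolics) via the interaction of a Jordan block with a Hermitian form of signature $(2,1)$.
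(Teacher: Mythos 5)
The paper offers no proof of this theorem---it is quoted directly from Goldman's book---and your argument is essentially Goldman's own: write the characteristic polynomial as $\lambda^3-z\lambda^2+\bar z\lambda-1$ using $tr(A^{-1})=\overline{tr(A)}$, identify $\mathcal{G}(z)$ with its discriminant, and sort the eigenvalue configurations via the symmetry $\lambda\mapsto 1/\bar\lambda$ together with the $H$-orthogonality of (generalized) eigenspaces. The proposal is correct, including the two sign computations in Step 2 and the Jordan-block analysis separating complex reflections from ellipto-parabolics on the locus $\mathcal{G}=0$.
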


We note that for $A \in \mathbf{PU}(2,1)$, there are three lifts $A_1$, $A_2=\omega A_1$ and $A_3=\omega^2 A_1$ of $A$ in  $\mathbf{SU}(2,1)$, so $tr(A)$ is only well-defined up to scaling by $\omega=\dfrac{-1+ \mathrm{i}\sqrt{3}}{2}$ and $\omega^2$, but
the function $\mathcal{G}$ also has a $\mathbb{Z}_3$-symmetry about the multiplicity  by $\omega$ and $\omega^2$, so whether $\mathcal{G}(tr(A))$ is bigger, equal, or smaller than zero is independent on the lift of $A$  to  $\mathbf{SU}(2,1)$.

In particular, suppose that $A\in\mathbf{SU}(2,1)$ has real trace. Then $A$ is elliptic if and only if  $-1\leq{\rm{tr}(A)}<3$.
Moreover, $A$ is unipotent if and only if ${\rm{tr}(A)}=3$. If ${\rm{tr}(A)}=-1,0,1$, $A$ is elliptic of order 2, 3, 4 respectively.
The locus of $z \in \mathbb{C}$ such  that $\mathcal{G}(z)=0$ is a concave deltoid with three vertices $3$, $\dfrac{-3+3\sqrt{3} \mathrm{i}}{2}$ and $\dfrac{-3-3\sqrt{3} \mathrm{i}}{2}$ respectively, see Figure 6.1 of  \cite{Go}.


\subsection{Holy grail function}

Gongopadhyay-Parker-Parsad  \cite{GongopadhyayPP}  generalized Goldman's function to $\mathbf{PU}(3,1)$. 
 For a  matrix $A \in \mathbf{PU}(3,1)$, we denote by $tr(A)$ the trace of $A$, then  $\tau(A)=tr(A)=a+ b\cdot \mathrm{i}$ with $a,b \in \mathbb{R}$, and $\sigma(A)=\frac{tr^2(A)-tr(A^2)}{2} \in \mathbb{R}$. Then  the characteristic polynomial of $A$ is $$\chi_{A}(X)=X^4- \tau X^3 + \sigma X^2 - \overline{\tau} X+1.$$
Gongopadhyay-Parker-Parsad \cite{GongopadhyayPP} classified the dynamical action of $A \in \mathbf{PU}(3,1)$ on $\overline{\bf H}^3_{\mathbb C}$.


For $A \in \mathbf{SU}(3,1)$, consider the function
\begin{equation}\label{HGfunction}
\mathcal{H}(A)=\mathcal{H}(\tau, \sigma)=4(\frac{\sigma^2}{3}-|\tau|^2+4)^3-27(\frac{2 \sigma^3}{27}-\frac{|\tau|^2 \sigma}{3}-\frac{8 \sigma}{3}+(\tau^2+\overline{\tau}^2))^2,
\end{equation}
 which is exactly the resultant of the characteristic polynomial of $A$.

\begin{thm} (Gongopadhyay-Parker-Parsad  \cite{GongopadhyayPP}) \label{thm:holly}For $A \in \mathbf{SU}(3,1)$:
	\begin{itemize}
		\item $A$ is regular elliptic if and only if  $\mathcal{H}(A) > 0$;
		
			\item $A$ is regular loxodromic  if and only if  $\mathcal{H}(A)< 0$;

			\item $A$ has a repeated eigenvalue if and only if   $\mathcal{H}(A)=0$. Moreover,  if $\mathcal{H}(A)=0$ and $A$ is diagnalisable, then $A$ is either elliptic or loxodromic. Moreover,  if $\mathcal{H}(A)=0$ and $A$ is not diagnalisable, then $A$ is parabolic.
		
		\end{itemize}
\end{thm}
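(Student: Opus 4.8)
The plan is to run the same scheme that underlies Goldman's Theorem~\ref{thm:Goldman} for $\mathbf{PU}(2,1)$: read the dynamical type of $A$ off the positions of the roots of $\chi_A$, and recognise $\mathcal{H}(A)$ as (a positive multiple of) the discriminant of $\chi_A$. First I would set up the eigenvalue dictionary forced by $A\in\mathbf{SU}(3,1)$. Since $A$ preserves $\langle\cdot,\cdot\rangle$ of signature $(3,1)$, its eigenvalues are stable (with multiplicity) under $\lambda\mapsto 1/\overline{\lambda}$ and have product $1$; an eigenvector for an eigenvalue of modulus $\neq 1$ satisfies $\langle v,v\rangle=|\lambda|^2\langle v,v\rangle$, hence is null and represents a fixed point on $\partial{\bf H}^3_{\mathbb C}$, while eigenvectors attached to $\lambda,\mu$ with $\mu\neq 1/\overline{\lambda}$ are orthogonal. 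Because a Hermitian form of signature $(3,1)$ carries no $2$-dimensional totally isotropic subspace, these constraints leave exactly three configurations of the eigenvalue multiset: (i) four distinct unit-modulus eigenvalues; (ii) one pair $\lambda,1/\overline{\lambda}$ with $|\lambda|>1$ together with two unit-modulus eigenvalues $\mu_1,\mu_2$; (iii) a repeated eigenvalue. In case~(i) one of the mutually orthogonal eigenlines is negative, so $A$ fixes a point of ${\bf H}^3_{\mathbb C}$ and is regular elliptic; in case~(ii), $A$ has two boundary fixed points joined by a geodesic axis and is regular loxodromic precisely when $\mu_1\neq\mu_2$.

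Next I would dispose of the repeated-eigenvalue regime, i.e.\ the third bullet. A short computation with generalised eigenvectors shows a non-diagonalisable $A$ cannot have an off-circle eigenvalue (two generalised eigenvectors would span a $2$-dimensional totally isotropic subspace), so all of its eigenvalues are unit-modulus; being non-semisimple it fixes no point of ${\bf H}^3_{\mathbb C}$ and admits no loxodromic axis, hence is parabolic. A diagonalisable $A$ with a repeated eigenvalue still has a negative eigenline when all eigenvalues are unit-modulus, so is elliptic, and is loxodromic otherwise; this is the ``elliptic or loxodromic'' alternative. Expanding \eqref{HGfunction} I would then confirm that $\mathcal{H}(\tau,\sigma)$ is literally the discriminant $\prod_{j<k}(\lambda_j-\lambda_k)^2$ of the monic quartic $\chi_A$ (equivalently the resultant $\mathrm{Res}(\chi_A,\chi_A')$, since $\deg\chi_A=4$ and $\chi_A$ is monic), so that $\mathcal{H}(A)=0$ is equivalent to a repeated eigenvalue; combined with the previous sentence this gives the whole third bullet.

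The remaining task is the sign computation: $\mathrm{disc}(\chi_A)>0$ in case~(i) and $<0$ in case~(ii). For case~(i) I would use the identity $(e^{i\theta_j}-e^{i\theta_k})^2=-e^{i(\theta_j+\theta_k)}\,|e^{i\theta_j}-e^{i\theta_k}|^2$ together with $\theta_1+\theta_2+\theta_3+\theta_4\equiv 0$ (from $\det A=1$) to collapse the product to $\prod_{j<k}|\lambda_j-\lambda_k|^2>0$. For case~(ii), writing $\lambda=re^{i\alpha}$, $1/\overline{\lambda}=r^{-1}e^{i\alpha}$, $\mu_j=e^{i\beta_j}$ with $\beta_1+\beta_2\equiv-2\alpha$, one has $(\lambda-1/\overline{\lambda})^2=e^{2i\alpha}(r-r^{-1})^2$, $(\mu_1-\mu_2)^2=-e^{-2i\alpha}|\mu_1-\mu_2|^2$, and $(\lambda-\mu_j)(1/\overline{\lambda}-\mu_j)=\mu_je^{i\alpha}\bigl(2\cos(\beta_j-\alpha)-(r+r^{-1})\bigr)$ with strictly negative real bracket since $r+r^{-1}>2\ge 2\cos(\beta_j-\alpha)$ for $r\neq 1$; multiplying all factors, the phases cancel and $\mathrm{disc}(\chi_A)=-(r-r^{-1})^2|\mu_1-\mu_2|^2\cdot(\text{positive real})<0$. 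I expect the main obstacles to be these phase cancellations and, above all, the careful conversion of the Witt-index bound into the eigenvalue trichotomy in Step~1 (ruling out two or more off-circle pairs, and matching the non-semisimple case with the parabolic one); checking \eqref{HGfunction} against one explicit elliptic and one explicit loxodromic matrix at the end fixes the global sign convention and finishes the proof.
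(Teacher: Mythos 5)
The paper does not prove this statement at all: Theorem~\ref{thm:holly} is quoted verbatim from Gongopadhyay--Parker--Parsad \cite{GongopadhyayPP}, and the only proof-relevant remark in the text is that $\mathcal{H}$ ``is exactly the resultant of the characteristic polynomial.'' Your sketch is a correct reconstruction along exactly those lines, and it is essentially the argument of the cited source: the Witt-index-one constraint forces the eigenvalue trichotomy, $\mathcal{H}(\tau,\sigma)$ equals $\mathrm{Res}(\chi_A,\chi_A')=\mathrm{disc}(\chi_A)$ for the monic self-inversive quartic $\chi_A$, and your phase cancellations (using $\sum\theta_j\equiv 0$ in the elliptic case and $\beta_1+\beta_2\equiv-2\alpha$ in the loxodromic case) correctly yield $\mathrm{disc}>0$ and $\mathrm{disc}<0$ respectively; the non-diagonalisable analysis via totally isotropic generalised eigenspaces is also sound. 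I see no gap beyond the routine verification that the explicit expression \eqref{HGfunction} really is the discriminant of $X^4-\tau X^3+\sigma X^2-\overline{\tau}X+1$, which you have flagged as a computation to carry out.
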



\begin{figure}
	\begin{center}
		\begin{tikzpicture}
		\node at (0,0) {\includegraphics[width=12cm,height=8cm]{{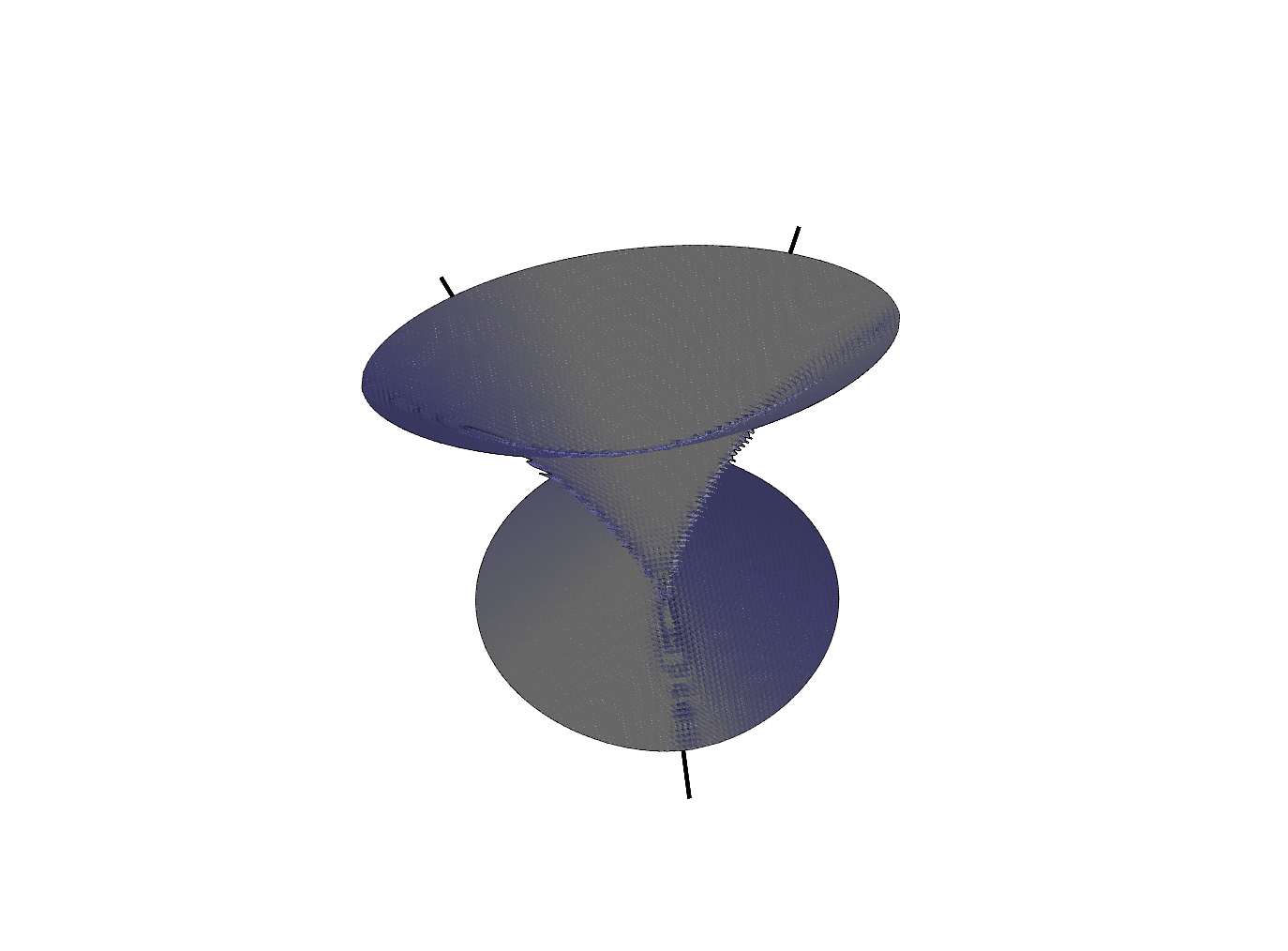}}};
		\end{tikzpicture}
	\end{center}
	\caption{The holy grail.}
	\label{figure:holygailwhisters}
\end{figure}
The locus of  $(\tau, \sigma) \in \mathbb{C} \times \mathbb{R}$ such $\mathcal{H}(\tau, \sigma)=0$
is the so called \emph{holy grail}. The holy grail  is a union of a  ruled surface and four \emph{whiskers}. The  ruled surface consists of three parts: upper bowl, central tetrahedron and lower bowel. See Figure 2 of  \cite{GongopadhyayPP} and Figure 	\ref{figure:holygailwhisters}.


We remark that holy grail function $\mathcal{H}(A)$ for $A \in  \mathbf{PU}(3,1)$ is a degree-6 polynomial on the trace of $A$ and the other $\mathbb{R}$-invariant $\sigma$ of $A$, which is much more  difficult than the degree-4 function of Goldman.  This is one of the many reasons that deformations of a group into $\mathbf{PU}(3,1)$ is much difficult than into $\mathbf{PU}(2,1)$.


\subsection{Bisectors, isometric spheres and Ford polyhedron}

Isometric spheres are special examples of bisectors. In this subsection, we will describe a convenient set of
coordinates for bisector intersections, deduced from  the slice decomposition of a bisector.
\begin{defn} Given two distinct points $p_0$  and $p_1$ in ${\bf H}^{n}_{\mathbb C}$ with the same norm (e.g. one could
	take $\langle p_0,p_0\rangle=\langle p_1,p_1\rangle= -1$), the \emph{bisector} $\mathcal{B}(p_0,p_1)$ is the projectivization  of the set of negative vector
	$x$ with
	$$|\langle x,p_0\rangle|=|\langle x,p_1\rangle|.$$
\end{defn}

The  {\it spinal sphere} of the bisector $\mathcal{B}(p_0,p_1)$ is the intersection of $\partial {\bf H}^{n}_{\mathbb C}$ with the closure of $\mathcal{B}(p_0,p_1)$ in $\overline{{\bf H}^{n}_{\mathbb C}}= {\bf H}^{n}_{\mathbb C}\cup \partial { {\bf H}^{n}_{\mathbb C}}$. The bisector $\mathcal{B}(p_0,p_1)$ is a topological $(2n-1)$-ball, and its spinal sphere is a $(2n-2)$-sphere.
The  {\it complex spine} of $\mathcal{B}(p_0,p_1)$ is the complex line through the two points $p_0$ and $p_1$. The {\it real spine} of $\mathcal{B}(p_0,p_1)$
is the intersection of the complex spine with the bisector itself, which is a (real) geodesic; it is the locus of points inside the complex spine which are equidistant from $p_0$ and $p_1$.
Bisectors are not totally geodesic, but they have a very nice foliation by two different families of totally geodesic submanifolds. Mostow \cite{Mostow:1980}  showed that a bisector is the preimage of the real
spine under the orthogonal projection onto the complex spine. The fibres of this projection are complex planes ${\bf H}^{n-1}_{\mathbb C} \hookrightarrow {\bf H}^{n}_{\mathbb C}$ called the {\it complex slices} of the bisector. Goldman \cite{Go} showed that a bisector is
the union of all  totally real totally geodesic planes containing the real spine. Such Lagrangian planes are called the {\it real slices} or {\it meridians} of the bisector.

The standard lift $(z_1,z_2,\cdots, z_{n},1)^T$ of $z=(z_1,z_2, \cdots, z_{n})\in \mathbb {C}^{n}$ is negative if and only if
$$z_1+|z_2|^2+\cdots +|z_{n}|^2+\overline{z}_1=2{\rm Re}(z_1)+|z_2|^2+\cdots +|z_{n}|^2<0.$$  Thus $\mathbb{P}(V_{-})$ is a paraboloid in ${\mathbb C}^{n}$, called the {\it Siegel domain}.
Its boundary $\mathbb{P}(V_{0})$ satisfies
$$2{\rm Re}(z_1)+|z_2|^2+\cdots +|z_{n}|^2=0.$$
Therefore, the Siegel domain has an  analogue construction of the upper half space model for the  real hyperbolic space ${\bf H}^{n}_{\mathbb R}$.

Let $\mathcal{N}=\mathbb{C}^{n-1}\times \mathbb{R}$ be the Heisenberg group with product
$$
[z,t]\cdot [w,s]=[z+w,t+s + 2 \cdot Im\langle \langle  z, w\rangle\rangle],
$$
where $z=(z_1, z_2, \cdots, z_{n-1}) \in \mathbb{C}^{n-1}$, $w=(w_1, w_2, \cdots, w_{n-1}) \in \mathbb{C}^{n-1}$, and 
$$\langle \langle  z, w \rangle \rangle=w^{*}\cdot z$$ is the standard positive Hermitian form  on $\mathbb{C}^{n-1}$.
Then the boundary of complex hyperbolic space $\partial {\bf H}^{n}_{\mathbb C}$ can be identified to the union $\mathcal{N}\cup \{q_{\infty}\}$, where $q_{\infty}$ is the point at infinity.
The \emph{standard lift} of $q_{\infty}$ and $q=[z_1,z_2, \cdots, z_{n-1},t]\in\mathcal{N}$ in $\mathbb{C}^{n+1}$ are
\begin{equation}\label{eq:lift}
{\bf{q}_{\infty}}=\left[
\begin{array}{c}
1 \\
0 \\
\vdots  \\
0 \\
0 \\
\end{array}
\right],  \quad
{\bf{q}}=\left[
\begin{array}{c}
\frac{-(|z_1|^2+|z_2|^2+\cdots+|z_{n-1}|^2)+it}{2} \\
z_1 \\
\vdots  \\
z_{n-1} \\
1 \\
\end{array}
\right].
\end{equation}


So complex hyperbolic space and its boundary ${\bf H}^{n}_{\mathbb C} \cup \partial {\bf H}^{n}_{\mathbb C}$ can be identified to ${\mathcal{N}}\times{\mathbb{R}_{\geq 0}}\cup q_{\infty}$.
Any point $q=(z,t,u)\in{\mathcal{N}}\times{\mathbb{R}_{\geq 0}}$ has the standard lift
$$
\quad
{\bf{q}}=\left[
\begin{array}{c}
	\frac{-(|z_1|^2+|z_2|^2+\cdots+|z_{n-1}|^2)-u+it}{2} \\
	z_1 \\
	\vdots  \\
	z_{n-1} \\
	1 \\
\end{array}
\right].
$$
Here $(z_1,z_2,\cdots, z_{n-1},t,u)$ is called the \emph{horospherical coordinates} of $\overline {\bf H}^{n}_{\mathbb{C}}={\bf H}^{n}_{\mathbb{C}} \cup \partial {\bf H}^{n}_{\mathbb{C}}$. The natural projection $\mathcal{N}=\mathbb{C}^{n-1} \times \mathbb{R} \rightarrow \mathbb{C}^{n-1}$ is called the {\it vertical projection}.


\begin{defn}
	The \emph{Cygan metric} $d_{\textrm{Cyg}}$ on $\partial {\bf H}^{n}_{\mathbb{C}} \backslash\{q_{\infty}\}$ is defined to be
	\begin{equation}\label{eq:cygan-metric}
	d_{\textrm{Cyg}}(p,q)=|2\langle {\bf{p}}, {\bf{q}} \rangle|^{1/2}=\left| |z_1-w_1|^2+\cdots+|z_{n-1}-w_{n-1}|^2-i(t-s+2{\rm{Im}}\langle \langle  z, w \rangle \rangle) \right|^{1/2},
	\end{equation}
	where $p=[z_1,\cdots, z_{n-1},t]$ and $q=[w_1, \cdots,w_{n-1},s]$.
	
	So the {\it Cygan sphere} with center $(w,s,0)$  and radius $r$ has equation
	$$d_{\textrm{Cyg}}\left((z,t,u),(w,s,0)\right)=\left||z_1-w_1|^2+\cdots+|z_{n-1}-w_{n-1}|^2+u+i(t-s+2{\rm Im}\langle \langle  z, w \rangle \rangle)\right|=r^2.$$

	The \emph{extended Cygan metric} on $ {\bf H}^{n}_{\mathbb{C}}$ is given by the formula
$$d_{\textrm{Cyg}}(p,q)=\left| |z_1-w_1|^2+\cdots+|z_{n-1}-w_{n-1}|^2+|u-v|-i(t-s+2{\rm{Im}}\langle \langle  z, w \rangle \rangle)) \right|^{1/2},
$$
	where $p=(z,t,u)$ and $q=(w,s,v)$.
\end{defn}

Suppose that $g=(g_{i,j})^{n+1}_{i,j=1}\in \mathbf{PU}(n,1)$ does not fix $q_{\infty}$. Then it is obvious that $g_{n+1,1}\neq 0$.
\begin{defn}
	The \emph{isometric sphere} of $g$, denoted by $I(g)$, is the set
	\begin{equation}\label{eq:isom-sphere}
	I(g)=\{ p \in { {\bf H}^{n}_{\mathbb C} \cup \partial  {\bf H}^{n}_{\mathbb C}} : |\langle {\bf{p}}, {\bf{q}}_{\infty} \rangle | = |\langle {\bf{p}}, g^{-1}({\bf{q}}_{\infty}) \rangle| \}.
	\end{equation}
	Here $\bf{p}$ is any lift of $p$.
\end{defn}
The isometric sphere $I(g)$ is the Cygan sphere with center
$$g^{-1}({\bf{q}}_{\infty})=\left[\frac{\overline{g_{n+1,2}}}{\overline{g_{n+1,1}}},\frac{\overline{g_{n+1,3}}}{\overline{g_{n+1,1}}},\cdots,\frac{\overline{g_{n+1,n}}}{\overline{g_{n+1,1}}},2 \cdot {\rm{Im}}(\frac{\overline{g_{n+1,n+1}}}{\overline{g_{n+1,1}}})\right]$$
and radius $r_g=\sqrt{\frac{2}{|g_{n+1,1}|}}$.
An isometric sphere is special bisector.

The \emph{interior} of $I(g)$ is  the component of its complement in ${\bf H}^{n}_{\mathbb C} \cup \partial {\bf H}^{n}_{\mathbb C}$
that do not contain $q_{\infty}$, namely
\begin{equation}\label{eq:exterior}
\{ p \in {{\bf H}^{n}_{\mathbb C} \cup \partial {\bf H}^{n}_{\mathbb C}} : |\langle {\bf{p}}, {\bf{q}}_{\infty} \rangle | > |\langle {\bf{p}}, g^{-1}({\bf{q}}_{\infty}) \rangle| \}.
\end{equation}
The \emph{exterior} of $I(g)$ is the set
\begin{equation}\label{eq:interior}
\{ p \in {{\bf H}^{n}_{\mathbb C} \cup \partial {\bf H}^{n}_{\mathbb C}} : |\langle {\bf{p}}, {\bf{q}}_{\infty} \rangle | < |\langle {\bf{p}}, g^{-1}({\bf{q}}_{\infty}) \rangle| \},
\end{equation}
which contains the point at infinity $q_{\infty}$.

We are interested in the intersection of Cygan spheres.
\begin{prop}[Goldman \cite{Go}, Parker and Will \cite{ParkerWill:2017}]
	The intersection of two Cygan spheres is connected.
\end{prop}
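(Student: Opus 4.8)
The plan is to realize the two Cygan spheres as isometric spheres and to exploit that such spheres are \emph{coequidistant}. Every Cygan sphere of positive radius equals $I(g)$ for some $g\in\mathbf{PU}(n,1)$ with $g(q_\infty)\neq q_\infty$ (choose $g$ so that $g^{-1}(q_\infty)$ is the prescribed center and $|g_{n+1,1}|$ gives the prescribed radius), and by definition $I(g)$ is the set of $y$ with $|\langle y,q_\infty\rangle|=|\langle y,g^{-1}(q_\infty)\rangle|$; that is, $I(g)=\mathcal{B}(q_\infty,g^{-1}(q_\infty))$ --- the bisector of two ideal points being interpreted as this equidistant set. Writing the two given Cygan spheres as $I(g_1)$ and $I(g_2)$ and putting $p_i:=g_i^{-1}(q_\infty)$, we see $I(g_1)$ and $I(g_2)$ are coequidistant (both equidistant from $q_\infty$), and since any two of the equalities $|\langle y,q_\infty\rangle|=|\langle y,p_1\rangle|=|\langle y,p_2\rangle|$ force the third, $I(g_1)\cap I(g_2)$ is the locus equidistant from the three ideal points $q_\infty,p_1,p_2$; in particular it equals $\mathcal{B}(q_\infty,p_1)\cap\mathcal{B}(p_1,p_2)$ as well.

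I would first settle the cospinal case, in which $q_\infty,p_1,p_2$ lie on a common complex line $L$. Then both $I(g_1)$ and $I(g_2)$ have complex spine $L$, and each is the preimage of its real spine under $H$-orthogonal projection onto $L$; the fibre of that projection over a point of $L$ depends only on the point, not on the bisector. As the two real spines are geodesics in $L\cong\mathbf{H}^1_{\mathbb{C}}$, they either coincide, meet in one point, or are disjoint, so $I(g_1)\cap I(g_2)$ is respectively all of $I(g_1)$, a single complex slice isometric to $\overline{\mathbf{H}^{n-1}_{\mathbb{C}}}$, or empty --- connected in every case.

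For the generic (non-cospinal) case I would induct on $n$, the case $n=1$ being immediate. By Mostow's theorem $I(g_1)$ is foliated by its complex slices $\{S_\lambda\}_{\lambda\in\mathbb{R}}$, each isometric to $\overline{\mathbf{H}^{n-1}_{\mathbb{C}}}$ and indexed by the points $\lambda$ of the real spine of $I(g_1)$, with $S_\lambda$ the fibre over $\lambda$ of $H$-orthogonal projection onto the complex spine of $I(g_1)$. Let $V_\lambda$ be the complex linear span of a lift of $S_\lambda$ and $\pi_\lambda$ the $H$-orthogonal projection onto $V_\lambda$; using $\langle y,v\rangle=\langle y,\pi_\lambda v\rangle$ for $y\in V_\lambda$,
\[
S_\lambda\cap I(g_2)=\{\,y\in S_\lambda:\ |\langle y,\pi_\lambda q_\infty\rangle|=|\langle y,\pi_\lambda p_2\rangle|\,\},
\]
a (generalized) bisector inside $S_\lambda\cong\overline{\mathbf{H}^{n-1}_{\mathbb{C}}}$, hence empty or connected by the inductive hypothesis. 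It then suffices to show that $J:=\{\lambda\in\mathbb{R}:S_\lambda\cap I(g_2)\neq\emptyset\}$ is an interval; granting this, $I(g_1)\cap I(g_2)=\bigcup_{\lambda\in J}\big(S_\lambda\cap I(g_2)\big)$ is a continuously varying family of connected slices over the connected interval $J$ --- the fibres of the restriction of orthogonal projection onto the complex spine of $I(g_1)$ --- and hence is connected.

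The hard part is the claim that $J$ is an interval, and this is precisely where coequidistance is needed: for two bisectors in general position the corresponding set of slices may be disconnected, which is the standard source of disconnected bisector intersections. Since $|\langle\cdot,q_\infty\rangle|=|\langle\cdot,p_1\rangle|$ holds identically on $I(g_1)$, the slice $S_\lambda$ meets $I(g_2)$ exactly when $1$ lies in the range $[\alpha(\lambda),\beta(\lambda)]$ of $y\mapsto|\langle y,p_2\rangle|/|\langle y,q_\infty\rangle|$ over $S_\lambda$; in coordinates normalized so that $I(g_1)$ is the unit Cygan sphere centered at $p_1=0$, one computes $\alpha$ and $\beta$ explicitly from the $H$-norms of $\pi_\lambda q_\infty$ and $\pi_\lambda p_2$ and their distance inside $S_\lambda$, and checks that each is monotone on either side of a common critical value, so that $\{\lambda:\alpha(\lambda)\le 1\}$ and $\{\lambda:\beta(\lambda)\ge 1\}$ are intervals and $J$ is their intersection --- a one-variable monotonicity estimate. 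For the dimensions relevant to this paper ($n=2,3$) one may instead avoid the induction altogether and quote the structure of triple bisector intersections, i.e. Giraud disks and their higher-dimensional counterparts, from \cite{Go}.
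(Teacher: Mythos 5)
The paper does not prove this proposition; it is quoted from Goldman and Parker--Will, so there is no in-text argument to compare yours against. Your strategy --- realize the two Cygan spheres as coequidistant (generalized) bisectors $\mathcal{B}(q_\infty,p_1)$, $\mathcal{B}(q_\infty,p_2)$, dispose of the cospinal case via the real spines, and in the generic case fibre $I(g_1)\cap I(g_2)$ over the real spine of $I(g_1)$ using Mostow's slice decomposition --- is exactly the standard route, and the cospinal case and the ``connected fibres over a connected base'' reduction are fine. But as written the argument has a genuine gap at the one step that carries all the content: the claim that $J=\{\lambda: S_\lambda\cap I(g_2)\neq\emptyset\}$ is an interval. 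You correctly identify this as the hard part and as the place where coequidistance must be used (for generic pairs of bisectors the analogous set of slices can be disconnected), but you then only assert that the extremal values $\alpha(\lambda),\beta(\lambda)$ of $|\langle y,p_2\rangle|/|\langle y,q_\infty\rangle|$ on $S_\lambda$ are ``monotone on either side of a common critical value.'' That monotonicity is precisely the proposition in disguise; without computing $\alpha$ and $\beta$ and verifying it, nothing has been proved. (In Parker--Will this is done explicitly in geographic coordinates, where the intersection condition on each slice reduces to a quadratic in one circle variable and the discriminant locus is shown to be an interval in $\alpha$.)

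Two secondary problems. First, the inductive hypothesis is about intersections of \emph{two} Cygan spheres in $\mathbf{H}^{n-1}_{\mathbb C}$, whereas $S_\lambda\cap I(g_2)$ is a \emph{single} generalized bisector $\{|\langle y,\pi_\lambda q_\infty\rangle|=|\langle y,\pi_\lambda p_2\rangle|\}$ inside $S_\lambda$; its connectedness is not an instance of the inductive statement and needs its own (easier) justification. Moreover $\pi_\lambda q_\infty$ or $\pi_\lambda p_2$ may be a positive vector, in which case this locus is not a bisector of two points at all but an equidistant locus involving a complex hyperplane, and you would need to check connectedness (and nonemptiness criteria) in that degenerate case too. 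Second, the closing remark that for $n=2,3$ one can ``quote the structure of Giraud disks'' is essentially circular: Giraud's theorem that a coequidistant intersection is a smooth disc is the non-cospinal case of the very statement being proved, so invoking it reduces your proof back to the citation the paper already makes. To turn this into a complete proof you must actually carry out the one-variable analysis of $\alpha(\lambda)$ and $\beta(\lambda)$ (or equivalently the discriminant computation in spinal/geographic coordinates) and treat the boundary points of $\overline{\mathbf{H}^n_{\mathbb C}}$, since the proposition is used in the paper for closures of Cygan spheres.
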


It is easy to have 
\begin{prop} Let $g$ and $h$ be elements of $\mathbf{PU}(n,1)$ which do not fix $q_{\infty}$, such that $g^{-1}(q_{\infty})\neq h^{-1}(q_{\infty})$ and let $f \in\mathbf{PU}(n,1)$ be an unipotent transformation  fixing $q_{\infty}$.  Then the followings hold:
	\begin{itemize}
		\item  $g$ maps $I(g)$ to $I(g^{-1})$, and the exterior of $I(g)$ to the interior of $I(g^{-1})$.
		\item  $I(gf)=f^{-1}I(g)$, $I(fg)=I(g)$.
		\item $g(I(g)\cap I(h))=I(g^{-1})\cap I(hg^{-1})$, $h(I(g)\cap I(h))=I(gh^{-1})\cap I(h^{-1})$.
	\end{itemize}
\end{prop}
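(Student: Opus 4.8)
The plan is to reduce every assertion to a one-line manipulation of the defining equality (resp. inequality) of an isometric sphere, using only that elements of $\mathbf{SU}(n,1)$ preserve the Hermitian form. Throughout I would fix lifts in $\mathbf{SU}(n,1)$ and write ${\bf q}_{\infty}$ for the standard lift of $q_\infty$; the two facts I need are $\langle g{\bf u},g{\bf v}\rangle=\langle {\bf u},{\bf v}\rangle$ for all $g$, and that a unipotent $f$ fixing $q_\infty$ fixes the \emph{vector} ${\bf q}_{\infty}$ (its only eigenvalue is $1$, so $f{\bf q}_{\infty}=\lambda{\bf q}_{\infty}$ forces $\lambda=1$), hence $f^{\pm1}{\bf q}_{\infty}={\bf q}_{\infty}$. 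I would also record that $(g^{-1})^{-1}({\bf q}_{\infty})=g({\bf q}_{\infty})$, so that $I(g^{-1})$ is cut out by the equation $|\langle {\bf p},{\bf q}_{\infty}\rangle|=|\langle {\bf p},g{\bf q}_{\infty}\rangle|$.

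For the first bullet I would substitute $g{\bf p}$ for the test vector: the pair of identities $\langle g{\bf p},g{\bf q}_{\infty}\rangle=\langle {\bf p},{\bf q}_{\infty}\rangle$ and $\langle g{\bf p},{\bf q}_{\infty}\rangle=\langle {\bf p},g^{-1}{\bf q}_{\infty}\rangle$ turn the statement ``$p\in I(g)$'' into ``$g(p)\in I(g^{-1})$'', and the same substitution interchanges the strict inequalities defining the exterior of $I(g)$ and the interior of $I(g^{-1})$. For the second bullet I would compute $(gf)^{-1}({\bf q}_{\infty})=f^{-1}g^{-1}({\bf q}_{\infty})$ and $(fg)^{-1}({\bf q}_{\infty})=g^{-1}({\bf q}_{\infty})$ using $f^{-1}{\bf q}_{\infty}={\bf q}_{\infty}$; the second equality says $I(fg)$ and $I(g)$ have identical defining data, so $I(fg)=I(g)$, and for $I(gf)=f^{-1}I(g)$ one unwinds ``$f(p)\in I(g)$'' with $\langle f{\bf p},{\bf q}_{\infty}\rangle=\langle {\bf p},{\bf q}_{\infty}\rangle$ and $\langle f{\bf p},g^{-1}{\bf q}_{\infty}\rangle=\langle {\bf p},f^{-1}g^{-1}{\bf q}_{\infty}\rangle$ to land exactly on the equation of $I(gf)$.

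For the third bullet, I would first note that the hypothesis $g^{-1}(q_\infty)\neq h^{-1}(q_\infty)$ is exactly what guarantees $hg^{-1}$ and $gh^{-1}$ do not fix $q_\infty$, so $I(hg^{-1})$ and $I(gh^{-1})$ are defined. Since $g$ is a bijection of $\overline{{\bf H}^{n}_{\mathbb C}}$ and $g(I(g))=I(g^{-1})$ by the first bullet, we get $g(I(g)\cap I(h))=I(g^{-1})\cap g(I(h))$, so it suffices to check
$$I(g^{-1})\cap g(I(h))=I(g^{-1})\cap I(hg^{-1}).$$
Writing ${\bf p}=g^{-1}{\bf q}$ and using $\langle g^{-1}{\bf q},{\bf v}\rangle=\langle {\bf q},g{\bf v}\rangle$, the set $g(I(h))$ is cut out by $|\langle {\bf q},g{\bf q}_{\infty}\rangle|=|\langle {\bf q},gh^{-1}{\bf q}_{\infty}\rangle|$; on $I(g^{-1})$ one has $|\langle {\bf q},g{\bf q}_{\infty}\rangle|=|\langle {\bf q},{\bf q}_{\infty}\rangle|$, so the equation collapses to $|\langle {\bf q},{\bf q}_{\infty}\rangle|=|\langle {\bf q},(hg^{-1})^{-1}{\bf q}_{\infty}\rangle|$, which is the equation of $I(hg^{-1})$. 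The second identity follows by swapping $g$ and $h$. The one genuine subtlety — and the place where a careless argument goes wrong — is exactly here: $g$ does \emph{not} carry $I(h)$ onto $I(hg^{-1})$ as a whole, only their restrictions to $I(g^{-1})$ coincide, which is why the statement must be phrased in terms of the pairwise intersection rather than as an equality of single isometric spheres.
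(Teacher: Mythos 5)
Your proof is correct and is precisely the routine verification the paper leaves implicit (it states the proposition with "It is easy to have" and gives no proof). All three bullets reduce, as you show, to $g^{*}Hg=H$, the exact fixing of the vector ${\bf q}_{\infty}$ by a unipotent $f$, and the observation that $g(I(h))$ agrees with $I(hg^{-1})$ only after restricting to $I(g^{-1})$ — that last caveat is the one point worth spelling out, and you do.
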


\begin{defn}The \emph{Ford domain} $D_{\Gamma}$ for a discrete group $\Gamma < \mathbf{PU}(n,1)$ centered at $q_{\infty}$ is
	the intersection of the (closures of the) exteriors of all isometric spheres of elements in $\Gamma$ not fixing $q_{\infty}$. That is,
	$$D_{\Gamma}=\{p\in {\bf H}^{n}_{\mathbb C} \cup \partial{\bf H}^{n}_{\mathbb C}: |\langle \mathbf{p},q_{\infty}\rangle|\leq|\langle \mathbf{p},g^{-1}(q_{\infty})\rangle|,
	\ \forall g \in \Gamma \ \mbox{with} \ g(q_{\infty})\neq q_{\infty} \}.$$
\end{defn}

From the definition, one can see that parts of  isometric spheres form the boundary of a Ford domain.  For $g \in \Gamma$, we will denote by $s(g)=I(g)  \cap D_{\Gamma}$, which is a side of $D_{\Gamma}$.

When $q_{\infty}$ is either in the domain  of discontinuity or is a parabolic fixed point, the Ford  domain  is preserved by $\Gamma_{\infty}$, the stabilizer of
$q_{\infty}$ in $\Gamma$.  In this case, $D_{\Gamma}$ is only a fundamental domain modulo the action of $\Gamma_{\infty}$.  In other words, the fundamental domain for
$\Gamma$ is the intersection of the Ford domain with a fundamental domain for $\Gamma_{\infty}$. Facets of codimension one in $D_{\Gamma}$   will be called {\it sides}.  Facets of codimension two in $D_{\Gamma}$   will be called {\it ridges}.
Facets of dimension one and zero in $D_{\Gamma}$ will be called  {\it edges} and {\it vertices} respectively. Moreover, a  {\it bounded ridge} is a ridge which does not intersect  $\partial {\bf H}^{n}_{\mathbb C}$, and if the intersection of a ridge $r$ and $\partial {\bf H}^{n}_{\mathbb C}$ is non-empty, then
$r$ is an {\it infinite ridge}.

It is usually very hard to determine $D_{\Gamma}$ because one should check infinitely many inequalities.
Therefore a general method will be to guess the Ford polyhedron and check  it using the Poincar\'e polyhedron theorem.
The basic idea is that the sides of $D_{\Gamma}$ should be paired by isometries, and the images of $D_{\Gamma}$
under these so-called side-pairing maps should give a local tiling of ${\bf H}^{n}_{\mathbb C}$.  If they do (and if
the quotient of $D_{\Gamma}$ by the identification given by the side-pairing maps is complete), then the Poincar\'{e} polyhedron
theorem implies that the images of $D_{\Gamma}$ actually give a global tiling of ${\bf H}^{n}_{\mathbb C}$.

Once a fundamental domain is obtained, one gets an explicit presentation of $\Gamma$ in terms of the generators given by the side-pairing maps together
with a generating set for the stabilizer $\Gamma_{\infty}$, where the relations corresponding to so-called ridge cycles, which correspond to the local tilings bear each co-dimension two face. For more on the Poincar\'e polyhedron theorem, see \cite{dpp:2016, ParkerWill:2017}.

\subsection{Spinal coordinates for bisectors in  ${\bf H}^2_{\mathbb C}$}\label{subsection:Spinalcoordinates}

The intersections of isometric spheres in ${\bf H}^2_{\mathbb C}$ is a little easier to describe than in higher dimensions. We show this in this subsection.

Since isometric spheres are Cygan spheres, we now recall some facts about Cygan spheres.
Let $S_{[z_0,t_0]}(r)$ be the Cygan sphere with center $[z_0,t_0]$ and radius $r>0$ in ${\bf H}^2_{\mathbb C}$. Then
$$
S_{[z_0,t_0]}(r)=\left\{ (z,t,u)\in\hc \cup \partial\hc: ||z-z_0|^2+u+\rm{i}(t-t_0+2 \cdot Im( z \cdot \overline{z_0})|=r^2 \right\}
$$
in horospherical coordinates.

\begin{defn}\label{def:geographic}
	The \emph{geographic coordinates} $(\alpha,\beta,w)$ of $q=q(\alpha,\beta,w)\in S_{[0,0]}(r)$ is given by the lift
	\begin{equation}\label{eq:geog-coor}
	{\bf{q}}={\bf q}(\alpha,\beta,w)=\left[
	\begin{array}{c}
	-r^2e^{-i\alpha}/2 \\
	rwe^{i(-\alpha/2+\beta)} \\
	1 \\
	\end{array}
	\right],
	\end{equation}
	where $\alpha\in [-\pi/2,\pi/2]$, $\beta\in [0, \pi)$ and $w\in [-\sqrt{\cos(\alpha)},\sqrt{\cos(\alpha)}]$.
	In particular, the ideal boundary of $S_{[0,0]}(r)$ on $\partial\hc$ are the points with $w=\pm\sqrt{\cos(\alpha)}$.
\end{defn}

The following property should be useful to describe the intersection of Cygan spheres.
\begin{prop}[Goldman \cite{Go}, Parker and Will \cite{ParkerWill:2017}]
	Let $S_{[0,0]}(r)$ be a Cygan sphere with {geographic coordinates} $(\alpha,\beta,w)$.
	\begin{enumerate}
		\item The spine of  $S_{[0,0]}(r)$ is given by $w=0$.
		\item The slices of  $S_{[0,0]}(r)$ are  given by $\theta=\theta_0$ for fixed $\theta_0\in [-\pi/2,\pi/2]$.
		\item The meridians of  $S_{[0,0]}(r)$ are  given by $\alpha=\alpha_0$ for fixed $\alpha_0\in [0,\pi)$.
		
	\end{enumerate}
\end{prop}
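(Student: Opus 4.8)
This is a classical fact (it appears in Goldman's book and is used by Parker--Will), so I would present the argument as a verification built directly on the explicit lift (\ref{eq:geog-coor}) together with the two descriptions of the slice decomposition of a bisector recalled above: Mostow's for the complex slices and Goldman's for the real slices. \textbf{Setup.} Since a Cygan sphere is a special bisector, $S_{[0,0]}(r)$ has a complex spine and a real spine; as $S_{[0,0]}(r)$ is the isometric sphere of an element $g$ with $g^{-1}(q_\infty)=[0,0]$, its complex spine is the complex line through $q_\infty$ and $[0,0]$, i.e.\ the vertical complex geodesic $\Sigma_\infty=\{z=0\}$, which has polar vector $(0,1,0)^{T}$, and its real spine is $\Sigma_\infty\cap S_{[0,0]}(r)=\{(0,t,u):u^2+t^2=r^4,\ u\ge 0\}$, a geodesic. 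Item (1) is then read off from (\ref{eq:geog-coor}): the middle homogeneous coordinate of $\mathbf q(\alpha,\beta,w)$ is $rwe^{i(-\alpha/2+\beta)}$, which vanishes precisely when $w=0$, so the real spine is the locus $\{w=0\}$, naturally parametrized by $\alpha$.

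\textbf{Complex slices.} By Mostow's theorem the bisector is the preimage of the real spine under the orthogonal projection $\pi$ onto $\Sigma_\infty$, so its complex slices are the fibres $\pi^{-1}(s)$ with $s$ on the real spine, and each such fibre equals $\mathbb P\big(\mathrm{span}\{\mathbf s,(0,1,0)^{T}\}\big)$. Writing $s_{\alpha_0}$ for the real-spine point of parameter $\alpha_0$, with lift $(-r^2e^{-i\alpha_0}/2,\,0,\,1)^{T}$, I would check from (\ref{eq:geog-coor}) that $\mathbf q(\alpha,\beta,w)\in\mathrm{span}\{\mathbf s_{\alpha_0},(0,1,0)^{T}\}$ if and only if $\alpha=\alpha_0$: equating third homogeneous coordinates forces the span coefficient of $\mathbf s_{\alpha_0}$ to be $1$, and then equating first coordinates gives $e^{-i\alpha}=e^{-i\alpha_0}$. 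Hence the complex slice through $s_{\alpha_0}$ is exactly the level set $\{\alpha=\alpha_0\}$, which is item (2) (the coordinate labelling the slices being $\alpha\in[-\pi/2,\pi/2]$).

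\textbf{Meridians.} By Goldman's theorem the bisector is the union of the Lagrangian planes containing its real spine, each being the fixed-point set of an antiholomorphic isometric involution that fixes the real spine pointwise. For fixed $\beta_0\in[0,\pi)$ I would exhibit such an involution explicitly, namely $\sigma_{\beta_0}(\mathbf z)=g_{\beta_0}\overline{\mathbf z}$ with
\[
g_{\beta_0}=\begin{pmatrix}0&0&r^2/2\\[2pt] 0&-e^{2i\beta_0}&0\\[2pt] 2/r^2&0&0\end{pmatrix}.
\]
A short computation gives $g_{\beta_0}^{\,*}Hg_{\beta_0}=H$ (so $\sigma_{\beta_0}$ is an antiholomorphic isometry), $g_{\beta_0}\overline{g_{\beta_0}}=I$ (so $\sigma_{\beta_0}^{2}=\mathrm{id}$), and $\sigma_{\beta_0}\big(\mathbf q(\alpha,\beta_0,w)\big)=\mathbf q(\alpha,\beta_0,w)$ for all $\alpha,w$ — in particular $\sigma_{\beta_0}$ fixes the whole real spine. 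Therefore the fixed set of $\sigma_{\beta_0}$ is a meridian of $S_{[0,0]}(r)$, and it is swept out exactly by the level set $\{\beta=\beta_0\}$ as $(\alpha,w)$ vary; letting $\beta_0$ run over $[0,\pi)$ recovers all meridians. This is item (3).

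\textbf{Main obstacle.} The computations are elementary and essentially forced by (\ref{eq:geog-coor}); the only care needed is range and redundancy bookkeeping — checking that $\{\alpha=\alpha_0\}$ is a \emph{full} complex slice (it is, since $w$ ranges over a symmetric interval and $\beta$ over a half-circle), that $\{\beta=\beta_0\}$ is a \emph{full} meridian rather than a half-plane (using that $w$ takes both signs, with $(\beta_0,w)\leftrightarrow(\beta_0+\pi,-w)$ accounting for the restriction $\beta\in[0,\pi)$), and keeping the closure conventions at the ideal-boundary values $w=\pm\sqrt{\cos\alpha}$ consistent. I expect this bookkeeping, together with the verification that $g_{\beta_0}\in\mathbf U(2,1)$ and $\sigma_{\beta_0}^{2}=\mathrm{id}$, to be the only mildly delicate part of the proof.
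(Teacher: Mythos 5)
The paper states this proposition purely as a citation to Goldman and to Parker--Will and gives no proof of its own, so there is nothing internal to compare against; your argument is the standard verification and, as far as I can check, it is correct. All three computations go through: the middle entry of $\mathbf q(\alpha,\beta,w)$ vanishes exactly when $w=0$, giving the real spine; the fibre of the orthogonal projection onto the complex spine $\{z=0\}$ over $s_{\alpha_0}$ is $\mathbb P\bigl(\mathrm{span}\{\mathbf s_{\alpha_0},(0,1,0)^{T}\}\bigr)$ and meets the sphere exactly in $\{\alpha=\alpha_0\}$; and your matrix $g_{\beta_0}$ does satisfy $g_{\beta_0}^{*}Hg_{\beta_0}=H$ and $g_{\beta_0}\overline{g_{\beta_0}}=I$, with $\sigma_{\beta_0}$ fixing $\mathbf q(\alpha,\beta_0,w)$ projectively (after rescaling by $-e^{-i\alpha}$) and fixing no point with $\beta\ne\beta_0$, so $\{\beta=\beta_0\}$ is a full meridian. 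One remark worth making explicit: the proposition as printed in the paper is garbled --- it writes the slices as $\theta=\theta_0$ (a variable that does not occur in the geographic coordinates) and the meridians as $\alpha=\alpha_0$ with $\alpha_0\in[0,\pi)$, which is the range of $\beta$; what you have proved is the evidently intended statement, with slices $\{\alpha=\alpha_0\}$, $\alpha_0\in[-\pi/2,\pi/2]$, and meridians $\{\beta=\beta_0\}$, $\beta_0\in[0,\pi)$, and you should say so rather than leave the correction implicit. The only other point needing a word is the use of the complex-spine description for a bisector whose two defining points $q_\infty$ and $[0,0]$ lie on $\partial\mathbf H^2_{\mathbb C}$: the complex spine is still the complex geodesic $\{z=0\}$ containing the real spine (whose actual endpoints are $[0,\pm r^2]$), so your identification is fine, but the phrase ``complex line through $p_0$ and $p_1$'' is borrowed from the coequidistant (interior-point) setting and deserves the one-line justification.
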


If
$$\mathbf{p}=\left[\begin{matrix}
p_1\\ p_2\\p_3\end{matrix}\right],\quad \mathbf{q}=\left[\begin{matrix}
q_1\\ q_2\\q_3\end{matrix}\right]$$ are lifts of $p,q$ in $\mathbf{H}^2_{\mathbb C}$, then the {\it Hermitian cross product} of $p$ and $q$  is defined by

$$\mathbf{p} \boxtimes
\mathbf{q}=
\left[\begin{matrix}
\overline{p}_3\overline{q}_2-\overline{p}_2\overline{q}_3\\ \overline{p}_1\overline{q}_3-\overline{p}_3\overline{q}_1\\ \overline{p}_1\overline{q}_2-\overline{p}_2\overline{q}_1
\end{matrix}
\right].$$
This vector is orthogonal to $p$ and $q$  with  respect to the Hermitian form  $\langle \cdot,\cdot\rangle$.
It is a Hermitian version of the Euclidean cross product.
In order to analyze  2-faces of a Ford polyhedron, we must study the intersections of isometric spheres.

From the detailed analysis in \cite{Go}, we know that the intersection of two bisectors is usually not totally geodesic and can be somewhat complicated. In  this paper, we shall only consider the intersection of
coequidistant bisectors, i.e. bisectors equidistant from a common point.   When $p,q$ and $r$ are not in a common complex line, that is,  their lifts are linearly independent in $\mathbb {C}^{2,1}$, then the locus $\mathcal{B}(p,q,r)$ of points in $ {\bf H}^2_{\mathbb C}$
equidistant to  $p,q$ and $r$ is a smooth disk that is not totally geodesic, and is often called a \emph{Giraud disk}.  The following property is crucial when studying fundamental domain.

\begin{prop}[Giraud] \label{prop:Giraud}
	If $p,q$ and $r$ are not in a common complex line, then the Giraud disk $\mathcal{B}(p,q,r)$ is contained in precisely three bisectors, namely $\mathcal{B}(p,q),  \mathcal{B}(q,r)$ and  $\mathcal{B}(p,r)$.
\end{prop}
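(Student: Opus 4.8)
The plan is to establish two things: that $\mathcal{B}(p,q,r)$ lies in each of the three bisectors $\mathcal{B}(p,q)$, $\mathcal{B}(q,r)$, $\mathcal{B}(p,r)$ (and that these are pairwise distinct), and that no other bisector contains $\mathcal{B}(p,q,r)$. The first part is immediate from the definition: a point equidistant from $p$, $q$ and $r$ is in particular equidistant from each pair, so $\mathcal{B}(p,q,r)\subseteq\mathcal{B}(p,q)\cap\mathcal{B}(q,r)\cap\mathcal{B}(p,r)$; the same remark gives $\mathcal{B}(p,q,r)=\mathcal{B}(p,q)\cap\mathcal{B}(q,r)=\mathcal{B}(q,r)\cap\mathcal{B}(p,r)=\mathcal{B}(p,q)\cap\mathcal{B}(p,r)$. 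If two of the three coincided, say $\mathcal{B}(p,q)=\mathcal{B}(q,r)$, then $\mathcal{B}(p,q,r)$ would equal $\mathcal{B}(p,q)$, which is absurd since the Giraud disk is a smooth $2$-disk while a bisector is a $3$-ball. Hence the three are distinct and the real content is the uniqueness assertion.

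For the uniqueness I would argue at the level of Hermitian quadrics. Normalize the lifts so that $\langle p,p\rangle=\langle q,q\rangle=\langle r,r\rangle=-1$. For negative vectors $a,b$ of equal norm, the bisector $\mathcal{B}(a,b)$ is the projectivization of the zero set of the Hermitian form $x\mapsto|\langle x,a\rangle|^{2}-|\langle x,b\rangle|^{2}=x^{*}H(aa^{*}-bb^{*})Hx$, which has rank exactly $2$ because $a,b$ are linearly independent. Set $A=H(pp^{*}-qq^{*})H$ and $B=H(qq^{*}-rr^{*})H$, so that $\mathcal{B}(p,q,r)=\{[x]\in\hc:x^{*}Ax=x^{*}Bx=0\}$; by the previous paragraph $A$ and $B$ are linearly independent over $\mathbb{R}$. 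Let $\mathcal{B}'=\mathcal{B}(v_{0},v_{1})$ be any bisector containing $\mathcal{B}(p,q,r)$, with lifts normalized so that $\langle v_{0},v_{0}\rangle=\langle v_{1},v_{1}\rangle=-1$, and put $C=H(v_{0}v_{0}^{*}-v_{1}v_{1}^{*})H$; then $C\neq 0$ and $x^{*}Cx=0$ along $\mathcal{B}(p,q,r)$.

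The crux is to show that $C$ is a \emph{real} linear combination $C=\lambda A+\mu B$, $\lambda,\mu\in\mathbb{R}$; equivalently, that the real vector space of Hermitian forms vanishing on $\mathcal{B}(p,q,r)$ is exactly the two-dimensional space $\mathbb{R}A+\mathbb{R}B$. This non-degeneracy of the Giraud disk is the step I expect to be the main obstacle, since one must rule out an ``accidental'' quadric through the disk. I would verify it in the spinal coordinates of Subsection~\ref{subsection:Spinalcoordinates}: $\mathcal{B}(p,q,r)=\mathcal{B}(p,q)\cap\mathcal{B}(q,r)$ meets each complex slice $\{\alpha=\alpha_{0}\}$ of $\mathcal{B}(p,q)$, a copy of $\mathbf{H}^{1}_{\mathbb{C}}$, in a non-trivial arc (a bisector meets a complex line it does not contain in at most a geodesic); and, the Giraud disk being a connected real-analytic surface not contained in any complex line, these arcs sweep out enough directions in $\mathbb{C}^{3}$ that a Hermitian form vanishing along all of them must lie in $\mathbb{R}A+\mathbb{R}B$.

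Granting $C=\lambda A+\mu B$, I cancel the invertible matrix $H$ to get
\[
v_{0}v_{0}^{*}-v_{1}v_{1}^{*}=\lambda pp^{*}+(\mu-\lambda)qq^{*}-\mu rr^{*}=P\,\operatorname{diag}(\lambda,\,\mu-\lambda,\,-\mu)\,P^{*},\qquad P=[\,p\mid q\mid r\,].
\]
Since $p,q,r$ do not lie on a common complex line, $P$ is invertible, so the right-hand side has rank equal to the number of non-zero entries among $\lambda,\mu-\lambda,-\mu$, while the left-hand side has rank exactly $2$. Hence exactly one of $\lambda,\mu-\lambda,-\mu$ vanishes. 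If $\mu=0$ then $C$ is a non-zero real multiple of $A$ and $\mathcal{B}'=\mathcal{B}(p,q)$; if $\lambda=0$ then $\mathcal{B}'=\mathcal{B}(q,r)$; if $\lambda=\mu$ then $v_{0}v_{0}^{*}-v_{1}v_{1}^{*}=\lambda(pp^{*}-rr^{*})$ and $\mathcal{B}'=\mathcal{B}(p,r)$. These exhaust the possibilities, so $\mathcal{B}(p,q,r)$ is contained in precisely the three bisectors $\mathcal{B}(p,q)$, $\mathcal{B}(q,r)$ and $\mathcal{B}(p,r)$.
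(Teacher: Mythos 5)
The paper does not prove this proposition at all: it is quoted as Giraud's classical theorem (Goldman, \emph{Complex Hyperbolic Geometry}, Thm.\ 8.3.3), so there is no in-paper argument to compare against. Judged on its own terms, your skeleton is sound --- the containment in the three bisectors, their pairwise distinctness, and the final rank computation with $P=[\,p\mid q\mid r\,]$ are all correct --- but the proof has a genuine gap exactly where you flag it. The assertion that every Hermitian form vanishing on $\mathcal{B}(p,q,r)$ lies in $\mathbb{R}A+\mathbb{R}B$ \emph{is} the theorem; everything after it is bookkeeping. The justification offered (``the arcs in the complex slices sweep out enough directions'') is a heuristic, not an argument: the space of Hermitian forms on $\mathbb{C}^{3}$ is $9$-real-dimensional, and nothing you say explains why a $2$-dimensional real-analytic surface imposes exactly $7$ independent linear conditions rather than fewer. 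As written, the uniqueness half of the proposition is asserted, not proved.

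The gap is cleanly repairable with the paper's own parametrization (the formula $V(z_1,z_2)=\mathbf{q}\boxtimes \mathbf{r}+z_1\,\mathbf{r}\boxtimes \mathbf{p}+z_2\,\mathbf{p}\boxtimes \mathbf{q}$ from Subsection~\ref{subsection:Spinalcoordinates}). Write $c_0=\mathbf{q}\boxtimes \mathbf{r}$, $c_1=\mathbf{r}\boxtimes \mathbf{p}$, $c_2=\mathbf{p}\boxtimes \mathbf{q}$; since $p,q,r$ are linearly independent and the form is nondegenerate, $(c_0,c_1,c_2)$ is a basis of $\mathbb{C}^{3}$. For a Hermitian $Q$ vanishing on the Giraud disk, the function
\[
V(z_1,z_2)^{*}QV(z_1,z_2)=\sum_{j}c_j^{*}Qc_j+2\operatorname{Re}\bigl(z_1\,c_0^{*}Qc_1\bigr)+2\operatorname{Re}\bigl(z_2\,c_0^{*}Qc_2\bigr)+2\operatorname{Re}\bigl(\overline{z}_1z_2\,c_1^{*}Qc_2\bigr)
\]
is a trigonometric polynomial on the torus vanishing on the nonempty open set $\{\langle V,V\rangle<0\}$, hence identically; linear independence of the characters forces $c_0^{*}Qc_1=c_0^{*}Qc_2=c_1^{*}Qc_2=0$ and $\sum_j c_j^{*}Qc_j=0$. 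Thus $Q$ is diagonal in the basis $(c_0,c_1,c_2)$ with real entries summing to zero --- a $2$-real-dimensional space that visibly contains the independent forms $A$ and $B$, so it equals $\mathbb{R}A+\mathbb{R}B$. With this inserted, your remaining rank argument (exactly one of $\lambda,\ \mu-\lambda,\ -\mu$ vanishes, since all three vanishing gives rank $0$ and none vanishing gives rank $3$) correctly yields the three bisectors and completes the proof.
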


Note that checking whether an isometry maps  a Giraud disk to another is equivalent to checking that corresponding triple of  points are mapped to each other.

In order to study Giraud  disks, we will use {\it spinal coordinates}. The complex slices of $\mathcal{B}(p,q)$ are given explicitly by choosing a lift $\mathbf{p}$ (resp. $\mathbf{q}$) of $p$ (resp. $q$).
When $p,q\in  {\bf H}^2_{\mathbb C}$, we simply choose lifts such that $\langle \mathbf{p},\mathbf{p}\rangle= \langle \mathbf{q},\mathbf{q}\rangle$.
In this paper, we will mainly use these parametrization when  $p,q\in  \partial{\bf H}^2_{\mathbb C}$. In that case, the condition
$\langle \mathbf{p},\mathbf{p}\rangle= \langle \mathbf{q},\mathbf{q}\rangle$  is vacuous, since all lifts are null vectors; we then choose some fixed lift $\mathbf{p}$
for  the center of the Ford domain, and we take $ \mathbf{q}=G( \mathbf{p})$ for some $G\in \mathbf{U}(2,1)$. For  a different matrix $G'=SG$, with $S$
is a scalar matrix, note  that the diagonal element of  $S$ is a unit complex number, so $\mathbf{q}$ is well defined up to a unit complex number.
The complex slices of $\mathcal{B}(p,q)$ are obtained as the set of negative lines $(\overline{z}\mathbf{p}-\mathbf{q})^{\bot}$ in ${\bf H}^2_{\mathbb C}$ for some arc of values of $z\in S^1$,  which is determined by requiring that $\langle \overline{z}\mathbf{p}-\mathbf{q},\overline{z}\mathbf{p}-\mathbf{q}\rangle>0$.

Since a point of the bisector is on precisely one complex slice, we can parameterize the {\it Giraud torus}  $\hat{\mathcal{B}}(p,q,r)$  in ${\bf P}^2_{\mathbb C}$   by $(z_1,z_2)=(e^{it_1},e^{it_2})\in S^1\times S^1 $ via
	\begin{equation} \label{equaation:girauddisk}
	V(z_1,z_2)=(\overline{z}_1\mathbf{p}-\mathbf{q})\boxtimes (\overline{z}_2\mathbf{p}-\mathbf{r})=\mathbf{q}\boxtimes \mathbf{r}+z_1 \mathbf{r}\boxtimes \mathbf{p}+z_2 \mathbf{p}\boxtimes \mathbf{q}.
	\end{equation}
The Giraud disk $\mathcal{B}(p,q,r)$ corresponds to the $(z_1,z_2)\in S^1\times S^1 $  with  $$\langle V(z_1,z_2),V(z_1,z_2)\rangle<0.$$ It follows from the fact the bisectors are covertical that  this region is a topological disk, see \cite{Go}.

The boundary at infinity $\partial\mathcal{B}(p,q,r)$ is a circle, given in spinal coordinates by the equation
$$\langle V(z_1,z_2),V(z_1,z_2)\rangle=0.$$

Note that the choices of two lifts of $q$ and $r$ affect the spinal coordinates by rotation on each of the $S^1$-factors.

A defining equation for the trace of another bisector $\mathcal{B}(u,v)$ on the Giraud disk $\mathcal{B}(p,q,r)$ can be written in the form
$$| \langle V(z_1,z_2),u\rangle|=| \langle V(z_1,z_2),v\rangle|,$$  provided that $u$ and $v$ are suitably chosen lifts.
The expressions $\langle V(z_1,z_2),u\rangle$ and $\langle V(z_1,z_2),v\rangle$ are affine in $z_1$ and $z_2$.

This triple bisector intersection can be parameterized  fairly explicitly, because one can solve the equation
$$|\langle V(z_1,z_2),u\rangle|^2=|\langle V(z_1,z_2),v\rangle|^2 $$ for one of the variables $z_1$ or $z_2$ simply by solving a quadratic equation.
A detailed explanation of how this works can be found in \cite{Deraux:2016gt, DerauxF:2015, dpp:2016}.

\section{The moduli space of representations of $G$ into $\mathbf{PU}(3,1)$  with $I_1I_2$ parabolic} \label{sec:gram}
\label{sec:moduli}

In this section, we give the matrix representations of $G$ into $\mathbf{PU}(3,1)$ with complex reflection generators.

\subsection{The Gram matrices of four complex hyperbolic planes in $\mathbf{H}^3_{\mathbb C}$ for the group $G$} \label{subsec:gram}

Recall that for two $\mathbb C$-planes $\mathcal{P}$ and $\mathcal{P}'$  in $\mathbf{H}^3_{\mathbb C}$ with  polar vectors  $n$ and $n'$ such that
$\langle n, n\rangle =\langle n', n'\rangle =1$:
\begin{itemize}
	\item
	If  $\mathcal{P}$ and $\mathcal{P}'$  intersect in a $\mathbb{C}$-line in $\mathbf{H}^3_{\mathbb C}$, then the angle $\alpha $ between
	them has $|\langle n, n'\rangle|=\cos(\alpha)$;

	\item If  $\mathcal{P}$ and $\mathcal{P}'$ are hyper-parallel  in  $\mathbf{H}^3_{\mathbb C}$,
	then the distance $d$ between them has $|\langle n, n'\rangle |=\cosh \frac{d}{2}$;   	
	
	\item If  $\mathcal{P}$ and $\mathcal{P}'$ are asymptotic in  $\mathbf{H}^3_{\mathbb C}$,  then $|\langle n, n' \rangle|=1$.\end{itemize}

We consider  $\mathbb C$-planes $\mathcal{P}_{i}$ for $i=1,2,3,4$, so each $\mathcal{P}_{i}$ is a  totally geodesic ${\bf H}^2_{\mathbb C} \hookrightarrow {\bf H}^3_{\mathbb C}$. Let $n_{i}$ be the polar vector of  $\mathcal{P}_{i}$ in $\mathbb{C}{\mathbf P}^{3}-\overline{\mathbf{H}}^3_{\mathbb C}$.
We assume
\begin{itemize}
	\item  the angle between $\mathcal{P}_{1}$ and $\mathcal{P}_{3}$ is $\frac{\pi}{2}$;
	\item the angle between $\mathcal{P}_{2}$ and $\mathcal{P}_{4}$ is $\frac{\pi}{2}$;
	
		\item the angle between $\mathcal{P}_{1}$ and $\mathcal{P}_{4}$ is $\frac{\pi}{3}$;
		\item the planes $\mathcal{P}_{1}$ and $\mathcal{P}_{2}$ are asymototic;
		\item the planes $\mathcal{P}_{2}$ and $\mathcal{P}_{3}$ are hyper-parallel with distance  $2 \cdot \operatorname{arccosh}(h)$  if $h>1$, and the planes $\mathcal{P}_{2}$ and $\mathcal{P}_{3}$ intersect in a $\mathbb{C}$-line with angle  $\frac{\pi}{m}$  if $h=\cos(\frac{\pi}{m})$;
		\item the planes $\mathcal{P}_{3}$ and $\mathcal{P}_{4}$ are hyper-parallel with distance  $2 \cdot \operatorname{arccosh}(h)$ if $h>1$, and the planes $\mathcal{P}_{3}$ and $\mathcal{P}_{4}$ intersect in a $\mathbb{C}$-line with angle  $\frac{\pi}{m}$  if $h=\cos(\frac{\pi}{m})$.
	\end{itemize}
Then we can normalize the Gram matrix
to  the following form
	$$\mathscr{G}=(
	\langle  n_{i},n_{j}\rangle)_{1\leq i, j \leq 4}=\begin{pmatrix}
	1& -1& 0&-\frac{\mathrm{e}^{t\mathrm{i}}}{2}\\
	-1& 1 & -h& 0\\
	0&-h&1 &-h\\
	-\frac{\mathrm{e}^{-t\mathrm{i}}}{2} & 0&-h&1\\
	\end{pmatrix}.$$
	Where up to anti-holomorphic isometry of  ${\bf H}^3_{\mathbb C}$, we may assume $t \in [0,\pi]$.
	Moreover $t=0$ corresponds the case of (infinite volume) 3-dimensional real hyperbolic Coxeter tetrahedra, see \cite{VinbergS:1993}.
	By 	\cite{CunhaDGT:2012}, for  the Gram matrix above, there is a unique configuration of four
	 $\mathbb C$-planes $\mathcal{P}_{i}$ in ${\bf H}^3_{\mathbb C}$ for $i=1,2,3,4$  up to $\mathbf{PU}(3,1)$  realize the Gram matrix.
	

	Then it is easy to see $$\det(\mathscr{G})=-\frac{3h^2}{4}-\frac{1}{4}-h^2 \cos(t).$$ The eigenvalues of $\mathscr{G}$ are $$1 \pm \frac{\sqrt{16h^2+10\pm2 \sqrt{64h^4+64h^2 \cos(t)+25}}}{4}.$$
	When $$\cos(t)=-\frac{3h^2+1}{4h^2},$$ $\mathscr{G}$  has eigenvalues  $$0,~~2,~~ 1+\frac{\sqrt{8h^2+1}}{2},~~1-\frac{\sqrt{8h^2+1}}{2}.$$
  When $t \in ( \operatorname{arccos}(-\frac{3h^2+1}{4h^2}), \pi]$, $\mathscr{G}$ has signature $(2,2)$, we will not study them in this paper.
So our moduli space is
	\begin{equation}
	\mathscr{M}=\left\{(h,t) \in \mathbb{R}_{\geq \frac{1}{2}} \times[0, \pi] \Bigg|t \in \left[0, \operatorname{arccos}\left(-\frac{3h^2+1}{4h^2}\right)\right] \right\}.
	\end{equation}

	Let $G$ be the  abstract group with the presentation
	$$G=\left\langle \iota_{1}, \iota_{2},\iota_{3},\iota_{4} \Bigg| \begin{array}{c}    \iota_{1}^2= \iota_{2}^2= \iota_{3}^2=  \iota_{4}^2=id,\\[ 3 pt]
	( \iota_{1} \iota_{3})^{2}=  (\iota_{2} \iota_{4})^{2}= ( \iota_{1} \iota_{4})^3=id
	\end{array}\right\rangle.$$
 Then $K=\langle \iota_{1}\iota_{2},\iota_{3}\iota_{1},\iota_{4}\iota_{1} \rangle$ is an index  two subgroup of $G$, which is isomorphic to $\mathbb{Z}_2 * \mathbb{Z}_2* \mathbb{Z}_3$.

 	Now for any $(h,t)\in \mathcal{M}$, let $\rho_{(h,t)}:G \rightarrow  \mathbf{PU}(3,1)$  be the representation with $\rho_{(h,t)}(\iota_{i})=I_{i}$ be the order two $\mathbb{C}$-reflection about $\mathcal{P}_{i}$.
 	We also denote by $\Gamma=\Gamma(h,t) =\langle I_1,I_2,I_3,I_4 \rangle$.
	 When $t= \operatorname{arccos}(-\frac{3h^2+1}{4h^2})$, $\Gamma$ preserves a totally geodesic ${\bf H}^2_{\mathbb C} \hookrightarrow{\bf H}^3_{\mathbb C}$ invariant, so we will view the representation as degenerating to a representation into  $\mathbf{PU}(2,1)$.
When $h \in [1, \infty)$ is fixed and $t=0$, $\Gamma$ preserves a totally geodesic ${\bf H}^3_{\mathbb R} \hookrightarrow{\bf H}^3_{\mathbb C}$ invariant,	we have a 3-dimensional real hyperbolic Coxeter tetrahedron, so we have a discrete and faithful representation of the  group
$G$ into $\mathbf{PO}(3,1)$. For this fixed $h$, when we increasing $t$, we still have a representation of $G$ in   $\mathbf{PU}(3,1)$, but the discreteness of the  representation is highly non-trivial, this is what we will do in the paper for some pairs of  $(h,t)$.

\begin{figure}
	\begin{center}
		\begin{tikzpicture}
		\node at (0,0) {\includegraphics[width=12cm,height=6cm]{{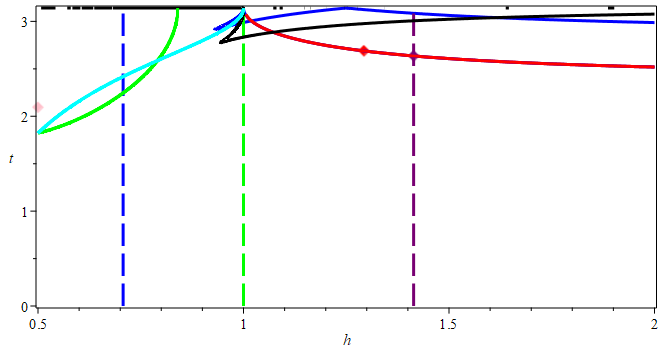}}};
		\end{tikzpicture}
	\end{center}
	\caption{The moduli space $\mathcal{M}$ is the region below the red curve: the red curve is where  $t= \operatorname{arccos}(-\frac{3h^2+1}{4h^2})$; the blue  curve is the locus where $\mathcal{H}(I_1I_2I_3I_4)=0$; the cyan  curve is the locus where $\mathcal{H}(I_1I_4I_1I_2I_1I_4I_3)=0$; 
		the black  curve is the locus where $\mathcal{H}(I_1I_3I_4I_1I_2)=0$; 
		the green  curve is the locus where $\mathcal{H}(I_1I_3I_2I_4I_1I_3I_4)=0$.   The red diamond marked  point is $(h_1,t_1)$ in the red curve, where $h_1$ is $1.29326...$ numerically. At this point the isometric spheres $I(B)$ and   $I(C)$ are tangent; The intersection of the red curve and the dashed purple vertical line is $(\sqrt{2},\operatorname{arccos}(-\frac{7}{8}))$, we draw it in purple diamond; We study the discreteness and faithfulness of $\rho$ in a small neighborhood of this point. 
		 The pink diamond marked point is $(\frac{1}{2}, \frac{2\pi}{3})$, so $(I_2I_3)^3=(I_3I_4)^3=id$, but it also has an accidental-elliptic element, say  $I_1I_4I_1I_2I_1I_4I_3$
		has order 6.}
	\label{figure:moduli}
\end{figure}


See Figure \ref{figure:moduli} for the moduli space  $\mathcal{M}$, the moduli space is the region bellow the red curve. In the moduli space $\mathcal{M}$ the function $\mathcal{H}(I_1I_2I_3I_4)$ is complicated.  We draw the locus where $\mathcal{H}(I_1I_2I_3I_4)$ is zero in blue curve in  Figure \ref{figure:moduli} (so the blue curve above the red curve is not meaningful, since the  signature of the Gram matrix is $(2,2)$ but not $(3,1)$).  There are also several marked  points in Figure \ref{figure:moduli}, we will explain them later. 
Moreover, when $h \in (0,1)$, we are only interested that $h = \cos(\frac{\pi}{m})$ for some $m \in \mathbb{Z}_{\geq 3}$, that is when $\mathcal{P}_{2}$ and $\mathcal{P}_{3}$ ($\mathcal{P}_{3}$ and $\mathcal{P}_{4}$ respectively) intersect at an angle $\frac{\pi}{m}$. 
	


			\subsection{Matrices  representations of the group $G$ into $\mathbf{PU}(3,1)$ } \label{subsec:matricesin3dim}
	For each pair $(h,t)\in \mathcal{M}$, we now give the matrix presentation of $I_{i}$, the order-two complex reflection with mirror  $\mathcal{P}_{i}$. We first take
		\begin{equation}\label{eq:2p2q12}
		{n_1}=\left[
		\begin{array}{c}
		0 \\
		1 \\
			0 \\
		0 \\
		\end{array}
		\right],\quad
		{n_2}=\left[
		\begin{array}{c}
		1\\
			-1\\
			0 \\
		0\\
		\end{array}
		\right].
		\end{equation}		
		Then the Hermitian product of $n_{i}$  and $n_{j}$ for $i=1,2$ and $j=3,4$  determine the second and the fourth  entries of $n_{3}$ and $n_{4}$,  the norms of $n_{3}$ and $n_{4}$  determine  the absolute  values of the third entries of $n_{3}$ and $n_{4}$,  and we can normalize them as
		\begin{equation}\label{eq:2p2q34}
		{n_3}=\left[
		\begin{array}{c}
		-\dfrac{1}{2h}\\ [ 6 pt]
		0
		\\
		0
	 \\-h \\
		\end{array}
		\right],\quad
		{n_4}=\left[
		\begin{array}{c}
		\dfrac{4h^2+\rm{e}^{-t \rm{i}}}{4h^2}\\[ 8 pt]
		-\dfrac{\rm{e}^{-t \rm{i}}}{2}\\  [ 8 pt]
	\dfrac{\sqrt{4h^2 \cos(t)+3h^2+1}}{2h}\\   [ 8 pt]
		-\dfrac{\rm{e}^{-t \rm{i}}}{2}
		\\
		\end{array}
		\right]
		\end{equation}
	By direct  calculation, we have  $(\langle n_{i}, n_{j} \rangle )_{1 \leq i, j \leq 4}=\mathscr{G}$.

We denote by  $I_{i}$ the order-two complex reflection about the $\mathbb {C}$-plane $\mathcal{P}_{i}$ with polar vector $n_{i}$ for $i=1, 2, 3, 4$.
We have
\begin{equation}\nonumber
I_1=\left(\begin{matrix}
-1&0 &0&0\\
0&1 &0 &0\\
0&0&-1 &0\\
0&0&0 &-1\\ \end{matrix}\right),  \quad    \quad
I_2=\left(\begin{matrix}
-1 & -2 & 0&2\\
 0& 1 & 0&-2\\
0&0 & -1&0\\
0&0&0&-1\\ \end{matrix}\right),
\end{equation}
$$I_{3}=\left(\begin{matrix}
0 & 0 & 0&\frac{1}{2h^2}\\
0 & -1&0&0 \\
0&0 & -1&0\\
2h^2&0 &0&0\\ \end{matrix}\right),$$
 $$I_{4}=\left(\begin{array}{cccc}
 -\dfrac{4h^2+4h^2\rm{e}^{t \rm{i}}+1}{4h^2}& -\dfrac{4h^2\rm{e}^{t \rm{i}}+1}{4h^2} & -\dfrac{(4h^2+\rm{e}^{-t \rm{i}} )D}{4h^3}&\dfrac{16h^4+8h^2 \cos(t)+1}{8h^4}\\ [ 8 pt]
 
 \dfrac{1}{2} & -\frac{1}{2}& -\dfrac{\rm{e}^{-t \rm{i}} D}{2h} &-\dfrac{4h^2\rm{e}^{-t \rm{i}}+1}{4h^2}\\[ 8 pt]
 
 -\dfrac{\rm{e}^{t \rm{i}}D}{2h}&-\dfrac{\rm{e}^{t \rm{i}} D}{2h} & \dfrac{4h^2 \cos(t)+h^2+1}{2h^2}& \dfrac{(4h^2+\rm{e}^{t \rm{i}}) D}{4h^3}\\[ 8 pt]
 \dfrac{1}{2}&\frac{1}{2} &-\dfrac{\rm{e}^{-t \rm{i}} D}{2h} &-\dfrac{4h^2+4h^2\rm{e}^{-t \rm{i}}+1}{4h^2}\\ \end{array}\right),$$
where $D=\sqrt{4h^2 \cos(t)+3h^2+1}$.
By direct calculation we have $$\det(I_1)=\det(I_2)=\det(I_3)=\det(I_4)=-1,$$  and $$I^*_{i} H I_{i}=H$$ for $i=1,2,3,4$, here  $I^*_{i}$ is the Hermitian transpose of $I_{i}$. Moreover  $$(I_1 I_3)^{2}=(I_1 I_4)^{3}=(I_2 I_4)^{2}=id,$$ and $I_1I_2$ is parabolic, so we get a representation $\rho$ of  $G$ into $\mathbf{U}(3,1)$ (with additional condition that $I_1I_2$ is parabolic).

\begin{remark}	Note that since $\det(I_i)=-1$, so $I_i \in \mathbf{U}(3,1)$, but $I_i \notin \mathbf{PU}(3,1)$, and  ${\rm e}^{-\frac{\pi\rm{i}}{4}}\cdot I_i \in \mathbf{PU}(3,1)$. But we can also use $I_i$ to study  the geometry of the groups  for simplicity of notations.  But if we consider the holy grail function of a word on them, we must use the matrix ${\rm e}^{-\frac{\pi\rm{i}}{4}}\cdot I_i$ since it has determinant 1. 
	 
	\end{remark}


 In the following, we take $$A=I_1I_2,~~~~ B=I_3I_1,~~~~C=I_4I_1,$$
then $B^2=id$,  $C^3=id$, $(AC)^2=id$, and $A$ is parabolic.  $\langle A, B, C \rangle$ is an index two subgroup of $\Gamma=\rho_{(h,t)}(G)$.  We have
			\begin{equation}\nonumber
			A=\left(\begin{matrix}
			1&2 &0&-2\\
			0&1 &0 &-2\\
			0&0&1 &0\\
			0&0&0 &1\\ \end{matrix}\right),  \quad    \quad
			B=\left(\begin{matrix}
			0 & 0 & 0&-\frac{1}{2h^2}\\
			0& -1 & 0&0\\
			0&0 & 1&0\\
			-2h^2&0&0&0\\ \end{matrix}\right),
			\end{equation}
					and	$C$ is 
			$$\left(\begin{array}{cccc}
			\dfrac{4h^2+4h^2\rm{e}^{t \rm{i}}+1}{4h^2}& -\dfrac{4h^2\rm{e}^{t \rm{i}}+1}{4h^2} & -\dfrac{(4h^2+\rm{e}^{-t \rm{i}} )D}{4h^3}&-\dfrac{16h^4+8h^2 \cos(t)+1}{8h^4}\\[ 10 pt]
			-\dfrac{1}{2} & -\frac{1}{2}& \dfrac{\rm{e}^{-t \rm{i}} D}{2h} &\dfrac{4h^2\rm{e}^{-t \rm{i}}+1}{4h^2}\\[ 10 pt]
			\dfrac{\rm{e}^{t \rm{i}} D}{2h}&-\dfrac{\rm{e}^{t \rm{i}} D}{2h} & -\dfrac{4h^2 \cos(t)+h^2+1}{2h^2}& -\dfrac{(4h^2+\rm{e}^{t \rm{i}}) D}{4h^3}\\[ 10 pt]
			-\dfrac{1}{2}&\frac{1}{2} &\dfrac{\rm{e}^{-t \rm{i}} D}{2h} &\dfrac{4h^2+4h^2\rm{e}^{-t \rm{i}}+1}{4h^2}\\ \end{array}\right).$$
			

		\subsection{Matrices in $\mathbf{PU}(2,1)$}\label{subsec:matricesin2dim}
	When $\cos(t)=-\frac{3h^2+1}{4h^2}$, the third entry  of each $n_{i}$
	is zero for $i=1, 2, 3, 4$. So take \begin{equation}\label{eq:2p2q12}
	{n}=\left[
	\begin{array}{c}
	0 \\
	0 \\
	1 \\
	0 \\
	\end{array}
	\right],
	\end{equation} in  $\mathbb C{\bf P}^3-\overline{{\bf H}^3_{\mathbb C}}$. The intersection of  ${\bf H}^3_{\mathbb C}$  and  the dual of $n$ in  $\mathbb C{\bf P}^3$  is a copy of  ${\bf H}^2_{\mathbb C}$,  each $I_{i}$ preserves this ${\bf H}^2_{\mathbb C}$ invariant.
	So we delete the third column and the third row of the matrix  $I_{i}$ in Subsection  \ref{subsec:matricesin3dim}, we get new matrices in $\mathbf{PU}(2,1)$, but we still denote them by $I_{i}$ for the simplification of notations. 
	We have
	\begin{equation}\nonumber
	I_1=\left(\begin{matrix}
	-1&0 &0\\
	0&1  &0\\
	
	0&0 &-1\\ \end{matrix}\right),  \quad    \quad
	I_2=\left(\begin{matrix}
	-1 & -2 &2\\
	0& 1 &-2\\
	
	0&0&-1\\ \end{matrix}\right),
	\end{equation}

	$$I_{3}=\left(\begin{matrix}
	0 & 0 &\frac{1}{2h^2}\\
	0 & -1&0 \\
	
	2h^2&0 &0\\ \end{matrix}\right),$$
and $I_4$ is 
$$\left(\begin{array}{ccc}
-\dfrac{h^2+\rm{i}\sqrt{7h^4-6h^2-1}}{4h^2}& \dfrac{3h^2-\rm{i}\sqrt{7h^4-6h^2-1}}{4h^2} &\dfrac{16h^4-6h^2-1}{8h^4}\\[ 10 pt]
\dfrac{1}{2} & -\dfrac{1}{2} &\dfrac{3h^2+\rm{i}\sqrt{7h^4-6h^2-1}}{4h^2}\\[ 10 pt]
\dfrac{1}{2}&\dfrac{1}{2}  &\dfrac{-h^2+\rm{i}\sqrt{7h^4-6h^2-1}}{4h^2}\\ \end{array}\right).$$

\section{Ford domain  of $\rho_{(\sqrt{2},\operatorname{arccos}(-\frac{7}{8}))}(K) <\mathbf{PU}(2,1)$} \label{sec:Ford2dim}

In this section, we prove the first part of  Theorem \ref{thm:3-mfd}. The  proof in this section is also a  model for the proof of Theorem \ref{thm:complex3dim}.

When  $h= \sqrt{2}$, and $t=\operatorname{arccos}(-\frac{7}{8})$,  from  Subsection \ref{subsec:matricesin2dim} we have
$$I_{3}=\left(\begin{matrix}
0 & 0 &\frac{1}{4}\\
0 & -1&0 \\

4&0 &0\\ \end{matrix}\right),$$
and 
$$I_4=\left(\begin{array}{ccc}
-\frac{2+\rm{i}\sqrt{15}}{8}& \frac{6-\rm{i}\sqrt{15}}{8} &\frac{51}{32}\\[ 8 pt]
\frac{1}{2} & -\frac{1}{2} & \frac{6+\rm{i}\sqrt{15}}{8}\\[ 8 pt]
\frac{1}{2}&\frac{1}{2}  &\frac{-2+\rm{i}\sqrt{15}}{8}\\ \end{array}\right).$$
In this section, we let $\Sigma=\rho_{(\sqrt{2},\operatorname{arccos}(-\frac{7}{8}))}(K)$.

\subsection{Outline of the Ford domain of $\Sigma=\rho_{(\sqrt{2},\operatorname{arccos}(-\frac{7}{8}))}(K)$ in ${\bf H}^2_{\mathbb C}$}\label{subsec:ford3mfd}

We will study the local combinatorial structure of the  Ford domain  of  $\Sigma=\langle A, B, C\rangle= \rho_{(\sqrt{2},\operatorname{arccos}(-\frac{7}{8}))}(K)< \rho_{(\sqrt{2},\operatorname{arccos}(-\frac{7}{8}))}(G)$ in this section.
Where
\begin{equation}\nonumber
A=I_1I_2=\left(\begin{matrix}
1&2 &-2\\
0&1  &-2\\

0&0 &1\\ \end{matrix}\right),  \quad    \quad
B=I_3I_1=\left(\begin{matrix}
0 & 0 &-\frac{1}{4}\\
0& -1 &0\\
-4&0&0\\ \end{matrix}\right),
\end{equation}
and 
$$C=I_4I_1=\left(\begin{array}{ccc}
\frac{2+\rm{i}\sqrt{15}}{8}&\frac{6-\rm{i}\sqrt{15}}{8} &-\frac{51}{32}\\ [6  pt]
-\frac{1}{2} & -\frac{1}{2} &-\frac{6+\rm{i}\sqrt{15}}{8}\\[ 6 pt]
-\frac{1}{2}&\frac{1}{2}&\frac{2-\rm{i}\sqrt{15}}{8}\\ \end{array}\right).$$

The action of $A$ on the Heisenberg group $\mathbb{C} \times \mathbb{R}$ is given by
\begin{equation}\label{eq-A}
(z,t)\rightarrow (z-2, t+4 \cdot  Im(z)).
\end{equation}
Let $R \subset \rho_{(\sqrt{2},\operatorname{arccos}(-\frac{7}{8}))}(K)$  be the set of words 
$$\left\{A^{k}CA^{-k}, A^{k}C^{-1}BCA^{-k}, A^{k}CBC^{-1}A^{-k}, A^{k}CBCA^{-k}, A^{k}C^{-1}BC^{-1}A^{-k}  \right\}_{k \in \mathbb{Z}}.$$
 We will show the boundary of the Ford domain of $\Sigma$ consists of part of the isometric sphere of $I(g)$ for all $g \in R$. 
Note that since $(AC)^2=id$, the isometric sphere $I(C^{-1})$ identifies with the isometric sphere $I(A^{-1}CA)$. Since $A$ is unipotent with fixed point $q_{\infty}=(1,0,0)^{T} \in \partial {\bf H}^2_{\mathbb C}$, it is a Cygan isometry, and thus $A$-action preserves the radii of isometric spheres.



\begin{defn}
	
	For $\Sigma=\rho_{(\sqrt{2},\operatorname{arccos}(-\frac{7}{8}))}(K) < \mathbf{PU}(2,1)$,  the partial Ford domain $D_{R}$ of $\Sigma$  in $ {\bf H}^2_{\mathbb C}$
	is the intersection of the exteriors of
	all  isometric spheres of elements in	$R$,
	that is $$D_{R}=\{p \in  \overline{ {\bf H}^2_{\mathbb{C}} } : |\langle p, q_{\infty} \rangle | \leq  |\langle p, g^{-1}(q_{\infty})|\}_{g \in R}.$$
\end{defn}
We will show $D_{R}$ is in fact the Ford domain of $\Sigma$, the main tool for our study is the Poincar\'e polyhedron theorem, which gives
sufficient condition for $D_{R}$
to be a fundamental domain for the group. We shall use a version of the Poincar\'e polyhedron theorem for coset
decompositions rather than for groups, because $D_{R}$
is stabilized by the cyclic
subgroup generated by $A$. We refer to \cite{ParkerWill:2017} for the precise statement of this
version of the Poincar\'e polyhedron theorem we need.
The  main technical result in this section is

\begin{thm} \label{thm:2dimford}
	$D_{R}$ is a fundamental domain for the cosets of $\langle A \rangle $ in $\Sigma$. Moreover,
	the group $\Sigma = \langle A, B,C \rangle$ is discrete and has a presentation  $$\langle A,B,C : B^2=C^3=(AC)^2=id\rangle.$$
\end{thm}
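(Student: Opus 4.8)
The tool is the Poincar\'e polyhedron theorem in the form adapted to coset decompositions (as stated in \cite{ParkerWill:2017}), appropriate here because $D_R$ is stabilized by the unipotent cyclic group $\langle A\rangle=\Sigma_{\infty}$. The plan is to build the combinatorial model of $D_R$ from the finitely many isometric spheres listed in $R$ modulo $\langle A\rangle$, equip it with side-pairing maps, verify the side-pairing, ridge-cycle and parabolic-completeness conditions, and read off the conclusion. \textbf{Setup.} First I would record, in horospherical coordinates, the centers and radii of the isometric spheres $I(g)$ for the orbit representatives $g\in\{C,\,C^{-1}BC,\,CBC^{-1},\,CBC,\,C^{-1}BC^{-1}\}$, using the explicit matrices of $A,B,C$ at $(h,t)=(\sqrt2,\operatorname{arccos}(-\tfrac78))$. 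Since $A$ is unipotent and fixes $q_{\infty}$, it acts on $\partial\mathbf{H}^2_{\mathbb C}$ as a Cygan isometry; hence each $\langle A\rangle$-orbit of spheres consists of spheres of equal radius whose centers form an $A$-orbit in the Heisenberg group, $D_R$ is $\langle A\rangle$-invariant, and it suffices to analyze finitely many sides modulo $\langle A\rangle$. The candidate side-pairings are: $B$, $C^{-1}BC$ and $CBC^{-1}$ are involutions (each a conjugate of $B$ by a power of $C$), so each pairs its own isometric sphere with itself; $C$ sends $I(C)$ to $I(C^{-1})$, which equals $I(A^{-1}CA)$ by $(AC)^2=id$, an $\langle A\rangle$-translate of $I(C)$, so modulo $\langle A\rangle$ the sphere $I(C)$ is self-paired by $AC$; and $(CBC)^{-1}=C^{-1}BC^{-1}$, so $CBC$ pairs $I(CBC)$ with $I(C^{-1}BC^{-1})$.

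\textbf{Combinatorics of $D_R$.} Next I would determine exactly which of these spheres contribute a side and how neighbouring sides meet. All the relevant bisectors are coequidistant (equidistant from $q_{\infty}$), so their pairwise intersections are Giraud disks; using the spinal coordinates of Section~\ref{subsection:Spinalcoordinates}, the trace of one such bisector on another is affine in $(z_1,z_2)\in S^1\times S^1$, and triple intersections reduce to a quadratic. I expect this to produce a short explicit list of bounded ridges, each lying in exactly three of the listed spheres by Proposition~\ref{prop:Giraud}, together with the infinite ridges limiting onto $q_{\infty}$ and its $\langle A\rangle$-translates. One must also check that no listed sphere is covered by the union of the others and that spheres not sharing a ridge are appropriately disjoint (or tangent), so that the listed spheres really do bound $D_R$.

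\textbf{Poincar\'e verification and conclusion.} With the model in hand I would verify the hypotheses of the coset Poincar\'e theorem: (i) each side-pairing carries its side isometrically to the paired side, mapping the exterior of the first sphere into $D_R$; (ii) for every ridge, following the induced cycle of side-pairings returns to the starting ridge in finitely many steps with cycle transformation a power of $B$, $C$ or $AC$ equal to the identity in $\Sigma$, and the local dihedral angles around the ridge add up consistently; (iii) a small horoball at $q_{\infty}$ is precisely invariant under $\langle A\rangle$, so the completeness/parabolic condition holds. The theorem then yields simultaneously that $\Sigma$ is discrete, that $D_R$ is an exact fundamental domain for the cosets of $\langle A\rangle$ (in particular $D_R$ is the genuine Ford domain, so no further isometric sphere meets its interior), and that $\Sigma$ is presented by the side-pairing generators together with $A$ modulo the ridge-cycle relations; eliminating the redundant generators $AC$ and $C^{\pm1}BC^{\pm1}$ leaves precisely $\langle A,B,C\mid B^2=C^3=(AC)^2=id\rangle$.

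\textbf{Main obstacle.} The crux is the combinatorial step: proving that exactly the spheres in $R$ bound $D_R$ and computing their triple intersections. This is where one genuinely has to handle bisector intersections in $\mathbf{H}^2_{\mathbb C}$, and where the special arithmetic of $(\sqrt2,\operatorname{arccos}(-\tfrac78))$ intervenes, including the degenerate (non-transverse) triple intersections highlighted later in the paper. Once the combinatorial model is correct, the verification of the Poincar\'e hypotheses is a finite if lengthy computation and the presentation comes out automatically. A secondary subtlety is the non-compactness of $D_R$: the infinite ridges at $q_{\infty}$ and the precise invariance of the horoball there must be treated with care.
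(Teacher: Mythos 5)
Your proposal follows essentially the same route as the paper's proof: the coset version of the Poincar\'e polyhedron theorem from \cite{ParkerWill:2017}, the same five $\langle A\rangle$-orbits of isometric spheres with the same side-pairings ($AC$ self-pairing $s(C)$ modulo $\langle A\rangle$, the involutions $C^{-1}BC$ and $CBC^{-1}$ self-pairing their sides, and $CBC$ carrying $s(CBC)$ to $s(C^{-1}BC^{-1})$), and the same identification of the crux, namely the explicit spinal-coordinate computation of the pairwise and degenerate triple intersections, which occupies Subsections \ref{subsec:intersection}--\ref{subsec:ridge} and is only announced, not carried out, in your plan. Two small corrections: $B$ itself is not a side-pairing map, since $I(B)$ is not among the spheres indexed by $R$ and does not bound $D_R$ (only the conjugates $C^{\pm1}BC^{\pm1}$ and $CBC$, $C^{-1}BC^{-1}$ occur); and each ridge lies in exactly \emph{two} of the listed isometric spheres --- Proposition \ref{prop:Giraud} is used to say the ambient Giraud disk lies in only three bisectors, the third of which is not an isometric sphere, so no unlisted sphere can contain a ridge.
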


\subsection{Intersection patterns of the isometric spheres for $D_{R}$}\label{subsec:intersection}
In this subsection, we will study the rough information on intersection patterns of the isometric spheres for $D_{R}$.  We summarize these patterns in Table \ref{table:intersecton} and we will show this carefully. Moreover,  Table \ref{table:intersecton} should be compared with Figure 	\ref{figure:pointw} in Subsection \ref{subsec:poincare2dim} and  Figure \ref{figure:abstract} in Section \ref{sec:3mfd}.

\begin{table}[htbp]
	\caption{The intersections of isometric spheres we  should be   concerned about (up to $A$-action).}
	\centering
	\begin{tabular}{c}
		\hline
		 $I(C)\cap I(A^{-1}CA)$     \\  [2 ex]
		$I(C)\cap I(CBC^{-1})$  
		\\ [2 ex]
		$I(C)\cap I(C^{-1}BC^{-1})$ \\  [2 ex]
		$I(C)\cap I(AC^{-1}BCA^{-1})$     
		 \\  [2 ex]
		$I(C)\cap I(ACBCA^{-1})$   
		\\  [2 ex]
		$I(CBC)\cap I(C^{-1}BC)$    
		\\  [2 ex]
		$I(CBC^{-1})\cap I(C^{-1}BC^{-1})$    
		\\ 
		\hline
	\end{tabular}
	\label{table:intersecton}
\end{table}


 First, from the $A$-action on the Heisenberg group and matrix presentations of $B$ and $C$, it is easy to have the follow three propositions.
 
\begin{prop}\label{prop:center-radiusC}
	For any integer $k\in \mathbb{Z}$, the isometric sphere  $I(A^{k}CA^{-k})$ has radius $2$ and is  centered at  $$[-2k-1,-\frac{\sqrt{15}}{2}] \in \mathbb{C} \times \mathbb{R}$$
	in the Heisenberg group.
\end{prop}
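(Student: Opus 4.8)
The plan is to compute directly, since Proposition~\ref{prop:center-radiusC} is an explicit statement about the isometric sphere $I(A^k C A^{-k})$ and all the relevant matrices have been written down. First I would recall that the center of $I(g)$ in Heisenberg coordinates is $g^{-1}(\mathbf{q}_\infty)$ and the radius is $r_g = \sqrt{2/|g_{3,1}|}$, as recorded in the subsection on isometric spheres (specialized to $n=2$). So the whole statement reduces to identifying the bottom-left entry and the first column of the inverse of $A^k C A^{-k}$.

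The key computational steps, in order: (1) Since $A$ is unipotent and fixes $q_\infty = (1,0,0)^T$, conjugation by $A^k$ is a Cygan (Heisenberg) isometry fixing $q_\infty$, so by the general properties of isometric spheres ($I(gf) = f^{-1}I(g)$ with $f$ unipotent fixing $q_\infty$, and $g \mapsto I(g)$ being equivariant under such $f$) the radius of $I(A^k C A^{-k})$ equals the radius of $I(C)$, and its center is the image under $A^k$ (acting as in \eqref{eq-A}) of the center of $I(C)$. (2) Compute $r_C$: reading off the $(3,1)$-entry of the matrix $C = I_4 I_1$ at $(h,t) = (\sqrt2, \operatorname{arccos}(-\tfrac78))$, which is $-\tfrac12$, so $r_C = \sqrt{2/|-\tfrac12|} = 2$. (3) Compute the center of $I(C)$, namely $C^{-1}(\mathbf{q}_\infty)$; using the explicit formula for $g^{-1}(\mathbf q_\infty)$ in Heisenberg coordinates, $\left[\overline{g_{3,2}}/\overline{g_{3,1}},\, 2\operatorname{Im}(\overline{g_{3,3}}/\overline{g_{3,1}})\right]$ with $g = C$, the entries $g_{3,1} = -\tfrac12$, $g_{3,2} = \tfrac12$, $g_{3,3} = \tfrac{2 - \mathrm{i}\sqrt{15}}{8}$ give center $\left[-1,\, -\tfrac{\sqrt{15}}{2}\right]$. (4) Apply the $A$-action \eqref{eq-A} $k$ times to the point $(-1, -\tfrac{\sqrt{15}}{2})$: since $A^k$ sends $(z,t) \mapsto (z - 2k,\, t + 4k\operatorname{Im}(z) + \text{(correction)})$, one checks $\operatorname{Im}(z) = 0$ for $z = -1$ at every stage, so the $t$-coordinate is unchanged and the $z$-coordinate becomes $-1 - 2k = -2k-1$. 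This yields center $\left[-2k-1,\, -\tfrac{\sqrt{15}}{2}\right]$ and radius $2$, as claimed.

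There is essentially no obstacle here; this is a routine verification. The only point requiring a moment of care is step~(4): one must use the correct Heisenberg group law \eqref{eq-A} for the iterate $A^k$ (in particular that the vertical coordinate picks up a term $4k\operatorname{Im}(z)$ and, for $k \geq 2$, additional contributions from the composition), and observe that all these vanish precisely because the point lies over a real $z$. Alternatively, and perhaps more cleanly, I would simply multiply out the matrix $A^k C A^{-k}$ in closed form (noting $A^k = \begin{pmatrix} 1 & 2k & -2k \\ 0 & 1 & -2k \\ 0 & 0 & 1\end{pmatrix}$, checking $\operatorname{Im}$ of the relevant entry is zero) and read off its first column and $(3,1)$-entry directly, which bypasses any subtlety about the group law. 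Either route makes the proposition immediate.
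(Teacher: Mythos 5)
Your proof is correct and is precisely the argument the paper has in mind: the paper offers no written proof, stating only that the three center/radius propositions follow "from the $A$-action on the Heisenberg group and matrix presentations of $B$ and $C$," which is exactly your computation (read off $g_{3,1}=-\tfrac12$ and the first column of $C^{-1}$ to get radius $2$ and center $[-1,-\tfrac{\sqrt{15}}{2}]$, then translate by the Cygan isometry $A^k$, noting the vertical coordinate is unchanged because $\operatorname{Im}(z)=0$). All the numerical checks go through as you describe.
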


\begin{prop}\label{prop:center-radiuscBCCBc}
	For any integer $k\in \mathbb{Z}$, the isometric spheres $I(A^{k}C^{-1}BCA^{-k})$ and  $I(A^{k}CBC^{-1}A^{-k})$ have the same radius  $\frac{2}{\sqrt{3}}$, they are centered at $$[-2k+ \sqrt{\frac{5}{3}}\rm{i},\frac{-7+8k}{2}\sqrt{\frac{5}{3}}]\in \mathbb{C} \times \mathbb{R}$$ and  $$[-2k- \sqrt{\frac{5}{3}}\rm{i},\frac{-7-8k}{2}\sqrt{\frac{5}{3}}]\in \mathbb{C} \times \mathbb{R}$$
		in the Heisenberg group  respectively.
\end{prop}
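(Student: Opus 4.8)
The statement to prove is Proposition~\ref{prop:center-radiuscBCCBc}, which describes the radius and Heisenberg center of the isometric spheres $I(A^{k}C^{-1}BCA^{-k})$ and $I(A^{k}CBC^{-1}A^{-k})$. The plan is to reduce everything to the case $k=0$ and then do a direct matrix computation. First I would recall the two formulas from the definition of isometric sphere in Subsection~``Bisectors, isometric spheres and Ford polyhedron'': for $g=(g_{i,j})$ not fixing $q_\infty$, the isometric sphere $I(g)$ is the Cygan sphere with radius $r_g=\sqrt{2/|g_{3,1}|}$ and Heisenberg center $g^{-1}(q_\infty)=[\overline{g_{3,2}}/\overline{g_{3,1}},\,2\operatorname{Im}(\overline{g_{3,3}}/\overline{g_{3,1}})]$ (specializing $n=2$). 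So the whole proposition is the assertion that, for the specific matrices $B$ and $C$ attached to $(h,t)=(\sqrt2,\operatorname{arccos}(-7/8))$, the word $W:=C^{-1}BC$ has $|W_{3,1}|=3/2$ and $W^{-1}(q_\infty)=[\sqrt{5/3}\,\mathrm i,\,-\tfrac{7}{2}\sqrt{5/3}]$, together with the analogous claim for $CBC^{-1}$, and then that conjugation by $A^k$ shifts these data in the stated way.

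The $k$-dependence is the easy part and I would handle it first via the general identities already in the excerpt: since $A$ is unipotent fixing $q_\infty$, Cygan isometries preserve radii, so $r_{A^kWA^{-k}}=r_W$, giving the common radius $2/\sqrt3$ once the $k=0$ computation is done. For the center, the identities $I(gf)=f^{-1}I(g)$ and $I(fg)=I(g)$ with $f=A^{-k}$ (a unipotent fixing $q_\infty$) give $I(A^{k}WA^{-k})=A^{k}\,I(W)$, so the new center is $A^{k}$ applied to the old center; plugging the old center into the explicit Heisenberg action of $A$, namely $(z,t)\mapsto(z-2,\,t+4\operatorname{Im}(z))$ from \eqref{eq-A} (iterated $k$ times: $(z,t)\mapsto(z-2k,\,t+4k\operatorname{Im}(z))$), produces exactly the stated centers $[-2k+\sqrt{5/3}\,\mathrm i,\ \tfrac{-7+8k}{2}\sqrt{5/3}]$ and $[-2k-\sqrt{5/3}\,\mathrm i,\ \tfrac{-7-8k}{2}\sqrt{5/3}]$ — here one checks the real part of $4\operatorname{Im}(z)$ bookkeeping against the $\pm$ signs, which is the one spot where a sign error could creep in. So the only genuine content is the $k=0$ case.

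For $k=0$: I would form the product matrices $C^{-1}BC$ and $CBC^{-1}$ explicitly from the displayed $3\times3$ forms of $B$ and $C$ (with $C^{-1}=C^2$ since $C^3=\mathrm{id}$, which is a convenient way to get $C^{-1}$ without inverting), read off the $(3,1)$, $(3,2)$, $(3,3)$ entries, and verify $|W_{3,1}|=3/2$ so that $r_W=\sqrt{2/(3/2)}=2/\sqrt3$, and that $\overline{W_{3,2}}/\overline{W_{3,1}}=\sqrt{5/3}\,\mathrm i$ (resp. $-\sqrt{5/3}\,\mathrm i$) and $2\operatorname{Im}(\overline{W_{3,3}}/\overline{W_{3,1}})=-\tfrac72\sqrt{5/3}$ (resp. $-\tfrac72\sqrt{5/3}$ with the appropriate sign). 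The two cases are related by an antiholomorphic symmetry (conjugation commutes with $B$ up to the $\iota$-symmetry of this configuration, matching the complex-conjugate pair of $C$'s off-diagonal $\sqrt{15}\,\mathrm i$ entries), so once the first is done the second follows by replacing $\mathrm i\mapsto-\mathrm i$ throughout; I would note this rather than redo the arithmetic.

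The main obstacle is simply bookkeeping accuracy in the $3\times3$ matrix products over $\mathbb{Q}(\sqrt{15}\,\mathrm i)$ — nothing conceptual, just the risk of an arithmetic slip in tracking the bottom row of $C^{-1}BC$, compounded by the need to keep the $\pm$ signs straight when passing to $CBC^{-1}$ and when iterating the $A$-action. Since this is a ``by direct calculation'' proposition of exactly the same flavor as the preceding Proposition~\ref{prop:center-radiusC}, I expect the proof to be two or three lines pointing to the computation of $C^{-1}BC$ and $CBC^{-1}$, the radius-invariance of the unipotent $A$, and the Heisenberg action \eqref{eq-A}, leaving the entrywise arithmetic to the reader (or to a footnote/symbolic-algebra remark).
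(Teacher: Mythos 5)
Your proposal is correct and follows exactly the route the paper intends (the paper offers no written proof here, asserting that the three center/radius propositions follow ``from the $A$-action on the Heisenberg group and matrix presentations of $B$ and $C$''): the bottom rows of $C^{-1}BC$ and $CBC^{-1}$ come out as $\left(-\tfrac32,\ \pm\tfrac{\sqrt{15}}{2}\mathrm{i},\ \tfrac54-\tfrac{7\sqrt{15}}{8}\mathrm{i}\right)$, which via $r_g=\sqrt{2/|g_{3,1}|}$ and the center formula gives radius $2/\sqrt3$ and the stated $k=0$ centers $[\pm\sqrt{5/3}\,\mathrm{i},\,-\tfrac72\sqrt{5/3}]$, and your reduction $I(A^kWA^{-k})=A^kI(W)$ together with $A^k:(z,t)\mapsto(z-2k,\,t+4k\,\mathrm{Im}(z))$ correctly produces the general-$k$ statement. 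One small caution about your proposed shortcut: the two words are not related by entrywise conjugation $\mathrm{i}\mapsto-\mathrm{i}$ (their $(3,3)$ entries are equal, not conjugate, and blindly conjugating would flip the sign of the $t$-coordinate of the second center, which in fact equals $-\tfrac72\sqrt{5/3}$ for both words at $k=0$), so the direct entrywise computation you propose as the primary route is the one to rely on.
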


\begin{prop}\label{prop:center-radiusCBCcBc}
	For any integer $k\in \mathbb{Z}$, the isometric spheres $I(A^{k}CBCA^{-k})$ and  $I(A^{k}C^{-1}BC^{-1}A^{-k})$ have the same radius $\sqrt{2}$, they are centered at  $$[-\frac{1+4k}{2}+\frac{\sqrt{15}}{2} \rm{i},\frac{4k-3}{2}\sqrt{15}]\in \mathbb{C} \times \mathbb{R}$$ and $$[\frac{1-4k}{2}-\frac{\sqrt{15}}{2} \rm{i},\frac{-4k-3}{2}\sqrt{15}]\in \mathbb{C} \times \mathbb{R}$$
		in the Heisenberg group respectively.
\end{prop}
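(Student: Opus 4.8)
The plan is to reduce the whole $k$-indexed family to the single base case $k=0$, and then to read the center and radius of the two base isometric spheres directly off the matrix entries, using the explicit description of an isometric sphere recalled in Section~\ref{sec:background}.

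For the reduction: by the elementary properties of isometric spheres in Section~\ref{sec:background}, $I(g'f)=f^{-1}I(g')$ and $I(fg)=I(g)$ whenever $f$ is unipotent and fixes $q_\infty$. Taking $f=A^{\pm k}$ and $g'=A^kg$ gives $I(A^kgA^{-k})=A^k\bigl(I(g)\bigr)$ for every $g\in\Sigma$. Now $A$ is unipotent and fixes $q_\infty=(1,0,0)^T$, so it is a Cygan isometry: it preserves the radii of Cygan spheres and moves their Heisenberg centers by its own Heisenberg action, which by (\ref{eq-A}) is $(z,t)\mapsto(z-2,\,t+4\,\mathrm{Im}(z))$, whence $A^k\colon(z,t)\mapsto(z-2k,\,t+4k\,\mathrm{Im}(z))$. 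Hence it suffices to locate $I(CBC)$ and $I(C^{-1}BC^{-1})$; the general formulas then follow by applying this explicit $A^k$-action to the $k=0$ centers, the radii being unchanged. For instance, feeding $z=-\tfrac12+\tfrac{\sqrt{15}}{2}\mathrm{i}$ (so $\mathrm{Im}(z)=\tfrac{\sqrt{15}}{2}$) and $t=-\tfrac32\sqrt{15}$ into the $A^k$-map yields exactly $\bigl[-\tfrac{1+4k}{2}+\tfrac{\sqrt{15}}{2}\mathrm{i},\,\tfrac{4k-3}{2}\sqrt{15}\bigr]$, and the other word is handled the same way.

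For $k=0$: since the $3\times3$ reflections $I_i$ of Subsection~\ref{subsec:matricesin2dim} have determinant $1$, every word in $A,B,C$ lies in $\mathbf{SU}(2,1)$, so the formula applies with no normalization subtlety. One forms $CBC$ from the matrices of $B$ and $C$ specialised to $h=\sqrt2$, and reads $I(g)$ as the Cygan sphere of radius $\sqrt{2/|g_{3,1}|}$ centered at $\bigl[\overline{g_{3,2}}/\overline{g_{3,1}},\,2\,\mathrm{Im}(\overline{g_{3,3}}/\overline{g_{3,1}})\bigr]$. The computation should give $|(CBC)_{3,1}|=1$, hence radius $\sqrt2$, and center $\bigl[-\tfrac12+\tfrac{\sqrt{15}}{2}\mathrm{i},\,-\tfrac32\sqrt{15}\bigr]$. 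For the second word it is cheapest to observe that $B^2=\mathrm{id}$ forces $C^{-1}BC^{-1}=(CBC)^{-1}$, and that for any $g\in\mathbf{SU}(2,1)$ the radii of $I(g)$ and $I(g^{-1})$ coincide, since $g^{-1}=Hg^{*}H$ ($g^*$ the Hermitian transpose, $H$ the form matrix) gives $|(g^{-1})_{3,1}|=|\overline{g_{3,1}}|=|g_{3,1}|$; this already yields the claimed common radius $\sqrt2$, and a one-line computation of $(CBC)^{-1}$ (equivalently $C^2BC^2$) produces the center $\bigl[\tfrac12-\tfrac{\sqrt{15}}{2}\mathrm{i},\,-\tfrac32\sqrt{15}\bigr]$.

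There is no genuine obstacle here — the statement is a short explicit matrix computation — and the only points requiring care are the bookkeeping ones: use the matrices normalized as in Subsection~\ref{subsec:matricesin2dim}, and remember that ``$g^{-1}(q_\infty)$'' in the center formula means the image of the \emph{standard} lift of $q_\infty$. The identical argument (reduce to $k=0$ via the $A^k$-action, then extract $g_{3,1},g_{3,2},g_{3,3}$) also proves Propositions~\ref{prop:center-radiusC} and~\ref{prop:center-radiuscBCCBc}, applied to $g=C$ and to $g\in\{C^{-1}BC,\ CBC^{-1}\}$ respectively.
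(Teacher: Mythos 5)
Your proposal is correct and matches the paper's (unwritten) argument: the paper simply asserts these propositions follow ``from the $A$-action on the Heisenberg group and matrix presentations of $B$ and $C$,'' which is exactly your reduction to $k=0$ via $I(A^kgA^{-k})=A^k(I(g))$ followed by reading off $(CBC)_{3,1}=-1$, $(CBC)_{3,2}=\tfrac{1+\mathrm{i}\sqrt{15}}{2}$, $(CBC)_{3,3}=2-\tfrac{3}{4}\mathrm{i}\sqrt{15}$ and using $C^{-1}BC^{-1}=(CBC)^{-1}$; I verified the resulting centers and the radius $\sqrt{2}$ agree with the stated formulas.
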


Secondly, we consider the intersections of  $I(C)$ with other isometric spheres. 

\begin{prop}\label{prop:C} For the isometric sphere $I(C)$ of $C$, we have
	
	\begin{enumerate}
		\item  \label{item:CandClarge}
   $I(C)$  does not intersect the  isometric sphere  $I(A^{k}CA^{-k})$ for $|k| > 2$;
  	\item \label{item:CandCsmall} $I(C)$ is tangent to the  isometric sphere  $I(A^{k}CA^{-k})$ for $k= \pm 2$. Moreover, these two tangent points lie in the interiors  of the isometric spheres  $I(ACA^{-1})$ and $I(A^{-1}CA)$ respectively;

  	\item \label{item:CandCBC}  $I(C)$ is disjoint from $I(A^{k}CBCA^{-k})$  for all  $k \in \mathbb{Z}$ except when $k=0,1$. Moreover, $I(C) \cap I(CBC)$ is in the interior of $I(A^{-1}CA)$;
  	
  	\item  \label{item:CandinverseCBinverseC} $I(C)$ is disjoint from $I(A^{k}C^{-1}BC^{-1}A^{-k})$  for all $k \in \mathbb{Z}$ except $k=0,1$. Moreover, $I(C) \cap I(AC^{-1}BC^{-1}A^{-1})$ is in the interior of $I(ACA^{-1})$;
  	
  	 	\item \label{item:CandCBinverseC}
  	 	$I(C)$ is disjoint from $I(A^{k}CBC^{-1}A^{-k})$  for all  $k \in \mathbb{Z}$ except when $k=0,1$; Moreover, $I(C) \cap I(ACBC^{-1}A^{-1})$ is in the interior of $I(ACA^{-1})$;

  	 		\item \label{item:CandinverseCBC} $I(C)$ is disjoint from $I(A^{k}C^{-1}BCA^{-k})$  for all  $k \in \mathbb{Z}$ except when $k=0,1$. Moreover, $I(C) \cap I(C^{-1}BC)$ is in the interior of $I(A^{-1}CA)$.
  	
  	 \end{enumerate}
		\end{prop}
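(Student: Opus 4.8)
The plan is to verify Proposition~\ref{prop:C} by explicit computation with the Cygan (extended Cygan) metric, using the normal forms already fixed in Subsection~\ref{subsec:matricesin2dim} and the centre/radius data from Propositions~\ref{prop:center-radiusC}--\ref{prop:center-radiusCBCcBc}. Recall that an isometric sphere $I(g)$ is the Cygan sphere with centre $g^{-1}(q_\infty)\in\mathcal N$ and radius $r_g=\sqrt{2/|g_{n+1,1}|}$, and two Cygan spheres $S_{[z_1,t_1]}(r_1)$ and $S_{[z_2,t_2]}(r_2)$ are disjoint, tangent, or intersect according to the sign of $d_{\mathrm{Cyg}}\big([z_1,t_1],[z_2,t_2]\big)^2-(r_1^2+r_2^2)$ being positive, zero, or negative (this is the usual comparison for Cygan balls, together with the fact from Goldman and Parker--Will that the intersection of two Cygan spheres is connected, so no subtlety arises). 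Thus each of the six items reduces to comparing a single real number $d_{\mathrm{Cyg}}^2$ against a sum of two squared radii, as a function of the integer parameter $k$.

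First I would set up the Cygan-distance formula between the relevant centres. Using \eqref{eq:cygan-metric}, for centres $p=[z_1,t_1]$ and $q=[z_2,t_2]$ in the Heisenberg group one has $d_{\mathrm{Cyg}}(p,q)^2=\big||z_1-z_2|^2-\mathrm i(t_1-t_2+2\,\mathrm{Im}(z_1\overline{z_2}))\big|$, which is a concrete polynomial in $k$ once the coordinates are substituted from Propositions~\ref{prop:center-radiusC}--\ref{prop:center-radiuscBCCBc}. For item~(1) I would compute $d_{\mathrm{Cyg}}\big(\text{centre of }I(C),\text{centre of }I(A^kCA^{-k})\big)^2$, which involves $|{-1}-(-2k-1)|^2=4k^2$ for the horizontal part and a vertical part that vanishes (both centres have the same $t$-coordinate $-\tfrac{\sqrt{15}}2$), giving $d_{\mathrm{Cyg}}^2=4k^2$; comparing with $r_C^2+r_{A^kCA^{-k}}^2=2+2=4$ shows $4k^2>4$ iff $|k|>1$, disjoint for $|k|\ge2$ --- wait, one must be careful that $|k|=2$ gives equality, hence tangency, which is precisely item~(2). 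For item~(2), after locating the tangency point explicitly (it lies on the real spine / can be found as the unique common point), I would check it lies in the interior of $I(A^{\pm1}CA^{\mp1})$ by evaluating the defining inequality \eqref{eq:exterior} of the interior at that point; this is one inequality to verify numerically with $h=\sqrt2$. Items~(3)--(6) follow the identical template: substitute the centre/radius data for $I(A^kCBCA^{-k})$, $I(A^kC^{-1}BC^{-1}A^{-k})$, $I(A^kCBC^{-1}A^{-k})$, $I(A^kC^{-1}BCA^{-k})$ from Propositions~\ref{prop:center-radiuscBCCBc}--\ref{prop:center-radiusCBCcBc}, form $d_{\mathrm{Cyg}}^2-(r^2+r^2)$ as a quadratic (or similar) in $k$, find its integer zeros/sign changes to pin down the exceptional values $k=0,1$, and then for the single surviving intersection in each case evaluate the interiority inequality for $I(A^{\mp1}CA^{\pm1})$ at a representative point of that intersection (or, more robustly, show the whole intersection curve lies in that interior by a monotonicity/estimate argument on the Cygan norm).

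The main obstacle I expect is not the disjointness part --- those are clean polynomial inequalities in $k$ --- but the \emph{interiority} claims, i.e. showing that the surviving bisector intersections $I(C)\cap I(CBC)$, $I(C)\cap I(AC^{-1}BC^{-1}A^{-1})$, $I(C)\cap I(ACBC^{-1}A^{-1})$, $I(C)\cap I(C^{-1}BC)$, and the two tangency points, all lie strictly inside $I(A^{\pm1}CA^{\mp1})$. A single point is easy, but to get the whole curve one needs either an explicit parametrization of the Giraud disk $I(C)\cap I(\cdot)$ in spinal coordinates \eqref{equaation:girauddisk} and then a sign check of $|\langle V(z_1,z_2),q_\infty\rangle|-|\langle V(z_1,z_2),(A^{\pm1}CA^{\mp1})^{-1}(q_\infty)\rangle|$ along it, or a softer argument bounding the Cygan distance from the intersection locus to $q_\infty$ versus to $(A^{\pm1}CA^{\mp1})^{-1}(q_\infty)$ uniformly. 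I would carry this out with the explicit value $h=\sqrt2$, where all coordinates become rational or lie in $\mathbb Q(\sqrt{15})$, so the inequalities are exact and can in principle be certified symbolically; the connectedness of Cygan-sphere intersections (the Proposition attributed to Goldman and Parker--Will) guarantees that checking the sign at the two boundary-at-infinity endpoints of each intersection circle, plus one interior point, suffices.
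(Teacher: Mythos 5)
Your overall plan---Cygan-distance estimates between centres to exclude all but finitely many $k$, then a spinal-coordinate parametrization of the surviving Giraud disks to verify the interiority claims---is the same as the paper's. The problem is the disjointness/tangency criterion you build everything on. Two Cygan spheres of radii $r_1,r_2$ are disjoint when the Cygan distance $d$ between their centres exceeds $r_1+r_2$ (the triangle inequality for the Cygan metric; this is what the paper uses throughout); your trichotomy based on the sign of $d^2-(r_1^2+r_2^2)$ is not equivalent to it and gives wrong answers here. For item (1) the correct comparison is $2|k|>2+2=4$, i.e.\ $|k|>2$, with $|k|=2$ the borderline case handled in item (2); your computation ($4k^2>4$, ``disjoint for $|k|\ge2$'') contradicts the tangency asserted in item (2), and you also used $r_C^2=2$ when the radius of $I(C)$ is $2$, so $r_C^2=4$. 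Worse, in item (3) your criterion yields exact equality at $k=1$ (there $d^2=6=r_C^2+r_{ACBCA^{-1}}^2$), i.e.\ ``tangency,'' whereas $I(C)\cap I(ACBCA^{-1})$ is a two-dimensional Giraud disk (it carries the ridge $s(C)\cap s(ACBCA^{-1})$ used later in the paper). So the criterion must be replaced by the sum-of-radii comparison before any of the exceptional sets of $k$ come out correctly.

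Two further points. Even with the correct comparison, $d=r_1+r_2$ does not by itself yield tangency: the Cygan metric is not a length metric, so equality in the triangle inequality does not automatically produce a (unique) common point. The paper proves item (2) by writing down the defining equations of $I(C)$ and $I(A^{2}CA^{-2})$ and checking that exactly one point, $(-3,-\tfrac{\sqrt{15}}{2},0)$, satisfies both; that point is the centre of $I(ACA^{-1})$, which settles the interiority claim for the tangent point immediately. Finally, for the ``Moreover'' clauses, your three-point shortcut (checking the sign of $|\langle V,q_\infty\rangle|-|\langle V,(A^{\mp1}CA^{\pm1})^{-1}(q_\infty)\rangle|$ at the two ideal endpoints plus one interior point) is not justified by connectedness alone: connectedness gives constancy of the sign only once you know the function does not vanish anywhere on the Giraud disk. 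The paper establishes that nonvanishing by globally maximizing $|\langle V,(A^{\mp1}C^{\mp1}A^{\pm1})(q_\infty)\rangle|^2$ over $\{\langle V,V\rangle\le0\}$ (in Maple) and comparing with the constant value of $|\langle V,q_\infty\rangle|^2$ on the parametrized torus, and only then evaluates one sample point. Your first proposed option---a sign check along the whole parametrized disk---is the right one; the shortcut is not.
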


\begin{proof}By Proposition \ref{prop:center-radiusC},  the Cygan  distance between the centers of $I(C)$ and $I(A^{k}CA^{-k})$  is $|2k|$.  The radius of these isometric spheres are $2$, so when $|k| > 2$, the isometric sphere  $I(C)$ does not intersect the  isometric sphere  $I(A^{k}CA^{-k})$. This ends the proof of (\ref{item:CandClarge}).

	  The defining equation of the isometric sphere   $I(C)$ is
	\begin{equation}\label{eq:cygan-sphereC}
	\left\{ (z,t,u)\in\hc \cup \partial\hc: ||z+1|^2+u+\rm{i}(t+\frac{\sqrt{15}}{2}-2 \cdot Im(z)|=4 \right\}.
	\end{equation}
The defining equation of the isometric sphere 	$I(A^{2}CA^{-2})$
		\begin{equation}\label{eq:cygan-sphereA2Ca2}
	\left\{ (z,t,u)\in\hc \cup \partial\hc: ||z+5|^2+u+\rm{i}(t+\frac{\sqrt{15}}{2}-10 \cdot Im(z)|=4 \right\}.
	\end{equation}
Then it is easy to see the point $P$ with horospherical coordinates $$(z,t,u)=(-3,-\frac{\sqrt{15}}{2},0)$$ is the only point  which  satisfies both  	 (\ref{eq:cygan-sphereC})  and (\ref{eq:cygan-sphereA2Ca2}). The point $P$ is the center of the isometric sphere $I(ACA^{-1})$, so it lies in the interior of the isometric sphere $I(ACA^{-1})$. The tangent point between  the isometric spheres $I(C)$ and  $I(A^{-2}CA^{2})$ has similar behavior. This ends the proof of (\ref{item:CandCsmall}).

From Propositions \ref{prop:center-radiusC} and  \ref{prop:center-radiusCBCcBc}, we know the Cygan distance between the center of the isometric spheres $I(C)$ and $I(A^{k}CBCA^{-k})$ is
$$\left| |\frac{4k-1}{2}-\frac{\sqrt{15}}{2} \rm{i}|^2+(2-2k)\sqrt{15}\rm{i}\right|^{\frac{1}{2}},$$
it is $$(16k^4-16k^3+96k^2-136k+76)^{\frac{1}{4}}.$$
Which is bigger than $2+\sqrt{2}$ for any  $k \in \mathbb{Z}$ except $k=0,1$.

Moreover, consider the intersection of $I(C)$ and  $I(CBC)$. 
In Equation (\ref{equaation:girauddisk}), we take ${\bf q}=C^{-1}(q_{\infty})$,  ${\bf r}=(CBC)^{-1}(q_{\infty})$ and  ${\bf p}=q_{\infty}$, then we can parameterize the intersection of the isometric spheres  $I(C)$ and $I(CBC)$  by $V=V(z_1,z_2)$ with $\langle V,V \rangle <0$.
Where
$$V=\left(\begin{array}{c}
\frac{1}{16}+\frac{7\rm{i}\sqrt{15}}{16}-\frac{\rm{e}^{r \rm{i}}(1+\rm{i}\sqrt{15})}{2}+\frac{\rm{e}^{s \rm{i}}}{2}
\\[ 6 pt]
-\frac{3}{4}+\frac{\rm{i}\sqrt{15}}{4}-\rm{e}^{r \rm{i}}+\frac{\rm{e}^{s \rm{i}}}{2}\\[ 6 pt]
-\frac{1}{4}+\frac{\rm{i}\sqrt{15}}{4}\\ \end{array}\right),$$
with $(z_1,z_2)=(\rm{e}^{r \rm{i}},\rm{e}^{s \rm{i}}) \in S^{1}\times S^1$.

We have $\langle V, V\rangle$  is
$$\frac{(-2\sin(r)+\sin(s))\sqrt{15}}{2}-(\cos(s)+2)\cos(r)-\sin(s)\sin(r)-\cos(s)+6.$$
Now $$\langle V, q_{\infty}\rangle \langle  q_{\infty}, V\rangle=1,$$
and $$\langle V, A^{-1}C^{-1}A(q_{\infty})\rangle \langle A^{-1}C^{-1}A(q_{\infty}), V\rangle $$ is
\begin{flalign}
\nonumber &
\frac{(\sin(r-s)-3\sin(r)+\sin(s))\sqrt{15}}{4}-\frac{3\cos(r)}{4}-\frac{3\cos(s)}{4}-\frac{3\cos(r+s)}{4}+\frac{13}{4}.
&
\end{flalign}
Using Maple, the maximum of $|\langle V, A^{-1}C^{-1}A(q_{\infty})\rangle|^2 $ is 0.7370031 numerically with the condition $\langle V ,V \rangle <0$. Which is smaller than $|\langle V,q_{\infty}\rangle|^2=1$. So the triple intersection $$I(C) \cap I(CBC) \cap I(A^{-1}CA)$$ is empty. Moreover, take a sample point $(r,s)=(\frac{\pi}{3}, \frac{15 \pi}{8})$, then it is easy to see $V(\rm{e}^{\frac{\pi \rm{i}}{3}},\rm{e}^{\frac{15\pi \rm{i}}{8}}) \in {\bf H}^2_{\mathbb C}$ and $V(\rm{e}^{\frac{\pi \rm{i}}{3}},\rm{e}^{\frac{15\pi \rm{i}}{8}})$ lies in the interior of $I(A^{-1}CA)$. So the Giraud disk $I(C) \cap I(CBC)$ lies entirely  in the interior of the isometric sphere $I(A^{-1}CA)$.  This proves (\ref{item:CandCBC}).

The proof  of (\ref{item:CandinverseCBinverseC}) is very similar to the proof of
(\ref{item:CandCBC}), even with the same functions at later steps,  we omit the details.

From Propositions  \ref{prop:center-radiusC} and \ref{prop:center-radiuscBCCBc}, we know the Cygan distance between the centers of the isometric spheres $I(C)$ and $I(A^{k}CBC^{-1}A^{-k})$ is
$$\left| |2k-1+\sqrt{\frac{5}{3}} \rm{i}|^2+\rm{i} \frac{4k\sqrt{15}}{3}\right|^{\frac{1}{2}},$$
it is $$(\frac{64}{9}-\frac{64k}{3}+64k^2-32k^3+16k^4)^{\frac{1}{4}}.$$
Which is bigger than $2+\frac{2}{\sqrt{3}}$ for any  $k \in \mathbb{Z}$ except $k=0,1$.

Moreover, consider the intersection of $I(C)$ and  $I(ACBC^{-1}A^{-1})$.
In Equation (\ref{equaation:girauddisk}), we take ${\bf q}=C^{-1}(q_{\infty})$,  ${\bf r}=ACBC^{-1}A^{-1}(q_{\infty})$ and  ${\bf p}=q_{\infty}$, then we can parameterize  the intersection of the isometric spheres  $I(C)$ and $I(ACBC^{-1}A^{-1})$  by $V=V(z_1,z_2)$ with $\langle V,V \rangle <0$.
Where
$$V=\left(\begin{array}{c}
-\frac{37}{16}+\frac{7\rm{i}\sqrt{15}}{16}+\frac{\rm{e}^{r \rm{i}}(-3+\rm{i}\sqrt{15})}{2}+\frac{\rm{e}^{s \rm{i}}}{2}
\\[ 6 pt]
-\frac{7}{4}+\frac{3\rm{i}\sqrt{15}}{4}-\frac{\rm{e}^{r \rm{i}}}{2}+\frac{\rm{e}^{s \rm{i}}}{2}\\[ 6 pt]
\frac{3}{4}-\frac{\rm{i}\sqrt{15}}{4}\\ \end{array}\right),$$
with $(z_1,z_2)=(\rm{e}^{r \rm{i}},\rm{e}^{s \rm{i}}) \in S^{1}\times S^1$.

We have $\langle V, V\rangle$  is
$$\frac{(-3\sin(r)+\sin(s))\sqrt{15}}{2}-\frac{(3\cos(s)+6)\cos(r)}{2}-\frac{3\sin(s)\sin(r)}{2}-\cos(s)+\frac{29}{4}.$$
Now $$\langle V, q_{\infty}\rangle \langle  q_{\infty}, V\rangle=\frac{3}{2},$$
and $$\langle V, A^{-1}C^{-1}A(q_{\infty})\rangle \langle A^{-1}C^{-1}A(q_{\infty}), V\rangle $$ is
\begin{flalign}
\nonumber &\frac{\sqrt{15}(\sin(s)-\sin(r))}{4}+\frac{\sqrt{15}\sin(r-s)}{4}-\frac{3\cos(r-s)}{4}-\frac{3\cos(r)}{2}-\frac{\cos(s)}{4}+\frac{11}{4}.
&
\end{flalign}
Using Maple, the maximum of $|\langle V, A^{-1}C^{-1}A(q_{\infty})\rangle|^2 $ is 1.30600826 numerically with the condition $\langle V ,V \rangle <0$. Which is smaller than $|\langle V,q_{\infty}\rangle|^2=\frac{3}{2}$. So the triple intersection $$I(C) \cap I(ACBC^{-1}A^{-1}) \cap I(ACA^{-1})$$ is empty. Moreover, take a sample point $(r,s)=(\frac{\pi}{3}, 0)$, then it is easy to see $V(\rm{e}^{\frac{\pi \rm{i}}{3}},\rm{e}^{0 \cdot \rm{i}}) \in {\bf H}^2_{\mathbb C}$ and $V(\rm{e}^{\frac{\pi \rm{i}}{3}},\rm{e}^{0 \cdot \rm{i}})$ lies in the interior of $I(ACA^{-1})$. So the Giraud disk $I(C) \cap I(ACBC^{-1}A^{-1})$ lies entirely  in the interior of the isometric sphere $I(ACA^{-1})$.  This proves (\ref{item:CandCBinverseC}).

The proof of (\ref{item:CandinverseCBC}) is very similar to the proof of (\ref{item:CandCBinverseC}), we omits the details.
\end{proof}

\begin{prop}\label{prop:inverseCBinverseC} For the isometric sphere $I(C^{-1}BC^{-1})$, we have
	
	\begin{enumerate}
		\item  \label{item:inverseCBinverseCandCBeC}
	 $I(C^{-1}BC^{-1})$ does not intersect the  isometric sphere $I(A^{k}CBCA^{-k})$ for any $k \in \mathbb{Z}$;
		\item \label{item:inverseCBinverseCandinverseCBC}
		 $I(C^{-1}BC^{-1})$ does not intersect the  isometric sphere  $I(A^{k}C^{-1}BCA^{-k})$ for any $k \in \mathbb{Z}$;
		
		\item  \label{item:inverseCBinverseCandinverseCBinverseC}
		
	 $I(C^{-1}BC^{-1})$ does not intersect the  isometric sphere  $I(A^{k}C^{-1}BC^{-1}A^{-k})$ for any non-zero $k \in \mathbb{Z}$;
		
		\item  \label{item:inverseCBinverseCandCBinverseC}
		 $I(C^{-1}BC^{-1})$ does not intersect the  isometric sphere  $I(A^{k}CBC^{-1}A^{-k})$ for any non-zero $k \in \mathbb{Z}$.

	\end{enumerate}
\end{prop}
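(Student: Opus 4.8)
The plan is to argue exactly as in the proofs of Propositions \ref{prop:center-radiusC}--\ref{prop:center-radiusCBCcBc} and of Proposition \ref{prop:C}, purely by comparing Cygan radii with Cygan distances between centres. Recall that an isometric sphere $I(g)$ together with its interior is precisely the closed ball of the (extended) Cygan metric centred at $g^{-1}(q_\infty)$ with radius $r_g$; since the Cygan metric satisfies the triangle inequality, two isometric spheres $I(g)$ and $I(h)$ are disjoint as soon as
$$d_{\textrm{Cyg}}\big(g^{-1}(q_\infty),\,h^{-1}(q_\infty)\big)\;>\;r_g+r_h.$$
Thus each of the four items (\ref{item:inverseCBinverseCandCBeC})--(\ref{item:inverseCBinverseCandCBinverseC}) reduces to one distance estimate, in which all centres and radii are read off from Propositions \ref{prop:center-radiuscBCCBc} and \ref{prop:center-radiusCBCcBc}. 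In particular $I(C^{-1}BC^{-1})$ is the $k=0$ member of the family in Proposition \ref{prop:center-radiusCBCcBc}, with centre $[\frac{1}{2}-\frac{\sqrt{15}}{2}\mathrm{i},\,-\frac{3}{2}\sqrt{15}]$ and radius $\sqrt{2}$; in items (\ref{item:inverseCBinverseCandCBeC}) and (\ref{item:inverseCBinverseCandinverseCBinverseC}) the competing sphere also has radius $\sqrt{2}$, and in items (\ref{item:inverseCBinverseCandinverseCBC}) and (\ref{item:inverseCBinverseCandCBinverseC}) it has radius $\frac{2}{\sqrt{3}}$, so the threshold $r_g+r_h$ that must be beaten is $2\sqrt{2}$ in the first two items and $\sqrt{2}+\frac{2}{\sqrt{3}}$ in the last two.

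For each item I would substitute the two centres into the Heisenberg formula for $d_{\textrm{Cyg}}$, taking care with the sign of the $2\,{\rm Im}\langle\langle\cdot,\cdot\rangle\rangle$ term. In every case $d_{\textrm{Cyg}}^{2}$ comes out as the square root of an explicit polynomial $P_i(k)$ of degree at most $4$ in $k$ with positive leading coefficient, so the required inequality $d_{\textrm{Cyg}}>r_g+r_h$ becomes a polynomial inequality $P_i(k)>(r_g+r_h)^{4}$ which has to be checked only over $\mathbb{Z}$ (items (\ref{item:inverseCBinverseCandCBeC}) and (\ref{item:inverseCBinverseCandinverseCBC})) or over $\mathbb{Z}\setminus\{0\}$ (items (\ref{item:inverseCBinverseCandinverseCBinverseC}) and (\ref{item:inverseCBinverseCandCBinverseC})); since $P_i$ grows like $k^{4}$ this reduces to locating its finitely many real roots, or to a short verification in Maple. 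As an anchor for the extremal cases: in item (\ref{item:inverseCBinverseCandCBeC}) the value $k=0$ corresponds to $I(C^{-1}BC^{-1})$ against $I(CBC)$, whose centres turn out to be at Cygan distance exactly $4>2\sqrt{2}$; in item (\ref{item:inverseCBinverseCandinverseCBinverseC}) the excluded value $k=0$ is the sphere itself; and in item (\ref{item:inverseCBinverseCandCBinverseC}) the excluded value $k=0$ gives $I(CBC^{-1})$, which genuinely does meet $I(C^{-1}BC^{-1})$, as recorded in Table \ref{table:intersecton}.

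I do not expect any conceptual obstacle here; the content is bookkeeping. The one point needing care is that in items (\ref{item:inverseCBinverseCandCBeC}) and (\ref{item:inverseCBinverseCandinverseCBC}) there is no exceptional value of $k$ --- even the closest pair of centres, at $k=0$, must already be strictly farther apart than $r_g+r_h$ --- so $P_i(0)$, and also the values $P_i(\pm1)$, should be evaluated exactly rather than estimated before one invokes the growth of $P_i$ for large $|k|$. Once these four polynomial inequalities are confirmed, the triangle-inequality criterion stated above yields disjointness in all four cases, which is the assertion of the proposition.
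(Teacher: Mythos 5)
Your proposal is correct and follows essentially the same route as the paper: the paper likewise reduces each item to a single Cygan-distance computation between the centres given in Propositions \ref{prop:center-radiuscBCCBc} and \ref{prop:center-radiusCBCcBc} and compares with the sum of the radii (obtaining, e.g., $(4k^2+4k+16)^{1/2}$ in item (\ref{item:inverseCBinverseCandCBeC}), which equals $4$ at $k=0$ as you predict). Your bookkeeping of the thresholds is in fact slightly more careful than the paper's, which in item (\ref{item:inverseCBinverseCandCBeC}) quotes the bound $\sqrt{2}+\tfrac{2}{\sqrt{3}}$ where $2\sqrt{2}$ is the relevant sum of radii; since the distances computed comfortably exceed $2\sqrt{2}$, the conclusion is unaffected.
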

\begin{proof}

From Proposition  \ref{prop:center-radiusCBCcBc}, the Cygan distance between the centers of the isometric spheres  $I(C^{-1}BC^{-1})$ and $I(A^{k}CBCA^{-k})$ is
$$\left| |(2k+1)-\sqrt{15} \rm{i}|^2+\rm{i}\left(-2k\sqrt{15}+2 Im \left(\left(\frac{1}{2}-\frac{\sqrt{15}}{2}\rm{i}\right)\left(-\frac{1+4k}{2}-\frac{\sqrt{15}}{2}\rm{i}\right)\right)\right)\right|^{\frac{1}{2}}.$$
Which is
\begin{equation}\label{eq:cBcAkCBCak}
(4k^2+4k+16)^{\frac{1}{2}}.
\end{equation}
It is now easy to see the term in (\ref{eq:cBcAkCBCak}) is bigger than  $\sqrt{2}+\frac{2}{\sqrt{3}}$ for any $k \in \mathbb{Z}$, so the isometric sphere  $I(C^{-1}BC^{-1})$ does not intersect the  isometric sphere  $I(A^{k}CBCA^{-k})$ for any $k \in \mathbb{Z}$. This proves (\ref{item:inverseCBinverseCandCBeC}).

From Propositions \ref{prop:center-radiusCBCcBc}  and \ref{prop:center-radiuscBCCBc},  the Cygan distance between the centers of the isometric spheres  $I(C^{-1}BC^{-1})$ and $I(A^{k}C^{-1}BCA^{-k})$ is
$$\left| |(2k+\frac{1}{2})-\sqrt{\frac{5}{3}} \rm{i}-\frac{\sqrt{15}}{2} \rm{i}|^2+\rm{i}\left(-\frac{3\sqrt{15}}{2}+\frac{(7-8k)}{2}\sqrt{\frac{5}{3}}+ Im\left(\left(\frac{1}{2}-\frac{\sqrt{15}}{2}\rm{i}\right)\left(-2k-\sqrt{\frac{5}{3}}\rm{i}\right)\right)\right)\right|^{\frac{1}{2}}.$$
Which is
\begin{equation}\label{eq:cBcAkCBcak}
(16k^4+16k^3+\frac{448}{3}k^2-\frac{172k}{3}+\frac{1399}{9})^{\frac{1}{4}}.
\end{equation}
It can be showed that the term in (\ref{eq:cBcAkCBcak})  is bigger than $\sqrt{2}+\frac{2}{\sqrt{3}}$ for any  $k \in \mathbb{Z}$. So  the isometric sphere  $I(C^{-1}BC^{-1})$ does not intersect the  isometric sphere  $I(A^{k}C^{-1}BCA^{-k})$ for any $k \in \mathbb{Z}$. This proves (\ref{item:inverseCBinverseCandinverseCBC}).


From   Proposition \ref{prop:center-radiusCBCcBc}, the Cygan distance between the center of the isometric spheres  $I(C^{-1}BC^{-1})$ and $I(A^{k}C^{-1}BC^{-1}A^{-k})$ is
$$\left|2k^2+4k\sqrt{15} \rm{i}\right|^{\frac{1}{2}}.$$
It is now easy to see the isometric sphere $I(C^{-1}BC^{-1})$ does not intersect the  isometric sphere  $I(A^{k}C^{-1}BC^{-1}A^{-k})$ for any non-zero $k \in \mathbb{Z}$. This proves (\ref{item:inverseCBinverseCandinverseCBinverseC}).

Note that the sum of the radius of  $C^{-1}BC^{-1}$ and $A^{k}CBC^{-1}A^{-k}$  is $\sqrt{2}+\frac{2}{\sqrt{3}}$, which is 2.568914101 numerically.
From Propositions \ref{prop:center-radiusCBCcBc}  and  \ref{prop:center-radiuscBCCBc},  the Cygan distance between the center of the isometric spheres  $I(C^{-1}BC^{-1})$ and $I(A^{k}CBC^{-1}A^{-k})$ is
$$\left| |(2k+\frac{1}{2})+\sqrt{\frac{5}{3}} \rm{i}-\frac{\sqrt{15}}{2} \rm{i}|^2+\rm{i}\left(-\frac{3\sqrt{15}}{2}+\frac{(7+8k)}{2}\sqrt{\frac{5}{3}}+ Im\left(\left(\frac{1}{2}-\frac{\sqrt{15}}{2}\rm{i}\right)\left(-2k+\sqrt{\frac{5}{3}}\rm{i}\right) \right) \right)\right|^{\frac{1}{2}}.$$
Which is
\begin{equation}\label{eq:cBcAkCBcak}
(176k^2+\frac{4}{9}+\frac{8k}{3}+16k^4+16k^3)^{\frac{1}{4}}.
\end{equation}
The term in (\ref{eq:cBcAkCBcak}) is 0.8164965807   numerically when $k=0$, and it is bigger than $\sqrt{2}+\frac{2}{\sqrt{3}}$ for any non-zero $k \in \mathbb{Z}$. So  the isometric sphere $I(C^{-1}BC^{-1})$ does not intersect the  isometric sphere  $I(A^{k}CBC^{-1}A^{-k})$ for any $|k| \geq 1$. This proves (\ref{item:inverseCBinverseCandCBinverseC}).
\end{proof}

The proof of Proposition \ref{prop:CBC} is similar to the proof of  Proposition \ref{prop:inverseCBinverseC}, we omit the details.

\begin{prop}\label{prop:CBC} For the isometric sphere $I(CBC)$, we have
	
	\begin{enumerate}
		\item  \label{item:CBCandCBinverseC}
		 $I(CBC)$ does not intersect the  isometric sphere  $I(A^{k}CBC^{-1}A^{-k})$ for any $k \in \mathbb{Z}$;
		\item  \label{item:CBCandinverseCBinverseC}
	 $I(CBC)$ does not intersect the  isometric sphere  $I(A^{k}C^{-1}BC^{-1}A^{-k})$ for any $k \in \mathbb{Z}$;
		\item  \label{item:CBCandinverseCBinverseC}
		
	 $I(CBC)$ does not intersect the  isometric sphere  $I(A^{k}CBCA^{-k})$ for any non-zero $k \in \mathbb{Z}$;
		
		\item  \label{item:CBCandinverseCBinverseC}
	 $I(CBC)$ does not intersect the  isometric sphere  $I(A^{k}C^{-1}BCA^{-k})$ for any non-zero $k \in \mathbb{Z}$.

	\end{enumerate}
\end{prop}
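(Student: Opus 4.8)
The plan is to follow the proof of Proposition \ref{prop:inverseCBinverseC} line for line, with $I(C^{-1}BC^{-1})$ replaced by $I(CBC)$. By Proposition \ref{prop:center-radiusCBCcBc} (the $k=0$ case), $I(CBC)$ is the Cygan sphere of radius $\sqrt 2$ centered at $[-\frac12+\frac{\sqrt{15}}{2}\mathrm{i},-\frac{3\sqrt{15}}{2}]\in\mathbb C\times\mathbb R$. First I would record, using Propositions \ref{prop:center-radiusC}, \ref{prop:center-radiuscBCCBc} and \ref{prop:center-radiusCBCcBc}, the centers and radii of the four families $I(A^{k}CBC^{-1}A^{-k})$, $I(A^{k}C^{-1}BC^{-1}A^{-k})$, $I(A^{k}CBCA^{-k})$, $I(A^{k}C^{-1}BCA^{-k})$, and note that the sum of radii one must beat is $\sqrt2+\tfrac{2}{\sqrt3}$ in items (1) and (4) and $2\sqrt 2$ in items (2) and (3). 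The value $k=0$ excluded in items (3) and (4) is genuine: in item (3) it gives $I(CBC)$ itself, and in item (4) it gives the face $I(CBC)\cap I(C^{-1}BC)$ recorded in Table \ref{table:intersecton}.

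For each of the four items and each relevant $k\in\mathbb Z$, I would compute the Cygan distance between the center of $I(CBC)$ and the center of the sphere in question directly from \eqref{eq:cygan-metric}; in each case this produces an explicit radical in $k$, of the shape $(P(k))^{1/4}$ with $P$ a polynomial of degree at most four in $k$ with positive leading coefficient (for instance the horizontal part of the squared distance is $(2k-1)^2+15$ in item (2) and $4k^2$ in item (3)). It then remains to verify the one-variable polynomial inequality that this distance exceeds the relevant sum of radii, for all $k$ in items (1), (2) and for all non-zero $k$ in items (3), (4); since the underlying polynomials are monotone in $|k|$ outside a bounded range, this reduces to a finite check. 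One point worth flagging: in item (3) with $k=\pm1$ the horizontal contribution $|z-w|^2=4$ alone is already smaller than $(2\sqrt2)^2=8$, so the vertical (imaginary) term in \eqref{eq:cygan-metric} cannot be dropped — retaining it gives Cygan distance $4>2\sqrt2$, and the estimate closes.

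The main obstacle is thus bookkeeping with exact algebraic quantities rather than anything conceptual: several of these inequalities are tight. For example, in item (4) with $k=-1$ the Cygan distance between centers is at least $\sqrt{20/3}=\tfrac{2\sqrt{15}}{3}\approx 2.582$, only a hair above the bound $\sqrt2+\tfrac{2}{\sqrt3}\approx 2.569$, so decimal approximations are unsafe and one must argue with the exact radicals. Should any center-distance comparison fail to close — it does not here, but this is the standard remedy used for the $I(C)$-intersections earlier in this section — I would instead parametrize the hypothetical bisector intersection in spinal coordinates by $V(z_1,z_2)$ as in \eqref{equaation:girauddisk} and show $\langle V(z_1,z_2),V(z_1,z_2)\rangle\ge 0$ on the relevant circle in $S^1\times S^1$, so the putative Giraud disk does not meet ${\bf H}^2_{\mathbb C}$, exactly as in the proof of Proposition \ref{prop:C}. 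Finally, since $A$ acts by Cygan isometries permuting the spheres in $R$, these four items account for all intersections of $I(CBC)$ with spheres of $R$ up to the $A$-action, which is what the analysis of $D_R$ requires.
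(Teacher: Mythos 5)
Your proposal is correct and is exactly the argument the paper intends: the paper omits this proof, stating only that it is "similar to the proof of Proposition \ref{prop:inverseCBinverseC}", i.e.\ the comparison of Cygan distances between sphere centers with sums of radii, which is precisely what you carry out. Your numerical details check out, including the two genuinely delicate points — that in item (3) with $k=\pm1$ the vertical term is needed (giving Cygan distance $4>2\sqrt2$) and that in item (4) with $k=-1$ the bound reduces to the exact inequality $100>96$ — so your reconstruction is a faithful and complete version of the omitted proof.
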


We also have 
\begin{prop}\label{prop:CBinverseC} For the isometric sphere $I(CBC^{-1})$, we have
	
	\begin{enumerate}
		\item  \label{item:CBinverseCandinverseCBC}
		 $I(CBC^{-1})$ does not intersect the  isometric sphere  $I(A^{k}C^{-1}BCA^{-k})$ for any $k \in \mathbb{Z}$;
		\item  \label{item:CBinverseCandinverseCBinverseC}
		 $I(CBC^{-1})$ does not intersect the  isometric sphere  $I(A^{k}C^{-1}BC^{-1}A^{-k})$ for any non-zero $k \in \mathbb{Z}$;
		
		\item  \label{item:CBinverseCandCBinverseC}
		 $I(CBC^{-1})$ does not intersect the  isometric sphere  $I(A^{k}CBC^{-1}A^{-k})$ for any non-zero $k \in \mathbb{Z}$;
		
		\item  \label{item:CBinverseCandCBC}
	 $I(CBC^{-1})$ does not intersect the  isometric sphere  $I(A^{k}CBCA^{-k})$ for any $k \in \mathbb{Z}$.

	\end{enumerate}
\end{prop}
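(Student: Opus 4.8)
The proof follows exactly the pattern of Propositions \ref{prop:inverseCBinverseC} and \ref{prop:CBC}: in each of the four cases I would compute the Cygan distance between the centres of the two isometric spheres, using the centre data of Propositions \ref{prop:center-radiusC}, \ref{prop:center-radiuscBCCBc} and \ref{prop:center-radiusCBCcBc} together with the Cygan metric formula (\ref{eq:cygan-metric}), and then compare it with the sum of the two radii. Recall $I(CBC^{-1})$ has radius $\frac{2}{\sqrt{3}}$ and, by Proposition \ref{prop:center-radiuscBCCBc} (case $k=0$), centre $[-\sqrt{\frac{5}{3}}\,\mathrm{i},\,-\frac{7}{2}\sqrt{\frac{5}{3}}]$ in the Heisenberg group.

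For item (\ref{item:CBinverseCandinverseCBC}) the second sphere $I(A^{k}C^{-1}BCA^{-k})$ also has radius $\frac{2}{\sqrt{3}}$, so disjointness follows once the centre-distance exceeds $\frac{4}{\sqrt{3}}$; substituting the two centres into (\ref{eq:cygan-metric}) yields an expression of the shape $\bigl(\text{degree-}4\text{ polynomial in }k\bigr)^{1/4}$ with positive leading coefficient --- just like the quartics appearing in Proposition \ref{prop:inverseCBinverseC} --- and one checks it strictly exceeds $\frac{4}{\sqrt{3}}$ for every $k\in\mathbb{Z}$, the minimum being attained at $k=0$ or $k=1$. Items (\ref{item:CBinverseCandCBinverseC}) and (\ref{item:CBinverseCandCBC}) are identical in form: in (\ref{item:CBinverseCandCBinverseC}) the second sphere again has radius $\frac{2}{\sqrt{3}}$ and we need the centre-distance to beat $\frac{4}{\sqrt{3}}$ for every non-zero $k$ (the case $k=0$ being the sphere itself), and in (\ref{item:CBinverseCandCBC}) the second sphere has radius $\sqrt{2}$, so we need the centre-distance to beat $\sqrt{2}+\frac{2}{\sqrt{3}}$ for every $k\in\mathbb{Z}$; in both cases this reduces to one explicit polynomial inequality in $k$.

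Item (\ref{item:CBinverseCandinverseCBinverseC}) deserves a separate word, since here the second sphere $I(C^{-1}BC^{-1})$ has radius $\sqrt{2}$ (sum of radii $\sqrt{2}+\frac{2}{\sqrt{3}}$) and disjointness is claimed only for \emph{non-zero} $k$. For $k=0$ the intersection $I(CBC^{-1})\cap I(C^{-1}BC^{-1})$ is genuinely non-empty --- it is one of the ridges of $D_{R}$ recorded in Table \ref{table:intersecton} and is studied separately --- so it must be excluded. For non-zero $k$ the centre-distance computed from Proposition \ref{prop:center-radiuscBCCBc} is again the fourth root of a positive-leading polynomial in $k$ whose value already exceeds $\sqrt{2}+\frac{2}{\sqrt{3}}$ at $k=\pm 1$ and increases from there, which gives the claim.

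As in Propositions \ref{prop:inverseCBinverseC} and \ref{prop:CBC}, the only real work is the small-$|k|$ bookkeeping: the centre-distance bounds are generous for $|k|\geq 2$ but tight for $|k|\in\{0,1\}$, so those finitely many inequalities have to be checked exactly rather than asymptotically, and that is where I expect the only obstacle to lie. Should any of them be too close to decide disjointness by the coarse ``centre-distance versus sum of radii'' test, the fallback --- exactly as in the proof of Proposition \ref{prop:C} --- is to parametrise the corresponding Giraud disk by $V(z_1,z_2)$ via (\ref{equaation:girauddisk}) with $\mathbf{p}=q_\infty$ and show that either $\{\langle V,V\rangle<0\}$ is empty or that it lies inside the interior of some other bounding isometric sphere; in the present proposition, however, I anticipate the plain centre-distance estimates suffice in every case.
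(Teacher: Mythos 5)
Your proposal is correct and is essentially the paper's argument: each claim is settled by comparing the Cygan distance between the centres (Propositions \ref{prop:center-radiuscBCCBc} and \ref{prop:center-radiusCBCcBc}) with the sum of the radii, and the plain centre-distance test does suffice in every case, with the $k=0$ ridge correctly excluded in item (\ref{item:CBinverseCandinverseCBinverseC}). The only difference is cosmetic: the paper does not recompute items (\ref{item:CBinverseCandinverseCBinverseC}) and (\ref{item:CBinverseCandCBC}) but deduces them from Propositions \ref{prop:inverseCBinverseC} and \ref{prop:CBC} via the $A$-translation of isometric spheres, carrying out the explicit distance estimate only for items (\ref{item:CBinverseCandinverseCBC}) and (\ref{item:CBinverseCandCBinverseC}).
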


\begin{proof}From Propositions  \ref{prop:CBC}  and \ref{prop:CBinverseC}, we only need to prove (\ref{item:CBinverseCandinverseCBC}) and (\ref{item:CBinverseCandCBinverseC}) of Proposition \ref{prop:CBinverseC}.
	
	From Proposition \ref{prop:center-radiusCBCcBc},  the Cygan distance between the centers of the isometric spheres  $I(CBC^{-1})$ and $I(A^{k}C^{-1}BCA^{-k})$ is
	$$ 2\sqrt{\frac{5}{3}+k^2},$$
	which is bigger than $2 \cdot \frac{2}{\sqrt{3}}$ for any $k \in \mathbb{Z}$,
		so the isometric spheres  $I(CBC^{-1})$ and $I(A^{k}C^{-1}BCA^{-k})$ do not intersect for any  $k \in \mathbb{Z}$.
		
		From Proposition \ref{prop:center-radiusCBCcBc},  the Cygan distance between the centers of the isometric spheres  $I(CBC^{-1})$ and $I(A^{k}CBC^{-1}A^{-k})$ is
		$$2\sqrt{|\frac{2 \sqrt{15}k\rm{i}}{3}+k^2|},$$
		which is bigger than $2 \cdot \frac{2}{\sqrt{3}}$ for any non-zero $k \in \mathbb{Z}$,
			so the isometric spheres  $I(CBC^{-1})$ and $I(A^{k}CBC^{-1}A^{-k})$ do not intersect for any non-zero  $k \in \mathbb{Z}$.
			
\end{proof}

The proof of the  follow Proposition  \ref{prop:inverseCBC} is similar, we omit the detailed calculations.
\begin{prop}\label{prop:inverseCBC} For the isometric sphere $I(C^{-1}BC)$, we have
	
	\begin{enumerate}
		\item  \label{item:CBCandCBinverseC}
		 $I(C^{-1}BC)$ does not intersect the  isometric sphere  $I(A^{k}CBCA^{-k})$ for any non-zero $k \in \mathbb{Z}$;
		\item  \label{item:CBCandinverseCBinverseC}
		 $I(C^{-1}BC)$ does not intersect the  isometric sphere  $I(A^{k}CBC^{-1}A^{-k})$ for any $k \in \mathbb{Z}$;
		\item  \label{item:CBCandinverseCBinverseC}
		
		 $I(C^{-1}BC)$ does not intersect the  isometric sphere  $I(A^{k}C^{-1}BCA^{-k})$ for any non-zero $k \in \mathbb{Z}$;
		
		\item  \label{item:CBCandinverseCBinverseC}
		 $I(C^{-1}BC)$ does not intersect the  isometric sphere  $I(A^{k}C^{-1}BC^{-1}A^{-k})$ for any  $k \in \mathbb{Z}$.

	\end{enumerate}
\end{prop}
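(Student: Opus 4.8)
The plan is to repeat, verbatim, the center--distance comparison used in the proofs of Propositions \ref{prop:inverseCBinverseC}, \ref{prop:CBC} and \ref{prop:CBinverseC}. First I would record the geometric data. Taking $k=0$ in Proposition \ref{prop:center-radiuscBCCBc}, the isometric sphere $I(C^{-1}BC)$ has radius $\frac{2}{\sqrt3}$ and center $\left[\sqrt{\tfrac53}\,\mathrm{i},\,-\tfrac72\sqrt{\tfrac53}\right]$ in the Heisenberg group. By Proposition \ref{prop:center-radiusCBCcBc} the spheres $I(A^{k}CBCA^{-k})$ and $I(A^{k}C^{-1}BC^{-1}A^{-k})$ have radius $\sqrt2$, while by Proposition \ref{prop:center-radiuscBCCBc} the spheres $I(A^{k}CBC^{-1}A^{-k})$ and $I(A^{k}C^{-1}BCA^{-k})$ have radius $\frac{2}{\sqrt3}$. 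Hence the two threshold values that the intercenter Cygan distance must beat are $\sqrt2+\frac{2}{\sqrt3}$ (in parts (1) and (4)) and $\frac{4}{\sqrt3}$ (in parts (2) and (3)).

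Next, for each of the four families I would insert the center of $I(C^{-1}BC)$ together with the center of the $k$-th sphere of the family, as given in Propositions \ref{prop:center-radiuscBCCBc} and \ref{prop:center-radiusCBCcBc}, into the Cygan-metric formula (\ref{eq:cygan-metric}); in each case the intercenter distance comes out as the fourth (or square) root of a polynomial in $k$ with positive leading coefficient. It then remains to check the elementary inequality that this distance strictly exceeds the relevant threshold: for all $k\in\mathbb{Z}$ in parts (2) and (4), and for all nonzero $k\in\mathbb{Z}$ in parts (1) and (3). The value $k=0$ is excluded in part (3) because there the second sphere is $I(C^{-1}BC)$ itself, and in part (1) because $I(CBC)\cap I(C^{-1}BC)$ is a genuine nonempty ridge of $D_{R}$ (it appears in Table \ref{table:intersecton}) analyzed separately. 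Since two Cygan spheres are disjoint as soon as the Cygan distance between their centers is strictly larger than the sum of their radii, these inequalities give the proposition.

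There is no real obstacle here: the coefficients of the polynomials in $k$ are concrete rationals and small surds, the worst case is always $k=0$ or $k=\pm1$ (handled numerically, exactly as $\sqrt2+\frac{2}{\sqrt3}\approx 2.5689$ is used in the proof of Proposition \ref{prop:inverseCBinverseC}), and the remaining $k$ follow by a trivial monotonicity estimate. The only point demanding a little care is bookkeeping, namely correctly matching each conjugated word with the appropriate center formula, keeping track of the Heisenberg translations and shears coming from $A$ and of the base involution $B$; one expects (and I would verify, if it streamlines the exposition) that an antiholomorphic symmetry of the configuration interchanging $C$ with $C^{-1}$ and fixing $q_\infty$ carries $I(CBC^{-1})$ to $I(C^{-1}BC)$ and permutes the four families, so that Proposition \ref{prop:inverseCBC} becomes a formal consequence of Proposition \ref{prop:CBinverseC}. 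Absent such a symmetry one simply falls back on the four direct computations above, which is why the paper is content to omit the details.
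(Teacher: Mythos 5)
Your proposal is correct and is essentially the paper's own (omitted) argument: the paper states that the proof of Proposition \ref{prop:inverseCBC} is "similar" to those of Propositions \ref{prop:inverseCBinverseC}--\ref{prop:CBinverseC}, which are exactly the Cygan center-distance versus sum-of-radii comparisons you describe, with the correct radii $\tfrac{2}{\sqrt{3}}$ and $\sqrt{2}$, the correct thresholds, and the correct exclusions of $k=0$ in parts (1) and (3). The only remark worth adding is that, as in the paper's proof of Proposition \ref{prop:CBinverseC}, one can shorten the bookkeeping by conjugating by $A^{-k}$ (which sends $I(A^{k}gA^{-k})$ to $I(g)$ and permutes the families), so that all but one or two of the four parts are formal consequences of the earlier propositions.
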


\subsection{The combinatorics of ridges of  $D_{R}$ in  ${\bf H}^2_{\mathbb{C}}$}\label{subsec:ridge}
We study carefully the  combinatorics of ridges of  $D_{R}$ in this subsection.
Which are crucial for the application of the  Poincar\'e polyhedron theorem.
For each $g \in R$, the side $s(g)$ by definition is $I(g) \cap D_{R}$, which is a 3-dimensional object, we also describe the sides in details in this subsection.

We first take four points $u_i$ for $i=1,2,3,4$, which lie on the isometric spheres $I(C^{-1})$,  $I(CBC)$ and $I(CBC^{-1})$. See Figures \ref{figure:pointw}, \ref{figure:pointv} and  \ref{figure:abstract}.

In Equation (\ref{equaation:girauddisk}), we take ${\bf q}=(CBC)^{-1}(q_{\infty})$,  ${\bf r}=q_{\infty}$ and  ${\bf p}=C^{-1}BC(q_{\infty})$, then we can parameterize the intersection of the isometric spheres  $I(CBC)$ and $I(C^{-1}BC)$  by $V=V(z_1,z_2)$ with $\langle V,V \rangle <0$.
Where
$$V=\left(\begin{array}{c}
\frac{13\sqrt{15}\rm{i}}{16}-\frac{25}{16}+\frac{\sqrt{15}}{2}(-\rm{i}\cos(r)+\sin(r))+\rm{e}^{s \rm{i}}(\frac{1+\sqrt{15}\rm{i}}{2})\\[ 6 pt]
\frac{7}{4}-\frac{\sqrt{15}\rm{i}}{4}-\frac{3\rm{e}^{r \rm{i}}}{2}+\rm{e}^{s \rm{i}}\\[6 pt]
-\frac{3}{4}-\frac{\sqrt{15} \rm{i}}{4}\\ \end{array}\right),$$
with $(z_1,z_2)=(\rm{e}^{r \rm{i}},\rm{e}^{s \rm{i}}) \in S^{1}\times S^1$.


Note that $\langle V,V \rangle = V^{*}H V$ is
\begin{equation}\label{eq:CBCinverseCBC}
(-3\cos(r)-1)\cos(s)-3\sin(s)\sin(r)-\frac{3\cos(r)}{2}+\frac{7}{2}.
\end{equation}
Consider the intersection of this Giraud  disk and the isometric sphere  $I(C^{-1})$, it is given by  the following
\begin{equation}\label{eq:CBCinverseCBCintersecinverseC}
\cos(s-r)+\cos(r)-\cos(s)-1=0.
\end{equation}
From the common solutions of (\ref{eq:CBCinverseCBC}) and  (\ref{eq:CBCinverseCBCintersecinverseC}),
we have four points in the Heisenberg group:
\begin{itemize}
	\item
	
	$u_1$ corresponds to $(0, \frac{\pi}{3})$ in (\ref{eq:CBCinverseCBC}), in horospherical coordinates it is
	$$u_1=[(\frac{1}{4}-\frac{\sqrt{5}}{4})+\rm{i}(\frac{\sqrt{15}}{4}-\frac{\sqrt{3}}{4}),-\sqrt{15}-\frac{3\sqrt{3}}{2}] \in \mathbb{C} \times \mathbb{R}.$$
	Which is one of  the cyan points in Figures \ref{figure:pointw}, \ref{figure:pointv} and  \ref{figure:abstract},  where we draw these points by very small cyan balls;
	\item
	$u_2$ corresponds to $(0, -\frac{\pi}{3})$ in (\ref{eq:CBCinverseCBC}), in horospherical coordinates it is
	$$u_2=[(\frac{1}{4}+\frac{\sqrt{5}}{4})+\rm{i}(\frac{\sqrt{15}}{4}+\frac{\sqrt{3}}{4}),-\sqrt{15}+\frac{3\sqrt{3}}{2}]\in \mathbb{C} \times \mathbb{R}.$$
	Which is one of the black points in Figures \ref{figure:pointw}, \ref{figure:pointv} and  \ref{figure:abstract}, where we draw these points by  very small black balls;
	\item
	$u_3$ corresponds to $(\arctan(2\sqrt{6}), \arctan(2\sqrt{6}))$ in (\ref{eq:CBCinverseCBC}), in horospherical coordinates it is
	$$u_3=[(-\frac{1}{5}+\frac{\sqrt{10}}{10})+\rm{i}(\frac{2\sqrt{15}}{5}+\frac{\sqrt{6}}{10}),-\frac{13\sqrt{15}}{10}-\frac{\sqrt{6}}{5}] \in \mathbb{C} \times \mathbb{R}.$$
	Which is one  of the blue points in Figures \ref{figure:pointw}, \ref{figure:pointv} and  \ref{figure:abstract}, where we draw these points by  very small blue balls;
	\item
	$u_4$ corresponds to $(-\arctan(2\sqrt{6}), -\arctan(2\sqrt{6}))$ in (\ref{eq:CBCinverseCBC}), in horospherical coordinates it is
	$$u_4=[(-\frac{1}{5}-\frac{\sqrt{10}}{10})+\rm{i}(\frac{2\sqrt{15}}{5}-\frac{\sqrt{6}}{10}),-\frac{13\sqrt{15}}{10}+\frac{\sqrt{6}}{5}]\in \mathbb{C} \times \mathbb{R}.$$
	Which is one of the red points in Figures \ref{figure:pointw}, \ref{figure:pointv} and  \ref{figure:abstract}, where we draw these points by  very small red balls.
\end{itemize}

By direct calculations, we have $$C^{-1}BC(u_1)=u_2, ~~~~C^{-1}BC(u_2)=u_1,~~~C^{-1}BC(u_3)=u_4, ~~~~C^{-1}BC(u_4)=u_3.$$
Moreover,  in horospherical coordinates we have
$$CBC(u_1)=[(-\frac{1}{4}+\frac{\sqrt{5}}{4})+\rm{i}(-\frac{\sqrt{15}}{4}+\frac{\sqrt{3}}{4}),-\sqrt{15}-\frac{3\sqrt{3}}{2}];$$
$$CBC(u_2)=[(-\frac{1}{4}-\frac{\sqrt{5}}{4})-\rm{i}(\frac{\sqrt{15}}{4}+\frac{\sqrt{3}}{4}),-\sqrt{15}+\frac{3\sqrt{3}}{2}];$$
$$CBC(u_3)=[(\frac{1}{5}+\frac{\sqrt{10}}{10})+\rm{i}(-\frac{2\sqrt{15}}{5}+\frac{\sqrt{6}}{10}),-\frac{13\sqrt{15}}{10}+\frac{\sqrt{6}}{5}];$$
$$CBC(u_4)=[(\frac{1}{5}-\frac{\sqrt{10}}{10})-\rm{i}(\frac{2\sqrt{15}}{5}+\frac{\sqrt{6}}{10}),-\frac{13\sqrt{15}}{10}-\frac{\sqrt{6}}{5}].$$
These four points lie on the isometric spheres $I(C)$, $I(CBC^{-1})$ and $I(C^{-1}BC^{-1})$.
We also draw $CBC(u_i)$ for $i=1,2,3,4$ in Figures \ref{figure:pointw}, \ref{figure:pointv} and  \ref{figure:abstract} by very small balls in cyan, blue, black and red colors.

\begin{prop}\label{prop:cCintersection}
The isometric sphere  $I(C)$ intersects the  isometric sphere  $I(A^{-1}CA)$ in a Giraud disk. This disk is disjoint from the isometric spheres  $I(A^{k}CBCA^{-k})$, $I(A^{k}C^{-1}BCA^{-k})$, $I(A^{k}CBC^{-1}A^{-k})$   and $I(A^{k}C^{-1}BC^{-1}A^{-k})$ for all $k \in \mathbb{Z}$.  So the ridge $s(C) \cap s(A^{-1}CA)$ is a disk with boundary entirely in  $\partial {\bf H}^2_{\mathbb{C}}$.	
\end{prop}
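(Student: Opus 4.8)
The plan is to realize $\mathcal D:=I(C)\cap I(A^{-1}CA)$ as a Giraud disk, and then to rule out intersection with the four families one at a time, using Proposition~\ref{prop:C} together with the fact that conjugation by the unipotent element $A$ (which fixes $q_\infty$ and is a Cygan isometry) permutes isometric spheres, $A^{\ell}\bigl(I(g)\bigr)=I(A^{\ell}gA^{-\ell})$, carrying interiors to interiors.

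By Proposition~\ref{prop:center-radiusC}, $I(C)$ and $I(A^{-1}CA)$ have radius $2$ and Heisenberg centers $[-1,-\tfrac{\sqrt{15}}{2}]$ and $[1,-\tfrac{\sqrt{15}}{2}]$; their centers lie at Cygan distance $2$, they are not tangent (tangency to an $A$-translate of $I(C)$ occurs only for $|k|=2$, Proposition~\ref{prop:C}(\ref{item:CandCsmall})), and a direct evaluation of the spinal coordinate expression $\langle V(z_1,z_2),V(z_1,z_2)\rangle$ shows it is negative on a nonempty open subset of $S^1\times S^1$. Since the only $\mathbb C$-chains through $q_\infty$ are vertical lines in $\mathcal N$ and the two finite centers have distinct vertical projections, the three points $q_\infty$, $C^{-1}(q_\infty)$, $A^{-1}C^{-1}(q_\infty)=(A^{-1}CA)^{-1}(q_\infty)$ are not on a common complex line; hence by Proposition~\ref{prop:Giraud}, $\mathcal D$ is a topological disk lying on exactly the bisectors $I(C)$, $I(A^{-1}CA)$ and $\mathcal B(C^{-1}(q_\infty),A^{-1}C^{-1}(q_\infty))$, parameterized by $V(z_1,z_2)$ of (\ref{equaation:girauddisk}) with $\mathbf p=q_\infty$, $\mathbf q=C^{-1}(q_\infty)$, $\mathbf r=A^{-1}C^{-1}(q_\infty)$ on the locus $\langle V,V\rangle<0$, with frontier the ideal circle $\langle V,V\rangle=0$.

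For the disjointness from the four families, take $\{I(A^{k}CBCA^{-k})\}_{k}$. Proposition~\ref{prop:C}(\ref{item:CandCBC}) gives that $I(C)$, hence $\mathcal D\subset I(C)$, misses $I(A^{k}CBCA^{-k})$ for $k\neq0,1$. Applying the isometry $A^{-1}$ and using $A^{-1}(I(C))=I(A^{-1}CA)$ and $A^{-1}(I(A^{k}CBCA^{-k}))=I(A^{k-1}CBCA^{-(k-1)})$, we get that $I(A^{-1}CA)$, hence $\mathcal D$, misses $I(A^{j}CBCA^{-j})$ for $j\neq-1,0$, which settles the remaining index $j=1$. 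For $k=0$, the second half of Proposition~\ref{prop:C}(\ref{item:CandCBC}) says $I(C)\cap I(CBC)\subset\operatorname{int}I(A^{-1}CA)$, so $I(C)\cap I(CBC)\cap I(A^{-1}CA)=\varnothing$, i.e. $\mathcal D\cap I(CBC)=\varnothing$. The family $\{I(A^{k}C^{-1}BCA^{-k})\}$ is handled identically from Proposition~\ref{prop:C}(\ref{item:CandinverseCBC}). For $\{I(A^{k}CBC^{-1}A^{-k})\}$ and $\{I(A^{k}C^{-1}BC^{-1}A^{-k})\}$, the indices $k\neq0,1$ and $k=1$ are dispatched exactly as above via Proposition~\ref{prop:C}(\ref{item:CandCBinverseC}) and (\ref{item:CandinverseCBinverseC}); for $k=0$, apply $A^{-1}$ to the inclusions $I(C)\cap I(ACBC^{-1}A^{-1})\subset\operatorname{int}I(ACA^{-1})$ and $I(C)\cap I(AC^{-1}BC^{-1}A^{-1})\subset\operatorname{int}I(ACA^{-1})$ of Proposition~\ref{prop:C}, which become $I(A^{-1}CA)\cap I(CBC^{-1})\subset\operatorname{int}I(C)$ and $I(A^{-1}CA)\cap I(C^{-1}BC^{-1})\subset\operatorname{int}I(C)$; these loci are then disjoint from $I(C)$, so $\mathcal D\cap I(CBC^{-1})=\varnothing$ and $\mathcal D\cap I(C^{-1}BC^{-1})=\varnothing$.

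Finally, $\mathcal D$ also misses $I(A^{k}CA^{-k})$ for every $k\neq0,-1$: for $|k|>2$ this is Proposition~\ref{prop:C}(\ref{item:CandClarge}); for $k=\pm2$, $I(C)$ meets $I(A^{k}CA^{-k})$ only at one ideal point which, by the coordinates in the proof of Proposition~\ref{prop:C}(\ref{item:CandCsmall}), does not lie on $I(A^{-1}CA)$; and for $k=1$ the spheres $I(A^{-1}CA)$ and $I(ACA^{-1})$ have centers at Cygan distance $4$, equal to the sum of their radii, so they are tangent, with tangency point the center $[-1,-\tfrac{\sqrt{15}}{2}]$ of $I(C)$, which lies in $\operatorname{int}I(C)$, whence $\mathcal D\cap I(ACA^{-1})=\varnothing$. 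Being connected, disjoint from every isometric sphere of $R$ other than $I(C)$ and $I(A^{-1}CA)$, and contained in their closed exteriors (checked at a sample point), $\mathcal D$ lies in $D_{R}$, so $s(C)\cap s(A^{-1}CA)=\mathcal D$: a topological $2$-disk with frontier the ideal circle $\{\langle V,V\rangle=0\}\subset\partial\mathbf H^2_{\mathbb C}$. All the metric content here is already packaged in Proposition~\ref{prop:C}; the point that requires care — the main obstacle — is organizing the case analysis so that every member of every family is covered, the mechanism being that $A$-conjugating Proposition~\ref{prop:C} shifts its two exceptional indices and leaves the single common exceptional index $k=0$ for the interior statements to absorb.
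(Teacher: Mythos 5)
Your proposal is correct, and for the disjointness part it takes a genuinely different (and in my view cleaner) route than the paper. The paper, after exhibiting the Giraud disk exactly as you do, asserts that the propositions of Subsection \ref{subsec:intersection} reduce the problem to two residual checks --- that $I(C)\cap I(A^{-1}CA)$ misses $I(CBC)$ and $I(C^{-1}BC)$ --- and then settles those by two further Maple optimizations of $|\langle V,q_{\infty}\rangle|^2-|\langle V,g^{-1}(q_{\infty})\rangle|^2$ over the Giraud disk itself. You instead extract everything from Proposition \ref{prop:C} alone: the generic indices from its disjointness clauses, the index $k=1$ by conjugating those clauses by $A^{-1}$ (which shifts the exceptional set $\{0,1\}$ to $\{-1,0\}$ and sends $I(C)$ to $I(A^{-1}CA)$, interiors to interiors), and the index $k=0$ from the ``Moreover'' triple-intersection statements, again $A^{-1}$-conjugated where the paper's clause is phrased relative to $I(ACA^{-1})$ rather than $I(A^{-1}CA)$. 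What your approach buys is that no new numerical optimization is needed and, more importantly, the reduction the paper states without detail (``we only need show\dots'') is actually justified index by index; what the paper's extra Maple check buys is an independent confirmation from the other parameterization. You also supply the verification that the disk misses the remaining spheres $I(A^{k}CA^{-k})$, $k\neq 0,-1$ (tangency points landing off $I(A^{-1}CA)$ resp.\ off $I(C)$), which the final sentence of the proposition implicitly requires and which the paper leaves to Proposition \ref{prop:C}(\ref{item:CandClarge})--(\ref{item:CandCsmall}) without comment. The only minor caveat, shared with the paper, is that the strict-inequality formulations $\langle V,V\rangle<0$ leave the ideal boundary circle to a limiting argument, but this does not affect the conclusion.
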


\begin{proof}
	Recall that $C^{-1}$	 and $A^{-1}CA$ have the same isometric sphere.  It is easy to see that $C^{-1}(q_{\infty})$,  $C(q_{\infty})$ and  $q_{\infty}$ are linearly independent.
	In Equation (\ref{equaation:girauddisk}), we take ${\bf q}=C^{-1}(q_{\infty})$,  ${\bf r}=C(q_{\infty})$ and  ${\bf p}=q_{\infty}$, then we can parameterize the intersection of the isometric spheres  $I(C)$ and $I(C^{-1})$  by $V=V(z_1,z_2)$ with $\langle V,V \rangle <0$.
	Where
	$$V=\left(\begin{array}{c}
	-\frac{1}{4}+\frac{\rm{i}\sqrt{15}}{8}-\frac{\rm{e}^{r \rm{i}}+\rm{e}^{s \rm{i}}}{2}\\[ 6 pt]
	\frac{	-\rm{e}^{r \rm{i}}+\rm{e}^{s \rm{i}}}{2}\\[ 6 pt]
	-\frac{1}{2}\\ \end{array}\right),$$
	and  $(z_1,z_2)=(\rm{e}^{r \rm{i}},\rm{e}^{s \rm{i}}) \in S^{1}\times S^1$.

	Note that $\langle V,V \rangle = V^* H V$ is $$3-2\sin(r)\sin(s)-2\cos(r)\cos(s)+2\cos(r)+2\cos(s).$$
	Take a sample point $r=s=\pi$, then $V=V(-1,-1) \in  {\bf H}^2_{\mathbb{C}}$. So the intersection of these two isometric spheres is not empty, then it is a  Giraud  disk.
	
	\begin{figure}
		\begin{center}
			\begin{tikzpicture}
			\node at (0,0) {\includegraphics[width=10cm,height=6cm]{{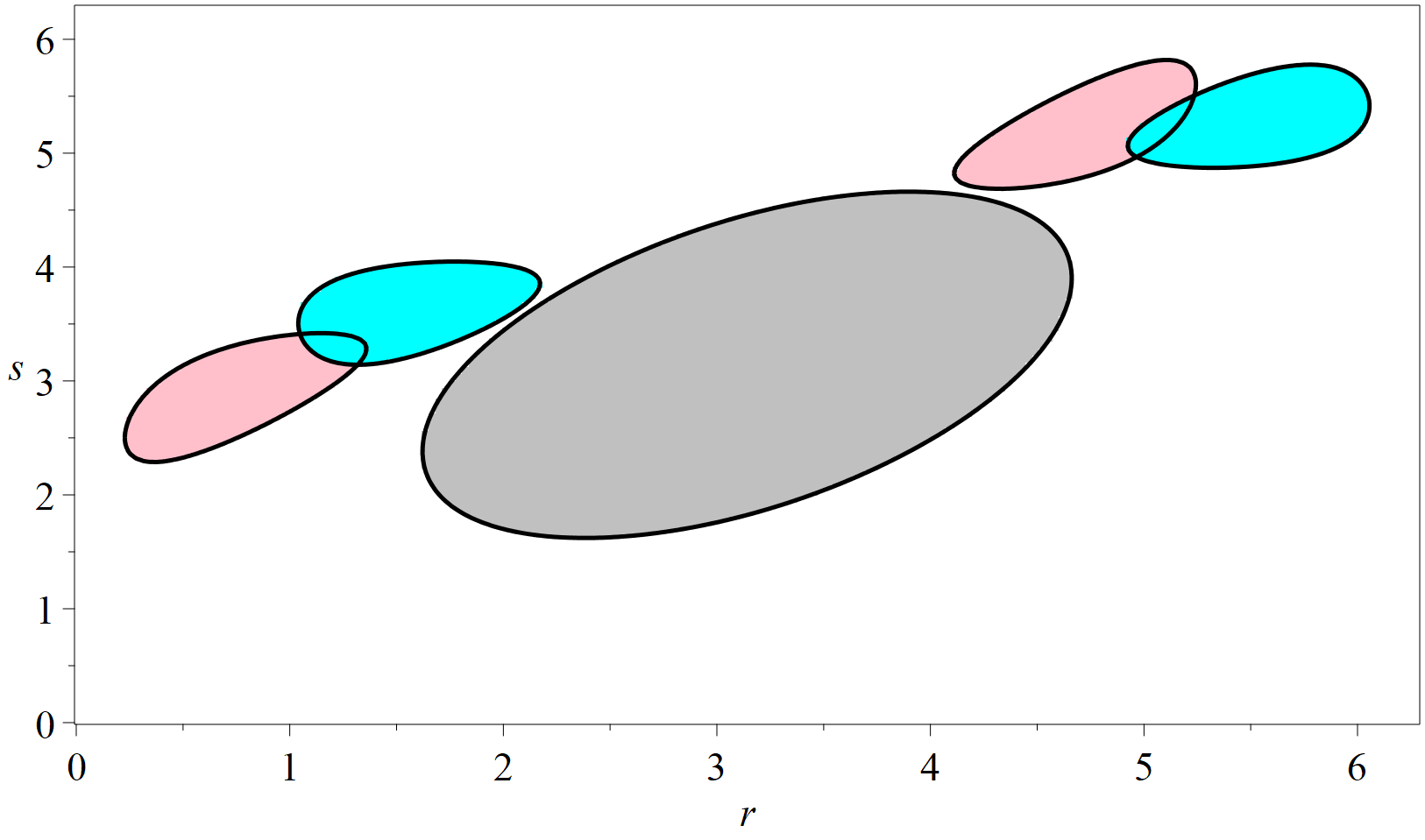}}};
			\end{tikzpicture}
		\end{center}
		\caption{The intersection of the isometric spheres  $I(C)$ and $I(A^{-1}CA)$ is a Giraud disk, which is disjoint from other isometric spheres $I(g)$ for $g \in R$.}
		\label{figure:cCdisk}
	\end{figure}

To show this disk is disjoint from the isometric spheres  $I(A^{k}CBCA^{-k})$, $I(A^{k}C^{-1}BCA^{-k})$, $I(A^{k}CBC^{-1}A^{-k})$ and $I(A^{k}C^{-1}BC^{-1}A^{-k})$ for all $k \in \mathbb{Z}$. From  propositions  in Subsection \ref{subsec:intersection}, we only need show
\begin{itemize}
\item  the intersection  $I(C) \cap I(A^{-1}CA) \cap  {\bf H}^2_{\mathbb{C}}$ and $I(CBC)$ is empty;
\item  the intersection  $I(C) \cap I(A^{-1}CA) \cap  {\bf H}^2_{\mathbb{C}}$ and $I(C^{-1}BC)$ is empty.
\end{itemize}

	Using Maple, from our parameterization of $I(C) \cap I(A^{-1}CA)$ above,  the maximum of $$|\langle V, q_{\infty}\rangle|^2-|\langle V, (CBC)^{-1}(q_{\infty})\rangle|^2$$ with the condition $\langle V, V \rangle \leq 0$ is -0.689216 numerically. So $I(C) \cap I(A^{-1}CA)$ is entirely in the exterior of  $I(CBC) \cap {\bf H}^2_{\mathbb{C}}$.
		Similarly, we have  $I(C) \cap I(A^{-1}CA)$ lies entirely in the exterior of $ I(C^{-1}BC) \cap {\bf H}^2_{\mathbb{C}}$.
	
		See Figure 	\ref{figure:cCdisk} for the  Giraud disk $I(C) \cap I(A^{-1}CA) \cap{\bf H}^2_{\mathbb{C}}$, it is the gray colored region in Figure 	\ref{figure:cCdisk}. Where the two cyan  disks is the region in the Giraud torus  where  $|\langle V, q_{\infty}\rangle|^2-|\langle V, (CBC)^{-1}(q_{\infty})\rangle|^2 >0$; and the two pink disks is the region in the Giraud torus  where   $|\langle V, q_{\infty}\rangle|^2-|\langle V, (C^{-1}BC)^{-1}(q_{\infty})\rangle|^2 >0$.
\end{proof}

\begin{prop}\label{prop:cBcandCintersecCBinverseC}
	The isometric sphere  $I(C^{-1}BC^{-1})$ intersects the  isometric sphere $I(C)$ in a Giraud disk. This disk also intersects with the isometric sphere  $I(CBC^{-1})$.  More precisely, the triple intersection $I(C^{-1}BC^{-1}) \cap I(C) \cap I(CBC^{-1})$ is a union of  two crossed straight segments, the ridge   $s(C^{-1}BC^{-1}) \cap s(C)$ is topologically the union of two sectors.
	
\end{prop}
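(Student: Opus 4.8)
The plan is to follow the scheme already used in the proof of Proposition~\ref{prop:cCintersection}: parameterize the Giraud disk $I(C)\cap I(C^{-1}BC^{-1})$ in spinal coordinates and then write down the trace of the third isometric sphere $I(CBC^{-1})$ on it. Since $B^2=id$ we have $(C^{-1}BC^{-1})^{-1}(q_\infty)=CBC(q_\infty)$, and a direct check with the matrices of $B$ and $C$ shows the three lifts $q_\infty$, $C^{-1}(q_\infty)$, $CBC(q_\infty)$ are linearly independent in $\mathbb{C}^{2,1}$; so by Proposition~\ref{prop:Giraud} the set $I(C)\cap I(C^{-1}BC^{-1})=\mathcal{B}(q_\infty,C^{-1}(q_\infty))\cap\mathcal{B}(q_\infty,CBC(q_\infty))$ is a Giraud disk, provided it is nonempty. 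To get coordinates I would apply Equation~\eqref{equaation:girauddisk} with $\mathbf{p}=q_\infty$, $\mathbf{q}=C^{-1}(q_\infty)$, $\mathbf{r}=CBC(q_\infty)$, obtaining $V=V(z_1,z_2)$, $(z_1,z_2)=(e^{ir},e^{is})\in S^1\times S^1$, with the disk cut out by $\langle V,V\rangle=V^*HV<0$; computing $\langle V,V\rangle$ explicitly as a trigonometric function of $r,s$ and exhibiting one sample point with $\langle V,V\rangle<0$ confirms nonemptiness (this is in any case implicit in Proposition~\ref{prop:C}~(\ref{item:CandinverseCBinverseC})).

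Next, the trace of $I(CBC^{-1})=\mathcal{B}(q_\infty,CBC^{-1}(q_\infty))$ on this disk is the zero set, inside $\{\langle V,V\rangle<0\}$, of
$$f(z_1,z_2)=|\langle V(z_1,z_2),q_\infty\rangle|^2-|\langle V(z_1,z_2),CBC^{-1}(q_\infty)\rangle|^2.$$
Since the expressions $\langle V,\cdot\rangle$ are affine in $z_1,z_2$, one solves $f=0$ as a quadratic equation in one of the two coordinates, following the method described in \cite{Deraux:2016gt, DerauxF:2015, dpp:2016}. The decisive — and, to the author, surprising — point is that at $(h,t)=(\sqrt{2},\operatorname{arccos}(-\tfrac{7}{8}))$ this quadratic degenerates, so that $f$ factors into two real linear factors and the solution set is a union of two ``straight'' segments meeting transversally at a single interior point of the disk. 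I would verify this factorization directly from the explicit matrices, independently of any later result, and then read off the two segments together with their endpoints on $\partial\mathcal{B}$ (where $\langle V,V\rangle=0$) to check that they genuinely cross, which in particular shows the triple intersection is nonempty. Conceptually, this degeneracy is the shadow of the co-planarity of the four centers $q_\infty$, $C^{-1}(q_\infty)$, $CBC^{-1}(q_\infty)$, $CBC(q_\infty)$ — the phenomenon later abstracted as Lemma~\ref{lemma:coplane} — which forces the triple equidistant locus to lie inside a common real slice of the three bisectors. This establishes that $I(C^{-1}BC^{-1})\cap I(C)\cap I(CBC^{-1})$ is a union of two crossed straight segments, as depicted in Figure~\ref{figure:inverseCBinverseCandCintersectCBinverseC}.

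Finally, to identify the ridge $s(C^{-1}BC^{-1})\cap s(C)=I(C^{-1}BC^{-1})\cap I(C)\cap D_{R}$, I would invoke the intersection-pattern results of Subsection~\ref{subsec:intersection}: Propositions~\ref{prop:C} and \ref{prop:inverseCBinverseC}, together with their $A$-translates, show that among all $I(g)$ with $g\in R$ the only isometric sphere whose interior meets the Giraud disk $I(C^{-1}BC^{-1})\cap I(C)$ is $I(CBC^{-1})$. The two crossed segments cut this disk into four sectors, and the defining inequality of $D_{R}$ discards exactly the two sectors on which $f<0$; hence $s(C^{-1}BC^{-1})\cap s(C)$ is the union of the two remaining opposite sectors, which share the single crossing point. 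This is the claimed union of two sectors.

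The step I expect to be the main obstacle is the factorization of $f$ in the second paragraph: one must check by hand, from the explicit matrices, that the triple bisector intersection degenerates from a generic smooth curve to a pair of crossing segments. All the remaining ingredients — linear independence, nonemptiness of the Giraud disk, and the determination of which isometric spheres of $R$ actually cut it — are routine computations of exactly the kind already carried out in Subsection~\ref{subsec:intersection}.
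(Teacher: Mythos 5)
Your proposal follows essentially the same route as the paper: the same choice of $\mathbf{p}=q_\infty$, $\mathbf{q}=C^{-1}(q_\infty)$, $\mathbf{r}=CBC(q_\infty)$ in the spinal parameterization, a sample point to show the Giraud disk is nonempty, the degenerate trace equation of $I(CBC^{-1})$ yielding crossed straight segments, and the earlier intersection-pattern propositions to conclude the ridge is two opposite sectors. The only caveat is that the zero locus of your $f$ on the Giraud torus is in fact three lines (in the paper's coordinates $r=\pi$, $s=\pi$ and $r+s=\pi$), not two linear factors, so your computation must also verify, as the paper does, that the third line lies entirely where $\langle V,V\rangle>0$ and hence outside $\mathbf{H}^2_{\mathbb C}$.
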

\begin{proof}In Equation (\ref{equaation:girauddisk}), we take ${\bf q}=C^{-1}(q_{\infty})$,  ${\bf r}=CBC(q_{\infty})$ and  ${\bf p}=q_{\infty}$, then we can parameterize the intersection of the isometric spheres  $I(C^{-1}BC^{-1})$ and $I(C)$  by $V=V(z_1,z_2)$ with $\langle V,V \rangle <0$.
	Where
	
	$$V=\left(\begin{array}{c}
	-\frac{33}{16}+\frac{5 \sqrt{15}\rm{i}}{16}-\frac{\rm{e}^{r \rm{i}}}{2}+\rm{e}^{s \rm{i}}(-\frac{1}{2}-\frac{\sqrt{15} \rm{i}}{2})\\[ 6 pt]
	-\frac{3}{4}+\frac{\sqrt{15}\rm{i}}{4}-\frac{\rm{e}^{r \rm{i}}}{2}+\rm{e}^{s \rm{i}}\\[ 6 pt]
	-\frac{3}{4}-\frac{\sqrt{15} \rm{i}}{4}\\ \end{array}\right),$$
	with  $(z_1,z_2)=(\rm{e}^{r \rm{i}},\rm{e}^{s \rm{i}}) \in S^{1}\times S^1$.
	
	Note that $\langle V,V \rangle = V^{*}H V$ is $$(-\cos(r)+3)\cos(s)-\sin(s)\sin(r)+\frac{3\cos(r)}{2}+\frac{7}{2}.$$
	Take the sample point $r=s=\pi$, then $V=V(-1,-1) \in  {\bf H}^2_{\mathbb{C}}$. So the intersection of these two isometric spheres is not empty, then it is a Giraud disk.
	See Figure 	\ref{figure:inverseCBinverseCandCintersectCBinverseC} for this disk. The region bounded by the circle is the Giraud disk $I(C^{-1}BC^{-1}) \cap I(C)$.

		
	We now consider the intersection of the isometric sphere  $I(CBC^{-1})$ with this Giraud disk,  that is,  we consider $V$ with  $$|\langle V, q_{\infty} \rangle |=|\langle V, (CBC^{-1})^{-1}(q_{\infty}) \rangle |.$$
	Which is equivalent to 
	$$\cos(s-r)-\cos(r)+\cos(s)+1 = 0.$$
	 The solutions are $r= \pi$, $s= \pi$ or $r+s=\pi$.
	 It is easy to see that when  $r+s=\pi$, $\langle V,V \rangle>0$, so  the points $V$ corresponding to the straight segment $r+s=\pi$
	 lie entirely outside the complex hyperbolic space.
	 It is easy to see  that the two crossed straight segments
	 $$\left\{r = \pi, s \in \left[\frac{2\pi}{3},\frac{4 \pi}{3}\right]\right\},\qquad  \left \{s = \pi,  r\in \left[\pi-\arctan(2\sqrt{6}),\pi+\arctan(2\sqrt{6})\right] \right\}$$
	 decompose the Giraud disk
	 into four sectors with a common vertex. Exactly one pair of the four sectors lie in the exterior of the isometric
	 sphere  $I(CBC^{-1})$. Therefore, the ridge   $s(C^{-1}BC^{-1}) \cap s(C)$ is a union of two sectors.
	 
See Figure  \ref{figure:inverseCBinverseCandCintersectCBinverseC} for the triple intersection $I(C^{-1}BC^{-1}) \cap I(C) \cap I(CBC^{-1})$ and the ridge 	$s(C^{-1}BC^{-1}) \cap s(C)$. The pink region in Figure \ref{figure:inverseCBinverseCandCintersectCBinverseC}  is given by  $$|\langle V, q_{\infty} \rangle |-|\langle V, (CBC^{-1})^{-1}(q_{\infty}) \rangle |<0,$$ so this region lies in the exterior of the isometric sphere $I(CBC^{-1})$.

	 	\begin{figure}
	 	\begin{center}
	 		\begin{tikzpicture}
	 		\node at (0,0) {\includegraphics[width=7cm,height=7cm]{{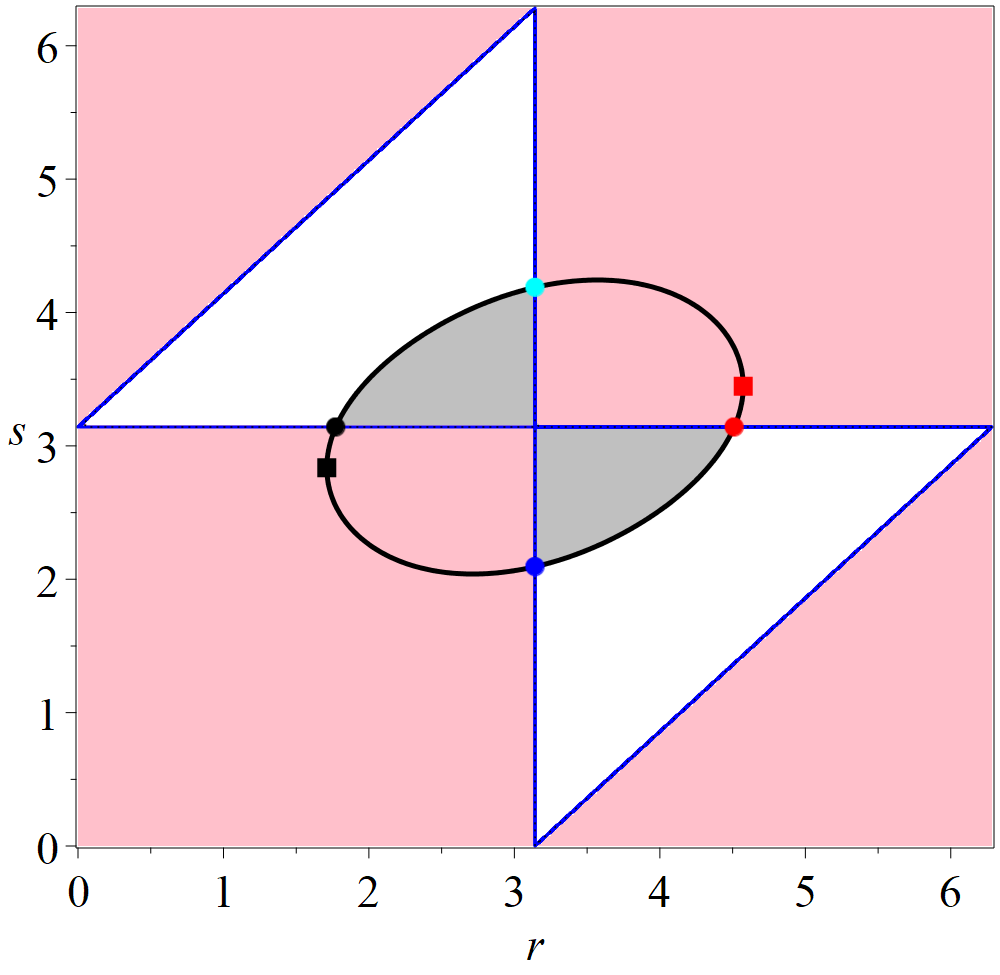}}};
	 		\end{tikzpicture}
	 	\end{center}
	 	\caption{The intersection of the isometric spheres  $I(C^{-1}BC^{-1})$ and $I(C)$ is a Giraud disk, which is the disk  bounded by the circle.  The pink region is in the exterior of the isometric  $I(CBC^{-1})$.  So the two pink colored sectors inside the circle is the ridge  $s(C^{-1}BC^{-1}) \cap s(C)$.}
	 	\label{figure:inverseCBinverseCandCintersectCBinverseC}
	 \end{figure}

	 
	 The proof of Proposition  \ref{prop:cBcandCintersecCBinverseC} is now complete. For the application of the Poincar\'e polyhedron theorem later, we make explicit parameterization of $$I(C^{-1}BC^{-1}) \cap I(C) \cap  I(CBC^{-1}) \cap \partial  {\bf H}^2_{\mathbb{C}}.$$
	 
	First we  take four points
	 \begin{enumerate}

	 	\item
	 $(r,s)=(\pi,\frac{2\pi}{3})$ in our  parameterization of $I(C^{-1}BC^{-1}) \cap I(C)$, which is $CBC(u_2)$, it is the blue solid circle marked point in Figure 	\ref{figure:inverseCBinverseCandCintersectCBinverseC};
\item 	$(r,s)=(\pi,\frac{4\pi}{3})$ in our   parameterization of $I(C^{-1}BC^{-1}) \cap I(C)$, which is $CBC(u_1)$, it is the cyan solid circle marked  point in Figure 	\ref{figure:inverseCBinverseCandCintersectCBinverseC};
\item 	$(r,s)=(\pi-\arctan(2\sqrt{6}),\pi)$ in our  parameterization of $I(C^{-1}BC^{-1}) \cap I(C)$, which is $CBC(u_3)$, it is the black solid circle marked  point in Figure 	\ref{figure:inverseCBinverseCandCintersectCBinverseC};
\item 	$(r,s)=(\pi+\arctan(2\sqrt{6}),\pi)$ in our   parameterization of $I(C^{-1}BC^{-1}) \cap I(C)$, which is $CBC(u_4)$, it is the red solid circle marked  point in Figure 	\ref{figure:inverseCBinverseCandCintersectCBinverseC}.
	\end{enumerate}


We remind the reader for two real arguments  $x,y$, $\arctan(y,x)$ is the principal value of the  argument of the complex number  $x+y \cdot \rm{i}$, so $-\pi < \arctan(y,x) \leq \pi$. This function is extended to complex arguments by the formula  $$\arctan(x,y)=- \rm{i} \cdot  \ln \left(\frac{x+ y \cdot \rm{i}}{\sqrt{x^2+y^2}}\right).$$
 
Now we  solve the equation  $\langle V,V \rangle=0$. We take $f_1$ be the function $\arctan(D,E)$, where

\begin{flalign} \label{eq:inverseCBinverseCandC1partA}
\nonumber &
D=\frac{(\cos(r)-3) \cdot (3\cos(r)^2-2\cos(r)-21+\sqrt{9\cos(r)^4+66\cos(r)^3-66\cos(r)-9})}{4\sin(r)(3\cos(r)-5)}
& \\ &
+\frac{3\cos(r)+7}{2\sin(r)}& \nonumber
\end{flalign}
and
\begin{flalign}
\nonumber &
E=\frac{3\cos(r)^2-2\cos(r)-21+\sqrt{9\cos(r)^4+66\cos(r)^3-66\cos(r)-9}}{20-12\cos(r)}.& \nonumber
\end{flalign}
We also take $f_2$ be the function $\arctan(F,G)$, where
\begin{flalign}
\nonumber &
F=\frac{(\cos(r)-3)\cdot (3\cos(r)^2-2\cos(r)-21-\sqrt{9\cos(r)^4+66\cos(r)^3-66\cos(r)-9})}{4\sin(r)(3\cos(r)-5)}
& \\ &
+\frac{3\cos(r)+7}{2\sin(r)}& \nonumber
\end{flalign}
and
\begin{flalign}
\nonumber &
G=\frac{3\cos(r)^2-2\cos(r)-21-\sqrt{9\cos(r)^4+66\cos(r)^3-66\cos(r)-9}}{20-12\cos(r)}.& \nonumber
\end{flalign}

We now take four arcs in the triple  intersection of  $I(C^{-1}BC^{-1})$, $I(C)$ and $\partial  {\bf H}^2_{\mathbb{C}}$:
\begin{itemize}
	\item
 $\mathcal{C}_{C^{-1}BC^{-1}, C, 1}$ be the curve with $V=V(r,s)$,  where  $s=f_1(r)$ and $$r \in \left[\pi, \pi-\arctan \left(\frac{2\sqrt{-56+22\sqrt{7}}}{-11+4\sqrt{7}}\right)\right];$$
	\item  $\mathcal{C}_{C^{-1}BC^{-1}, C, 2}$ be the curve with $V=V(r,s)$,   where $s=f_2(r)$ and $$r \in \left[\pi+\arctan(2\sqrt{6}), \pi-\arctan\left(\frac{2\sqrt{-56+22\sqrt{7}}}{-11+4\sqrt{7}}\right)\right];$$
	\item  $\mathcal{C}_{C^{-1}BC^{-1}, C, 3}$ be the curve with $V=V(r,s)$,   where $s=f_1(r)$ and $$r \in \left[ \pi+\arctan\left(\frac{2\sqrt{-56+22\sqrt{7}}}{-11+4\sqrt{7}}\right), \pi \right];$$
	\item  $\mathcal{C}_{C^{-1}BC^{-1}, C, 4}$ be the curve with $V=V(r,s)$,   where $s=f_2(r)$ and $$r \in \left[ \pi+\arctan\left(\frac{2\sqrt{-56+22\sqrt{7}}}{-11+4\sqrt{7}}\right), \pi-\arctan(2\sqrt{6})\right].$$
\end{itemize}

	Then  $\mathcal{C}_{C^{-1}BC^{-1}, C, 1}$ is an arc with one end point $CBC(u_1)$,  $\mathcal{C}_{C^{-1}BC^{-1}, C, 2}$ is an arc with one end point $CBC(u_4)$.  $\mathcal{C}_{C^{-1}BC^{-1}, C, 1}$ and  $\mathcal{C}_{C^{-1}BC^{-1}, C, 2}$ have a common end point with $$(r,s)=\left(\pi-\arctan(\frac{2\sqrt{-56+22\sqrt{7}}}{-11+4\sqrt{7}}\right), \pi+\arctan\left(\frac{(1+\sqrt{7})\sqrt{-56+22\sqrt{7}}}{-4+8\sqrt{7}}\right).$$
	This point is the red solid square marked point in   Figure  \ref{figure:inverseCBinverseCandCintersectCBinverseC}.
Then the union of  $\mathcal{C}_{C^{-1}BC^{-1}, C, 1}$ and  $\mathcal{C}_{C^{-1}BC^{-1}, C, 2}$ is an arc connecting  $CBC(u_1)$ and $CBC(u_4)$.
Similarly,  $\mathcal{C}_{C^{-1}BC^{-1}, C, 3}$ and  $\mathcal{C}_{C^{-1}BC^{-1}, C, 4}$ are two arcs with a common end point the black solid square marked point in   Figure  \ref{figure:inverseCBinverseCandCintersectCBinverseC}, and 
the union of  $\mathcal{C}_{C^{-1}BC^{-1}, C, 3}$ and  $\mathcal{C}_{C^{-1}BC^{-1}, C, 4}$ is an arc connecting  $CBC(u_2)$ and $CBC(u_3)$.

 Moreover,  it can be showed that the two arcs $$\mathcal{C}_{C^{-1}BC^{-1}, C, 1} \cup\mathcal{C}_{C^{-1}BC^{-1}, C, 2}$$ and $$\mathcal{C}_{C^{-1}BC^{-1}, C, 3} \cup\mathcal{C}_{C^{-1}BC^{-1}, C, 4}$$  are exactly the part of the triple intersection  $$I(C^{-1}BC^{-1}) \cap I(C) \cap \partial {\bf H}^2_{\mathbb{C}}$$ which are in the exterior of the isometric sphere  $I(CBC^{-1})$.

\end{proof}

\begin{prop}\label{prop:cBcandCBinverseCintersectionC}
	The isometric sphere $I(C^{-1}BC^{-1})$ intersects the  isometric sphere $I(CBC^{-1})$ in a Giraud  disk. This disk also intersects with the isometric sphere  $I(C)$.	Moreover, the ridge  $s(C^{-1}BC^{-1}) \cap s(CBC^{-1})$ is topologically the union of two sectors.
	
\end{prop}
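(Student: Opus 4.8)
The strategy mirrors the proof of Proposition~\ref{prop:cBcandCintersecCBinverseC}, with the roles of the isometric spheres $I(C)$ and $I(CBC^{-1})$ interchanged: I would parameterize the triple intersection $I(C^{-1}BC^{-1})\cap I(C)\cap I(CBC^{-1})$ from the Giraud disk $I(C^{-1}BC^{-1})\cap I(CBC^{-1})$ instead of from $I(C^{-1}BC^{-1})\cap I(C)$. First I would note that the centers $(C^{-1}BC^{-1})^{-1}(q_{\infty})$ and $(CBC^{-1})^{-1}(q_{\infty})$ of the two isometric spheres are distinct Heisenberg points, neither lying on the vertical complex line through $q_{\infty}$ (this is read off from Propositions~\ref{prop:center-radiuscBCCBc} and~\ref{prop:center-radiusCBCcBc}), so their lifts together with $q_{\infty}$ are linearly independent in $\mathbb{C}^{2,1}$. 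Hence, by Giraud's theorem (Proposition~\ref{prop:Giraud}), as soon as the intersection is seen to be nonempty it is a Giraud disk lying in exactly the three bisectors $I(C^{-1}BC^{-1})$, $I(CBC^{-1})$ and $\mathcal{B}\big((C^{-1}BC^{-1})^{-1}(q_{\infty}),(CBC^{-1})^{-1}(q_{\infty})\big)$. Taking ${\bf p}=q_{\infty}$, ${\bf q}=(C^{-1}BC^{-1})^{-1}(q_{\infty})$, ${\bf r}=(CBC^{-1})^{-1}(q_{\infty})$ in~(\ref{equaation:girauddisk}) produces an explicit $V=V(z_1,z_2)$ with $(z_1,z_2)=(\mathrm{e}^{r\mathrm{i}},\mathrm{e}^{s\mathrm{i}})\in S^1\times S^1$; computing $\langle V,V\rangle$ and exhibiting one sample point, e.g. $r=s=\pi$, with $\langle V,V\rangle<0$ shows the disk is nonempty.

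Next I would impose the defining equation for the trace of $I(C)$ on this disk, namely $|\langle V,q_{\infty}\rangle|=|\langle V,C^{-1}(q_{\infty})\rangle|$. The crucial point — which is precisely the coplanarity of $q_{\infty}$, $C^{-1}(q_{\infty})$, $CBC^{-1}(q_{\infty})$ and $CBC(q_{\infty})$ of Lemma~\ref{lemma:coplane}, now read in these coordinates — is that after squaring, this equation factors into a product of two affine expressions in $(\cos r,\sin r,\cos s,\sin s)$, so the $I(C)$-trace on the disk is not a generic curve but a union of two crossed straight segments, together with a third, spurious, component on which $\langle V,V\rangle>0$ and which therefore lies entirely outside ${\bf H}^2_{\mathbb C}$. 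The two genuine segments meet at a single point and cut the Giraud disk into four sectors, exactly one antipodal pair of which lies in the (closed) exterior of $I(C)$; compare Figure~\ref{figure:inverseCBinverseCandCintersectCBinverseC}. This pair of sectors is the candidate for the ridge $s(C^{-1}BC^{-1})\cap s(CBC^{-1})$.

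To upgrade ``candidate'' to ``equal'', I would invoke the disjointness statements of Subsection~\ref{subsec:intersection}: by Propositions~\ref{prop:inverseCBinverseC} and~\ref{prop:CBinverseC} (together with Propositions~\ref{prop:C} and~\ref{prop:cCintersection}), no isometric sphere $I(g)$ with $g\in R$ other than $I(C)$ meets both $I(C^{-1}BC^{-1})$ and $I(CBC^{-1})$, hence none meets the Giraud disk $I(C^{-1}BC^{-1})\cap I(CBC^{-1})$. Therefore $s(C^{-1}BC^{-1})\cap s(CBC^{-1})=\big(I(C^{-1}BC^{-1})\cap I(CBC^{-1})\big)\cap D_{R}$ is precisely the union of the two sectors. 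Finally, exactly as in Proposition~\ref{prop:cBcandCintersecCBinverseC}, I would give an explicit parameterization of the boundary arcs of this ridge inside $\partial{\bf H}^2_{\mathbb C}$ (solving $\langle V,V\rangle=0$ along the two segments) and identify their endpoints with the points $CBC(u_i)$, for later use in the Poincar\'e polyhedron argument. The main obstacle is precisely the algebraic step showing the $I(C)$-trace equation degenerates to crossed straight segments rather than a generic ellipse-like curve — morally a re-proof of Lemma~\ref{lemma:coplane} in the present chart — together with the ensuing bookkeeping of which sectors and which sub-arcs lie in the exterior of $I(C)$; the rest is Cygan-distance estimation already carried out in Subsection~\ref{subsec:intersection}.
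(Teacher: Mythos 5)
Your proposal is correct and follows essentially the same route as the paper: the same spinal parameterization of $I(C^{-1}BC^{-1})\cap I(CBC^{-1})$ with ${\bf q}=CBC(q_{\infty})$, ${\bf r}=CBC^{-1}(q_{\infty})$, ${\bf p}=q_{\infty}$, the same degenerate trace equation for $I(C)$ splitting into two crossed straight segments plus a spurious component lying outside ${\bf H}^2_{\mathbb C}$, and the same sector bookkeeping with boundary arcs ending at the $CBC(u_i)$. One small caution: with the paper's normalization of lifts the sample point $r=s=\pi$ gives $\langle V,V\rangle=3>0$ (the paper instead uses $r=s=0$, where $\langle V,V\rangle=-2$), so your nonemptiness witness must be chosen consistently with whatever lift convention you fix.
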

\begin{proof}In Equation (\ref{equaation:girauddisk}), we take ${\bf q}=CBC(q_{\infty})$,  ${\bf r}=CBC^{-1}(q_{\infty})$ and ${\bf p}=q_{\infty}$, then we can parameterize the intersection of the isometric spheres  $I(C^{-1}BC^{-1})$ and $I(CBC^{-1})$  by $V=V(z_1,z_2)$ with $\langle V,V \rangle <0$.
	Where
	$$V=\left(\begin{array}{c}
	\frac{25}{16}-\frac{13\sqrt{15}\rm{i}}{16}+\frac{\sqrt{15}(\rm{i}\cos(r)-\sin(r))}{2}+	\rm{e}^{s \rm{i}}(-\frac{1}{2}-\frac{\sqrt{15} \rm{i}}{2})\\[ 6 pt]
	\frac{7}{4}-\frac{\sqrt{15} \rm{i}}{4}-\frac{3\rm{e}^{r \rm{i}}}{2}+
\rm{e}^{s \rm{i}}	\\[ 6 pt]
	\frac{3}{4}+\frac{\sqrt{15} \rm{i}}{4}\\ \end{array}\right),$$
	and $(z_1,z_2)=(\rm{e}^{r \rm{i}},\rm{e}^{s \rm{i}}) \in S^{1}\times S^1$.

	Note that $\langle V,V \rangle = V^{*}H V$ is
\begin{equation} \label{equa:cBcandCBinverseC} (-3\cos(r)-1)\cos(s)-3\sin(s)\sin(r)-\frac{3\cos(r)}{2}+\frac{7}{2}.
\end{equation}
		Take the sample point  $r=s=0$, then $V=V(1,1) \in  {\bf H}^2_{\mathbb{C}}$. So the intersection of these two isometric spheres is not empty, then it is Giraud  a disk.

	We consider the intersection of the isometric sphere  $I(C)$ with this Giraud disk,  that is $V$ as above with $|\langle V, q_{\infty} \rangle |=|\langle V, C^{-1}(q_{\infty})\rangle |$. It is equivalent to
		$$\cos(s-r)+\cos(r)-\cos(s)-1 = 0.$$ 
	The solutions are $r= 0$, $s= \pi$ or $r=s$.
	It is easy to see that when  $s=\pi$, $\langle V,V \rangle>0$, so  the points $V$ corresponding to the straight segment $s=\pi$
	lie entirely outside the complex hyperbolic space.
	
	
	It is easy to see  that the two crossed straight segments
	$$\left\{r = 0, s\in \left[-\frac{\pi}{3},\frac{\pi}{3}\right]\right\}, \quad \left\{r=s  \in \left[-\arctan\left(2\sqrt{6}\right),  \arctan\left(2\sqrt{6}\right)\right] \right\}$$
	decompose the Giraud disk
	into four sectors with one common vertex. Exactly one pair of the four sectors lie in the exterior the isometric
	sphere  $I(C)$. Therefore, the ridge $s(C^{-1}BC^{-1}) \cap s(CBC^{-1})$ is a union of two sectors.

See Figure 	\ref{figure:cBcandCBcinterC}, where the disk bounded by the circle is the Giraud disk $$I(C^{-1}BC^{-1}) \cap I(CBC^{-1}) \cap \partial {\bf H}^2_{\mathbb{C}}.$$   The pink region is where $$|\langle V, q_{\infty} \rangle |<|\langle V, C^{-1}(q_{\infty})\rangle |,$$ so the two pink colored sectors  inside the circle  is the ridge $s(C^{-1}BC^{-1}) \cap s(CBC^{-1})$.

The proof of Proposition  \ref{prop:cBcandCBinverseCintersectionC} is now complete. For the application of the Poincar\'e polyhedron theorem later, we make explicit parametrization of $$I(C^{-1}BC^{-1}) \cap I(CBC^{-1}) \cap  I(C) \cap \partial  {\bf H}^2_{\mathbb{C}}$$ in our parametrization of $I(C^{-1}BC^{-1}) \cap I(CBC^{-1})$.
		
 First we take four points:
	\begin{enumerate}
		\item
		$(r,s)=(0,\frac{\pi}{3})$ in our parametrization of $I(C^{-1}BC^{-1}) \cap I(CBC^{-1})$, which is $CBC(u_1)$. It is the cyan solid circle labeled point in Figure 	\ref{figure:cBcandCBcinterC};
		\item 	$(r,s)=(0,-\frac{\pi}{3})$ in our parametrization of $I(C^{-1}BC^{-1}) \cap I(CBC^{-1})$, which is $CBC(u_2)$. It is the blue  solid circle labeled  point in Figure 	\ref{figure:cBcandCBcinterC};
		\item 	$(r,s)=(\arctan(2\sqrt{6}),\arctan(2\sqrt{6}))$ in our parametrization of $I(C^{-1}BC^{-1}) \cap I(CBC^{-1})$, which is $CBC(u_4)$. It is the red  solid circle labeled  point in Figure 	 \ref{figure:cBcandCBcinterC};
		\item 	$(r,s)=(-\arctan(2\sqrt{6}),-\arctan(2\sqrt{6}))$ in our parametrization of $I(C^{-1}BC^{-1}) \cap I(CBC^{-1})$, which is $CBC(u_3)$. It is the black  solid circle labeled  point in Figure 	 \ref{figure:cBcandCBcinterC}.
	\end{enumerate}
	
	\begin{figure}
		\begin{center}
			\begin{tikzpicture}
			\node at (0,0) {\includegraphics[width=10cm,height=6cm]{{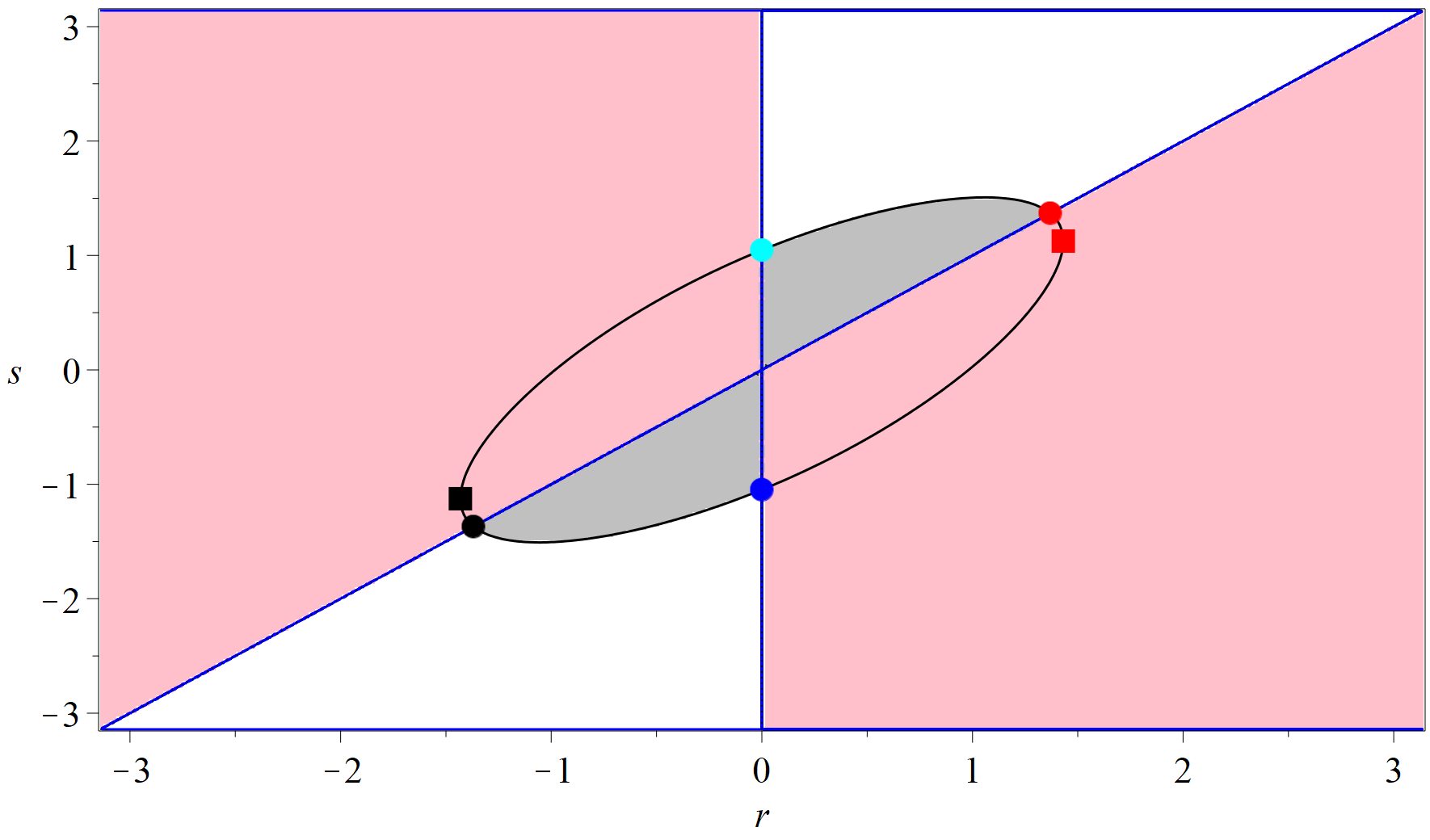}}};
			\end{tikzpicture}
		\end{center}
		\caption{The intersection of the isometric spheres  $I(C^{-1}BC^{-1})$ and $I(CBC^{-1})$ is a Giraud disk. The pink region lies in the exterior of the isometric sphere  $I(C)$. So the pink sectors inside the circle  is the ridge  $s(C^{-1}BC^{-1}) \cap s(CBC^{-1})$.}
		\label{figure:cBcandCBcinterC}
	\end{figure}

	Moreover, solve the equation of $\langle V,V \rangle=0$.  We get $s=g_1(r)$ or $s=g_2(r)$, where $g_{i}$ has similar form as the function $f_{i}$ in Proposition \ref{prop:cBcandCintersecCBinverseC}, but they are more involved. We omits the detailed terms of $g_{i}$ but only note that $$g_{1}\left(-\arctan\left(\frac{(12+8\sqrt{7})\sqrt{48\sqrt{7}-126}}{-66+32\sqrt{7}}\right)\right)=-\arctan\left(\frac{\sqrt{48\sqrt{7}-126}}{8-3\sqrt{7}}\right)$$ and $$g_{2}\left(\arctan\left(\frac{(12+8\sqrt{7})\sqrt{48\sqrt{7}-126}}{-66+32\sqrt{7}}\right)\right)=\arctan\left(\frac{\sqrt{48\sqrt{7}-126}}{8-3\sqrt{7}}\right).$$
	Note that $\pm \arctan\left(\frac{\sqrt{48\sqrt{7}-126}}{8-3\sqrt{7}}\right)$ are the maximum and minimum of $s$ with the condition $\langle V,V \rangle =0$ in
	(\ref{equa:cBcandCBinverseC}), see the red and black solid square labeled points in Figure 	\ref{figure:cBcandCBcinterC}.

	We now take four arcs in the  triple  intersection of   $I(C^{-1}BC^{-1})$,  $I(CBC^{-1})$ and $\partial  {\bf H}^2_{\mathbb{C}}$:
\begin{itemize}		
\item
$\mathcal{C}_{C^{-1}BC^{-1}, CBC^{-1}, 1}$ be the curve with $V=V(r,s)$,   $s=g_1(r)$ and $$r \in \left[0,  \arctan\left(\frac{2\sqrt{-56+22\sqrt{7}}}{11-4\sqrt{7}}\right)\right];$$
\item  $\mathcal{C}_{C^{-1}BC^{-1}, CBC^{-1}, 2}$ be the curve with $V=V(r,s)$,   $s=g_2(r)$ and $$r \in \left[\arctan\left(\frac{2\sqrt{-56+22\sqrt{7}}}{11-4\sqrt{7}}\right),\arctan\left(2\sqrt{6}\right)\right];$$
\item  $\mathcal{C}_{C^{-1}BC^{-1},CBC^{-1}, 3}$ be the curve with $V=V(r,s)$,   $s=g_1(r)$ and $$r \in \left[-\arctan\left(\frac{2\sqrt{-56+22\sqrt{7}}}{11-4\sqrt{7}}\right),0\right];$$	
\item  $\mathcal{C}_{C^{-1}BC^{-1}, CBC^{-1}, 4}$ be the curve with $V=V(r,s)$,   $s=g_2(r)$ and $$r \in \left[ -\arctan\left(2\sqrt{6}\right),-\arctan\left(\frac{2\sqrt{-56+22\sqrt{7}}}{11-4\sqrt{7}}\right)\right].$$
\end{itemize}

Then  $\mathcal{C}_{C^{-1}BC^{-1}, CBC^{-1}, 1}$ is an arc with one end point $CBC(u_2)$,  $\mathcal{C}_{C^{-1}BC^{-1}, CBC^{-1}, 2}$ is an arc with one end point $CBC(u_4)$.  $\mathcal{C}_{C^{-1}BC^{-1}, CBC^{-1}, 1}$ and  $\mathcal{C}_{C^{-1}BC^{-1},  CBC^{-1}, 2}$ have a common end point with $$(r,s)=\left(\arctan\left(\frac{2\sqrt{-56+22\sqrt{7}}}{11-4\sqrt{7}}\right), \arctan\left(\frac{(1+\sqrt{7})\sqrt{-56+22\sqrt{7}}}{-8+4\sqrt{7}}\right)\right).$$	Then the union of  $\mathcal{C}_{C^{-1}BC^{-1}, CBC^{-1}, 1}$ and  $\mathcal{C}_{C^{-1}BC^{-1}, CBC^{-1}, 2}$ is an arc connecting  $CBC(u_2)$ and $CBC(u_4)$.
Similarly, the union of  $\mathcal{C}_{C^{-1}BC^{-1}, CBC^{-1}, 3}$ and  $\mathcal{C}_{C^{-1}BC^{-1}, CBC^{-1}, 1}$ is an arc connecting  $CBC(u_1)$ and $CBC(u_3)$.
		
Moreover,  it can be showed that the two arcs $$\mathcal{C}_{C^{-1}BC^{-1}, CBC^{-1}, 1} \cup\mathcal{C}_{C^{-1}BC^{-1}, CBC^{-1}, 2}$$ and $$\mathcal{C}_{C^{-1}BC^{-1}, CBC^{-1}, 3} \cup\mathcal{C}_{C^{-1}BC^{-1}, CBC^{-1}, 4}$$  are exactly the part of the triple intersection $$I(C^{-1}BC^{-1}) \cap I(CBC^{-1}) \cap \partial {\bf H}^2_{\mathbb{C}}$$ which is in the exterior of the isometric sphere  $I(C)$.

\end{proof}

\begin{prop}\label{prop:CBcandCintersectioninverseCBinverseC}
	The isometric sphere $I(CBC^{-1})$ intersects the  isometric sphere  $I(C)$ in a Giraud disk. This disk also intersects with the isometric sphere  $I(C^{-1}BC^{-1})$.	Moreover, the ridge   $s(CBC^{-1})\cap s(C)$ is topologically the union of two sectors.
	
\end{prop}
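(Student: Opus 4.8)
The plan is to follow verbatim the strategy of Propositions \ref{prop:cBcandCintersecCBinverseC} and \ref{prop:cBcandCBinverseCintersectionC}, with the three isometric spheres permuted. First I would set up spinal coordinates for the Giraud intersection $I(CBC^{-1})\cap I(C)$: in Equation (\ref{equaation:girauddisk}) take ${\bf q}=C^{-1}(q_{\infty})$, ${\bf r}=(CBC^{-1})^{-1}(q_{\infty})$ and ${\bf p}=q_{\infty}$, obtaining $V=V(z_1,z_2)$ with $(z_1,z_2)=(\rm{e}^{r\rm{i}},\rm{e}^{s\rm{i}})\in S^{1}\times S^{1}$. Computing $\langle V,V\rangle=V^{*}HV$ produces a trigonometric function of $r,s$; evaluating it at a convenient sample point (for instance $r=s=\pi$, matching the earlier propositions) shows that $V$ is a negative vector, so $I(CBC^{-1})\cap I(C)$ is non-empty and, by Proposition \ref{prop:Giraud}, is a Giraud disk.

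Second, I would intersect this disk with $I(C^{-1}BC^{-1})$ by imposing $|\langle V,q_{\infty}\rangle|=|\langle V,(C^{-1}BC^{-1})^{-1}(q_{\infty})\rangle|$. Exactly as in the two preceding propositions this collapses to a simple trigonometric identity whose solution locus in the $(r,s)$-torus is two ``straight'' segments together with one degenerate segment on which $\langle V,V\rangle>0$ (hence which contributes no points of $\hc$ and can be discarded). The two surviving crossed segments — one of the type $r=\text{const}$ or $s=\text{const}$, the other of the type $r+s=\text{const}$ or $r=s$ — cut the Giraud disk into four sectors with a common vertex, and precisely one antipodal pair of these sectors lies in the exterior of $I(C^{-1}BC^{-1})$. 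That pair is the ridge $s(CBC^{-1})\cap s(C)$, which is therefore topologically a union of two sectors, establishing in particular that the disk does meet $I(C^{-1}BC^{-1})$. I would also identify the four segment endpoints with the appropriate points among $CBC(u_i)$, $i=1,2,3,4$, by matching parameter values, so that the combinatorics agrees with Table \ref{table:intersecton} and Figure \ref{figure:abstract}.

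Finally, for the later application of the Poincar\'e polyhedron theorem, I would record the explicit parametrization of $I(CBC^{-1})\cap I(C)\cap I(C^{-1}BC^{-1})\cap\partial\hc$: solve $\langle V,V\rangle=0$ for $s$ as a function of $r$, obtaining two branches $s=f_1(r)$, $s=f_2(r)$ of the same shape as in Proposition \ref{prop:cBcandCintersecCBinverseC}, and split the boundary circle of the Giraud disk into four arcs joining consecutive points $CBC(u_i)$, retaining the two arcs lying in the exterior of $I(C^{-1}BC^{-1})$. The main obstacle is purely computational rather than conceptual: verifying with Maple that the listed crossed segments exhaust the solutions inside $\hc$, that the designated sectors really are in the exterior of the third isometric sphere, and that the boundary arcs have the claimed endpoints — the algebra (with the usual $\sqrt{7}$, $\sqrt{15}$ and $\arctan$ expressions) is cumbersome but entirely parallel to the template already in place.
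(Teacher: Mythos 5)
Your proposal is correct and follows essentially the same route as the paper: the paper parametrizes $I(CBC^{-1})\cap I(C)$ via Equation (\ref{equaation:girauddisk}) with ${\bf q}=CBC^{-1}(q_{\infty})$, ${\bf r}=C^{-1}(q_{\infty})$, ${\bf p}=q_{\infty}$ (the same triple of points as yours, up to swapping $z_1$ and $z_2$, since $(CBC^{-1})^{-1}(q_\infty)=CBC^{-1}(q_\infty)$), checks nonemptiness at the sample point $r=s=0$, reduces the third-sphere condition to $\cos(s-r)-\cos(r)+\cos(s)-1=0$ with solutions $r=\pi$ (discarded, outside $\hc$), $s=0$ and $r=s$, and concludes with the same sector decomposition, endpoint identification with the $CBC(u_i)$, and boundary-arc parametrization. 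No substantive differences.
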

\begin{proof}In Equation (\ref{equaation:girauddisk}), we take ${\bf q}=CBC^{-1}(q_{\infty})$,  ${\bf r}=C^{-1}(q_{\infty})$ and ${\bf p}=q_{\infty}$, then we can parameterize the intersection of the isometric spheres  $I(C^{-1}BC^{-1})$ and $I(CBC^{-1})$  by $V=V(z_1,z_2)$ with $\langle V,V \rangle <0$.
	Where
		$$V=\left(\begin{array}{c}
	\frac{25}{16}-\frac{5\sqrt{15}\rm{i}}{16}-\frac{\rm{e}^{r \rm{i}}}{2}+	\left(-\rm{i}\cos(s)+\sin(s)\right)\frac{\sqrt{15}}{2}\\[ 6 pt]
	\frac{1}{4}-\frac{\sqrt{15} \rm{i}}{4}	-\frac{\rm{e}^{r \rm{i}}}{2}+\frac{3\rm{e}^{s \rm{i}}}{2}	\\[ 6 pt]
	\frac{3}{4}+\frac{\sqrt{15} \rm{i}}{4}\\ \end{array}\right),$$
	and $(z_1,z_2)=(\rm{e}^{r \rm{i}},\rm{e}^{s \rm{i}}) \in S^{1}\times S^1$.

	Note that $\langle V,V \rangle = V^{*}H V$ is
	$$\frac{(-3\cos(s)-2)\cos(r)}{2}-\frac{3\sin(s)\sin(r)}{2}-3\cos(s)+\frac{7}{2}.$$
Take a sample point $r=s=0$, then $V=V(1,1) \in  {\bf H}^2_{\mathbb{C}}$. So the intersection of these two isometric spheres is not empty, then it is a Giraud disk.
	
We consider the intersection of the isometric sphere  $I(C^{-1}BC^{-1})$ with this Giraud disk, that is $$|\langle V, q_{\infty} \rangle |=|\langle V, C^{-1}BC^{-1}(q_{\infty}) \rangle.$$ It   is equivalent to 
	$$\cos(s-r)-\cos(r)+\cos(s)-1 = 0.$$	The solutions are $r= \pi$, $s=0$ or $r=s$.
	It is easy to see that when  $r=\pi$, $\langle V,V \rangle>0$, so  the points $ V$ corresponding to the straight segment $r=\pi$
	lie entirely outside the complex hyperbolic space.
	
	It is easy to see  that the two crossed straight segments
	$$\left\{s = 0, r\in \left[-\arctan\left(2\sqrt{6}\right),  \arctan\left(2\sqrt{6}\right)\right]\right\}, \quad \left\{r=s  \in \left[-\frac{\pi}{3},\frac{ \pi}{3}\right]\right\}$$
	decompose the Giraud disk
	into four sectors with one common vertex. Exactly one pair of the four sectors lie in the exterior of  the isometric
	sphere  $I(C^{-1}BC^{-1})$. Therefore, the ridge $s(CBC^{-1})\cap s(C)$  is a union of two sectors.

	See Figure 	\ref{figure:CBinverseCandCinterinverseCBinverseC} for this Giraud disk. The  region bounded by the circle is the Giraud disk. The pink region lies in the exterior of $I(C^{-1}BC^{-1})$, so the two pink sectors  inside the circle is the ridge $s(CBC^{-1})\cap s(C)$.
	
	The proof of Proposition  \ref{prop:CBcandCintersectioninverseCBinverseC} is now complete. For the application of the Poincar\'e polyhedron theorem later, we make explicit parameterization of $$I(CBC^{-1}) \cap I(C) \cap  I(C^{-1}BC^{-1}) \cap \partial  {\bf H}^2_{\mathbb{C}}$$ in our parameterization of $I(CBC^{-1}) \cap I(C)$.
	
	We first take four points
	\begin{enumerate}	
		\item
		$(r,s)=(\frac{\pi}{3},\frac{\pi}{3})$ in our  parametrization of $I(CBC^{-1}) \cap I(C)$, which is $CBC(u_1)$. It is the cyan solid circle labeled point in Figure 	 \ref{figure:CBinverseCandCinterinverseCBinverseC};
		\item 	$(r,s)=(-\frac{\pi}{3},-\frac{\pi}{3})$ in our  parameterzation of $I(CBC^{-1}) \cap I(C)$, which is $CBC(u_2)$. It  is the blue solid circle labeled point in Figure 	 \ref{figure:CBinverseCandCinterinverseCBinverseC};
		\item 	$(r,s)=(\arctan(2\sqrt{6}),0)$ in our  parametrization of $I(CBC^{-1}) \cap I(C)$, which is $CBC(u_4)$.  It is the red solid circle labeled point in Figure 	 \ref{figure:CBinverseCandCinterinverseCBinverseC};
		\item 	$(r,s)=(-\arctan(2\sqrt{6}),0)$ in our  parametrization of $I(CBC^{-1}) \cap I(C)$, which is $CBC(u_3)$. It is the black solid circle labeled point in Figure 	 \ref{figure:CBinverseCandCinterinverseCBinverseC}.
	\end{enumerate}
	
	\begin{figure}
		\begin{center}
			\begin{tikzpicture}
			\node at (0,0) {\includegraphics[width=10cm,height=6cm]{{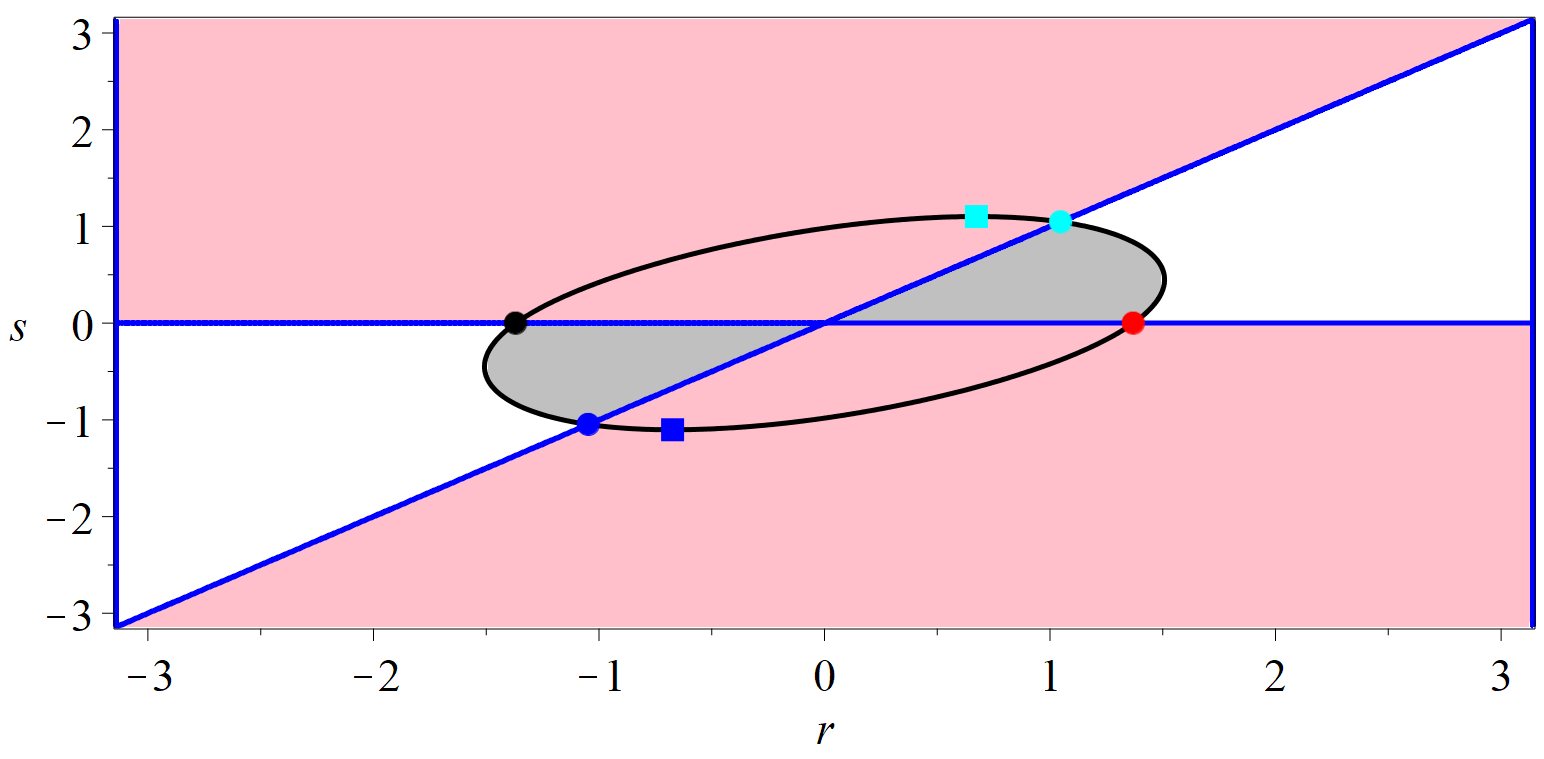}}};
			\end{tikzpicture}
		\end{center}
		\caption{The intersection of the isometric spheres $I(CBC^{-1})$ and $I(C)$ is a Giraud disk. The pink region is in the exterior of the isometric sphere  $I(C^{-1}BC^{-1})$. So the two pink colored sectors inside the circle is the ridge  $s(CBC^{-1}) \cap s(C)$.}
		\label{figure:CBinverseCandCinterinverseCBinverseC}
	\end{figure}

	Moreover, solve the equation of $\langle V,V \rangle=0$. We get $r=g_1(s)$ or $r=g_2(s)$, where $g_{i}$ has similar form as the function $f_{i}$ in Proposition \ref{prop:cBcandCBinverseCintersectionC}, but they are much  more involved. We omits the detailed terms of $g_{i}$ but only note that $g_{1}\left(\frac{\pi}{3}\right)=\arctan\left(\frac{5\sqrt{3}}{37}\right)$ and $g_{2}\left(-\frac{\pi}{3}\right)=-\arctan\left(\frac{5\sqrt{3}}{37}\right)$.

	We now take four arcs in the intersection of the isometric spheres of $I(CBC^{-1})$, $I(C)$  and $\partial  {\bf H}^2_{\mathbb{C}}$:
	
	\begin{itemize}
		
		\item
		$\mathcal{C}_{CBC^{-1}, C, 1}$ be the curve with $V=V(r,s)$,   where $r=g_1(s)$ and $$s \in \left[0, \arctan\left(\frac{\sqrt{-14+8\sqrt{7}}}{4-\sqrt{7}}\right)\right].$$
		
		\item  $\mathcal{C}_{CBC^{-1}, C, 2}$ be the curve with $V=V(r,s)$,   where $r=g_2(s)$ and $$s \in \left[\frac{\pi}{3}, \arctan\left(\frac{\sqrt{-14+8\sqrt{7}}}{4-\sqrt{7}}\right)\right].$$
		
		\item  $\mathcal{C}_{CBC^{-1}, C, 3}$ be the curve with  $V=V(r,s)$,   where $r=g_1(s)$ and $$s \in \left[- \arctan\left(\frac{\sqrt{-14+8\sqrt{7}}}{4-\sqrt{7}}\right),0\right].$$
		
		\item  $\mathcal{C}_{CBC^{-1}, C, 4}$ be the curve with $V=V(r,s)$,     where $r=g_2(s)$ and $$s \in \left[- \arctan\left(\frac{\sqrt{-14+8\sqrt{7}}}{4-\sqrt{7}}\right),-\frac{\pi}{3}\right].$$
		\end{itemize}	
	
		Then  $\mathcal{C}_{CBC^{-1}, C, 1}$ is an arc with one end point $CBC(u_3)$,  $\mathcal{C}_{CBC^{-1}, C, 2}$ is an arc with one end point $CBC(u_1)$.  $\mathcal{C}_{CBC^{-1}, C, 1}$ and  $\mathcal{C}_{CBC^{-1}, C, 2}$ have a common end point with $$(r,s)=\left(\arctan\left(\frac{(1+2\sqrt{7})\sqrt{-14+8\sqrt{7}}}{-8+11\sqrt{7}}\right),  \arctan\left(\frac{\sqrt{-14+8\sqrt{7}}}{4-\sqrt{7}}\right)\right).$$ That is the cyan solid square labeled point in  Figure 	\ref{figure:CBinverseCandCinterinverseCBinverseC}.
		Then the union of  $\mathcal{C}_{CBC^{-1}, C, 1}$ and  $\mathcal{C}_{CBC^{-1}, C, 2}$ is an arc connecting  $CBC(u_3)$ and $CBC(u_1)$.
		Similarly, the union of  $\mathcal{C}_{CBC^{-1}, C, 3}$ and  $\mathcal{C}_{CBC^{-1}, C, 4}$ is an arc connecting  $CBC(u_2)$ and $CBC(u_4)$.
		
		Moreover,  it can be showed that the two arcs $$\mathcal{C}_{CBC^{-1}, C, 1} \cup\mathcal{C}_{CBC^{-1}, C, 2}$$ and $$\mathcal{C}_{CBC^{-1}, C, 3} \cup\mathcal{C}_{CBC^{-1}, C, 4}$$  are exactly the part of the triple intersection  $I(CBC^{-1}) \cap I(C) \cap \partial {\bf H}^2_{\mathbb{C}}$ which are in the exterior of the isometric sphere  $I(C^{-1}BC^{-1})$.
		
\end{proof}

		We note that when $(r,s)=(\pi,\pi)$  in our parametrization   of $I(C^{-1}BC^{-1})\cap I(C)$, $(r,s)=(0,0)$  in our parametrization of $I(C^{-1}BC^{-1})\cap I(CBC^{-1})$, and  $(r,s)=(0,0)$  in our parametrization  of $I(CBC^{-1})\cap I(C)$, we get the same point in  ${\bf H}^2_{\mathbb{C}}$ with coordinates (up to sign):
			$$V_{C^{-1}BC^{-1}, CBC^{-1},C}=\left(\begin{array}{c}
		\frac{17}{16}-\frac{13\sqrt{15}\rm{i}}{16}\\[ 6 pt]
		\frac{5}{4}-\frac{\sqrt{15} \rm{i}}{4}	\\[ 6 pt]
		\frac{3}{4}+\frac{\sqrt{15} \rm{i}}{4}\\ \end{array}\right).$$

Moreover, consider our parametrization  of $I(C^{-1}BC^{-1})\cap  I(C)$ when $r= \pi$, $s \in [\pi, \frac{4 \pi}{3}]$. By direct calculation, this arc is identified with the arc  in our parametrization  of $I(C^{-1}BC^{-1})\cap  I(CBC^{-1})$ when $r=0$ and  $s \in [0, \frac{\pi}{3}]$, and it is identified with the arc  in our parametrization  of $I(CBC^{-1})\cap  I(C)$ when $r=0$ and $s\in [0, \frac{\pi}{3}]$. We denote  this arc by $[V_{C^{-1}BC^{-1}, CBC^{-1},C},CBC(u_1)]$. Similarly, the arcs in Figures 	\ref{figure:inverseCBinverseCandCintersectCBinverseC}, \ref{figure:cBcandCBcinterC}, \ref{figure:CBinverseCandCinterinverseCBinverseC} with center $V_{C^{-1}BC^{-1}, CBC^{-1},C}$ and  ends points  $CBC(u_2)$, $CBC(u_3)$, $CBC(u_4)$ are identified respectively.
So the union of the crossed  lines in Figures 	\ref{figure:inverseCBinverseCandCintersectCBinverseC},  \ref{figure:cBcandCBcinterC}, \ref{figure:CBinverseCandCinterinverseCBinverseC}   with ends points $CBC(u_1)$,  $CBC(u_2)$, $CBC(u_3)$, $CBC(u_4)$
 is exactly   $I(C^{-1}BC^{-1})\cap  I(C) \cap I(CBC^{-1})$.

	\begin{prop}\label{prop:3-sidecBc}	
	
	The side  $s(C^{-1}BC^{-1})$  is 3-ball in $ {\bf H}^2_{\mathbb{C}} \cup \partial {\bf H}^2_{\mathbb{C}}$. Moreover,   $s(C^{-1}BC^{-1}) \cap \partial {\bf H}^2_{\mathbb{C}}$ is a disk, $s(C^{-1}BC^{-1}) \cap  {\bf H}^2_{\mathbb{C}}$ is a disk consists of four sectors.
\end{prop}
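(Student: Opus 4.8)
\emph{Step 1: which isometric spheres are adjacent to $I(C^{-1}BC^{-1})$.} First I would pin down which isometric spheres $I(g)$, $g\in R$, actually meet the closed Cygan ball bounded by $I(C^{-1}BC^{-1})$, so that $s(C^{-1}BC^{-1})=\overline{I(C^{-1}BC^{-1})}\cap D_R$ is cut out by only finitely many of them. Propositions \ref{prop:inverseCBinverseC}, \ref{prop:CBC}, \ref{prop:CBinverseC} and \ref{prop:inverseCBC} already give disjointness from $I(A^{k}CBCA^{-k})$ and $I(A^{k}C^{-1}BCA^{-k})$ for all $k$, and from $I(A^{k}C^{-1}BC^{-1}A^{-k})$, $I(A^{k}CBC^{-1}A^{-k})$ for all $k\neq 0$; a Cygan-distance estimate between the centres listed in Propositions \ref{prop:center-radiusC} and \ref{prop:center-radiusCBCcBc} then disposes of $I(A^{k}CA^{-k})$ for $k\notin\{-1,0\}$. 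Applying $A^{-1}$ to the $k=1$ case of Proposition \ref{prop:C}(\ref{item:CandinverseCBinverseC}) shows $I(A^{-1}CA)\cap I(C^{-1}BC^{-1})$ lies in the interior of $I(C)$, hence contributes no face to $D_R$. So I would conclude
$$s(C^{-1}BC^{-1})=\overline{I(C^{-1}BC^{-1})}\cap\overline{\mathrm{ext}\,I(C)}\cap\overline{\mathrm{ext}\,I(CBC^{-1})},$$
with only $I(C)$ and $I(CBC^{-1})$ as genuine neighbours.

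\emph{Step 2: recording the pieces already computed.} The closed Cygan ball $\overline{I(C^{-1}BC^{-1})}$ is a topological $3$-ball whose boundary $2$-sphere is the spinal sphere $\Sigma_0=\overline{I(C^{-1}BC^{-1})}\cap\partial{\bf H}^2_{\mathbb C}$. By Propositions \ref{prop:cBcandCintersecCBinverseC} and \ref{prop:cBcandCBinverseCintersectionC}, $D_1:=\overline{I(C^{-1}BC^{-1})}\cap\overline{I(C)}$ and $D_2:=\overline{I(C^{-1}BC^{-1})}\cap\overline{I(CBC^{-1})}$ are Giraud disks, i.e.\ properly embedded $2$-disks in this $3$-ball with boundary circles in $\Sigma_0$; the identifications collected just before this proposition show that $D_1\cap D_2$ is the crossed $X$ consisting of the single central vertex $V_{C^{-1}BC^{-1}, CBC^{-1},C}$ together with four arcs running to $CBC(u_1),\dots,CBC(u_4)\in\Sigma_0$, so $\partial D_1\cap\partial D_2=\{CBC(u_i)\}_{i=1}^{4}$. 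The two ridges are also in hand: by Propositions \ref{prop:cBcandCintersecCBinverseC} and \ref{prop:cBcandCBinverseCintersectionC}, $s(C^{-1}BC^{-1})\cap s(C)=D_1\cap\overline{\mathrm{ext}\,I(CBC^{-1})}$ and $s(C^{-1}BC^{-1})\cap s(CBC^{-1})=D_2\cap\overline{\mathrm{ext}\,I(C)}$ are each a union of two opposite sectors of $D_1$, resp.\ $D_2$, with all four sectors meeting at $V_{C^{-1}BC^{-1}, CBC^{-1},C}$ and glued in pairs along the four arcs $[V_{C^{-1}BC^{-1}, CBC^{-1},C},CBC(u_i)]$.

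\emph{Step 3: assembling the side.} At infinity, I would argue that on $\Sigma_0$ the circle $\partial D_1$ bounds the disk $\delta_1:=\Sigma_0\cap\overline{\mathrm{ext}\,I(C)}$, that $\partial D_2$ meets $\delta_1$ in two arcs with the four endpoints $CBC(u_i)$ on $\partial\delta_1$, and that — reading off the explicit arcs $\mathcal{C}_{\bullet}$ of Propositions \ref{prop:cBcandCintersecCBinverseC}--\ref{prop:CBcandCintersectioninverseCBinverseC} and Figures \ref{figure:inverseCBinverseCandCintersectCBinverseC}, \ref{figure:cBcandCBcinterC}, \ref{figure:CBinverseCandCinterinverseCBinverseC} — the region of $\Sigma_0$ exterior to both $I(C)$ and $I(CBC^{-1})$ is a single $2$-disk $\Delta_\infty$; this is $s(C^{-1}BC^{-1})\cap\partial{\bf H}^2_{\mathbb C}$. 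The remaining (finite) part of the frontier of $s(C^{-1}BC^{-1})$ is the union of the two ridges of Step 2, i.e.\ the four sectors arranged as a fan around $V_{C^{-1}BC^{-1}, CBC^{-1},C}$ and glued along the arcs $[V_{C^{-1}BC^{-1}, CBC^{-1},C},CBC(u_i)]$; this union is a $2$-disk whose boundary circle is exactly $\partial\Delta_\infty$. Thus the frontier of $s(C^{-1}BC^{-1})$ is a $2$-sphere split into $\Delta_\infty$ and a four-sector disk, which is the stated decomposition. Finally, since $D_1$ is a properly embedded disk in the $3$-ball $\overline{I(C^{-1}BC^{-1})}$ it cuts it into two $3$-balls, one being $B_1:=\overline{I(C^{-1}BC^{-1})}\cap\overline{\mathrm{ext}\,I(C)}$; inside $B_1$ I would verify that $s(C^{-1}BC^{-1})$ coincides with the cone from $V_{C^{-1}BC^{-1}, CBC^{-1},C}$ over the $2$-sphere frontier just described, so that $s(C^{-1}BC^{-1})$ is a $3$-ball.

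\emph{Main obstacle.} The delicate step is the very last one: $D_1$ and $D_2$ are \emph{not} in general position — they meet along the degenerate crossed $X$ rather than along a single arc — so $B_1$ is not separated by an embedded disk and the routine ``a properly embedded disk splits a ball into two balls'' argument does not apply directly. One must instead track the four sectors and the four arcs $[V_{C^{-1}BC^{-1}, CBC^{-1},C},CBC(u_i)]$ by hand and check that gluing them produces a $2$-sphere bounding a ball (and not, say, two balls meeting along a disk), using the explicit coordinates of the $u_i$, $CBC(u_i)$ and $V_{C^{-1}BC^{-1}, CBC^{-1},C}$ together with the sign computations of $\langle V,V\rangle$ and $|\langle V,\cdot\rangle|$ carried out in Propositions \ref{prop:cBcandCintersecCBinverseC}--\ref{prop:CBcandCintersectioninverseCBinverseC}. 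This is precisely the non-generic phenomenon highlighted in the introduction.
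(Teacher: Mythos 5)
Your proposal is correct and follows essentially the same route as the paper: identify $I(C)$ and $I(CBC^{-1})$ as the only isometric spheres contributing faces, observe that the two ridges glue along the crossed $X$ into a single disk whose boundary is a simple closed curve on the spinal sphere, and conclude that this disk together with one of the two complementary disks on the spinal sphere bounds the $3$-ball $s(C^{-1}BC^{-1})$. Your explicit flagging of the non-generic intersection of the two Giraud disks is a fair point about where the paper's short argument relies on the earlier sector-by-sector computations, but it does not change the approach.
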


\begin{proof}The side  $s(C^{-1}BC^{-1})$
	is contained in the isometric sphere  $I(C^{-1}BC^{-1})$. From above propositions,
	$s(C^{-1}BC^{-1})$
	intersects possibly with the sides $s(C)$ and  $s(CBC^{-1})$. From above calculations, the union of $s(C^{-1}BC^{-1})\cap s(C)$ and $s(C^{-1}BC^{-1})\cap s(CBC^{-1})$   is a disk $$(s(C^{-1}BC^{-1})\cap s(C)) \cup (s(C^{-1}BC^{-1})\cap s(CBC^{-1})).$$  The boundary of $(s(C^{-1}BC^{-1})\cap s(C)) \cup (s(C^{-1}BC^{-1})\cap s(CBC^{-1}))$ is a simple closed curve in the spinal sphere of  $I(C^{-1}BC^{-1})$, so it separates this spinal sphere into two disks. One of these two disks together with $(s(C^{-1}BC^{-1})\cap s(C)) \cup (s(C^{-1}BC^{-1})\cap s(CBC^{-1}))$ co-bound a 3-ball in $I(C^{-1}BC^{-1})$, which is the 3-side $s(C^{-1}BC^{-1})$.
	See Figure  \ref{figure:inverseCBinverseCridge} for the side $s(C^{-1}BC^{-1})$.
\end{proof}

	\begin{figure}
	\begin{center}
		\begin{tikzpicture}
		\node at (0,0) {\includegraphics[width=6cm,height=6cm]{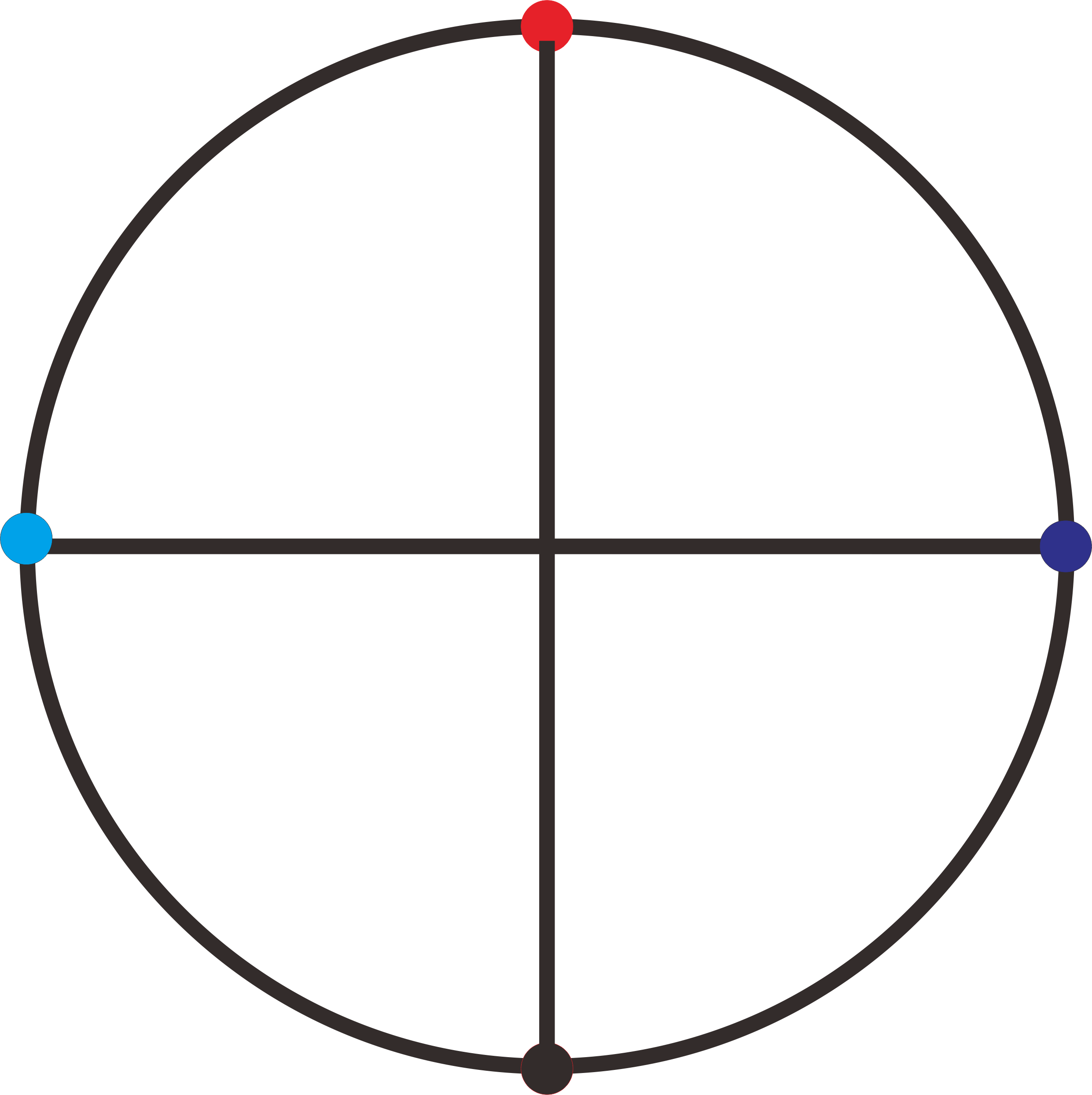}};

		\coordinate [label=below:$CBC(u_{1})$] (S)at(-3.9,-0.2);

		\coordinate [label=below:$CBC(u_{2})$] (S) at (3.9,0.5);

		\coordinate [label=below:$CBC(u_{3})$] (S) at (-0.7,-3.2);
		
		\coordinate [label=below:$CBC(u_{4})$] (S) at (1.0,3.5);
		
		\coordinate [label=below:$C$] (S)at(-1.35,1.35);
		\coordinate [label=below:$C$] (S)at(1.35,-1.35);
		\coordinate [label=below:$CBC^{-1}$] (S)at(1.39,1.35);
		\coordinate[label=below:$CBC^{-1}$](S)at(-1.259,-1.25);			
		\end{tikzpicture}
	\end{center}
	\caption{The  side  $s(C^{-1}BC^{-1})$ is a 3-ball in $I(C^{-1}BC^{-1})$, the outer side of the disk in  Figure \ref{figure:inverseCBinverseCridge} is the disk  $s(C^{-1}BC^{-1}) \cap \partial  {\bf H}^2_{\mathbb{C}}$.  The union of the two bisectors labeled by $C$ is the ridge $s(C^{-1}BC^{-1})\cap s(C)$. Similarly, the union of the two bisectors labeled by $CBC^{-1}$ is the ridge $s(C^{-1}BC^{-1})\cap s(CBC^{-1})$.}
	\label{figure:inverseCBinverseCridge}
\end{figure}


Similarly, we have

	\begin{prop}\label{prop:3-sideACBCa}	
	The side  $s(ACBCA^{-1})$  is 3-ball in $ {\bf H}^2_{\mathbb{C}} \cup \partial {\bf H}^2_{\mathbb{C}}$. Moreover,   $s(ACBCA^{-1}) \cap \partial {\bf H}^2_{\mathbb{C}}$ is a disk, $s(ACBCA^{-1}) \cap  {\bf H}^2_{\mathbb{C}}$ is a disk consists of four sectors.
\end{prop}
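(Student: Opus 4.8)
The plan is to reduce to a single base case via the $\langle A\rangle$-equivariance of the Ford domain, and then to repeat \emph{verbatim} the argument of Proposition~\ref{prop:3-sidecBc}. Recall from Subsection~\ref{subsec:ford3mfd} that $D_{R}$ is stabilized by $\langle A\rangle$, and that for a unipotent $f$ fixing $q_{\infty}$ one has $I(fg)=I(g)$ and $I(gf)=f^{-1}I(g)$. Applying these with $f=A^{\pm 1}$ gives $I(ACBCA^{-1})=I(CBCA^{-1})=A\cdot I(CBC)$, so that
$$s(ACBCA^{-1})=I(ACBCA^{-1})\cap D_{R}=A\bigl(I(CBC)\cap D_{R}\bigr)=A\cdot s(CBC).$$
Since $A\in\mathbf{PU}(2,1)$ is an isometry of $\overline{\mathbf{H}^{2}_{\mathbb C}}$ preserving both $\mathbf{H}^{2}_{\mathbb C}$ and $\partial\mathbf{H}^{2}_{\mathbb C}$, it therefore suffices to prove that $s(CBC)$ is a $3$-ball with $s(CBC)\cap\partial\mathbf{H}^{2}_{\mathbb C}$ a disk and $s(CBC)\cap\mathbf{H}^{2}_{\mathbb C}$ a disk consisting of four sectors.

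For $s(CBC)$ I would run the proof of Proposition~\ref{prop:3-sidecBc} under the substitution $CBC\leftrightarrow(CBC)^{-1}=C^{-1}BC^{-1}$, which carries the two ridges $s(C^{-1}BC^{-1})\cap s(C)$ and $s(C^{-1}BC^{-1})\cap s(CBC^{-1})$ there to the ridges $s(CBC)\cap s(C^{-1}BC)$ and $s(CBC)\cap s(C^{-1})$ here, and carries the vertices $CBC(u_{i})$ to the vertices $u_{i}$. In detail: first, the intersection estimates of Subsection~\ref{subsec:intersection} (Propositions~\ref{prop:CBC} and \ref{prop:inverseCBC}, together with the analogues of Propositions~\ref{prop:cBcandCintersecCBinverseC}--\ref{prop:CBcandCintersectioninverseCBinverseC}) show that the only sides of $D_{R}$ that $s(CBC)$ meets are $s(C^{-1})$ (equivalently $s(A^{-1}CA)$) and $s(C^{-1}BC)$; the triple intersection $I(CBC)\cap I(C^{-1})\cap I(C^{-1}BC)$ was already parametrized through the four points $u_{1},\dots,u_{4}$ in Subsection~\ref{subsec:ridge} (see (\ref{eq:CBCinverseCBC}) and (\ref{eq:CBCinverseCBCintersecinverseC})). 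Second, exactly as in Propositions~\ref{prop:cBcandCintersecCBinverseC}--\ref{prop:CBcandCintersectioninverseCBinverseC}, each of the relevant triple intersections inside $\mathbf{H}^{2}_{\mathbb C}$ is a union of two crossed geodesic segments sharing a common vertex (the analogue of $V_{C^{-1}BC^{-1},CBC^{-1},C}$), so each of the two ridges $s(CBC)\cap s(C^{-1})$, $s(CBC)\cap s(C^{-1}BC)$ is a union of two sectors, the four sectors share that common vertex, and they are glued in pairs along arcs running out to $u_{1},\dots,u_{4}$. Hence the union $\Delta$ of the two ridges is a topological disk whose boundary is a simple closed curve contained in the spinal sphere of $I(CBC)$; this curve separates the spinal sphere into two disks, and one of them, call it $E$, together with $\Delta$ cobounds a $3$-ball inside $I(CBC)$ which is precisely $s(CBC)$. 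Therefore $s(CBC)$ is a $3$-ball, $s(CBC)\cap\partial\mathbf{H}^{2}_{\mathbb C}=E$ is a disk, and $s(CBC)\cap\mathbf{H}^{2}_{\mathbb C}=\Delta$ is a disk consisting of four sectors; pushing forward by the isometry $A$ yields the assertion for $s(ACBCA^{-1})$.

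The work here is clerical rather than conceptual. One must certify, with the Cygan-distance inequalities and the Maple-assisted extremum computations of the style used in Subsection~\ref{subsec:intersection}, that no other isometric sphere $I(g)$ with $g\in R$ meets $I(CBC)$ inside $\mathbf{H}^{2}_{\mathbb C}$ (so that $s(CBC)$ has exactly the two advertised ridges), and one must check that the two ridges glue along their common vertex and their arcs to the $u_{i}$ into an \emph{embedded} disk rather than a more degenerate two-complex. These are essentially verbatim repetitions of the computations already carried out for $s(C^{-1}BC^{-1})$, and the point of the $\langle A\rangle$-equivariance reduction above is precisely that they need not be redone for each $A$-translate.
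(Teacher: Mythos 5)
Your proposal is correct and takes essentially the same route as the paper, which proves Proposition \ref{prop:3-sidecBc} in detail for $s(C^{-1}BC^{-1})$ and dismisses $s(ACBCA^{-1})$ with ``Similarly, we have'' --- precisely your combination of the $\langle A\rangle$-equivariance $s(ACBCA^{-1})=A\cdot s(CBC)$ with a rerun of the two-ridge/spinal-sphere argument, using Propositions \ref{prop:CBC}, \ref{prop:inverseCBC} and the parametrization of $I(CBC)\cap I(C^{-1}BC)$ through $u_1,\dots,u_4$. One cosmetic caveat: the ``crossed segments'' are straight in the spinal $(r,s)$-coordinates of the Giraud disk, not geodesic segments of $\mathbf{H}^{2}_{\mathbb C}$.
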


	\begin{prop}\label{prop:3-sideCBc}	
	
	The side  $s(CBC^{-1})$  is a solid light cone in $ {\bf H}^2_{\mathbb{C}} \cup \partial {\bf H}^2_{\mathbb{C}}$. Moreover,  $s(CBC^{-1}) \cap \partial {\bf H}^2_{\mathbb{C}}$ consists of two disjoint disks, the boundary of 
	$s(CBC^{-1}) \cap  {\bf H}^2_{\mathbb{C}}$ is a  light cone consisting of  four sectors.

\end{prop}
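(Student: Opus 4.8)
The plan is to run the same argument as in the proof of Proposition \ref{prop:3-sidecBc}, now reading off the combinatorics of $s(CBC^{-1})$ from the ridge analysis already carried out. The side $s(CBC^{-1})=I(CBC^{-1})\cap D_{R}$ lies on the isometric sphere $I(CBC^{-1})$, which is a Cygan sphere, hence a topological $3$-ball in ${\bf H}^2_{\mathbb{C}}\cup\partial{\bf H}^2_{\mathbb{C}}$ whose spinal sphere is a $2$-sphere. First I would determine which isometric spheres $I(g)$, $g\in R$, actually cut into $I(CBC^{-1})$. By Proposition \ref{prop:CBinverseC} (together with Propositions \ref{prop:CBC} and \ref{prop:inverseCBC}) the spheres $I(A^{k}CBCA^{-k})$, $I(A^{k}C^{-1}BCA^{-k})$, $I(A^{k}CBC^{-1}A^{-k})$ and $I(A^{k}C^{-1}BC^{-1}A^{-k})$ are disjoint from $I(CBC^{-1})$ in the relevant ranges of $k$, and a Cygan distance comparison using Propositions \ref{prop:center-radiusC} and \ref{prop:center-radiuscBCCBc} shows that among the $C$-family only $I(C)$ and $I(A^{-1}CA)=I(C^{-1})$ meet $I(CBC^{-1})$; one then checks, exactly as in the proof of Proposition \ref{prop:C}, that $I(CBC^{-1})\cap I(A^{-1}CA)$ lies in the interior of $I(C)$, so it is invisible from $D_{R}$. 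Hence the only ridges of $s(CBC^{-1})$ are $s(CBC^{-1})\cap s(C)$ and $s(CBC^{-1})\cap s(C^{-1}BC^{-1})$.

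Next I would feed in the explicit descriptions of these two ridges. By Proposition \ref{prop:CBcandCintersectioninverseCBinverseC} the ridge $s(CBC^{-1})\cap s(C)$ is a union of two sectors, and by Proposition \ref{prop:cBcandCBinverseCintersectionC} the ridge $s(CBC^{-1})\cap s(C^{-1}BC^{-1})$ is again a union of two sectors; moreover, as recorded in the paragraph following Proposition \ref{prop:CBcandCintersectioninverseCBinverseC}, all four sectors share the single common apex $V_{C^{-1}BC^{-1},CBC^{-1},C}$, and the radial edges emanating from that apex are precisely the two crossed segments making up $I(CBC^{-1})\cap I(C)\cap I(C^{-1}BC^{-1})$, with endpoints $CBC(u_{1}),CBC(u_{2}),CBC(u_{3}),CBC(u_{4})$. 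Thus $s(CBC^{-1})\cap{\bf H}^2_{\mathbb{C}}$ is exactly this union of four sectors with common apex, a \emph{light cone} with apex $V_{C^{-1}BC^{-1},CBC^{-1},C}$. I would then trace the boundary at infinity: the two arcs bounding the ridge $s(CBC^{-1})\cap s(C)$ join $CBC(u_{3})$ to $CBC(u_{1})$ and $CBC(u_{2})$ to $CBC(u_{4})$ (Proposition \ref{prop:CBcandCintersectioninverseCBinverseC}), while the two arcs bounding the ridge $s(CBC^{-1})\cap s(C^{-1}BC^{-1})$ join $CBC(u_{1})$ to $CBC(u_{3})$ and $CBC(u_{2})$ to $CBC(u_{4})$ (Proposition \ref{prop:cBcandCBinverseCintersectionC}). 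Using the identifications of the three spinal parametrizations noted after Proposition \ref{prop:CBcandCintersectioninverseCBinverseC}, gluing the two arcs joining $CBC(u_{1})$ and $CBC(u_{3})$, one from each ridge, yields one simple closed curve $\gamma_{1}$, and gluing the two arcs joining $CBC(u_{2})$ and $CBC(u_{4})$ yields a second simple closed curve $\gamma_{2}$, with $\gamma_{1}$ and $\gamma_{2}$ disjoint.

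Finally I would assemble the topology. The two disjoint simple closed curves $\gamma_{1},\gamma_{2}$ cut the spinal sphere of $I(CBC^{-1})$ into two disks $D_{1},D_{2}$ and an annulus; the two disks are exactly $s(CBC^{-1})\cap\partial{\bf H}^2_{\mathbb{C}}$, which is therefore a pair of disjoint disks. The four sectors regroup into the cone from $V_{C^{-1}BC^{-1},CBC^{-1},C}$ over $\gamma_{1}$ and the cone from $V_{C^{-1}BC^{-1},CBC^{-1},C}$ over $\gamma_{2}$, each such cone being the union of the two sectors whose base arcs lie on that curve, glued along the radial edges through their common endpoints; so $s(CBC^{-1})$ is the union of the solid cone over $D_{1}$ and the solid cone over $D_{2}$ with common apex $V_{C^{-1}BC^{-1},CBC^{-1},C}$, that is, a solid light cone, and the boundary in ${\bf H}^2_{\mathbb{C}}$ of $s(CBC^{-1})$ is the light cone consisting of these four sectors. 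I expect the principal obstacle to be precisely the bookkeeping in the second paragraph: one must verify that the six arcs genuinely close up into two disjoint, unknotted circles on the spinal sphere (rather than a single circle or a linked pair) and that no remaining isometric sphere of $R$ reaches into the region they bound. Neither point is conceptual; as in Subsection \ref{subsec:intersection}, they reduce to explicit Cygan distance inequalities between the centers computed in Propositions \ref{prop:center-radiusC}--\ref{prop:center-radiusCBCcBc} and to Maple checks of the signs of the relevant $\langle V,V\rangle$-type quantities on the Giraud tori.
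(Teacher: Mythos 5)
Your proposal is correct and follows essentially the same route as the paper: identify $s(C)$ and $s(C^{-1}BC^{-1})$ as the only sides meeting $s(CBC^{-1})$ (with $I(A^{-1}CA)\cap I(CBC^{-1})$ hidden inside $I(C)$), regroup the four sectors from Propositions \ref{prop:cBcandCBinverseCintersectionC} and \ref{prop:CBcandCintersectioninverseCBinverseC} into two disks pinched at $V_{C^{-1}BC^{-1},CBC^{-1},C}$, and cap their two disjoint boundary circles with the two disks they cut off the spinal sphere. Your arc bookkeeping ($CBC(u_1)$--$CBC(u_3)$ and $CBC(u_2)$--$CBC(u_4)$) matches the paper's parametrizations, so no changes are needed.
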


\begin{proof}The side  $s(CBC^{-1})$
	is contained in the isometric sphere  $I(CBC^{-1})$.	From above propositions,
we need only to consider the intersections of 	$s(CBC^{-1})$ with the sides $s(C)$ and  $s(C^{-1}BC^{-1})$. From above calculations, the union of $s(CBC^{-1})\cap s(C)$ and $s(CBC^{-1})\cap s(C^{-1}BC^{-1})$   are two disks  pinching at the point $V_{C^{-1}BC^{-1}, CBC^{-1},C}$. We denote these disks by $E_1$ and $E_2$ respectively.
	$\partial (E_1 \cup E_2)$  are two disjoint simple closed curves in the spinal sphere of $CBC^{-1}$, so they separate this spinal sphere into one annulus and two disks, say $D_1$ and $D_2$. We may assume $E_1 \cup D_1$ and  $E_2 \cup D_2$ are two spheres, so each of them bounds a 3-ball in the isometric sphere $I(CBC^{-1})$. These two 3-balls pinch at the point $V_{C^{-1}BC^{-1}, CBC^{-1},C}$, it is topologically a solid light cone.
	See Figure  	\ref{figure:CBinverseCridge} for the side $s(CBC^{-1})$.
\end{proof}

	\begin{figure}
	\begin{center}
		\begin{tikzpicture}
		\node at (0,0) {\includegraphics[width=12cm,height=6cm]{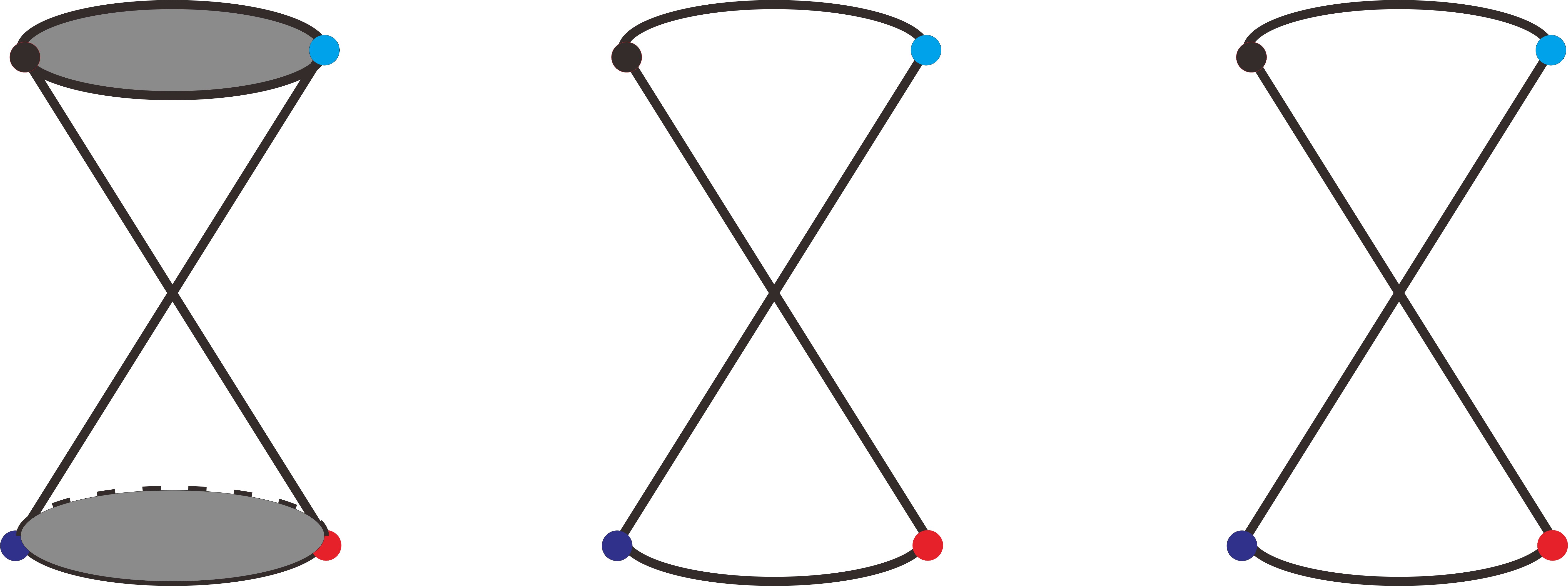}};
		
		\draw[->] (-3.4,2.6)--(-3,3.1);
		\coordinate [label=below:$CBC(u_{1})$] (S)at(-2.9,3.6);
		
		\draw[->] (-5.85,-2.8)--(-5.7,-3.3);
		\coordinate [label=below:$CBC(u_{2})$] (S) at (-5.1,-3.4);
		
		\draw[->] (-5.85,2.6)--(-5.7,3.1);
		\coordinate [label=below:$CBC(u_{3})$] (S) at (-5.1,3.6);
		
		\draw[->] (-3.35,-2.8)--(-2.5,-3.3);
		\coordinate [label=below:$CBC(u_{4})$] (S) at (-2.1,-3.4);
		
		\end{tikzpicture}
	\end{center}
	\caption{The left is the side $s(CBC^{-1})$, which is a solid lightcone, the two gray colored disks are $s(CBC^{-1}) \cap  \partial {\bf H}^2_{\mathbb{C}}$; the center is the ridge  $s(CBC^{-1})\cap s(C)$;  the right is the ridge $s(CBC^{-1})\cap s(C^{-1}BC^{-1})$. We omit the labels of $CBC(u_{i})$ for $i=1,2,3,4$ in the center and the right subfigures.}
	\label{figure:CBinverseCridge}
\end{figure}


Similarly, we have
\begin{prop}\label{prop:3-sideAcBCa}	
	The side  $s(AC^{-1}BCA^{-1})$  is a solid light cone in $ {\bf H}^2_{\mathbb{C}} \cup \partial {\bf H}^2_{\mathbb{C}}$. Moreover,  $s(AC^{-1}BCA^{-1}) \cap \partial {\bf H}^2_{\mathbb{C}}$ consists of two disjoint disks,
	$\partial s(AC^{-1}BCA^{-1}) \cap  {\bf H}^2_{\mathbb{C}}$ is a  light cone consisting of four sectors.	
\end{prop}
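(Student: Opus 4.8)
The argument parallels the proof of Proposition~\ref{prop:3-sideCBc}, with the triple of isometric spheres $\{I(C),\, I(CBC^{-1}),\, I(C^{-1}BC^{-1})\}$ replaced by $\{I(C),\, I(AC^{-1}BCA^{-1}),\, I(ACBCA^{-1})\}$. The cleanest way to organize it is to note that the set $R$ is invariant under conjugation by the parabolic $A$, which fixes $q_\infty$; since $A(I(g))=I(AgA^{-1})$ for every $g$ not fixing $q_\infty$, this gives $A(D_R)=D_R$, and $A$ carries each side $s(g)$ onto $s(AgA^{-1})$. In particular $s(AC^{-1}BCA^{-1})=A\bigl(s(C^{-1}BC)\bigr)$, so it suffices to determine the combinatorics of $s(C^{-1}BC)$, and this is carried out by the identical method used for $s(CBC^{-1})$.

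First I would identify which sides of $D_R$ meet $s(AC^{-1}BCA^{-1})$. Translating the clauses of Propositions~\ref{prop:C} and~\ref{prop:inverseCBC} (and their $\langle A\rangle$-translates) by $A$, every isometric sphere $I(g)$ with $g\in R$ is either disjoint from $I(AC^{-1}BCA^{-1})$, or meets it in a Giraud disk lying entirely in the interior of a third isometric sphere of $R$ — hence contributing nothing to $\partial D_R$ — except for $I(C)$ and $I(ACBCA^{-1})$. Thus $s(AC^{-1}BCA^{-1})$ is the part of the spinal sphere of $I(AC^{-1}BCA^{-1})$ cut out by the two ridges $s(AC^{-1}BCA^{-1})\cap s(C)$ and $s(AC^{-1}BCA^{-1})\cap s(ACBCA^{-1})$.

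Next I would compute these two ridges exactly as in Propositions~\ref{prop:cBcandCintersecCBinverseC},~\ref{prop:cBcandCBinverseCintersectionC} and~\ref{prop:CBcandCintersectioninverseCBinverseC}: parametrize the relevant pair of isometric spheres by a Giraud disk $V=V(z_1,z_2)$ via~\eqref{equaation:girauddisk}, write the trace of the third sphere on it in the form $|\langle V,u\rangle|=|\langle V,v\rangle|$ for suitable lifts $u,v$, solve the resulting quadratic, and check the sign of $\langle V,V\rangle$ along the solution locus. This will show that the triple intersection $I(C)\cap I(AC^{-1}BCA^{-1})\cap I(ACBCA^{-1})$ is, just as in the model case, a union of two crossed straight segments through a single point $V_{C,\,AC^{-1}BCA^{-1},\,ACBCA^{-1}}$ (the analogue of the pinch point $V_{C^{-1}BC^{-1},\,CBC^{-1},\,C}$ of Proposition~\ref{prop:3-sideCBc}), so that each of the two ridges is a union of two sectors and all four segments emanate from that one point. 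In particular $\partial s(AC^{-1}BCA^{-1})\cap{\bf H}^2_{\mathbb{C}}$ is a light cone of four sectors.

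Finally, as in Proposition~\ref{prop:3-sideCBc}, the union of the two ridges is a pair of topological disks meeting only at $V_{C,\,AC^{-1}BCA^{-1},\,ACBCA^{-1}}$; their total boundary is two disjoint simple closed curves on the spinal sphere of $I(AC^{-1}BCA^{-1})$, which cut that sphere into an annulus and two disks. Capping each ridge-disk with the corresponding spinal-sphere disk yields two $3$-balls glued at the single point $V_{C,\,AC^{-1}BCA^{-1},\,ACBCA^{-1}}$, i.e.\ a solid light cone, whose ideal boundary is the two disjoint spinal-sphere disks. I expect the main obstacle to be the first step: certifying that, after the $\langle A\rangle$-translation, $I(AC^{-1}BCA^{-1})$ genuinely meets no sphere of $R$ on $\partial D_R$ other than $I(C)$ and $I(ACBCA^{-1})$ requires going through every clause of the intersection-pattern propositions of Subsection~\ref{subsec:intersection} and, for each surviving Giraud disk, locating it relative to the remaining spheres (a maximum-of-$|\langle V,\cdot\rangle|^2$ estimate under $\langle V,V\rangle\le 0$, as in the proof of Proposition~\ref{prop:C}); the Giraud-disk computations of the second and third steps are then routine.
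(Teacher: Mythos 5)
Your proposal is correct and matches the paper's (implicit) argument: the paper states this proposition with only the word ``Similarly,'' deferring to the detailed treatment of $s(CBC^{-1})$ in Proposition \ref{prop:3-sideCBc} and the ridge computations of Subsection \ref{subsec:ridge}, which is exactly the analogy you carry out. Your explicit use of the $A$-equivariance of $D_{R}$ to identify $s(AC^{-1}BCA^{-1})$ with $A\bigl(s(C^{-1}BC)\bigr)$, and your correct isolation of $I(C)$ and $I(ACBCA^{-1})$ as the only contributing neighbours, make the reduction cleaner than the paper's unstated version but do not change the method.
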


\begin{prop}\label{prop:3-sideC}
	
	The side  $s(C)$  is a 3-ball in $ {\bf H}^2_{\mathbb{C}} \cup \partial {\bf H}^2_{\mathbb{C}}$. Moreover,  $s(C) \cap \partial {\bf H}^2_{\mathbb{C}}$ is a 4-holed 2-sphere, and the boundary of  $s(C) \cap {\bf H}^2_{\mathbb{C}}$ is a disjoint union of four disks:
	
		\begin{itemize}
		
		\item  $s(C) \cap S(ACA^{-1})$ is a Giraud disk;

			\item  $s(C) \cap S(A^{-1}CA)$ is a Giraud disk;
			
			\item  $s(C) \cap (S(CBC^{-1})\cup S(C^{-1}BC^{-1}))$ is a union of four sectors;
			
				\item  $s(C) \cap (S(ACBCA^{-1})\cup S(AC^{-1}BCA^{-1}))$ is a union of four sectors.
			\end{itemize}
		\end{prop}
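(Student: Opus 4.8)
The plan is to read off the combinatorial structure of $s(C)=I(C)\cap D_{R}$ from the intersection data already assembled in Subsection~\ref{subsec:intersection} and in Propositions~\ref{prop:cCintersection}, \ref{prop:cBcandCintersecCBinverseC}, \ref{prop:cBcandCBinverseCintersectionC}, \ref{prop:CBcandCintersectioninverseCBinverseC}. First I would pin down which isometric spheres actually contribute a ridge to $s(C)$. By Proposition~\ref{prop:C}, among all $I(g)$ with $g\in R$ the sphere $I(C)$ meets only $I(A^{k}CA^{-k})$ for $|k|\le 2$ and the spheres $I(A^{k}C^{\pm1}BC^{\pm1}A^{-k})$ with $k\in\{0,1\}$; moreover the two tangencies with $I(A^{\pm2}CA^{\mp2})$ and the four intersections $I(C)\cap I(CBC)$, $I(C)\cap I(C^{-1}BC)$, $I(C)\cap I(AC^{-1}BC^{-1}A^{-1})$, $I(C)\cap I(ACBC^{-1}A^{-1})$ all lie in the interior of $I(ACA^{-1})$ or of $I(A^{-1}CA)$. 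Hence the frontier of $s(C)$ inside ${\bf H}^2_{\mathbb C}$ is exactly the union of the six ridges $s(C)\cap s(g)$ with $g\in\{ACA^{-1},A^{-1}CA,CBC^{-1},C^{-1}BC^{-1},ACBCA^{-1},AC^{-1}BCA^{-1}\}$, and nothing else.

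Next I would describe these ridges and how they fit together on the spinal $2$-sphere of $I(C)$. Proposition~\ref{prop:cCintersection} gives that $s(C)\cap s(A^{-1}CA)$ is a Giraud disk bounded by a circle in $\partial{\bf H}^2_{\mathbb C}$; since $A$ is a Cygan isometry preserving $D_{R}$ with $A\bigl(s(A^{-1}CA)\bigr)=s(C)$ and $A\bigl(s(C)\bigr)=s(ACA^{-1})$, applying $A$ shows $s(C)\cap s(ACA^{-1})$ is also such a Giraud disk. Propositions~\ref{prop:cBcandCintersecCBinverseC} and \ref{prop:CBcandCintersectioninverseCBinverseC} give that $s(C)\cap s(C^{-1}BC^{-1})$ and $s(C)\cap s(CBC^{-1})$ are each a union of two sectors, and the paragraph following Proposition~\ref{prop:CBcandCintersectioninverseCBinverseC} already identifies the common set $I(C^{-1}BC^{-1})\cap I(C)\cap I(CBC^{-1})$ as the pair of crossed segments through $CBC(u_1),\dots,CBC(u_4)$ with centre $V_{C^{-1}BC^{-1},CBC^{-1},C}$; so these two pairs of sectors glue along that crossed set into a single embedded disk consisting of four sectors, which is the claimed piece $s(C)\cap\bigl(s(CBC^{-1})\cup s(C^{-1}BC^{-1})\bigr)$. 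The remaining piece $s(C)\cap\bigl(s(ACBCA^{-1})\cup s(AC^{-1}BCA^{-1})\bigr)$ is obtained by the same computation (this is essentially the content of Propositions~\ref{prop:3-sideACBCa} and \ref{prop:3-sideAcBCa}), yielding a second four-sector disk.

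Finally I would verify disjointness and assemble the ball. Proposition~\ref{prop:cCintersection} states that the Giraud disk $I(C)\cap I(A^{-1}CA)$ misses every $I(A^{k}C^{\pm1}BC^{\pm1}A^{-k})$, and Proposition~\ref{prop:C}(2) places the two tangency points inside $I(A^{\pm1}CA^{\mp1})$; together with the $A$-translated versions and the non-intersection statements of Subsection~\ref{subsec:intersection}, this shows that the two Giraud disks and the two four-sector disks are pairwise disjoint. Their four boundary circles are then four pairwise-disjoint simple closed curves on the spinal $2$-sphere of $I(C)$, each cutting off the region where $I(C)$ enters the corresponding face(s); so $s(C)\cap\partial{\bf H}^2_{\mathbb C}$, being the complement of these four open disks, is a $4$-holed $2$-sphere. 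Then $\partial s(C)$ is the union of this $4$-holed sphere with the four disks along their common boundary circles, hence a topological $2$-sphere, and since $s(C)$ is a closed subregion of the topological $3$-ball $I(C)$ having this $2$-sphere as its frontier, $s(C)$ is a $3$-ball. The step I expect to be the main obstacle is the claim that the two pairs of sectors coming from Propositions~\ref{prop:cBcandCintersecCBinverseC} and \ref{prop:CBcandCintersectioninverseCBinverseC} (and likewise the pair for $ACBCA^{-1},AC^{-1}BCA^{-1}$) glue to an \emph{embedded} four-sector disk with no extraneous intersections, and that the four holes it and the Giraud disks cut out of the spinal sphere are genuinely disjoint rather than merely disjoint in their interiors; this requires carefully matching the explicit boundary arcs $\mathcal{C}_{C^{-1}BC^{-1},C,i}$, $\mathcal{C}_{C^{-1}BC^{-1},CBC^{-1},i}$, $\mathcal{C}_{CBC^{-1},C,i}$ and the points $CBC(u_i)$, $V_{C^{-1}BC^{-1},CBC^{-1},C}$ computed above, and checking the analogous arcs for the $ACBCA^{-1}$-cluster by the same method.
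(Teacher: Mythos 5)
Your proposal is correct and follows essentially the same route as the paper: reduce to the six relevant sides via the non-intersection results of Subsection~\ref{subsec:intersection}, identify the two Giraud disks and the two four-sector disks (glued along the crossed segments through $CBC(u_i)$ and $V_{C^{-1}BC^{-1},CBC^{-1},C}$), check the four resulting disks are pairwise disjoint in $I(C)$, and conclude that they cut off a $3$-ball whose ideal boundary is a $4$-holed sphere. You are somewhat more explicit than the paper about the embeddedness of the glued four-sector disks, but this is exactly the verification the paper delegates to the arc parametrizations in Propositions~\ref{prop:cBcandCintersecCBinverseC}--\ref{prop:CBcandCintersectioninverseCBinverseC}.
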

	\begin{proof}The side  $s(C)$
		is contained in the isometric sphere  $I(C)$.
			From above propositions,
		we need only  to consider the intersections of 	$s(C)$
	 with the sides $s(ACA^{-1})$, $s(A^{-1}CA)$,  $s(C^{-1}BC^{-1})$, $s(CBC^{-1})$, $s(ACBCA^{-1})$ and  $s(AC^{-1}BCA^{-1})$.
			From above calculations, the union of $s(ACA^{-1})\cap s(C)$ and $s(A^{-1}CA)\cap s(C)$
		are two disjoint Giraud disks, which are disjoint from other sides.
		The union of two ridges $s(C)\cap s(C^{-1}BC^{-1})$ and $s(C)\cap s(CBC^{-1})$ is a disk in $I(C) \cap {\bf H}^2_{\mathbb{C}}$. Similarly, the union of two ridges $s(C)\cap s(AC^{-1}BCA^{-1})$ and $s(C)\cap s(AC^{-1}BC^{-1}A^{-1})$ is  also a disk in $I(C) \cap {\bf H}^2_{\mathbb{C}}$.  These four disks are disjoint in $I(C)$, they together separate a 3-ball in $I(C)$, which is $s(C)$, and  $s(C) \cap \partial {\bf H}^2_{\mathbb{C}}$ is a 4-holed 2-sphere.
		See Figure \ref{figure:Cridge} for the side  $s(C)$.
	\end{proof}

		
		
		
%
%

\begin{figure}
	\begin{center}
		\begin{tikzpicture}
		\node at (0,0) {\includegraphics[width=12.5cm,height=6cm]{{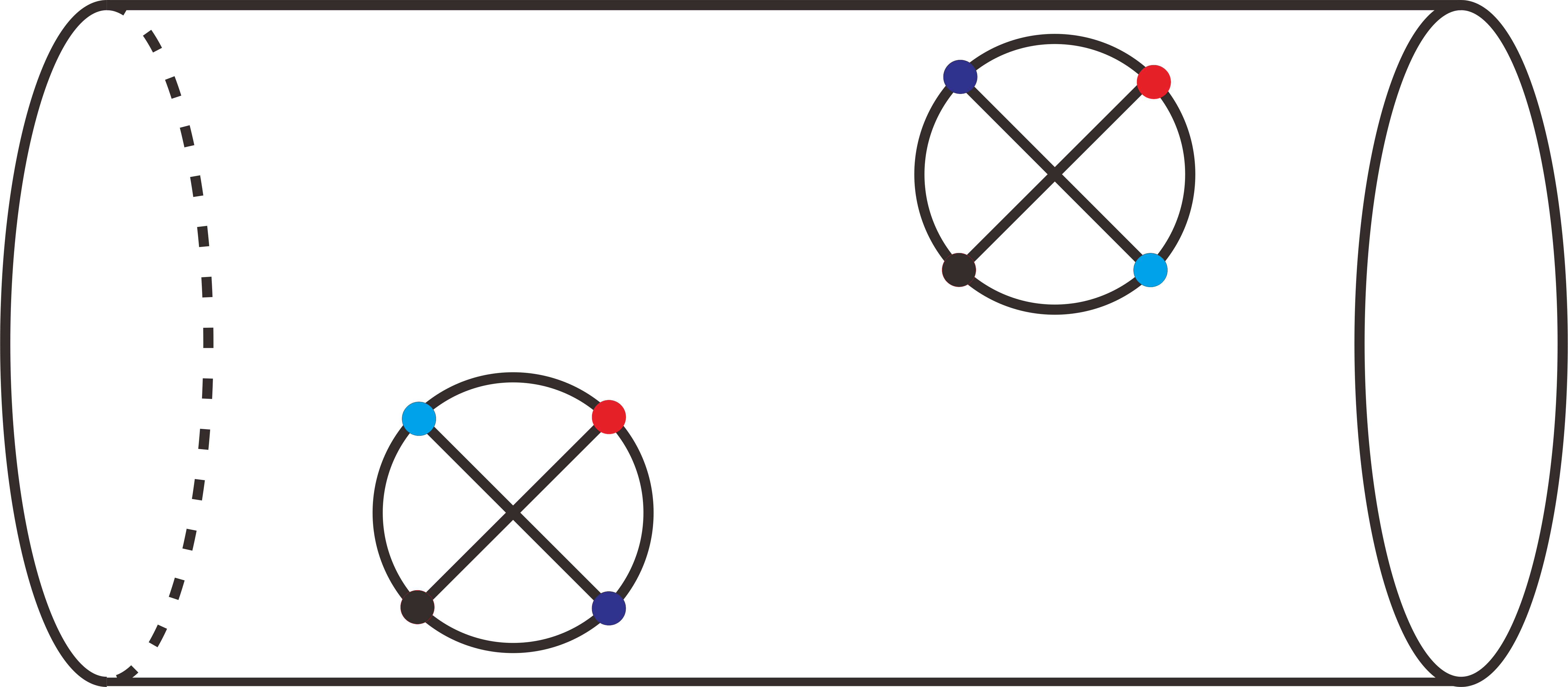}}};
		\coordinate [label=below:$A(u_{1})$] (S) at (3.6,0.5);
		\coordinate [label=below:$A(u_{2})$] (S) at (0.7,2.7);
		\coordinate [label=below:$A(u_{3})$] (S) at (0.7,0.5);
		\coordinate [label=below:$A(u_{4})$] (S) at (3.6,2.7);
		
		\coordinate [label=below:$CBC(u_{1})$] (S)at(-3.8,-0.2);
		\coordinate [label=below:$CBC(u_{2})$] (S) at (-0.1,-2.4);
		\coordinate [label=below:$CBC(u_{3})$] (S) at (-3.9,-2.4);
		\coordinate [label=below:$CBC(u_{4})$] (S) at (-0.1,-0.2);
		
		\draw[->] (2.99,1.55)--(1.3,3.2);
		\draw[->] (1.7,1.55)--(1.3,3.2);
		\coordinate [label=left:$AC^{-1}BCA^{-1}$] (S) at (2.0,3.5);
		
		\draw[->] (2.4,2.4)--(3.3,3.2);
		\draw[->] (2.3,0.75)--(3.3,3.2);
		\coordinate [label=left:$ACBCA^{-1}$] (S) at (4.85,3.5);
		
		\draw[->] (-2.9,-1.55)--(-1,-3.2);
		\draw[->] (-1.5,-1.55)--(-1,-3.2);
		\coordinate [label=left:$CBC^{-1}$] (S) at (-0.0,-3.6);
		
		\draw[->] (-2.3,-2.3)--(-2.9,-3.2);
		\draw[->] (-2.1,-0.8)--(-2.9,-3.2);
		\coordinate [label=left:$ C^{-1}BC^{-1}$] (S) at (-2.5,-3.6);
		\end{tikzpicture}
	\end{center}
	\caption{The side $s(C)$.  The left and the right round disks are the ridges $s(C)\cap s(A^{-1}CA)$ and $s(C)\cap s(ACA^{-1})$ respectively. The union of the two bisectors labeled by $CBC^{-1}$ is the ridge $s(C) \cap s(CBC^{-1})$. Similarly we have the ridges $s(C) \cap s(C^{-1}BC^{-1})$, $s(C) \cap s(AC^{-1}BCA^{-1})$ and $s(C) \cap s(ACBCA^{-1})$. }
	\label{figure:Cridge}
\end{figure}

\subsection{Using the  Poincar\'e polyhedron theorem}\label{subsec:poincare2dim}
	Since $(AC)^2=id$, then   $I(A^{-1}CA)=I(C)$. We have the  side pairing maps as follows:
		\begin{itemize}
		
		\item   $A \cdot A^{k}CA^{-k}: s(A^{k}CA^{-k}) \rightarrow s(A^{k}CA^{-k})$;

			\item   $A^{k}CBCA^{-k}: s(A^{k}CBCA^{-k}) \rightarrow s(A^{k}C^{-1}BC^{-1}A^{-k})$;
			
				\item   $A^{k}C^{-1}BCA^{-k}: s(A^{k}C^{-1}BCA^{-k}) \rightarrow s(A^{k}C^{-1}BCA^{-k})$;
		
		\item   $A^{k}CBC^{-1}A^{-k}: s(A^{k}CBC^{-1}A^{-k}) \rightarrow s(A^{k}CBC^{-1}A^{-k})$.
		
			\end{itemize}

			\begin{prop}\label{prop:sidepairingC}
				
			The side pairing map  $A^{k+1}CA^{-k}$  is a self-homeomorphism of $s(A^{k}CA^{-k})$:
			\begin{enumerate}
				
				\item \label{item:Cridge1} $A^{k+1}CA^{-k}$ exchanges the ridges $s(A^{k}CA^{-k}) \cap s(A^{k-1}CA^{-k+1})$
			and   $s(A^{k}CA^{-k}) \cap s(A^{k+1}CA^{-k-1})$;	
					\item   \label{item:Cridge2} $A^{k+1}CA^{-k}$ sends the ridge $s(A^{k}CA^{-k}) \cap s(A^{k+1}CBCA^{-k-1})$
					to  the ridge $s(A^{k}CA^{-k}) \cap s(A^{k}CBC^{-1}A^{-k})$;
					
					\item  \label{item:Cridge2} $A^{k+1}CA^{-k}$ sends the  ridge $s(A^{k}CA^{-k}) \cap s(A^{k+1}C^{-1}BCA^{-k-1})$
					to the ridge $s(A^{k}CA^{-k}) \cap s(A^{k}C^{-1}BC^{-1}A^{-k})$.			
			\end{enumerate}

			\end{prop}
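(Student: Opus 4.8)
The plan is to reduce the statement to the case $k=0$ and then to verify the three ridge correspondences by tracking, for each ridge, the triple of points that (together with $I(C)$) cuts it out as a Giraud disk. This will use nothing beyond the relations $(AC)^2=B^2=C^3=\mathrm{id}$ and the elementary properties of isometric spheres recalled above.

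\emph{Step 1: reduction to $k=0$.} The set $R$ is invariant under conjugation by $A$, and since $A$ is a unipotent Cygan isometry fixing $q_{\infty}$ it preserves $D_{R}$ and satisfies $A(I(g))=I(AgA^{-1})$, hence $A(s(g))=s(AgA^{-1})$. Therefore conjugation by $A^{k}$ carries the configuration of isometric spheres and sides around $s(C)$ to the configuration around $s(A^{k}CA^{-k})$, intertwines the side-pairing map $AC$ of $s(C)$ with $A^{k+1}CA^{-k}$, and shifts all indices appearing in the statement by $k$. So it suffices to treat $k=0$, i.e. to study the involution $\gamma:=AC$ (note $\gamma^{2}=\mathrm{id}$) on $s(C)$. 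From $(AC)^{2}=\mathrm{id}$ we record the only two facts we need: $\gamma=AC=C^{-1}A^{-1}$, so $\gamma(q_{\infty})=C^{-1}A^{-1}(q_{\infty})=C^{-1}(q_{\infty})$; and $ACA=C^{-1}$. Combined with $C^{-2}=C$ this is the entire input from the group.

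\emph{Step 2: the ridge correspondences.} Recall $I(g)=\mathcal{B}(q_{\infty},g^{-1}(q_{\infty}))$, so $I(g_{1})\cap I(g_{2})$ is the Giraud disk cut out by the triple $\{q_{\infty},g_{1}^{-1}(q_{\infty}),g_{2}^{-1}(q_{\infty})\}$, and by the remark after Proposition~\ref{prop:Giraud} an isometry sends one such Giraud disk to another precisely when it sends the corresponding triples to one another. For $(1)$: since $I(A^{-1}CA)=I(C^{-1})$, the disk $I(C)\cap I(A^{-1}CA)$ is cut out by $\{q_{\infty},C^{-1}(q_{\infty}),C(q_{\infty})\}$, and $\gamma$ sends this to $\{C^{-1}(q_{\infty}),q_{\infty},AC^{2}(q_{\infty})\}=\{q_{\infty},C^{-1}(q_{\infty}),AC^{-1}(q_{\infty})\}$, which cuts out $I(C)\cap I(ACA^{-1})$ because $(ACA^{-1})^{-1}(q_{\infty})=AC^{-1}(q_{\infty})$; since these two ridges are full Giraud disks (Proposition~\ref{prop:cCintersection}) and are distinct, the involution $\gamma$ exchanges them. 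For $(2)$: $I(C)\cap I(ACBCA^{-1})$ is cut out by $\{q_{\infty},C^{-1}(q_{\infty}),AC^{-1}BC^{-1}(q_{\infty})\}$, and using $ACA=C^{-1}$ and $C^{-2}=C$ the last point maps to $ACAC^{-1}BC^{-1}(q_{\infty})=CBC^{-1}(q_{\infty})=(CBC^{-1})^{-1}(q_{\infty})$, so $\gamma$ sends this ridge to $I(C)\cap I(CBC^{-1})$. For $(3)$: likewise $\gamma$ sends the triple $\{q_{\infty},C^{-1}(q_{\infty}),AC^{-1}BC(q_{\infty})\}$ of $I(C)\cap I(AC^{-1}BCA^{-1})$ to $\{q_{\infty},C^{-1}(q_{\infty}),CBC(q_{\infty})\}$, the triple of $I(C)\cap I(C^{-1}BC^{-1})$, since $(C^{-1}BC^{-1})^{-1}(q_{\infty})=CBC(q_{\infty})$.

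\emph{Step 3: from Giraud disks to sides, the self-homeomorphism claim, and the obstacle.} For $(1)$ the ridges are full Giraud disks, so Step 2 already identifies them. For the ``two-sector'' ridges in $(2)$ and $(3)$ one must further check that $\gamma$ matches the finite vertex sets $\{A(u_{i})\}$ and $\{CBC(u_{i})\}$ of Subsection~\ref{subsec:ridge}: since each such ridge is the part of a Giraud disk cut off by the neighbouring isometric spheres, whose mutual intersection pattern is recorded in Subsection~\ref{subsec:intersection}, the vertex matching pins the image down sector by sector, and the remaining isometric spheres of $R$ are seen, again from Subsection~\ref{subsec:intersection}, not to intrude. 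Finally $(1)$--$(3)$, together with the mirror statements obtained by applying $\gamma$ once more, show that $\gamma$ permutes the four codimension-one faces of $s(C)$ lying in $\hc$ described in Proposition~\ref{prop:3-sideC}, hence preserves $\partial s(C)$; as $\gamma$ also preserves $I(C)$ and a direct evaluation at one interior point of $s(C)$ shows its image lies in $s(C)$ rather than in the complementary region of $I(C)$, we conclude $\gamma$ is a self-homeomorphism of $s(C)$, and conjugating back by $A^{k}$ gives the statement for all $k$. The main obstacle here is not the algebra, which is immediate, but precisely this bookkeeping: ``side'' and ``ridge'' mean pieces of isometric spheres cut out by the whole of $D_{R}$, so one must use the intersection data of Subsection~\ref{subsec:intersection} to know exactly which isometric spheres bound $s(C)$ and to confirm that the images of the sectors are again exactly the asserted sectors---this combinatorial stability being what the Poincar\'e polyhedron theorem requires.
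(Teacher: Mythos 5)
Your proposal is correct and follows essentially the same route as the paper: reduce to $k=0$ and track the triple $\{q_{\infty},g_1^{-1}(q_{\infty}),g_2^{-1}(q_{\infty})\}$ defining each Giraud disk under $AC$, using only $(AC)^2=C^3=B^2=\mathrm{id}$. Your Step 3 makes explicit a point the paper leaves implicit (that matching Giraud disks must be upgraded to matching the ridges cut out by $D_R$, via the vertex and intersection data), but the substance of the argument is the same.
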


		\begin{proof}We just prove the case $k=0$, the other cases are similar.
			
			  The ridge $s(C) \cap s(ACA^{-1})$	is
			defined by the triple equality
		$$\langle z, q_{\infty}\rangle =\langle z, C^{-1}(q_{\infty})\rangle=\langle z, AC^{-1}A^{-1}(q_{\infty})\rangle.$$
From 	$AC$'s action  on the set 	$$\{q_{\infty}, C^{-1}(q_{\infty}), AC^{-1}A^{-1}(q_{\infty})\},$$ we get the set
	$$\{C^{-1}(q_{\infty}), q_{\infty},AC^{-1}A^{-1}(q_{\infty})\}.$$
So  $AC$ maps  $s(C) \cap s(ACA^{-1})$ to  $s(C) \cap s(A^{-1}CA)$.

The ridge $s(C) \cap s(ACBCA^{-1})$
is
defined by the triple equality
$$\langle z, q_{\infty}\rangle =\langle z, C^{-1}(q_{\infty})\rangle=\langle z, AC^{-1}BC^{-1}A^{-1}(q_{\infty})\rangle.$$
From $AC$'s action on the set 	$$\{q_{\infty},C^{-1}(q_{\infty}), AC^{-1}BC^{-1}A^{-1}(q_{\infty})\},$$ we get the set
$$\{C^{-1}(q_{\infty}), q_{\infty},CBC^{-1}(q_{\infty})\}.$$
So  $AC$ maps  $s(C) \cap s(ACBCA^{-1})$ to  $s(C) \cap s(CBC^{-1})$.

	The ridge $s(C) \cap s(AC^{-1}BCA^{-1})$
	is
	defined by the triple equality
	$$\langle z, q_{\infty}\rangle =\langle z, C^{-1}(q_{\infty})\rangle=\langle z, AC^{-1}BCA^{-1}(q_{\infty})\rangle.$$
	From $AC$'s action on the set 	$$\{q_{\infty},C^{-1}BCA^{-1}(q_{\infty}), AC^{-1}A^{-1}(q_{\infty})\},$$ we get the set
	$$\{C^{-1}(q_{\infty}), q_{\infty},CBC(q_{\infty})\}.$$
	So  $AC$ maps  $s(C) \cap s(AC^{-1}BCA^{-1})$ to  $s(C) \cap s(CBCA)$. The reader can compare to Figure 	\ref{figure:Cridge}.
	\end{proof}

Similarly, we have following three propositions, we omits the routine  proof.
	\begin{prop}\label{prop:sidepairingCBC}
	
	The side pairing map  $A^{k}CBCA^{-k}$  is a homeomorphism from $s(A^{k}CBCA^{-k})$ to $s(A^{k}C^{-1}BC^{-1}A^{-k})$:
	\begin{enumerate}
		
		\item \label{item:Cridge1}  $A^{k}CBCA^{-k}$ sends the ridge $s(A^{k}CBCA^{-k}) \cap s(A^{k}C^{-1}BCA^{-k})$
		to the ridge   $s(A^{k}C^{-1}BC^{-1}A^{-k}) \cap s(A^{k}CA^{-k})$;

		\item   \label{item:Cridge2} $A^{k}CBCA^{-k}$ sends the ridge $s(A^{k}CBCA^{-k}) \cap s(A^{k-1}CA^{-k+1})$
		to  the ridge $s(A^{k}C^{-1}BC^{-1}A^{-k}) \cap s(A^{k}CBC^{-1}A^{-k})$;
		
	\end{enumerate}
		
\end{prop}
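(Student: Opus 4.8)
The plan is to follow the strategy of Proposition~\ref{prop:sidepairingC} almost verbatim, the only new input being the relations $B^2 = C^3 = id$ in place of $(AC)^2=id$. First I would reduce to $k=0$: since $A$ is unipotent and fixes $q_\infty$, conjugation $X\mapsto A^kXA^{-k}$ is a Cygan isometry that preserves $D_{R}$, sends each isometric sphere $I(g)$ to $I(A^kgA^{-k})$, and hence sends sides to sides and ridges to ridges; so the statement for general $k$ follows from the $k=0$ case by conjugating everything by $A^{k}$. From now on the side-pairing map is $g=CBC$, with $g^{-1}=C^{-1}BC^{-1}$ since $B^2=id$, and we recall (Section~\ref{sec:background}) that $g$ carries $I(g)$ isometrically onto $I(g^{-1})$ and the exterior of $I(g)$ onto the interior of $I(g^{-1})$; in particular $CBC$ carries $I(CBC)$ onto $I(C^{-1}BC^{-1})$.

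By Proposition~\ref{prop:3-sidecBc} and its proof, $s(C^{-1}BC^{-1})$ is the $3$-ball cut out on $I(C^{-1}BC^{-1})$ by exactly its two ridges $s(C^{-1}BC^{-1})\cap s(C)$ and $s(C^{-1}BC^{-1})\cap s(CBC^{-1})$ together with a disk in the spinal sphere; the same argument (using Sections~\ref{subsec:intersection} and~\ref{subsec:ridge}) shows $s(CBC)$ is cut out on $I(CBC)$ by its two ridges $s(CBC)\cap s(C^{-1}BC)$ and $s(CBC)\cap s(A^{-1}CA)$ together with a disk in the spinal sphere. Hence it suffices to check that $CBC$ carries these two ridges of $s(CBC)$ onto the two ridges of $s(C^{-1}BC^{-1})$ as asserted in (1) and (2). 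As in Proposition~\ref{prop:sidepairingC} I would use that a ridge is a (possibly truncated) triple bisector intersection, hence is determined by the corresponding triple of points, and that an isometry sends such an intersection to the triple bisector intersection of the image triple (Proposition~\ref{prop:Giraud}). Thus $I(CBC)$, $I(C^{-1}BC)$ and $I(A^{-1}CA)$ are the bisectors equidistant from $q_\infty$ and, respectively, $C^{-1}BC^{-1}(q_\infty)$, $C^{-1}BC(q_\infty)$ and $A^{-1}C^{-1}(q_\infty)=C(q_\infty)$ (the last equality from $(AC)^2=id$), while $I(C^{-1}BC^{-1})$, $I(C)$ and $I(CBC^{-1})$ are equidistant from $q_\infty$ and, respectively, $CBC(q_\infty)$, $C^{-1}(q_\infty)$ and $CBC^{-1}(q_\infty)$. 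Simplifying with $B^2=C^3=id$ gives
$$CBC\cdot C^{-1}BC^{-1}=id,\qquad CBC\cdot C^{-1}BC=C^{-1},\qquad CBC\cdot C=CBC^{-1},$$
so $CBC$ sends the defining triple $\{q_\infty,\,C^{-1}BC^{-1}(q_\infty),\,C^{-1}BC(q_\infty)\}$ of $s(CBC)\cap s(C^{-1}BC)$ to $\{CBC(q_\infty),\,q_\infty,\,C^{-1}(q_\infty)\}$, which is the defining triple of $s(C^{-1}BC^{-1})\cap s(C)$, and it sends the defining triple $\{q_\infty,\,C^{-1}BC^{-1}(q_\infty),\,C(q_\infty)\}$ of $s(CBC)\cap s(A^{-1}CA)$ to $\{CBC(q_\infty),\,q_\infty,\,CBC^{-1}(q_\infty)\}$, the defining triple of $s(C^{-1}BC^{-1})\cap s(CBC^{-1})$. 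This is precisely the content of (1) and (2).

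The routine but delicate part --- the only real obstacle --- is the bookkeeping needed to ensure that $CBC$ also respects the \emph{truncation} of these ridges. By Propositions~\ref{prop:cBcandCintersecCBinverseC}, \ref{prop:cBcandCBinverseCintersectionC} and~\ref{prop:CBcandCintersectioninverseCBinverseC}, each of the ridges in sight is only a union of two sectors inside the full Giraud disk, cut down by one further isometric sphere, and one must verify that the extra sphere cutting $s(CBC)\cap s(C^{-1}BC)$ is carried by $CBC$ onto the extra sphere cutting $s(C^{-1}BC^{-1})\cap s(C)$, and similarly for the second ridge. As above, this reduces to tracking one more point through $CBC$ with $B^2=C^3=id$, together with checking that the four vertices $CBC(u_i)$ of the sector decompositions recorded in Section~\ref{subsec:ridge} are sent to the vertices of the target decompositions shown in Figures~\ref{figure:inverseCBinverseCridge} and~\ref{figure:CBinverseCridge}; no idea beyond that of Proposition~\ref{prop:sidepairingC} is needed, only care.
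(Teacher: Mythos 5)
Your argument is correct and is precisely the one the paper intends: the paper omits the proof of this proposition as ``routine,'' referring to the model proof of Proposition \ref{prop:sidepairingC}, which tracks the defining triple of points of each ridge under the side-pairing map exactly as you do, and your computations $CBC\cdot C^{-1}BC^{-1}=id$, $CBC\cdot C^{-1}BC=C^{-1}$, $CBC\cdot C=CBC^{-1}$ all check out. Your added attention to the truncation of the ridges into sectors is, if anything, more careful than the paper's treatment.
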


\begin{prop}\label{prop:sidepairingcBC}
	
	The side pairing map  $A^{k}C^{-1}BCA^{-k}$  is a self-homeomorphism of $s(A^{k}C^{-1}BCA^{-k})$. Moreover, $A^{k}C^{-1}BCA^{-k}$  exchanges the ridges $s(A^{k}C^{-1}BCA^{-k}) \cap s(A^{k-1}CA^{-k+1})$
		and    $s(A^{k}C^{-1}BCA^{-k}) \cap s(A^{k}CBCA^{-k})$.
	
\end{prop}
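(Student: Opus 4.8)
The plan is to argue exactly as in the proof of Proposition~\ref{prop:sidepairingC}, by following the defining triples of points of the two ridges under the side-pairing map. First I would reduce to $k=0$: since $A$ is unipotent and fixes $q_{\infty}$, one has $I(A^{k}gA^{-k})=A^{k}I(g)$ for every $g\in\Sigma$ not fixing $q_{\infty}$, and $D_{R}$ is $\langle A\rangle$-invariant, so $s(A^{k}gA^{-k})=A^{k}\,s(g)$; hence conjugation by $A^{k}$ carries the whole configuration at level $0$ to the one at level $k$, and it suffices to treat $g:=C^{-1}BC$ together with the ridges $s(C^{-1}BC)\cap s(A^{-1}CA)$ and $s(C^{-1}BC)\cap s(CBC)$.

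The key structural fact is that $g=C^{-1}BC$ is an involution, since $g^{2}=C^{-1}B^{2}C=id$ by $B^{2}=id$. Consequently $I(g)=I(g^{-1})$, and $g$ maps $I(g)$ bijectively onto itself, interchanging its interior and exterior; together with the description of the side $s(C^{-1}BC)$ (obtained exactly like the sides analysed above), this already shows that $g$ restricts to a self-homeomorphism of $s(C^{-1}BC)$. It remains to match the two ridges.

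For that I would use the bisector-triple description. Recalling $I(A^{-1}CA)=I(C^{-1})$, the ridge $s(C^{-1}BC)\cap s(A^{-1}CA)$ is the portion inside $D_{R}$ of the Giraud disk determined by $\{\,q_{\infty},\,C^{-1}BC(q_{\infty}),\,C(q_{\infty})\,\}$, while $s(C^{-1}BC)\cap s(CBC)$ is the one determined by $\{\,q_{\infty},\,C^{-1}BC(q_{\infty}),\,(CBC)^{-1}(q_{\infty})\,\}=\{\,q_{\infty},\,C^{-1}BC(q_{\infty}),\,C^{-1}BC^{-1}(q_{\infty})\,\}$. Applying $g$ to the first triple and using $g^{2}=id$ and $C^{2}=C^{-1}$ gives
$$\{\,g(q_{\infty}),\,q_{\infty},\,C^{-1}BC\cdot C(q_{\infty})\,\}=\{\,q_{\infty},\,C^{-1}BC(q_{\infty}),\,C^{-1}BC^{-1}(q_{\infty})\,\},$$
which is exactly the second triple. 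Since $g$ is a bijective isometry permuting these point-triples it carries the first ridge onto the second, and since $g=g^{-1}$ it carries the second back onto the first; that it also respects the truncation by $D_{R}$ is automatic once the combinatorics of $D_{R}$ is in hand, as in Proposition~\ref{prop:sidepairingC}.

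The only computation involved is the identification of these triples — in particular $(A^{-1}CA)^{-1}(q_{\infty})=C(q_{\infty})$, which follows from $(AC)^{2}=id$ (equivalently $CAC=A^{-1}$, so $C^{-1}A^{-1}C^{-1}=A$ fixes $q_{\infty}$), and $C^{-1}BC\cdot C(q_{\infty})=C^{-1}BC^{-1}(q_{\infty})$, which follows from $C^{3}=id$. So there is no genuine obstacle here; the one point not to overlook is that this side-pairing is a \emph{self}-pairing, which is precisely why $g$ must be an involution and what distinguishes this case from Proposition~\ref{prop:sidepairingCBC}.
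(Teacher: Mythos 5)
Your argument is correct and is exactly the paper's intended proof: the paper omits this proposition as "routine" because it follows the same track-the-defining-triples scheme as Proposition \ref{prop:sidepairingC}, which is precisely what you carry out (the identification $(A^{-1}CA)^{-1}(q_{\infty})=C(q_{\infty})$ via $(AC)^2=id$, and $C^{-1}BC\cdot C(q_{\infty})=C^{-1}BC^{-1}(q_{\infty})$ via $C^3=id$, are the only computations needed, and both check out). Your added observation that $C^{-1}BC$ is an involution, which is why this is a self-pairing, is consistent with how the paper treats the other self-paired sides.
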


\begin{prop}\label{prop:sidepairingCBc}
	
	The side pairing map  $A^{k}CBC^{-1}A^{-k}$  is a self-homeomorphism of $s(A^{k}CBC^{-1}A^{-k})$. Moreover, $A^{k}CBC^{-1}A^{-k}$  exchanges the ridges $s(A^{k}CBC^{-1}A^{-k}) \cap s(A^{k}CA^{-k})$
	and    $s(A^{k}CBC^{-1}A^{-k}) \cap s(A^{k}C^{-1}BC^{-1}A^{-k})$.
	
\end{prop}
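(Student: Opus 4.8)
The plan is to follow the argument of Proposition~\ref{prop:sidepairingC}, taking advantage of the fact that $CBC^{-1}$ is an involution. First I would observe that $CBC^{-1}=C\,B\,C^{-1}$ is a conjugate of $B$, so $(CBC^{-1})^{2}=CB^{2}C^{-1}=\mathrm{id}$ and hence $(CBC^{-1})^{-1}=CBC^{-1}$; therefore $I(CBC^{-1})=I\bigl((CBC^{-1})^{-1}\bigr)$, and by the standard properties of isometric spheres (Section~\ref{sec:background}) the map $CBC^{-1}$ sends $I(CBC^{-1})$ onto itself, interchanging its interior and exterior. The same is true of every $A^{k}CBC^{-1}A^{-k}$, and since $A$ is unipotent and fixes $q_{\infty}$ one has $I(A^{k}CBC^{-1}A^{-k})=A^{k}I(CBC^{-1})$; so conjugation by $A^{k}$ reduces everything to the case $k=0$. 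Put $g=CBC^{-1}$.

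Next I would use Proposition~\ref{prop:3-sideCBc} (together with Proposition~\ref{prop:CBinverseC} and the rough intersection patterns of Subsection~\ref{subsec:intersection}) to recall that the only ridges of the side $s(CBC^{-1})$ are $s(CBC^{-1})\cap s(C)$ and $s(CBC^{-1})\cap s(C^{-1}BC^{-1})$, and Propositions~\ref{prop:CBcandCintersectioninverseCBinverseC} and~\ref{prop:cBcandCBinverseCintersectionC} to recall that these lie respectively in the Giraud disks $\mathcal{B}\bigl(q_{\infty},\,g^{-1}(q_{\infty}),\,C^{-1}(q_{\infty})\bigr)$ and $\mathcal{B}\bigl(q_{\infty},\,g^{-1}(q_{\infty}),\,(C^{-1}BC^{-1})^{-1}(q_{\infty})\bigr)$. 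Using $g^{-1}=g=CBC^{-1}$, $(C^{-1}BC^{-1})^{-1}=CBC$, and $C^{-2}=C$ (since $C^{3}=\mathrm{id}$), I would then compute the image under $g$ of the triple of points defining the first Giraud disk:
\[
g(q_{\infty})=CBC^{-1}(q_{\infty}),\qquad g\bigl(CBC^{-1}(q_{\infty})\bigr)=q_{\infty},\qquad g\bigl(C^{-1}(q_{\infty})\bigr)=CBC(q_{\infty}).
\]
Thus $g$ sends $\{q_{\infty},\,CBC^{-1}(q_{\infty}),\,C^{-1}(q_{\infty})\}$ to $\{q_{\infty},\,CBC^{-1}(q_{\infty}),\,CBC(q_{\infty})\}$, which is precisely the triple defining the second Giraud disk; these two triples are distinct, since $CBC(q_{\infty})=C^{-1}(q_{\infty})$ would force $CBC=C^{-1}$, i.e.\ $B=C$, which is false. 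By Proposition~\ref{prop:Giraud} and the observation that an isometry matches one Giraud disk with another exactly when it matches the corresponding triples of points, $g$ interchanges the Giraud disk of $s(CBC^{-1})\cap s(C)$ with that of $s(CBC^{-1})\cap s(C^{-1}BC^{-1})$; combined with $g\bigl(I(CBC^{-1})\bigr)=I(CBC^{-1})$ this shows $g$ exchanges the two ridges.

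Finally I would check that $g$ restricts to a self-homeomorphism of the whole side: $g$ sends the spinal sphere $I(CBC^{-1})\cap\partial{\bf H}^{2}_{\mathbb{C}}$ onto itself and permutes the ideal boundary circles of the two ridges, and being a homeomorphism it cannot carry a disk component of $s(CBC^{-1})\cap\partial{\bf H}^{2}_{\mathbb{C}}$ onto the annular region in its complement, so it also preserves $s(CBC^{-1})\cap\partial{\bf H}^{2}_{\mathbb{C}}$. Since by Proposition~\ref{prop:3-sideCBc} the solid light cone $s(CBC^{-1})$ is recovered from its two ridges and this part at infinity — exactly as $s(C)\cap D_{R}$ was recovered from its boundary in the proof of Proposition~\ref{prop:sidepairingC} — it follows that $g$ is a self-homeomorphism of $s(CBC^{-1})$ exchanging the two ridges, and conjugating by $A^{k}$ gives the statement for all $k$. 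I expect no serious obstacle here: the only content is the orbit bookkeeping for $q_{\infty}$ under powers of $C$ and under the involution $g$, which is routine, and no estimates beyond those of Subsections~\ref{subsec:intersection} and~\ref{subsec:ridge} are needed.
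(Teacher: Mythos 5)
Your proposal is correct and follows essentially the same route the paper intends for this (omitted, "routine") proof: exactly as in Proposition~\ref{prop:sidepairingC}, one checks that the side-pairing map carries the triple $\{q_{\infty},\,CBC^{-1}(q_{\infty}),\,C^{-1}(q_{\infty})\}$ defining the Giraud disk of $s(CBC^{-1})\cap s(C)$ to the triple $\{q_{\infty},\,CBC^{-1}(q_{\infty}),\,CBC(q_{\infty})\}$ defining that of $s(CBC^{-1})\cap s(C^{-1}BC^{-1})$, using $B^{2}=C^{3}=id$ and invariance under Proposition~\ref{prop:Giraud}. The only cosmetic slip is the remark that $CBC(q_{\infty})=C^{-1}(q_{\infty})$ "would force $CBC=C^{-1}$" (agreement at one point does not give equality of isometries), but distinctness of the two triples is immediate anyway from the explicit centers in Propositions~\ref{prop:center-radiusC} and~\ref{prop:center-radiusCBCcBc}, so nothing is lost.
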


{\bf Proof of Theorem  \ref{thm:2dimford}.} After above propositions, we show

{\bf Local tessellation.}   We prove the tessellation around the sides and ridges of $D_{\Gamma}$.

 First, for example, since $A^{k+1}CA^{-k}$  is a self-homeomorphism of $s(A^{k}CA^{-k})$, $A^{k+1}CA^{-k}$
sends the exterior of $I(A^{k+1}CA^{-k})$ to the interior of $I(A^{k+1}CA^{-k})$, we see that
$D_{\Gamma}$ and $A^{k+1}CA^{-k}(D_{\Gamma})$
 have disjoint interiors and cover a neighborhood  of
each point in the interior of the side $s(A^{k}CA^{-k})$.  The cases of the other 3-sides are similar.

Secondly,  we consider tessellation about ridges.

(1).  Since $s(A^{-1}CA)=s(C^{-1})$, for the ridge  $s(C) \cap s(A^{-1}CA)$,  the ridge circle is $$s(C) \cap s(A^{-1}CA) \xrightarrow{C}s(C) \cap s(A^{-1}CA).$$
Which gives the relation $C^3=id$.  By a standard argument as in \cite{ParkerWill:2017}, we have $D_{\Gamma} \cup C(D_{\Gamma}) \cup  C^{-1}(D_{\Gamma})$ will cover a small neighborhood of  $s(C) \cap s(A^{-1}CA)$.

(2). For the ridge  $s(C^{-1}BC) \cap s(C^{-1})$,  the ridge circle is
\begin{flalign}
\nonumber & s(C^{-1}BC) \cap s(C^{-1}) \xrightarrow{C^{-1}} s(C) \cap s(C^{-1}BC^{-1})\xrightarrow{C^{-1}BC^{-1}} s(CBC) \cap s(C^{-1}BC)
&  \\
&  \xrightarrow{C^{-1}BC} s(C^{-1}BC) \cap s(C).&\nonumber
\end{flalign}
Which gives the relation $ C^{-1}BC \cdot C^{-1}BC^{-1} \cdot C^{-1}=id$, that is $B^2=id$. We note that the ridge circle may have length 6 a priroi. There are two ways to show the ridge circle is length 3. The first way is taking a sample point $w$ with coordinates $$w=\left(\begin{array}{c}
\frac{\sqrt{15 \cdot 14}}{18}+ \frac{1}{3}
\\[ 6 pt]
\frac{2 \sqrt{15}}{9}+ \frac{\sqrt{14}}{6}\\[ 6 pt]
-\frac{17 \sqrt{15}}{18}+ 5\frac{\sqrt{14}}{9}\\ \end{array}\right)$$ in the Heisenberg group.
The point $w$ corresponds to $(r,s)=(0, -\arccos(\frac{5}{9}))$ in the parametrization of the intersection $I(C^{-1}BC)\cap I(C^{-1})$ in Proposition \ref{prop:CBcandCintersectioninverseCBinverseC}.
Then $$C^{-1}(w)=\left(\begin{array}{c}
\frac{\sqrt{15 \cdot 14}}{54}- \frac{1}{9}
\\[ 6 pt]
-\frac{8 \sqrt{15}}{27}+ \frac{\sqrt{14}}{18}\\[ 6 pt]
-\frac{59 \sqrt{15}}{54}-7\frac{\sqrt{14}}{9}\\ \end{array}\right) $$ in the Heisenberg group, and
$$C^{-1}BC(w)=\left(\begin{array}{c}
-\frac{\sqrt{15 \cdot 14}}{18}- \frac{1}{3}
\\[ 6 pt]
\frac{4 \sqrt{15}}{9}- \frac{\sqrt{14}}{6}\\[ 6 pt]
-\frac{25 \sqrt{15}}{18}-5\frac{\sqrt{14}}{9}\\ \end{array}\right)$$ in the Heisenberg group.
We can plot the points $w$, $C^{-1}(w)$ and $C^{-1}BC(w)$ in the boundary of our polytope $D_{\Gamma}$, see Figure 	\ref{figure:pointw}. From which, then
the ridge circle of  $s(C^{-1}BC) \cap s(C^{-1})$ has length three.

\begin{figure}
	\begin{center}
		\begin{tikzpicture}
		\node at (0,0) {\includegraphics[width=12cm,height=8cm]{{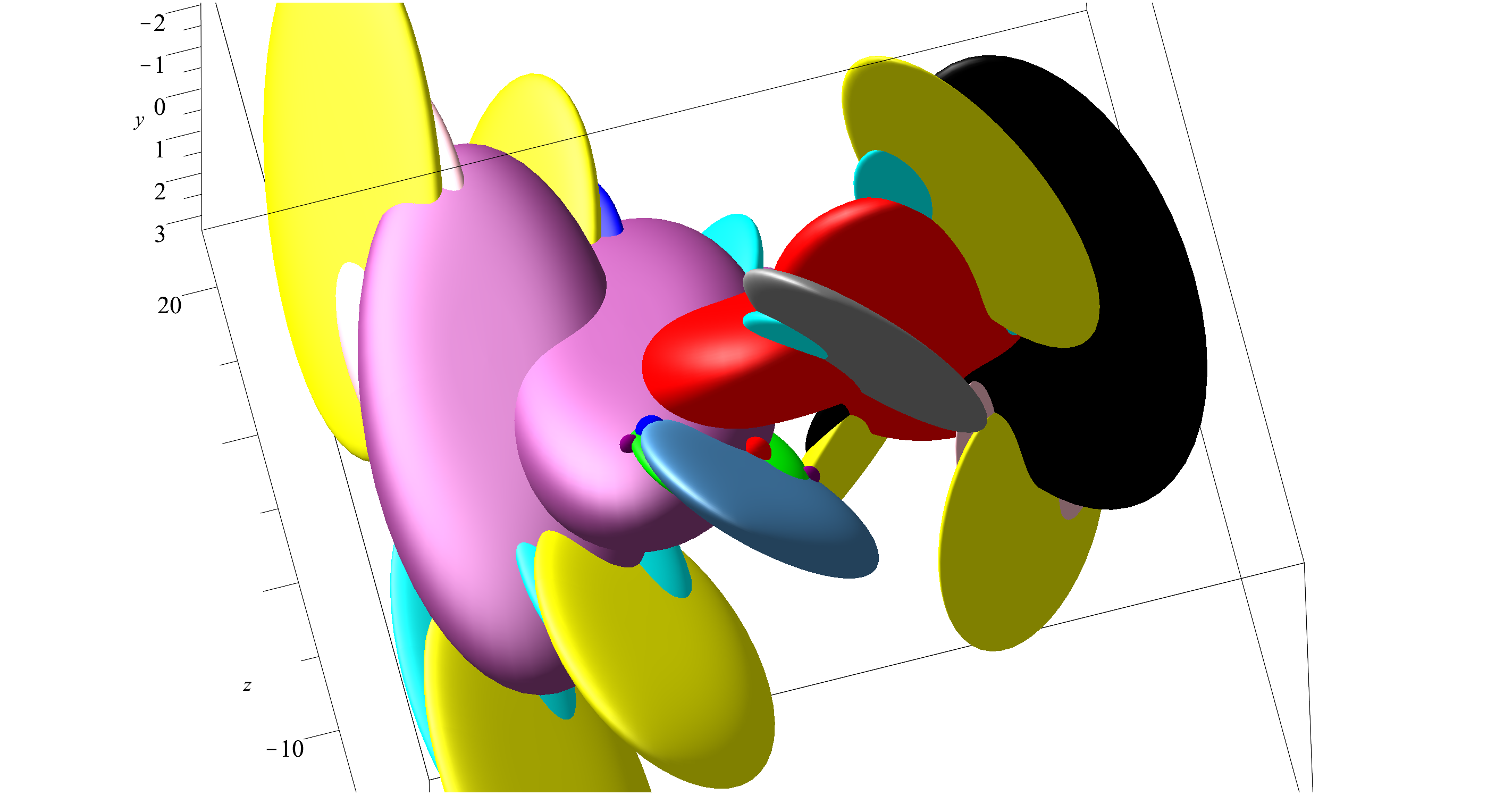}}};
		\end{tikzpicture}
	\end{center}
	\caption{One realistic view   of the ideal boundary of the Ford domain of 
	 $\rho_{(\sqrt{2},\operatorname{arccos}(-\frac{7}{8}))}(K) < \mathbf{PU}(2,1)$, which is the region outside all of the (big) spheres. The two big purple spheres are the spinal spheres of $A^{-2}CA^2$ and  $A^{-1}CA$. The two big red spheres are the spinal spheres of $C$ and  $ACA^{-1}$. The  big black sphere is the  spinal sphere of  $A^2CA^{-2}$. The  big steel-blue and gray spheres are  the  spinal spheres of $CBC$ and $ACBCA^{-1}$ respectively. The  big green and blue  spheres are  the  spinal spheres of $C^{-1}BC$ and $A^{-1}CBC^{-1}A$ respectively.   The points  $w$, $C^{-1}(w)$ and $C^{-1}BC(w)$ are the purple colored small balls in Figure  	\ref{figure:pointw}. The blue small ball is $u_2$, and  the red small ball is $u_4$. In this figure, we can only see  $w$ (near the small blue ball) and $C^{-1}BC(w)$ (near the small red ball).}
	\label{figure:pointw}
\end{figure}

The second way to show   the ridge circle of  $s(C^{-1}BC) \cap s(C^{-1})$ has length three is using all the parametrizations of the ridges $s(C^{-1}BC) \cap s(C^{-1})$, $ s(C) \cap s(C^{-1}BC^{-1})$ and $ s(CBC) \cap s(C^{-1}BC)$ in Propositions  \ref{prop:cBcandCintersecCBinverseC}, \ref{prop:cBcandCBinverseCintersectionC} and  \ref{prop:CBcandCintersectioninverseCBinverseC}, and calculating explicitly.

Then by a standard argument as in \cite{ParkerWill:2017}, we have $D_{\Gamma} \cup C(D_{\Gamma}) \cup  C^{-1}(D_{\Gamma})$ will cover a small neighborhood of  $s(C) \cap s(A^{-1}CA)$.

(3). For the ridge  $s(C^{-1}BC) \cap s(CBC)$,  the ridge circle is
\begin{flalign}
\nonumber & s(C^{-1}BC) \cap s(CBC) \xrightarrow{CBC} s(C^{-1}BC^{-1}) \cap s(C)\xrightarrow{C} s(C^{-1}) \cap s(C^{-1}BC)
&  \\
&  \xrightarrow{C^{-1}BC} s(C^{-1}BC) \cap s(CBC).&\nonumber
\end{flalign}
Which gives the relation $ CBC^{-1} \cdot C \cdot CBC=id$, that is $B^2=id$. 

We also take a sample point $v$ with coordinates $$v=\left(\begin{array}{c}
\frac{\sqrt{15 \cdot 14}}{18}- \frac{1}{3}
\\[ 6 pt]
\frac{4\sqrt{15}}{9}+ \frac{\sqrt{14}}{6}\\[ 6 pt]
-\frac{25 \sqrt{15}}{18}+ 5\frac{\sqrt{14}}{9}\\ \end{array}\right)$$ in the Heisenberg group, which lies in  $I(C^{-1}BC) \cap I(CBC) \cap \partial  {\bf H}^2_{\mathbb{C}}$.
Then $$CBC(v)=\left(\begin{array}{c}
-\frac{\sqrt{15 \cdot 14}}{54}- \frac{1}{9}
\\[ 6 pt]
-\frac{8 \sqrt{15}}{27}- \frac{\sqrt{14}}{18}\\[ 6 pt]
-\frac{59 \sqrt{15}}{54}+7\frac{\sqrt{14}}{9}\\ \end{array}\right) $$ in the Heisenberg group,
and $$C^{-1}BC(v)=\left(\begin{array}{c}
-\frac{\sqrt{15 \cdot 14}}{18}+ \frac{1}{3}
\\[ 6 pt]
\frac{2 \sqrt{15}}{9}- \frac{\sqrt{14}}{6}\\[ 6 pt]
-\frac{17 \sqrt{15}}{18}-5\frac{\sqrt{14}}{9}\\ \end{array}\right)$$ in the Heisenberg group.
We can plot the points $v$, $CBC(v)$ and $C^{-1}BC(v)$ in the boundary of our polytope $D_{\Gamma}$, see Figure 	\ref{figure:pointv}. From which, then
the ridge circle of  $s(C^{-1}BC) \cap s(CBC)$ has length three.

The author also remarks that Figures 	\ref{figure:pointw} and 	\ref{figure:pointv} are the guides to get some  results in Subsections \ref{subsec:intersection} and \ref{subsec:ridge}. Figure 	\ref{figure:abstract}
later is a more explicit but abstract picture   of the ideal boundary of the Ford domain of 
$\rho_{(\sqrt{2},\operatorname{arccos}(-\frac{7}{8}))}(K) < \mathbf{PU}(2,1)$.

\begin{figure}
	\begin{center}
		\begin{tikzpicture}
		\node at (0,0) {\includegraphics[width=12cm,height=8cm]{{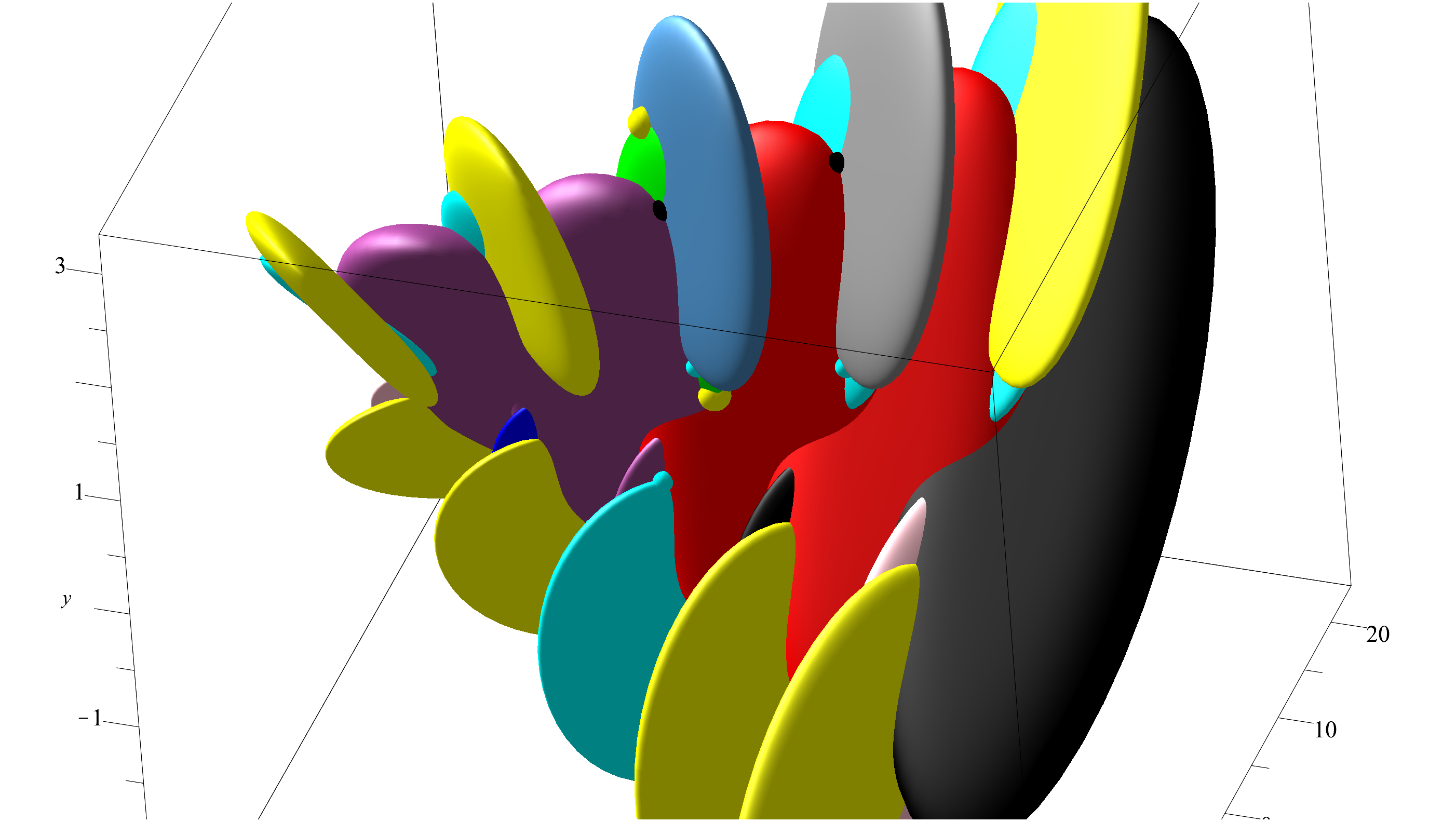}}};
		\end{tikzpicture}
	\end{center}
	\caption{Another realistic view   of the ideal boundary of the Ford domain of 
$\rho_{(\sqrt{2},\operatorname{arccos}(-\frac{7}{8}))}(K) < \mathbf{PU}(2,1)$, which is the region outside all of the (big) spheres. The two big purple spheres are the spinal spheres of $A^{-2}CA^2$ and  $A^{-1}CA$. The two big red spheres are the spinal spheres of $C$ and  $ACA^{-1}$. The  big black sphere is the  spinal sphere of  $A^2CA^{-2}$. The  big steel-blue and gray spheres are  the  spinal spheres of $CBC$ and $ACBCA^{-1}$ respectively. The  big green and blue  spheres are  the  spinal spheres of $C^{-1}BC$ and $A^{-1}CBC^{-1}A$ respectively. The points $v$, $CBC(v)$ and $C^{-1}BC(v)$  are the yellow colored small balls in Figure 	\ref{figure:pointv}. The black small balls are $u_3$ and $A(u_3)$, and  the cyan small balls is $u_1$, $CBC(u_1)$ and $A(u_1)$. In this figure, we can only see  $v$ (the top small yellow ball) and $C^{-1}BC(v)$ (the small yellow ball near to a small cyan ball).}
	\label{figure:pointv}
\end{figure}
By a standard argument as in \cite{ParkerWill:2017}, we have $D_{\Gamma} \cup C(D_{\Gamma}) \cup  C^{-1}(D_{\Gamma})$ will cover a small neighborhood of  $s(C) \cap s(A^{-1}CA)$.

(4). For the ridge  $s(CBC^{-1}) \cap s(C)$,  the ridge circle is
\begin{flalign}
\nonumber & s(CBC^{-1}) \cap s(C) \xrightarrow{C} s(C^{-1}) \cap s(CBC)\xrightarrow{CBC} s(C^{-1}BC^{-1}) \cap s(CBC^{-1})
&  \\
&  \xrightarrow{CBC^{-1}} s(CBC^{-1}) \cap s(C).&\nonumber
\end{flalign}
Which gives the relation $CBC^{-1} \cdot CBC \cdot C=id$, that is $B^2=id$.

By a standard argument as in \cite{ParkerWill:2017}, we have $D_{\Gamma} \cup C(D_{\Gamma}) \cup  C^{-1}(D_{\Gamma})$ will cover a small neighborhood of  $s(C) \cap s(A^{-1}CA)$.

(5). For the ridge  $s(C^{-1}BC^{-1}) \cap s(CBC^{-1})$,  the ridge circle is
\begin{flalign}
\nonumber & s(C^{-1}BC^{-1}) \cap s(CBC^{-1}) \xrightarrow{CBC^{-1}} s(CBC^{-1}) \cap s(C)\xrightarrow{C} s(C^{-1} \cap s(CBC)
&  \\
&  \xrightarrow{CBC} s(C^{-1}BC^{-1}) \cap s(CBC^{-1}).&\nonumber
\end{flalign}
Which gives the relation $C \cdot CBC^{-1} \cdot CBC \cdot C=id$, that is $B^2=id$.

By a standard argument as in \cite{ParkerWill:2017}, we have $D_{\Gamma} \cup C(D_{\Gamma}) \cup  C^{-1}(D_{\Gamma})$ will cover a small neighborhood of  $s(C) \cap s(A^{-1}CA)$.


 (6). A hidden ridge.  Consider the circle $$ s(ACA^{-1}) \cap s(C) \xrightarrow{C} s(C^{-1}) \cap s(A^{-2}CA^2) \xrightarrow{A} s(C) \cap s(ACA^{-1}).$$
 Which means that $AC$ preserves $s(C)$-invariant as set but exchanges $s(A^{-1}CA) \cap s(C)$ and $s(ACA^{-1}) \cap s(C)$. In fact $AC$ preserves the isometric sphere $I(C)$ invariant but exchanges the exterior and the interior of it, so we have the relation $(AC)^2=id.$
	This ends the proof of Theorem \ref{thm:2dimford},  and  so the proof of  the first part of Theorem \ref{thm:3-mfd}.

\section{3-manifold at infinity  of $\rho_{(\sqrt{2},\operatorname{arccos}(-\frac{7}{8}))}(K)<\mathbf{PU}(2,1)$}\label{sec:3mfd}



Based on results in Section \ref{sec:Ford2dim}, in particular, the combinatorial structures of ridge $s(g)$  for $g$ in the set  $R$, we study the 3-manifold  at infinity of  $\rho_{(\sqrt{2},\operatorname{arccos}(-\frac{7}{8}))}(K)<\mathbf{PU}(2,1)$ in this section.


\begin{figure}
	\begin{center}
		\begin{tikzpicture}
		\node at (0,0) {\includegraphics[width=12.5cm,height=6cm]{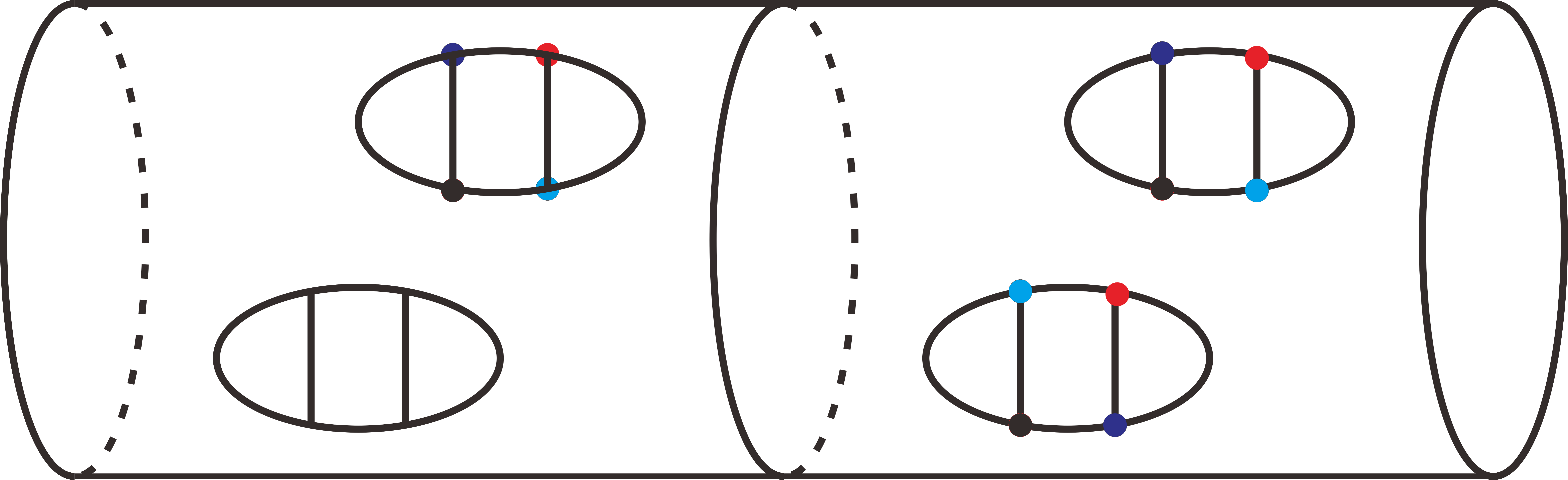}};
		
		\coordinate [label=below:$u_{1}$] (S) at (-1.35,0.5);
		\coordinate [label=below:$u_{2}$] (S) at (-3.05,2.7);
		\coordinate [label=below:$u_{3}$] (S) at (-3.05,0.5);
		\coordinate [label=below:$u_{4}$] (S) at (-1.35,2.7);
		
		\coordinate [label=below:$A(u_{1})$] (S) at (4.5,0.5);
		\coordinate [label=below:$A(u_{2})$] (S) at (2.3,2.7);
		\coordinate [label=below:$A(u_{3})$] (S) at (2.5,0.5);
		\coordinate [label=below:$A(u_{4})$] (S) at (4.6,2.7);
		
		\coordinate [label=below:$CBC(u_{1})$] (S)at(0.89,-0.2);
		\coordinate [label=below:$CBC(u_{2})$] (S) at (2.9,-2.4);
		\coordinate [label=below:$CBC(u_{3})$] (S) at (0.99,-2.4);
		\coordinate [label=below:$CBC(u_{4})$] (S) at (3.7,-0.2);

		\coordinate [label=below:$C$] (S) at (4.6,-2.2);

		\coordinate [label=left:$ C^{-1}$] (S) at (-0.8,-2.2);
		
		\draw[->] (-2.3,1.55)--(-1.6,3.2);
		\coordinate [label=left:$CBC$] (S) at (-1.0,3.5);
		
		\draw[->] (-3.1,1.55)--(-3.6,3.2);
		\draw[->] (-1.6,1.55)--(-3.6,3.2);
		\coordinate [label=left:$C^{-1}BC$] (S) at (-3.1,3.5);
		
		\draw[->] (3.3,1.55)--(2.3,3.2);
		\coordinate [label=left:$ACBCA^{-1}$] (S) at (3.0,3.5);
		
		\draw[->] (3.99,1.55)--(4.3,3.2);
		\draw[->] (2.7,1.55)--(4.3,3.2);
		\coordinate [label=left:$AC^{-1}BCA^{-1}$] (S) at (6.1,3.5);

		\draw[->] (2.9,-1.55)--(4.6,-3.2);
		\draw[->] (1.5,-1.55)--(4.6,-3.2);
		\coordinate [label=left:$CBC^{-1}$] (S) at (5.7,-3.6);
		
		\draw[->] (2.3,-1.55)--(1.9,-3.2);
		\coordinate [label=left:$ C^{-1}BC^{-1}$] (S) at (2.3,-3.6);
			\end{tikzpicture}
	\end{center}
	\caption{Part of the combinatorial picture of  the ideal boundary of the Ford domain of $\rho_{(\sqrt{2},\operatorname{arccos}(-\frac{7}{8}))}(K)<\mathbf{PU}(2,1)$, which is out side of an infinite annulus in $\mathbb{S}^{3}-q_{\infty}$.  What drawn in the figure is $A^{-1}(X_0) \cup X_0$, that is,  two  copies of a fundamental domain of $\rho_{(\sqrt{2},\operatorname{arccos}(-\frac{7}{8}))}(K)$ on the infinite annulus. }
	\label{figure:abstract}
\end{figure}

We denote  $\partial D_{R} \cap  \partial  {\bf H}^2_{\mathbb{C}}$ by  $\tilde{X}$, note that $$s(C) \cup s(CBC^{-1}) \cup s(C^{-1}BC^{-1}) \cap s(ACBCA^{-1}) \cup s(AC^{-1}BCA^{-1})) \cap  \partial  {\bf H}^2_{\mathbb{C}}$$ is an annulus by Propositions  \ref{prop:3-sidecBc}, \ref{prop:3-sideACBCa},  \ref{prop:3-sideCBc}, \ref{prop:3-sideAcBCa} and  \ref{prop:3-sideC}, we denote this annulus by $X_0$, then 
$$\tilde{X}= \cup_{k \in \mathbb{Z}} A^{k} (X_0).$$

\begin{prop} \label{prop:unknotted} The line 
	$$\mathcal{L}= \left\{[x+0 \cdot \rm{i}, -\frac{\sqrt{15}}{2}] ~~: ~~ x \in \mathbb{R}\right\}$$ in  $\partial {\bf H}^2_{\mathbb C}-\{q_{\infty}\}=\mathbb{C} \times \mathbb{R}$ is contained in the complement of $D_{R} \cap \partial  {\bf H}^2_{\mathbb C}$. So $\tilde{X}$ is an unknotted  infinite annulus in $\mathbb{S}^{3}-q_{\infty}$.
\end{prop}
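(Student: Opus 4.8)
The plan is to verify the two assertions in turn, the first by an explicit Cygan-distance computation and the second as a topological consequence. The crucial observation is that the height $-\frac{\sqrt{15}}{2}$ and the condition that the complex coordinate be real have been chosen precisely so that $\mathcal{L}$ runs through the centres of the ``necklace'' of isometric spheres $\{I(A^{k}CA^{-k})\}_{k\in\mathbb{Z}}$. Indeed, by Proposition \ref{prop:center-radiusC} the isometric sphere $I(A^{k}CA^{-k})$ has radius $2$ and Cygan centre $[-2k-1,-\frac{\sqrt{15}}{2}]\in\mathbb{C}\times\mathbb{R}$, which lies on $\mathcal{L}$.

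First I would compute the restriction of the extended Cygan metric to $\mathcal{L}$. For two points $p=[x_{1},-\frac{\sqrt{15}}{2}]$ and $q=[x_{2},-\frac{\sqrt{15}}{2}]$ of $\mathcal{L}$, the $t$-difference in (\ref{eq:cygan-metric}) vanishes, and since the complex coordinates $x_{1},x_{2}$ are real the term $2\,\mathrm{Im}\langle\langle z,w\rangle\rangle$ also vanishes; hence $d_{\textrm{Cyg}}(p,q)=|x_{1}-x_{2}|$, i.e. the Cygan metric restricts on $\mathcal{L}$ to the ordinary Euclidean metric on $\mathbb{R}$. Consequently the point $[x,-\frac{\sqrt{15}}{2}]$ lies in the interior of $I(A^{k}CA^{-k})$ if and only if $|x+2k+1|<2$, i.e. $x\in(-2k-3,-2k+1)$. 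These open intervals have length $4$ while consecutive centres $-2k-1$ differ by $2$, so $\bigcup_{k\in\mathbb{Z}}(-2k-3,-2k+1)=\mathbb{R}$. Therefore every point of $\mathcal{L}$ lies in the \emph{open} interior of some $I(A^{k}CA^{-k})$; since $D_{R}$ is the intersection of the exteriors of the isometric spheres of the elements of $R$, this shows $\mathcal{L}\cap D_{R}=\emptyset$, and in fact $\mathcal{L}$ is disjoint from $\partial D_{R}$, whence from $\tilde{X}$.

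For the second assertion I would argue that $\mathcal{L}$ is an unknotted $A$-invariant core for the region ``inside'' $\tilde{X}$. From (\ref{eq-A}) the map $A$ acts on $\mathcal{L}$ by $[x,-\frac{\sqrt{15}}{2}]\mapsto[x-2,-\frac{\sqrt{15}}{2}]$ (the term $4\,\mathrm{Im}(z)$ vanishes on $\mathcal{L}$), so $\mathcal{L}$ is $A$-invariant and $A$ simply translates the necklace $\{I(A^{k}CA^{-k})\}$ along it; moreover $\widehat{\mathcal{L}}=\mathcal{L}\cup\{q_{\infty}\}$ is an unknotted circle in $\mathbb{S}^{3}=\partial{\bf H}^2_{\mathbb{C}}$. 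Let $W$ be the component of $\big(\mathbb{S}^{3}\setminus\{q_{\infty}\}\big)\setminus\big(D_{R}\cap\partial{\bf H}^2_{\mathbb{C}}\big)$ containing $\mathcal{L}$, i.e. (essentially) the union of the open interiors of the spinal spheres of the elements of $R$. Using the propositions of Subsections \ref{subsec:intersection} and \ref{subsec:ridge} --- which describe how consecutive beads $I(A^{k}CA^{-k})$ overlap and show that all the pieces of $\partial D_{R}$ coming from the $I(CBC)$- and $I(C^{\pm1}BC^{\pm1})$-type spheres are glued to the necklace along sub-disks of its spinal spheres --- one sees that $W$ retracts onto $\mathcal{L}$ and that $\overline{W}\cup\{q_{\infty}\}$ is a regular neighbourhood of the unknotted circle $\widehat{\mathcal{L}}$ (a solid torus pinched at $q_{\infty}$). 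Its frontier $\tilde{X}$ is then the standard infinite annulus around an unknot, which is the assertion.

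The genuinely easy part is the distance computation of the first two paragraphs. The part that needs care is the last step: upgrading the set-theoretic inclusion $\mathcal{L}\subset W$ to a description of $W$ precise enough to pin down the ambient isotopy class of $\tilde{X}=\partial W$. This rests entirely on the combinatorial analysis carried out in Section \ref{sec:Ford2dim} (where $\tilde{X}$ is already identified as an infinite annulus and the incidence pattern of its faces is made explicit); once that is in hand, $\mathcal{L}$ visibly serves as an unknotted core of $W$ and the unknottedness of $\tilde{X}$ follows.
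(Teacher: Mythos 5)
Your proposal is correct and follows essentially the same route as the paper: both reduce the first claim to showing that the line is swallowed by the necklace of radius-$2$ isometric spheres $I(A^{k}CA^{-k})$ centred at $[-2k-1,-\frac{\sqrt{15}}{2}]$, and both deduce unknottedness of $\tilde{X}$ from the fact that this straight (hence unknotted, $A$-invariant) line is a core of the region bounded by the annulus. Your explicit observation that the Cygan metric restricts to the Euclidean metric on $\mathcal{L}$, so that the open intervals $(-2k-3,-2k+1)$ cover $\mathbb{R}$, is a slightly cleaner substitute for the paper's appeal to convexity of a spinal sphere on a fundamental segment for $\langle A\rangle$.
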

\begin{proof} The fundamental domain of $A$  acting on $\mathcal{L}$ is a segment parameterized by
	$$\mathcal{L}_0=\{[x+0 \cdot \rm{i},0] \in \partial {\bf H}^2_{\mathbb C} ~~: ~~\ x\in [-1,1]\}.
	$$
	Note that a spinal sphere is convex. It is easy to check that the interval $\mathcal{L}_0$ is in the interior of the spinal sphere of $I(C)$.
\end{proof}


See Figure	\ref{figure:abstract} for part of the combinatorial picture of  the ideal boundary of the Ford domain of $\rho_{(\sqrt{2},\operatorname{arccos}(-\frac{7}{8}))}(K)<\mathbf{PU}(2,1)$, which is out side of the infinite annulus $\tilde{X}$ in $\mathbb{S}^{3}-q_{\infty}$. This figure should be compared with Figures \ref{figure:pointw} and \ref{figure:pointv}.


 In the following,  for $$a,b \in \left\{u_{1},u_{2},u_{3},u_{4}, CBC(u_1), CBC(u_2), CBC(u_3),CBC(u_4)\right\}$$ and $x,y \in R$, we denote by  $[a,b]_{x,y}$ the edge component of $$I(x) \cap I(y) \cap \partial   {\bf H}^2_{\mathbb{C}}$$ whose end points  are $a$ and $b$, with orientation from $a$ to $b$. For example, the oriented  edge $[u_4,u_1)]_{C^{-1}BC, CBC}$ is the component of $$I(C^{-1}BC) \cap I(CBC) \cap \partial   {\bf H}^2_{\mathbb{C}}$$ with end points $u_4$ and $u_1$.

 \begin{figure}
 	\begin{center}
 		\begin{tikzpicture}
 		\node at (0,0) {\includegraphics[width=12.5cm,height=6cm]{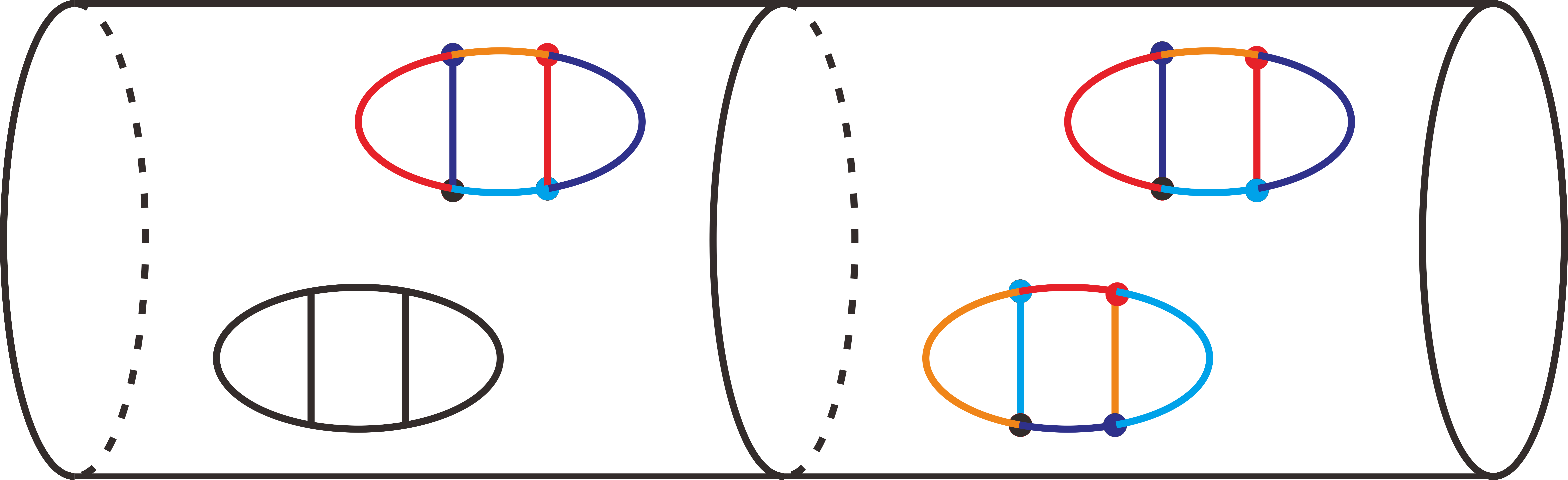}};
 		
 		\coordinate [label=below:$u_{1}$] (S) at (-1.35,0.5);
 		\coordinate [label=below:$u_{2}$] (S) at (-3.05,2.7);
 		\coordinate [label=below:$u_{3}$] (S) at (-3.05,0.5);
 		\coordinate [label=below:$u_{4}$] (S) at (-1.35,2.7);
 		
 		\coordinate [label=below:$A(u_{1})$] (S) at (4.5,0.5);
 		\coordinate [label=below:$A(u_{2})$] (S) at (2.3,2.7);
 		\coordinate [label=below:$A(u_{3})$] (S) at (2.5,0.5);
 		\coordinate [label=below:$A(u_{4})$] (S) at (4.6,2.7);
 		
 		\coordinate [label=below:$CBC(u_{1})$] (S)at(0.89,-0.2);
 		\coordinate [label=below:$CBC(u_{2})$] (S) at (2.9,-2.4);
 		\coordinate [label=below:$CBC(u_{3})$] (S) at (0.99,-2.4);
 		\coordinate [label=below:$CBC(u_{4})$] (S) at (3.7,-0.2);

 		\coordinate [label=below:$C$] (S) at (4.6,-2.2);

 		\coordinate [label=left:$ C^{-1}$] (S) at (-0.8,-2.2);

 		\end{tikzpicture}
 	\end{center}
 	\caption{The edge classes $x_1$, $x_2$, $x_3$ and  $x_4$  are drawn in red, blue, orange and cyan colors  respectively}
 	\label{figure:edgeclass}
 \end{figure}

These are some edges in Figure \ref{figure:abstract}, we consider their equivalent classes under $\rho_{(\sqrt{2},\operatorname{arccos}(-\frac{7}{8}))}(K)$-action:
\begin{itemize}
	\item
for the edge $[u_4,u_1)]_{C^{-1}BC, CBC}$,
from the ridge circle of $I(C^{-1}BC)\cap I(CBC)$ in Subsection \ref{subsec:poincare2dim},  we have the edge circle
\begin{flalign}
\nonumber & [u_4,u_1)]_{C^{-1}BC, CBC} \xrightarrow{CBC}  [CBC(u_4),CBC(u_1)]_{C^{-1}BC^{-1}, C} \xrightarrow{C}
&  \\
& [u_3,u_2]_{C^{-1}, C^{-1}BC}  \xrightarrow{C^{-1}BC}  [u_4,u_1]_{C^{-1}BC, CBC} .&\nonumber
\end{flalign}
 We denote this edge class by $x_1$ with orientation, note that the edge circle preserves the orientations of the edges.
	\item
for the edge $[u_2,u_3)]_{C^{-1}BC, CBC}$, from the ridge circle of $I(C^{-1}BC)\cap I(CBC)$ in Subsection \ref{subsec:poincare2dim}  we have the edge circle
 \begin{flalign}
\nonumber & [u_2,u_3)]_{C^{-1}BC, CBC} \xrightarrow{CBC}  [CBC(u_2),CBC(u_3)]_{C^{-1}, C^{-1}BC}   \xrightarrow{C} 
&  \\
& [u_1,u_4]_{C^{-1}, C^{-1}BC}\xrightarrow{C^{-1}BC} [u_2,u_3]_{C^{-1}BC, CBC} .&\nonumber
\end{flalign}
This class of oriented edges is denoted by $x_2$.
	\item
Similarly, for the edge $[CBC(u_3),CBC(u_1)]_{C^{-1}BC^{-1}, CBC^{-1}}$,
from the ridge circle of $I(C^{-1}BC^{-1})\cap I(CBC^{-1})$,  we have the edge circle
\begin{flalign}
\nonumber & [CBC(u_2),CBC(u_4)]_{C^{-1}BC^{-1}, CBC^{-1}} \xrightarrow{CBC^{-1}}  [CBC(u_1),CBC(u_3)]_{CBC^{-1}, C} \xrightarrow{C}
&  \\
& [u_2,u_4]_{C^{-1}, CBC}  \xrightarrow{CBC}  [CBC(u_2),CBC(u_4)]_{C^{-1}BC^{-1}, CBC^{-1}}.&\nonumber
\end{flalign}
This class of oriented edges is denoted by $x_3$:



	\item
for  the edge 
 $[CBC(u_4),CBC(u_2)]_{CBC^{-1}, C}$, 
from the ridge circle of $I(CBC^{-1})\cap I(C)$ in subsection \ref{subsec:poincare2dim},  we have the edge circle
 \begin{flalign}
\nonumber &[CBC(u_4),CBC(u_2)]_{CBC^{-1}, C} \xrightarrow{C}  [u_3,u_1]_{C^{-1}, CBC} \xrightarrow{CBC}
&  \\
& [CBC(u_3),CBC(u_1)]_{C^{-1}BC^{-1}, CBC^{-1}}  \xrightarrow{CBC^{-1}}  [CBC(u_4),CBC(u_2)]_{CBC^{-1}, C}.&\nonumber
\end{flalign}
This class of oriented edges is denoted by $x_4$.

\end{itemize}

Since it is a little indistinct, so we re-color the edge classes $x_1$, $x_2$, $x_3$ and  $x_4$  in red, blue, orange and cyan colors  respectively in Figure 	\ref{figure:edgeclass}.

{\bf The proof of the second part of Theorem  \ref{thm:3-mfd}:}
From the above, we have two equivalent classes of vertices in $\tilde{X}$:
\begin{itemize}
	\item the class $U_1$ consists of all  $x(u_1)$ for  $x \in R$. For example, 
	since $C^{-1}(u_1)=u_2$, we have $u_2 \in U_1$. Moreover, $CBC(u_1), CBC(u_2) \in U_1$;
	\item  the class $U_3$ consists of all  $x(u_3)$ for  $x \in R$. For example, 
	since $C^{-1}(u_3)=u_4$, we have $u_4 \in U_3$. Moreover, $CBC(u_3), CBC(u_4) \in U_3$.
	\end{itemize} 

We also take a point $u_5$, which has coordinates $(r,s)=\left(\pi, \pi-\arccos(\frac{1}{4})\right)$ in our parametrization of $I(C) \cap I(C^{-1})$  in Proposition \ref{prop:cCintersection}. Then we have $u_5 \in  \partial {\bf H}^2_{\mathbb{C}}$, we  also have  two more points  $C(u_5)$ and $C^{-1}(u_5)$, see Figure \ref{figure:group}. We denote by $U_5$  the  classes of points $u_5$, $C(u_5)$ and $C^{-1}(u_5)$.   The points $u_5$, $C(u_5)$ and $C^{-1}(u_5)$ separate  the circle $I(C) \cap I(C^{-1}) \cap \partial  {\bf H}^2_{\mathbb{C}}$ into three edges. These three edges are equivalent under $C$-action, we denote this edge class by $x_5$.

Recall  $\tilde{X}=(\partial  D_{\Sigma} ) \cap  \partial   {\bf H}^2_{\mathbb{C}}$ and $\Sigma=\rho_{(\sqrt{2},\operatorname{arccos}(-\frac{7}{8}))}(K)$. 
There is a projection map $\Pi:  D_{\Sigma}  \cap \partial   {\bf H}^2_{\mathbb{C}} \rightarrow \tilde{X}$  which is equivalent with respect of the $\Sigma$-action.  We denote $X$ the quotient space of $ \tilde{X}$ with the equivalent relation by $\Sigma$-action. Let $M$ be the 3-manifold at infinity of $\Sigma$, that is, $M= \Omega/\Sigma$, where  $\Omega$ is  the set of discontinuity of the discrete subgroup  $\Sigma$ acting on $\partial \mathbf{H}^2_{\mathbb C}=\mathbb{S}^3$, $\Omega$ is tilled by $\Sigma$-copies of $D_{\Sigma}  \cap  \partial   {\bf H}^2_{\mathbb{C}}$.
  $M$ is also the quotient space of $D_{\Sigma}  \cap \partial   {\bf H}^2_{\mathbb{C}}$ with the equivalent relation by $\Sigma$-action. From  the projection map $\Pi$, then $X$ is a 2-spine of $M$, so we have $\pi_1(X)=\pi_1(M)$.

\begin{figure}
	\begin{center}
		\begin{tikzpicture}
		\node at (0,0) {\includegraphics[width=10cm,height=10cm]{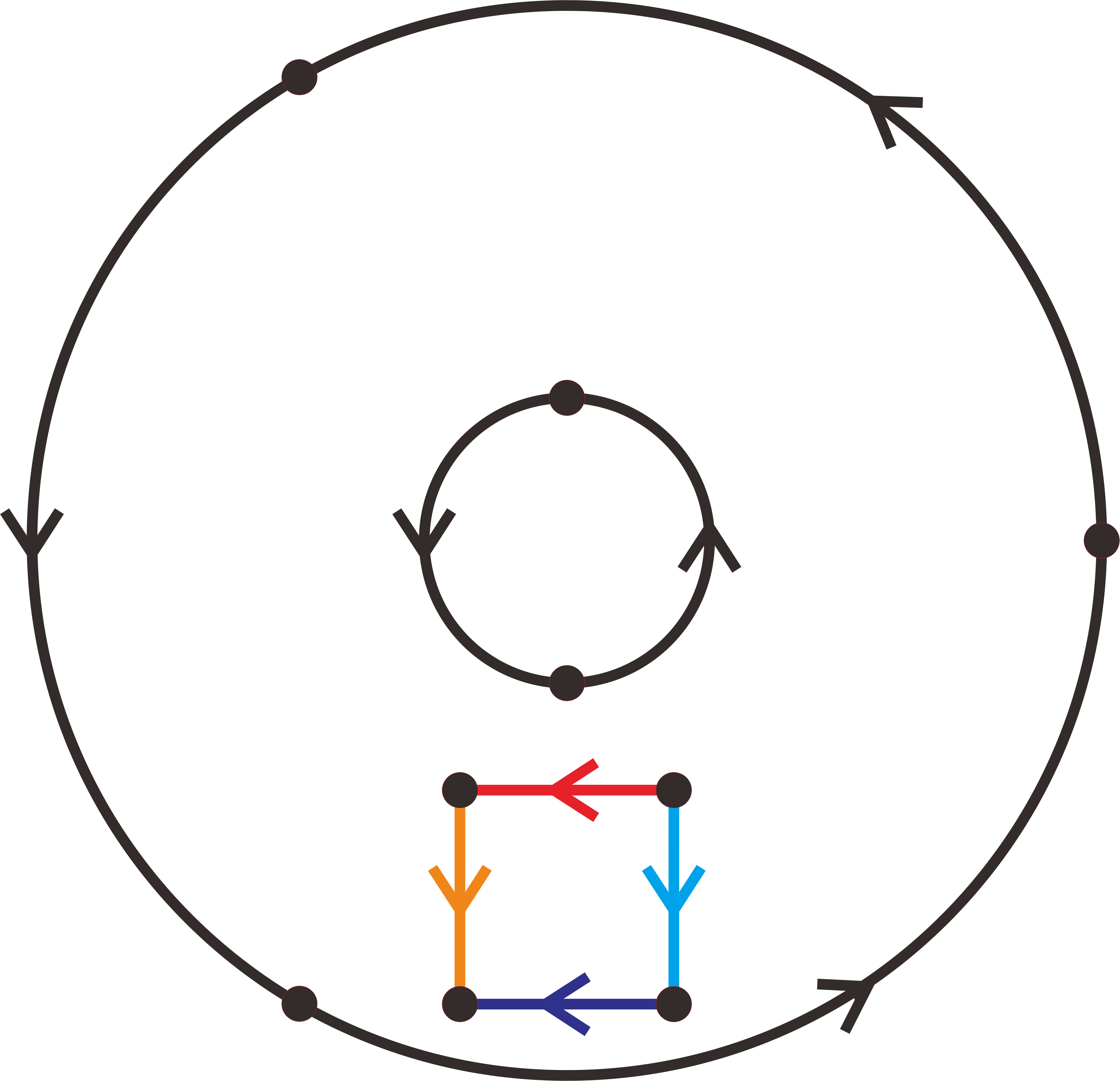}};
		
		\coordinate [label=below:$x_{5}$] (S) at (3.95,3.6);
			\coordinate [label=below:$x_{5}$] (S) at (-4.95,-0.4);
		
			\coordinate [label=below:$x_{5}$] (S) at (3,-4.6);

		\coordinate [label=below:$AC(u_{6})$] (S) at (-0.0,-0.4);
		\coordinate [label=below:$u_{6}$] (S) at (0,1.);
		
		\coordinate [label=below:$u_{5}$] (S) at (5.5,0.1);
		\coordinate [label=below:$C(u_{5})$] (S) at (-3.1,-4.0);
		\coordinate [label=below:$C^{-1}(u_{5})$] (S) at (-4.0,4.39);
		
			\coordinate [label=below:$CBC(u_{1})$] (S) at (-1.9,-1.7);
				\coordinate [label=below:$CBC(u_{2})$] (S) at (1.8,-3.6);
					\coordinate [label=below:$CBC(u_{4})$] (S) at (1.8,-1.7);
					\coordinate [label=below:$CBC(u_{3})$] (S) at (-1.9,-3.6);

		\coordinate [label=below:$x_6$] (S)at(1.79,-0.);
			\coordinate [label=below:$x_6$] (S)at(-1.79,-0.);
			\coordinate [label=below:$x_1$] (S)at(-0,-1.6);
				\coordinate [label=below:$x_2$] (S)at(-0,-4.5);
					\coordinate [label=below:$x_3$] (S)at(-1.3,-3.1);
						\coordinate [label=below:$x_4$] (S)at(1.4,-3.1);

		\end{tikzpicture}
	\end{center}
	\caption{The 3-holed 2-sphere from $s(C) \cap \partial  {\bf H}^2_{\mathbb{C}}$, which is one part of the  space $X$.}
	\label{figure:group}
\end{figure}

Consider $AC$-action on $s(C) \cap \partial {\bf H}^2_{\mathbb{C}}$. Note that $s(C) \cap \partial {\bf H}^2_{\mathbb{C}}$ is a 4-holed 2-sphere. The boundary of $s(C) \cap \partial {\bf H}^2_{\mathbb{C}}$ consists of four circles:

	\begin{enumerate}
	\item  $s(C) \cap s(ACA^{-1})\cap \partial {\bf H}^2_{\mathbb{C}}$;
	\item  $s(C) \cap s(A^{-1}CA)\cap \partial {\bf H}^2_{\mathbb{C}}$;
	
	\item  $s(C) \cap (s(CBC^{-1}) \cup s(C^{-1}BC^{-1}))\cap \partial {\bf H}^2_{\mathbb{C}}$, which is a union of four arcs in the edges classes $x_1,x_3,-x_2$ and $-x_4$ counterclockwise in Figures 	\ref{figure:abstract} and 	\ref{figure:edgeclass}. Here by $-x_2$ and $-x_4$ we mean the inverse orientations on $x_2$ and $x_4$;
		\item  $s(C) \cap (s(AC^{-1}BCA^{-1}) \cup s(ACBCA^{-1}))\cap \partial {\bf H}^2_{\mathbb{C}}$, which is also a union of four arcs  in the edges classes $-x_1,x_4,x_2$ and $-x_3$ counterclockwise in Figures 	\ref{figure:abstract} and 	\ref{figure:edgeclass}.
	\end{enumerate}	
$AC$ is an order two element in $\mathbf{PU}(2,1)$ which has exactly one fixed point in  ${\bf H}^2_{\mathbb{C}}$. Since $AC$ preserves the 2-sphere  $I(C)\cap \partial {\bf H}^2_{\mathbb{C}}$ invariant as a set, the quotient of  $I(C)\cap \partial {\bf H}^2_{\mathbb{C}}$ by $AC$  is the 2-dimensional real projective space $\mathbb{R}{\bf P}^{2}$.
$AC$ exchanges the first two boundary components of $s(C) \cap \partial {\bf H}^2_{\mathbb{C}}$  above, and $AC$ exchanges the third and the fourth  boundary components of $s(C) \cap \partial {\bf H}^2_{\mathbb{C}}$ above.  Take a simple closed curve $\mathcal{C}$  in $s(C)\cap  \partial  {\bf H}^2_{\mathbb{C}}$ which is $AC$-invariant and separates the first and the third boundary components of 
$s(C)\cap \partial {\bf H}^2_{\mathbb{C}}$ from the second  and the fourth  boundary components. Take a  point $u_6$ in  $\mathcal{C}$, then  $u_6$ and  $AC(u_6)$   separates
$\mathcal{C}$ into two oriented edges. These two edges are equivalent  by $AC$, we denote this edge class by $x_6$. We denote the vertex class of $u_6$ and  $AC(u_6)$ by $U_6$.


\begin{figure}
	\begin{center}
		\begin{tikzpicture}
		\node at (0,0) {\includegraphics[width=10cm,height=7cm]{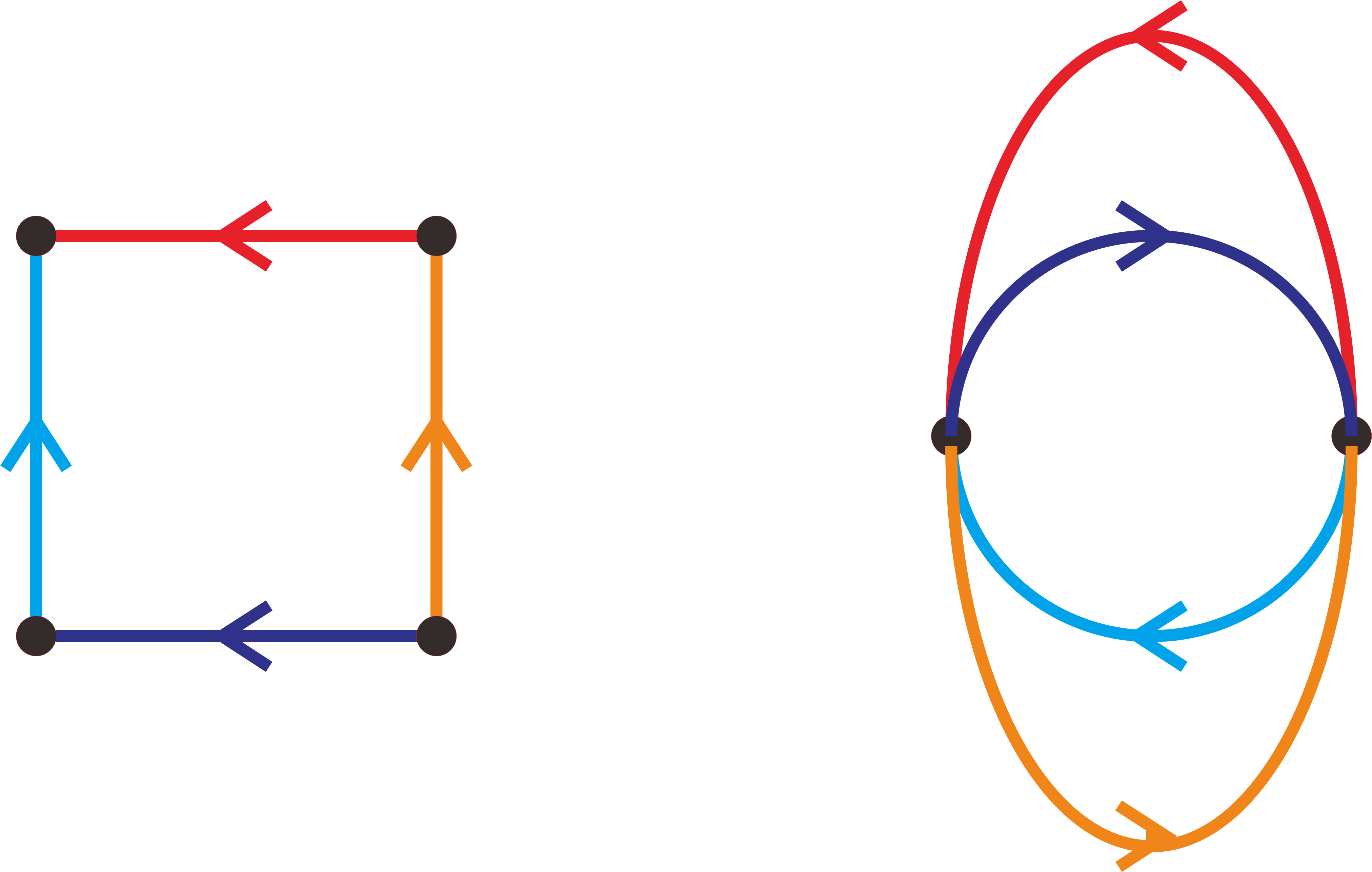}};

		\coordinate [label=below:$x_{4}$] (S) at (-5.37,0);
		\coordinate [label=below:$U_{1}$] (S) at (1.1,0);
		\coordinate [label=below:$x_{3}$] (S) at (-1.2,0);
		\coordinate [label=below:$U_{3}$] (S) at (5.37,0);

		\coordinate [label=below:$x_1$] (S)at(-3,2.2);
	
		\coordinate [label=below:$x_2$] (S)at(-3,-1.0);

			\coordinate [label=below:$x_2$] (S)at(3,1.4);
		\coordinate [label=below:$x_1$] (S)at(3,3.6);
		\coordinate [label=below:$x_4$] (S)at(3,-1.0);
		\coordinate [label=below:$x_3$] (S)at(3,-3.5);

			\coordinate [label=below:$C^{-1}BC^{-1}$] (S)at(-3.2,0.5);

			\coordinate [label=below:$C^{-1}BC$] (S)at(3.4,2.5);
			\coordinate [label=below:$CBC^{-1}$] (S)at(3.4,-2.2);

		\end{tikzpicture}
	\end{center}
	\caption{The left subfigure is the 2-disk $s(C^{-1}BC^{-1}) \cap  {\bf H}^2_{\mathbb{C}}$; The right subfigure is the union of two 2-disks, which are half of $(s(CBC^{-1}) \cup s(C^{-1}BC))\cap  {\bf H}^2_{\mathbb{C}}$ (we glue together the points $u_1$ and $u_2$, and denote it by $U_1$, we also glue together the points $u_3$ and $u_4$, and denote it by $U_3$).   The bigon labeled by $C^{-1}BC$ is  one  of the two disks $ s(C^{-1}BC)\cap  {\bf H}^2_{\mathbb{C}}$. Similarly,  the bigon labeled by $CBC^{-1}$ is  one  of the two disks $ s(CBC^{-1})\cap  {\bf H}^2_{\mathbb{C}}$. Those are  parts of the  space $X$.}
	\label{figure:homotopy}
\end{figure}

The topological space $X$ can be obtained from three parts by gluing:

	\begin{enumerate}
	\item half of  $s(C) \cap \partial {\bf H}^2_{\mathbb{C}}$, that is, a three holed sphere with boundaries $\mathcal{C}$,   $s(C) \cap s(A^{-1}CA)\cap \partial {\bf H}^2_{\mathbb{C}}$
	and   $s(C) \cap (s(CBC^{-1}) \cup s(C^{-1}BC^{-1}))\cap \partial {\bf H}^2_{\mathbb{C}}$. Note that half of  $s(C^{-1}) \cap \partial {\bf H}^2_{\mathbb{C}}$, that is, a three holed sphere with boundaries $C(\mathcal{C})$,   $s(C) \cap s(A^{-1}CA)\cap \partial {\bf H}^2_{\mathbb{C}}$
	and   $s(A^{-1}CA) \cap (s(C^{-1}BC) \cup s(CBC))\cap \partial {\bf H}^2_{\mathbb{C}}$ are identified with the first three holed sphere by $C$;
	
		\item a square $s(C^{-1}BC^{-1}) \cap \partial {\bf H}^2_{\mathbb{C}}$. Note that the disk  $s(C^{-1}BC^{-1}) \cap \partial {\bf H}^2_{\mathbb{C}}$ and  the disk $s(CBC) \cap \partial {\bf H}^2_{\mathbb{C}}$ are identified by $CBC$;	
			\item One of the two bigons  $s(CBC^{-1}) \cap \partial {\bf H}^2_{\mathbb{C}}$ and one of the two bigons   $s(C^{-1}BC) \cap \partial {\bf H}^2_{\mathbb{C}}$.  Note that two components of  $s(CBC^{-1}) \cap \partial {\bf H}^2_{\mathbb{C}}$ are exchanged by $CBC^{-1}$, and two components of  $s(C^{-1}BC) \cap \partial {\bf H}^2_{\mathbb{C}}$ are exchanged by $C^{-1}BC$.
		\end{enumerate}	 
So $X$ has four vertices:  $U_1$,  $U_3$, $U_5$ and  $U_6$. $X$ has six oriented edges:  $x_1$,  $x_2$, $x_3$,  $x_4$, $x_5$ and  $x_6$.  We add more 2-cells to get $X$.  See Figures \ref{figure:group} and \ref{figure:homotopy}.

Then it is easy to see
$$\pi_1(X)=\langle a,b,c |  a^2=b^3,c^2=id \rangle$$ with base point $U_3$.
Where $a$ is the homotopy class of $x_6$,  $b$ is the homotopy class of $x_5$, and  $c$ is the homotopy class of $x_1 \cdot x_3$.  The disk  $s(C^{-1}BC^{-1}) \cap \partial {\bf H}^2_{\mathbb{C}}$ implies $x_1 \cdot x^{-1}_4 \cdot x^{-1}_2 \cdot x_3$ is homotopical trivial. The bigons  $s(CBC^{-1}) \cap \partial {\bf H}^2_{\mathbb{C}}$ and    $s(C^{-1}BC) \cap \partial {\bf H}^2_{\mathbb{C}}$ imply $x_1 \cdot x_2$ and  $x_4 \cdot x_3$ are homotopical trivial.

It is also easy to see $M$ has exactly one torus cusp given by $q_{\infty}$.
Note that  the group $H$ with a presentation  $\langle a,b |  a^2=b^3 \rangle$ is the fundamental group of the trefoil knot complement 	 \cite{Rolfsen}, which is compact with exactly one torus boundary. Moreover, this manifold is the only manifold without 2-sphere as boundary up to homeomorphism with this fundamental group. $\mathbb{Z}/2\mathbb{Z}$ is the  fundamental group of the 3-dimensional  real projective space ${\mathbb R}{\mathbf P}^3$, and  ${\mathbb R}{\mathbf P}^3$ is also the only closed 3-manifold with fundamental group $\mathbb{Z}/2\mathbb{Z}$.
Since $\pi_1(M)$ is the free product of $H$  and $\mathbb{Z}/2\mathbb{Z}$.  Then it is standard that $M$ is the connected sum of two manifolds with fundamental groups $H$ and  $\mathbb{Z}/2\mathbb{Z}$ respectively, and  possibly  a homotopic 3-sphere	\cite{Hempel}.
By the solution of the  Poincar\'e conjecture, this homotopic 3-sphere is exactly the 3-sphere $\mathbb{S}^3$.
So $M$ is the connected sum of the trefoil knot complement 	and  the 3-dimensional  real projective space ${\mathbb R}{\mathbf P}^3$.
This ends the proof of  Theorem  \ref{thm:3-mfd}.

\section{The proof of Theorem \ref{thm:complex3dim}}\label{sec:complex3dim}



First we have the  following lucky Lemma, which is a little surprising to the author.
  \begin{lemma} \label{lemma:coplane} For any $(h,t) $ in the moduli space $\mathcal{M}$, $$q_{\infty}-C^{-1}(q_{\infty})+CBC^{-1}(q_{\infty})-CBC(q_{\infty})$$
  	is the zero vector  in $\mathbb{C}^{3,1}$.
\end{lemma}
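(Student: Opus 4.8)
The statement is a linear identity among four null vectors in $\mathbb{C}^{3,1}$, so the natural approach is a direct computation using the explicit matrices $A$, $B$, $C$ written down in Subsection~\ref{subsec:matricesin3dim} (in the $\mathbf{PU}(3,1)$ form, with the parameter $D=\sqrt{4h^2\cos(t)+3h^2+1}$). The first step is to record the four vectors in question. Since $\mathbf{q}_\infty=(1,0,0,0)^T$, the column $C^{-1}(\mathbf{q}_\infty)$, $CBC^{-1}(\mathbf{q}_\infty)$ and $CBC(\mathbf{q}_\infty)$ are obtained by applying the relevant word-matrices to $(1,0,0,0)^T$, i.e.\ they are (scalar multiples of) the first columns of $C^{-1}$, $CBC^{-1}$ and $CBC$ respectively. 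So the whole problem reduces to: compute the first columns of these three matrices and check that
\[
(1,0,0,0)^T - [\text{1st col of }C^{-1}] + [\text{1st col of }CBC^{-1}] - [\text{1st col of }CBC] = 0,
\]
with the correct choice of lifts (this is the point that needs care — one must fix the scalar normalization of each lift, not merely work projectively, since the claimed relation is an honest vector identity).

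The cleanest way to organize the computation is to first understand the action on the Heisenberg-type coordinates. Note $C$ does not fix $q_\infty$, whereas $B=I_3I_1$ maps $q_\infty$ to a finite point and $CBC$, $CBC^{-1}$ are conjugates; the four points $q_\infty$, $C^{-1}(q_\infty)$, $CBC^{-1}(q_\infty)$, $CBC(q_\infty)$ all lie on $\partial\mathbf{H}^3_{\mathbb C}$. Using the standard lift formula (\ref{eq:lift}) for a boundary point $[z_1,z_2,t]$, the assertion ``$\mathbf{q}_\infty - C^{-1}(\mathbf{q}_\infty) + CBC^{-1}(\mathbf{q}_\infty) - CBC(\mathbf{q}_\infty)=0$'' becomes a set of scalar identities in $h$ and $e^{it}$: the first coordinates must satisfy a relation, and the middle ($z_1,z_2$) and bottom coordinates must match up. I would (i) compute $g^{-1}(q_\infty)$ for $g=C$ from the bottom row of $C$ via the isometric-sphere center formula, giving $C^{-1}(q_\infty)$ directly in Heisenberg coordinates; (ii) similarly read off $CBC^{-1}(q_\infty)$ and $CBC(q_\infty)$; then (iii) verify the alternating sum vanishes. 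Alternatively — and this is probably less error-prone — just multiply the $4\times4$ matrices symbolically (in $h$, $\cos t$, $\sin t$, $D$, with the single relation $D^2=4h^2\cos t+3h^2+1$) and add the four first columns.

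The key algebraic simplification to exploit is that $B^2=\mathrm{id}$ and $C^3=\mathrm{id}$, so $C^{-1}=C^2$ and $CBC^{-1}=CBC^2$; also $(AC)^2=\mathrm{id}$ where $A=I_1I_2$ is the parabolic fixing $q_\infty$, which is exactly what makes $C^{-1}(q_\infty)$ and $A^{-1}CA(q_\infty)$ coincide and is the structural reason behind the coincidence. Concretely I expect the four points, projected to the $\mathbb{C}^2$-factor of the Heisenberg group, to be the vertices of a (possibly degenerate) parallelogram centered appropriately, which forces the claimed vanishing once the lifts are normalized so that each has last coordinate $1$ — but note the identity as stated does \emph{not} have all last coordinates equal to $1$, so part of the content is that the natural lifts (those produced by applying the matrices to the fixed lift $(1,0,0,0)^T$) already satisfy it on the nose.

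\textbf{Main obstacle.} The real difficulty is purely bookkeeping: the matrix $C$ has messy entries involving $\frac{1}{h}$, $\frac{1}{h^2}$, $\frac{1}{h^3}$, $e^{\pm it}$ and $D$, and $CBC^{\pm1}$ are triple products, so expanding and collecting terms is where an error would creep in. There is no conceptual obstruction — the statement is either true or false on an explicit parametrized family and it is claimed true — so the plan is simply to do the symbolic multiplication carefully (e.g.\ in Maple, as the authors do elsewhere in the paper), substitute $D^2=4h^2\cos t+3h^2+1$ wherever $D^2$ appears, and check that every one of the four coordinates of the alternating sum reduces identically to $0$. I would double-check the outcome at the two distinguished slices $t=0$ (the $\mathbf{H}^3_{\mathbb R}$ case, where everything is real) and $t=\operatorname{arccos}(-\frac{3h^2+1}{4h^2})$ (the $\mathbf{H}^2_{\mathbb C}$ case, where $D=0$ and $C$ degenerates to the $3\times3$ matrix of Subsection~\ref{subsec:matricesin2dim}), since in the latter case the identity should restrict to the coplanarity already implicit in the $\mathbf{PU}(2,1)$ analysis of Section~\ref{sec:Ford2dim}, and agreement there is a strong consistency check.
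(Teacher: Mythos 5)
Your proposal is correct and matches the paper's proof, which is precisely the direct calculation you describe: the paper writes down the vectors $C^{-1}(q_{\infty})$, $CBC^{-1}(q_{\infty})$ and $CBC(q_{\infty})$ obtained by applying the word-matrices to the fixed lift $(1,0,0,0)^{T}$ (so the normalization issue you flag is resolved exactly as you propose) and checks the alternating sum vanishes coordinatewise. Your consistency checks at $t=0$ and $t=\operatorname{arccos}(-\frac{3h^2+1}{4h^2})$ are a sensible addition but not needed.
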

\begin{proof}This is proved by direct calculation, where 
	\begin{equation}\label{eq:2p2q34}
	C^{-1}(q_{\infty})=\left[
	\begin{array}{c}
	\frac{1}{4h^2}+1+\rm{e}^{t \rm{i}}\\[ 6 pt]
	\frac{1}{2}\\[ 6 pt]
\frac{\rm{e}^{t \rm{i}}\sqrt{4h^2 \cos(t)+3h^2+1}}{2h}\\[ 6 pt]
	-\frac{1}{2}
	\\
	\end{array}
	\right],
	\end{equation}

	\begin{equation}\label{eq:2p2q34}
	CBC^{-1}(q_{\infty})=\left[
	\begin{array}{c}
	\frac{1}{4h^2}+1+\rm{e}^{-t \rm{i}}+4h^2(1+\rm{e}^{t \rm{i}})\\[ 6 pt]
	\frac{1}{2}-2h^2(1+\rm{e}^{-t \rm{i}})\\[ 6 pt]
	\frac{(4h^2(1+\rm{e}^{t \rm{i}})+\rm{e}^{t \rm{i}})\sqrt{4h^2 \cos(t)+3h^2+1}}{2h}\\[ 6 pt]
	-\frac{1}{2}-4h^2-4h^2 \cos(t)
	\\
	\end{array}
	\right]
	\end{equation}
	
	and 
	
	\begin{equation}\label{eq:2p2q34}
	CBC(q_{\infty})=\left[
	\begin{array}{c}
	4h^2(1+\rm{e}^{t \rm{i}})+1-2 \sin(t) \cdot \rm{i}\\[ 6 pt]
-2h^2(1+\rm{e}^{-t \rm{i}})\\[ 6 pt]
2h(	\rm{e}^{t \rm{i}}+1)\sqrt{4h^2 \cos(t)+3h^2+1}\\[ 6 pt]
	-4h^2(1+ \cos(t))
	\\
	\end{array}
	\right].
	\end{equation}
	Then we have $q_{\infty}-C^{-1}(q_{\infty})+CBC^{-1}(q_{\infty})-CBC(q_{\infty})=0$.
\end{proof}

So for any $(h,t) \in \mathcal{M}$,  the span of  $q_{\infty}$, $C^{-1}(q_{\infty})$, $CBC^{-1}(q_{\infty})$ and  $CBC(q_{\infty})$ is a 3-dimensional subspace  of  $\mathbb{C}^{3,1}$, the intersection of ${\bf H}^{3}_{\mathbb{C}} \subset {\bf P}^{3}_{\mathbb{C}}$ with the projection of this 3-dimensional subspace  into ${\bf P}^{3}_{\mathbb{C}}$ is denoted by $L$ , then  $L$ is a totally geodesic ${\bf H}^{2}_{\mathbb{C}} \hookrightarrow {\bf H}^{3}_{\mathbb{C}}$.

  \begin{lemma} \label{lemma:intersection} For any $(h,t) \in \mathcal{M}$ with $h >1$, the intersection $I(C)\cap I(CBC^{-1})\cap I(C^{-1}BC^{-1}) \cap L$ are two crossed straight segments  in the Giraud disk  $I(C)\cap I(CBC^{-1}) \cap L$. So the 
  	intersection of the ridge
  	$s(C) \cap s(CBC^{-1})$ with $L$ are two  sectors with common vertices.
  	
\end{lemma}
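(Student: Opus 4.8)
The plan is to reduce Lemma~\ref{lemma:intersection} to the already-established two-dimensional picture. By Lemma~\ref{lemma:coplane}, the four points $q_{\infty}$, $C^{-1}(q_{\infty})$, $CBC^{-1}(q_{\infty})$ and $CBC(q_{\infty})$ span a $3$-dimensional subspace $W\subset\mathbb{C}^{3,1}$, and $L=\mathbb{P}(W)\cap{\bf H}^3_{\mathbb C}$ is a totally geodesic ${\bf H}^2_{\mathbb C}\hookrightarrow{\bf H}^3_{\mathbb C}$. The first step is to observe that the restriction of the Hermitian form to $W$ has signature $(2,1)$: this holds precisely because $W$ contains a negative vector (any generic lift on $L$) and the three null vectors $q_{\infty}$, $C^{-1}(q_{\infty})$, $CBC^{-1}(q_{\infty})$ are linearly independent (one checks this $3\times3$ Gram matrix is nonsingular), so $W$ is nondegenerate of the stated signature. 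Hence $(L,\langle\cdot,\cdot\rangle|_W)$ is an honest copy of ${\bf H}^2_{\mathbb C}$, and the Giraud-disk machinery of Subsection~\ref{subsection:Spinalcoordinates} applies verbatim inside $W$.

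Next I would note that the three isometric spheres in question, $I(C)$, $I(CBC^{-1})$ and $I(C^{-1}BC^{-1})$, are the bisectors $\mathcal{B}(q_{\infty},C^{-1}(q_{\infty}))$, $\mathcal{B}(q_{\infty},CBC(q_{\infty}))$ and $\mathcal{B}(q_{\infty},(C^{-1}BC^{-1})^{-1}(q_{\infty}))=\mathcal{B}(q_{\infty},CBC(q_{\infty}))$ — wait, more carefully: $I(g)=\mathcal{B}(q_{\infty},g^{-1}(q_{\infty}))$, and the relevant points $g^{-1}(q_{\infty})$ for $g\in\{C,CBC^{-1},C^{-1}BC^{-1}\}$ are $C^{-1}(q_{\infty})$, $CBC^{-1}(q_{\infty})$, $CBC(q_{\infty})$ respectively, all of which lie in $W$. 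Therefore the complex spines of all three bisectors lie in $\mathbb{P}(W)$, and by the slice decomposition (Mostow) each of $I(C)\cap L$, $I(CBC^{-1})\cap L$, $I(C^{-1}BC^{-1})\cap L$ is again a bisector (isometric sphere) inside the ${\bf H}^2_{\mathbb C}$ given by $L$. Then $I(C)\cap I(CBC^{-1})\cap L$ is a Giraud disk in $L$ (nonempty since it contains, e.g., the image of the sample point used in Proposition~\ref{prop:cBcandCintersecCBinverseC}, which lies in $W$ by construction), and we must intersect it with $I(C^{-1}BC^{-1})\cap L$. I would parametrize exactly as in Proposition~\ref{prop:cBcandCintersecCBinverseC}, taking ${\bf q}=C^{-1}(q_{\infty})$, ${\bf r}=CBC(q_{\infty})$, ${\bf p}=q_{\infty}$ in~(\ref{equaation:girauddisk}); crucially, because $q_{\infty}-C^{-1}(q_{\infty})+CBC^{-1}(q_{\infty})-CBC(q_{\infty})=0$, the defining equation $|\langle V,q_{\infty}\rangle|=|\langle V,CBC^{-1}(q_{\infty})\rangle|$ for the third sphere becomes an affine relation between $z_1$ and $z_2$ that factors, yielding two crossed straight segments (the locus $z_1=\overline{z}_2$-type line together with a degenerate line lying outside ${\bf H}^2_{\mathbb C}$), precisely mirroring the $h=\sqrt2$, $t=\operatorname{arccos}(-\tfrac78)$ computation.

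The key point making this work for \emph{all} $(h,t)\in\mathcal{M}$ with $h>1$ is the coplanarity identity: it forces $\langle V(z_1,z_2),q_{\infty}\rangle-\langle V(z_1,z_2),CBC^{-1}(q_{\infty})\rangle$ to be, up to the scalar factor from~(\ref{equaation:girauddisk}), $\langle V, C^{-1}(q_{\infty})\rangle-\langle V, CBC(q_{\infty})\rangle$, which is \emph{linear} in $(z_1,z_2)$ with no constant term — so the trace equation $|\langle V,q_{\infty}\rangle|^2=|\langle V,CBC^{-1}(q_{\infty})\rangle|^2$ has the form $|a z_1 + b z_2|^2 = |c z_1 + d z_2|^2$ after using the coplanarity, and its solution set in $S^1\times S^1$ is a union of two ``diagonal'' circles, i.e.\ two crossed segments once we restrict to the disk $\langle V,V\rangle<0$. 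One of the two families consists of points with $\langle V,V\rangle>0$ (outside ${\bf H}^3_{\mathbb C}$), exactly as in Proposition~\ref{prop:cBcandCintersecCBinverseC}; the surviving family gives the two crossed segments, hence the ridge $s(C)\cap s(CBC^{-1})$ meets $L$ in two sectors with a common vertex.

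The main obstacle I anticipate is not the algebra but the bookkeeping: one has to verify (i) that the Giraud disk $I(C)\cap I(CBC^{-1})\cap L$ is actually nonempty and two-dimensional for every $(h,t)$ with $h>1$ — this requires exhibiting a negative vector satisfying both bisector equations uniformly in the parameters, which I would do by checking the relevant $2\times2$ trace inequality is satisfied at a convenient symmetric point; and (ii) that, among the two families of solutions of the quadratic, exactly the ``wrong'' one is forced to have $\langle V,V\rangle\ge0$, which amounts to a sign computation of a polynomial in $h,\cos t$ over the region $\mathcal{M}$ — this is where the restriction $h>1$ (and $t<\operatorname{arccos}(-\tfrac{3h^2+1}{4h^2})$, so that $L$ is a genuine ${\bf H}^2_{\mathbb C}$ and not the degenerate case) will be used. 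Once these two points are dispatched, the conclusion about the ridge being two sectors with common vertices is immediate from the slice/segment structure, exactly as in the proof of Proposition~\ref{prop:cBcandCintersecCBinverseC}.
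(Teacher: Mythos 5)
Your strategy is in substance the paper's: restrict everything to $L$, parametrize the Giraud disk by $(z_1,z_2)\in S^1\times S^1$, and show that the trace of the third isometric sphere factors into straight lines on the torus. The paper executes this by brute force — it works in the basis $e_1=q_{\infty}$, $e_2=C^{-1}(q_{\infty})$, $e_3=CBC^{-1}(q_{\infty})$ of the $3$-dimensional subspace, builds a cross product $\boxtimes_{L}$ adapted to the restricted Hermitian form $H_L$, and computes (with Maple) that $|\langle V,e_4\rangle|^2/|\langle V,q_{\infty}\rangle|^2=3+2\cos r+2\cos s+2\cos(r-s)$, whose vanishing minus one factors. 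Your observation that Lemma \ref{lemma:coplane} makes the heavy computation unnecessary is a genuine improvement: on the Giraud disk one has $\langle V,e_2\rangle=z_1\langle V,e_1\rangle$ and $\langle V,e_3\rangle=z_2\langle V,e_1\rangle$, so the relation $e_4=e_1-e_2+e_3$ gives $\langle V,e_4\rangle=(1-z_1+z_2)\langle V,e_1\rangle$ immediately, and $|1-z_1+z_2|=1$ factors by the elementary identity $1+\cos r+\cos s+\cos(r-s)=4\cos(r/2)\cos(s/2)\cos(\tfrac{r-s}{2})$ (up to the sign normalization of the lifts). This explains conceptually why the answer is parameter-independent, which the paper's computation only exhibits a posteriori.

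Three points need repair, though. First, the identity you extract from coplanarity has the wrong signs: $e_1-e_2+e_3-e_4=0$ yields $\langle V,e_1\rangle+\langle V,e_3\rangle=\langle V,e_2\rangle+\langle V,e_4\rangle$, not $\langle V,e_1\rangle-\langle V,e_3\rangle=\langle V,e_2\rangle-\langle V,e_4\rangle$; consequently the trace equation is of the form $|1\pm z_1\pm z_2|=1$, which has a constant term and is \emph{not} the homogeneous form $|az_1+bz_2|^2=|cz_1+dz_2|^2$ you assert. It still factors, but for the reason above, not the one you give. Second, the solution set is a union of \emph{three} circles (in the paper's normalization $r=\pi$, $s=\pi$, $r-s=\pi$), not two: one is discarded because it lies outside the ball and the remaining two give the crossed segments; as written, ``two circles, one family discarded, the surviving family gives the two crossed segments'' does not add up. Third, the two verifications you defer — that the Giraud disk $I(C)\cap I(CBC^{-1})\cap L$ is nonempty for every $(h,t)\in\mathcal{M}$ with $h>1$, and that exactly the branch $s=\pi$ satisfies $\langle V,V\rangle>0$ throughout the region — are precisely where the paper spends its effort, via explicit sign computations of $\langle V,V\rangle$ at $(r,s)=(\pi,0)$ and along $s=\pi$ as polynomials in $h$ and $\cos t$. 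These are not optional bookkeeping: without them the lemma is unproved, so they cannot remain ``anticipated obstacles.''
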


\begin{proof}We  re-denote $q_{\infty}$, $C^{-1}(q_{\infty})$, $CBC^{-1}(q_{\infty})$ and  $CBC(q_{\infty})$ by $e_1$, $e_2$, $e_3$ and $e_4$.
	
	We denote $H_{L}$ by the matrix $(e^*_{i}He_{j})_{1 \leq i,j \leq 3}$, then 
	$$H_{L}=\left(\begin{array}{ccc}
	0&-\frac{1}{2}&-\frac{1}{2}-4h^2(\cos(t)+1)\\[ 6 pt]
	-\frac{1}{2}  & 0&-4h^2(\cos(t)+1)\\[ 6 pt]
	-\frac{1}{2}-4h^2(\cos(t)+1)& -4h^2(\cos(t)+1) & 0\\
	\end{array}\right). $$
So $H_{L}$ is the Hermitian form on the space with the basis $e_1$, $e_2$ and $e_3$.

 The vector $x_1 e_1+x_2 e_2+x_3e_3$ is denoted by the vector ${\bf x}$, and the vector $y_1 e_1+y_2 e_2+y_3e_3$  is denoted by the vector ${\bf y}$, here 
\begin{equation}\label{eq:2p2q12}
{\bf x}=\left[
\begin{array}{c}
x_1 \\
x_2 \\
x_3 \\
\end{array}
\right],\quad
{\bf y}=\left[
\begin{array}{c}
y_1\\
y_2\\
y_3 \\
\end{array}
\right]
\end{equation}
with $x_{i},y_{i} \in \mathbb{C}$.
So \begin{equation}\label{eq:2p2q12}
{ \bf E_1}=\left[
\begin{array}{c}
1 \\
0 \\
0 \\
\end{array}
\right],\quad
{ \bf E_2}=\left[
\begin{array}{c}
0\\
1\\
0 \\
\end{array}
\right],\quad
{ \bf E_3}=\left[
\begin{array}{c}
0\\
0\\
1 \\
\end{array}
\right]
\end{equation}
represent the vectors $e_1$, $e_2$ and $e_3$.

Comparing to Subsection \ref{subsection:Spinalcoordinates}, we define  the   Hermitian cross-product $\boxtimes_{L}$ with respect to $H_{L}$. 
$$\mathbf{x} \boxtimes_{L}
\mathbf{y}=
\left[\begin{matrix}
\mathbf{x}^*H_{L}(1,2) \cdot \mathbf{y}^*H_{L}(1,3)-\mathbf{y}^*H_{L}(1,2) \cdot \mathbf{x}^*H_{L}(1,3)\\ \mathbf{x}^*H_{L}(1,3) \cdot \mathbf{y}^*H_{L}(1,1)-\mathbf{y}^*H_{L}(1,3) \cdot \mathbf{x}^*H_{L}(1,1)\\ \mathbf{x}^*H_{L}(1,1) \cdot \mathbf{y}^*H_{L}(1,2)-\mathbf{y}^*H_{L}(1,1) \cdot \mathbf{x}^*H_{L}(1,2)
\end{matrix}
\right].$$
Here $\mathbf{x}^*H_{L}$ is a one-by-three matrix, so $\mathbf{x}^*H_{L}(1,2)$ is the second entry of   $\mathbf{x}^*H_{L}$.

 Then the intersection  $I(C)\cap I(CBC^{-1}) \cap L$ are parametrizaried  by $V=V(z_1,z_2)$ with $\langle V,V \rangle <0$.
Where
$$V=E_2\boxtimes_{L}E_3+z_1 \cdot E_1\boxtimes_{L}E_3+z_2 \cdot E_1\boxtimes_{L}E_2$$  and  $(z_1,z_2) =(\rm{e}^{r \rm{i}},\rm{e}^{s \rm{i}})\in S^{1}\times S^1$.

We note that 
\begin{equation}\label{eq:2p2q12}
E_2\boxtimes_{L}E_3=\left[
\begin{array}{c}
-16h^4(\cos(t)+1)^2 \\[ 6 pt]
2h^2(8h^2\cos(t)+8h^2+1)(\cos(t)+1) \\[ 6 pt]
2h^2(\cos(t)+1) \\
\end{array}
\right],
\end{equation}
\begin{equation}\label{eq:2p2q12}
E_1\boxtimes_{L}E_3=\left[
\begin{array}{c}
-2h^2(8h^2\cos(t)+8h^2+1)(\cos(t)+1) \\[ 6 pt]
 \frac{(8h^2\cos(t)+8h^2+1)^2}{4}\\[ 6 pt]
-\frac{1}{4}-2h^2-2h^2\cos(t) \\
\end{array}
\right],
\end{equation}
and 
\begin{equation}\label{eq:2p2q12}
E_1\boxtimes_{L}E_2=\left[
\begin{array}{c}
2h^2(\cos(t)+1)\\[ 6 pt]
\frac{1}{4}+2h^2+2h^2\cos(t) \\[ 6 pt]
-\frac{1}{4} \\
\end{array}
\right].
\end{equation}

Then $V(1,1) \cdot e_1+V(2,1) \cdot e_2+V(3,1) \cdot e_3$  is a vector in in $\mathbb{C}^{3,1}$, we also denote it  by $V$.
Now  $|\langle V, q_{\infty} \rangle|^2$ is 
$$ 4h^4(\cos(t)+1)^2(8\cos(t)h^2+8h^2+1)^2,$$ 
 $|\langle V, e_4 \rangle|^2$ is the product of 
$$ (4h^4(\cos(t)+1)^2(8\cos(t)h^2+8h^2+1)^2$$ and $$ 2\sin(r)\sin(s)+2\cos(s)\cos(r)+2\cos(s)+2\cos(r)+3.$$
(this is nontrivial even with Maple).
So the solutions of $|\langle V, q_{\infty} \rangle|^2=|\langle V, e_4 \rangle|^2$
are  $r=\pi$, $s=\pi$, or $r-s=\pi$.
 When $r=\pi$ and $s=0$, then for the corresponding $V$ we have  $\langle V, V \rangle$ is
 $$-128h^4(\cos(t)+1)^2(h^2 \cos(t)+h^2+\frac{1}{8}), $$ which is negative, so $I(C)\cap I(CBC^{-1}) \cap L$ is non-empty, then it is a Giraud disk. 
We note that when $s=\pi$, then for the corresponding $V$ we have $\langle V, V \rangle$ is the product of $$512h^2(1+ \cos(t))(h^2 \cos(t)+h^2+\frac{1}{8})^2$$ and   $$\frac{1-\cos(r)}{8}+h^2(1+ \cos(r))(1+\cos(t)), $$ so $\langle V, V \rangle$ is positive when $t \in [0, \pi)$. Then the solution $s=\pi$ is out side $L$.
We may assume $r \in [0,2 \pi]$ and $s \in [-\pi,\pi]$. So the solutions  $r=\pi$ and  $r=s$ are two crossed straight segments  in the Giraud disk.

\end{proof}

{\bf Proof of Theorem  \ref{thm:complex3dim}.}
With Lemma  \ref{lemma:intersection}, the proof of Theorem  \ref{thm:complex3dim} runs the same line as the proofs in Section \ref{sec:Ford2dim}.  We assume  $(h,t)\in \mathcal{M}$ is  close to $(\sqrt{2},\arccos(-\frac{7}{8}))$. By the continuity of the Cygan distance, results similar to
Propositions  \ref{prop:C}, \ref{prop:inverseCBinverseC}, \ref{prop:CBC},  \ref{prop:CBinverseC} and \ref{prop:inverseCBC}  also hold (view these isometric spheres in  ${\bf H}^{3}_{\mathbb{C}}$).  The intersection of $I(C)$ and $I(A^{-1}CA)$ is a non-empty 4-disk.  We have showed  
the stability of  the intersection of three isometric spheres  $I(C)$,  $I(CBC^{-1})$ and $I(C^{-1}BC^{-1})$ for the group $\rho_{(h, t)}(K)  <\mathbf{PU}(3,1)$ in ${\bf H}^{3}_{\mathbb C}$ in Lemma  \ref{lemma:intersection}, which is a union of two 3-balls which intersect in a 2-ball. So
results similar to
Propositions \ref{prop:cBcandCintersecCBinverseC},  \ref{prop:cBcandCBinverseCintersectionC} and \ref{prop:CBcandCintersectioninverseCBinverseC} hold for $\rho_{h,t}(K)$.  Then we have results similar to
Propositions \ref{prop:3-sidecBc}, \ref{prop:3-sideACBCa},  \ref{prop:3-sideCBc}, \ref{prop:3-sideAcBCa} and \ref{prop:3-sideC}.
Then we can use the Poincar\'e polyhedron theorem with the same set of side paring maps in Subsection \ref{subsec:poincare2dim} to prove Theorem  \ref{thm:complex3dim}.


 \section{Discussion on related topics}\label{sec:question}
 
 We propose some questions which are closed related to  results in this paper.

  \subsection{The interpolation  of geometries
 }\label{subsec:interpolation}
 
 We view the moduli space $\mathcal{M}$ as the interpolation between ${\bf H}^3_{\mathbb{R}}$-geometry 
 and  ${\bf H}^2_{\mathbb{C}}$-geometry by  ${\bf H}^3_{\mathbb{C}}$-geometry for the group $G$.
 Then as a  generalization  of  Theorem \ref{thm:3-mfd}, it seems the following is true.
 
 \begin{question}\label{question:2dimcomplexhyp}  Show that for any $h > \sqrt{2}$, when  $t=\operatorname{arccos}(-\frac{3h^2+1}{4h^2})$, $\rho_{(h,t)}$   is a discrete and faithful  representation of $G$ into $\mathbf{PU}(2,1)$. Moreover, the 3-manifold at infinity  of $\rho_{(h,t)}(K)$ is also the connected sum of the trefoil knot  complement in $\mathbb{S}^3$ and a real projective space  ${\mathbb R}{\mathbf P}^3$.
 \end{question}
 
  It seems the combinatorics of Ford domains in  Question \ref{question:2dimcomplexhyp} 
 are the same as in Theorem \ref{thm:3-mfd}, but the calculations are more  difficult.  It seems that when $h$ decreasing to $1.29326$ numerically, the combinatorics of the Ford domain   of $\rho_{(h,\operatorname{arccos}(-\frac{3h^2+1}{4h^2})) }$ change  for the first time. In other words, it seems  the combinatorics of Ford domains do not change  when $h \in [1.29326, \sqrt{2}]$, but the author can not prove this.



 \begin{question} \label{question:change} 	From  Theorem \ref{thm:complex3dim}, Theorem \ref{thm:3-mfd} and Question \ref{question:2dimcomplexhyp}, it is reasonable to guess that for all $h\in [\sqrt{2}, \infty)$ and $t \in [0, \operatorname{arccos}(-\frac{3h^2+1}{4h^2})]$, $\rho_{(h,t)}$  is a discrete and faithful  representation of $G$.
 	Moreover, 	 the combinatorics of  Ford domains of $\rho_{(h,t)}(K)$ change only finitely many times  in this region.  
 \end{question}

It is reasonable  to study the Ford domain of $\rho_{(h,\frac{\pi}{2})}(K)$ for some $h \in [\sqrt{2}, \infty)$ before study  Question \ref{question:change}.

\begin{question} \label{question:nearreal} When $h \in [\frac{1}{2}, \infty)$ is fixed and $t=0$ (we assume $h= \cos(\frac{\pi}{p})$ for some integer $p$ if $h <1$), $\Gamma$ preserves a totally geodesic ${\bf H}^3_{\mathbb R} \hookrightarrow {\bf H}^3_{\mathbb C}$ invariant,	 we have a discrete subgroup in $\mathbf{PO}(3,1)$.  The Ford domain of $\rho_{(h,0)}(K) < \mathbf{PO}(3,1)$ in ${\bf H}^3_{\mathbb R}$ is easy to study. 	It is interesting to know the relationship between the Ford domains of $\rho_{(h,0)}(K) < \mathbf{PO}(3,1)$ in ${\bf H}^3_{\mathbb R}$ and $\rho_{(h,0)}(K) < \mathbf{PU}(3,1)$ in ${\bf H}^3_{\mathbb C}$.

\end{question}

Recall that  $A=I_1I_2$, $B=I_3I_1$ and $C=I_4I_1$.
Let
$$S=\{A^{k}CA^{-k},A^{k}BA^{-k}\} _{k \in \mathbb{Z}}$$
be a subset of $\rho_{(h,0)}(K) < \mathbf{PO}(3,1)$, it is not difficult to show  the partial Ford domain 
$D_{S}(\rho_{(h,0)}(K))$ is in fact the Ford domain of  $\rho_{(h,0)}(K)$ in  ${\bf H}^3_{\mathbb R}$ if $h >1$.
If we 
can solve Question \ref{question:nearreal}. For this fixed $h$, when we increasing $t$, we can continue to study the Ford domain of $\rho_{(h,t)}(K) < \mathbf{PU}(3,1)$ when $t$ is small.   
 The author remarks by Page 297 of \cite{Go}, the Ford domains of $\rho_{(h,0)}(K) < \mathbf{PO}(3,1)$ in ${\bf H}^3_{\mathbb R}$ and $\rho_{(h,0)}(K) < \mathbf{PU}(3,1)$ in ${\bf H}^3_{\mathbb C}$ may have different combinatorial structures  a priori.  In other words,  Question \ref{question:nearreal} is also non trivial.

 \subsection{Discussion on the topology of  ${\bf H}^2_{\mathbb{C}} / \rho_{(\sqrt{2},\operatorname{arccos}(-\frac{7}{8}))}(K)$}\label{subsec:4mfd}
 
 The reader may wonder about  the topology of  ${\bf H}^2_{\mathbb{C}} / \rho_{(\sqrt{2},\operatorname{arccos}(-\frac{7}{8}))}(K)$ as a 4-orbifold.

 Let $N$ be the  4-orbifold with three singularities obtained as follows:
 \begin{itemize}
 	\item
 	take  a cone  on lens space $L(3,-1)$, which is in fact a 4-orbifold;
 	\item  take  two  copies of cone on real projective space ${\mathbb R}{\mathbf P}^3$, we get two 4-orbifolds;
 	\item  then  attaching two  1-handles to the above three 4-orbifolds to get a connected 4-orbifold.

 \end{itemize}
 


 	Comparing to  \cite{Ma:2020a}, the author believes the following is true.
 \begin{question} \label{question:4mfd} 	
 	Show the complex  hyperbolic orbifold  ${\bf H}^2_{\mathbb{C}} / \rho_{(\sqrt{2},\operatorname{arccos}(-\frac{7}{8}))}(K)$ is homeomorphic to $N$.

 \end{question}

 The three guessed singularities in Question \ref{question:4mfd} are the quotients of the fixed points of $C$, $CBC^{-1}$ and $AC$   actions on ${\bf H}^2_{\mathbb{C}}$.   In \cite{Ma:2020a}, the author considered the topology of a more complicated ${\bf H}^2_{\mathbb{C}}$-orbifold. We believe the method in \cite{Ma:2020a} can solve Question \ref{question:4mfd}, but it is not trivial.

 
 From another point of view,   the group $G$
 is the abstract group of an infinite area acute  quadrilaterial $Q$ in ${\bf H}^2_{\mathbb{R}}$. Where  $Q$  has four edges $l_1$, $l_2$, $l_3$ and $l_4$:
 \begin{itemize}
 	\item the pair of  edges $l_2$ and $l_3$  are hyper-parallel;
 	\item the pair of   edges $l_2$ and $l_4$ has  interior angle  $\frac{\pi}{2}$;
 	\item the pair of   edges $l_4$ and $l_1$ has  interior angle $\frac{\pi}{3}$;
 	\item the pair of   edges $l_1$ and $l_2$ has  interior angle $\frac{\pi}{2}$.
 \end{itemize}
 Then $K$ is  the abstract group of the index two subgroup of $G$ generated by $\iota_1\iota_2$, $\iota_3\iota_1$ and $\iota_4\iota_1$.
 
 From the canonical Teichm\"uller theory, there are two dimensional discrete and faithful representations of $G$ into $\mathbf{PO}(2,1) <\mathbf{PU}(2,1)$.
 But there are four dimensional representations of $G$ into $\mathbf{PU}(2,1)$.
 Take any  discrete and faithful representation $\rho_0:G \rightarrow \mathbf{PO}(2,1) <\mathbf{PU}(2,1)$,  ${\bf H}^2_{\mathbb{C}}/\rho_0(K)$ is  homeomorphic to the tangent space of an infinite area  open disk  with three cone angles $\pi$, $\frac{2\pi}{3}$ and  $\pi$.  So ${\bf H}^2_{\mathbb{C}}/\rho_0(K)$ is homeomorphic to $N$ above.

 
 The group $\rho_{(\sqrt{2},\operatorname{arccos}(-\frac{7}{8}))(K)}$  can be view as a discrete and faith representation $\rho_1:K \rightarrow \mathbf{PU}(2,1)$ with one accidental parabolic, by this we mean $\rho_1(\iota_1\iota_2) =I_1I_2$ is parabolic. If we can show there is a path $\rho_{t}:K \rightarrow \mathbf{PU}(2,1)$ connecting $\rho_0$ and $\rho_1$, where each $\rho_{t}$ for $t \in [0,1)$ is discrete, faithful and type-preserving, then ${\bf H}^2_{\mathbb{C}}/\rho_1(K)$ should be homeomorphic to ${\bf H}^2_{\mathbb{C}}/\rho_0(K)$.  Even through it is trivial to get a path of representations  $\rho_{t}:K \rightarrow \mathbf{PU}(2,1)$ connecting $\rho_0$ and $\rho_1$, but  showing $\rho_{t}$ is discrete and  faithful for each $t \in (0,1)$ is highly non-trivial.

 Note in $\pi_1(M)$ of the 3-manifold $M$ in Theorem  \ref{thm:3-mfd}, if we add the relation $a^2=b^3=id$, then we get the fundamental group of $L(3,-1)\#  \mathbb{R}{\bf P}^{3}\#  \mathbb{R}{\bf P}^{3}$, where $\#$ is the connect sum.  Which is exactly the fundamental group of ${\bf H}^2_{\mathbb{C}}/\Gamma(\sqrt{2},0)$, and $L(3,-1)\#  \mathbb{R}{\bf P}^{3} \#  \mathbb{R}{\bf P}^{3}$  is exactly the manifold at infinity of $L$.  So $M$ can also be viewed as  drilling out a simple closed curve in $L(3,-1)\# \mathbb{R}{\bf P}^{3}\#  \mathbb{R}{\bf P}^{3}$. For related topics, see \cite{Ma:2020d}.
 
 	\subsection{Several remarks on the moduli space  $\mathcal{M}$}\label{remark:denegarating}

\begin{question} \label{question:h1over2}The case   $h = \cos(\frac{\pi}{m})$    for $m \geq 3$ is even more difficult and  extremely interesting to study. In this case, the planes $\mathcal{P}_{2}$ and $\mathcal{P}_{3}$ has angle $\frac{\pi}{m}$, and the planes $\mathcal{P}_{3}$ and $\mathcal{P}_{4}$ also has angle $\frac{\pi}{m}$.
	The  matrices presentations in Subsection  \ref{subsec:matricesin3dim} also hold   when $h = \cos(\frac{\pi}{m})$  for $m \geq 3$. For example, if $h=\frac{1}{2}$, then $(I_2I_3)^3=(I_3I_4)^3=id$. Moreover, if $h=\frac{1}{2}$ and $t=\frac{2\pi}{3}$, then $I_1I_4I_1I_2I_1I_4I_3$
 	has order 6. It is the pink diamond marked point in Figure \ref{figure:moduli}. It is interesting to know whether this group is discrete.
\end{question}

\begin{question} \label{question:frontier}
Moreover, we draw several curves in Figure \ref{figure:moduli}.  The blue  curve is the locus where $\mathcal{H}(I_1I_2I_3I_4)=0$;  The cyan  curve is the locus where $\mathcal{H}(I_1I_4I_1I_2I_1I_4I_3)=0$; The black  curve is the locus where $\mathcal{H}(I_1I_3I_4I_1I_2)=0$; The green  curve is the locus where $\mathcal{H}(I_1I_3I_2I_4I_1I_3I_4)=0$. It is interesting to know whether these curves are the frontiers of the discreteness and faithfulness of the representations of  the group $G$ 
(when $h=\cos(\frac{\pi}{p})$, there are obvious relations $(\iota_2 \iota_3)^{p}=(\iota_3 \iota_4)^{p}=id$ we have to add in the presentation of the abstract group $G$). 
 
 \end{question}

\bibliographystyle{amsplain}

\end{document}